\newtheorem{theorem}{Theorem}[section]
\newtheorem{lemma}{Lemma}[section]
\newtheorem{proposition}{Proposition}[section]
\theoremstyle{remark}
\newtheorem{remark}{Remark}[section]
\theoremstyle{definition}
\newcommand{\ve}{\varepsilon}
\numberwithin{equation}{section}
\numberwithin{table}{section}
\numberwithin{figure}{section}
\def\ontop#1#2{\setbox0\hbox{#2}\copy0\llap{\raise\ht0\hbox{#1}}}
\begin{document}
\newcommand{\shw}{\ensuremath{\overset{\scriptsize \circ}{S}}}
\title{NOTES ON ERROR ESTIMATES FOR GALERKIN APPROXIMATIONS OF
THE `CLASSICAL' BOUSSINESQ SYSTEM AND RELATED HYPERBOLIC PROBLEMS.}
\author{D.C. Antonopoulos}
\address{Department of Mathematics, University of Athens, 15784 Zographou, Greece}
\email{antonod@math.uoa.gr}
\author{V.A. Dougalis}
\address{Department of Mathematics, University of Athens, 15784 Zographou, Greece, and
Institute of Applied and Computational Mathematics, FORTH, 70013 Heraklion, Greece}
\email{doug@math.uoa.gr}
\subjclass[2010]{65M60, 35Q53;}
\keywords{Boussinesq systems, nonlinear dispersive waves, first-order hyperbolics,
Galerkin methods, error estimates}
\begin{abstract}
We consider the `classical' Boussinesq system in one space dimension and its symmetric analog. These
systems model two-way propagation of nonlinear, dispersive long waves of small amplitude on the
surface of an ideal fluid in a uniform horizontal channel. We discretize an initial-boundary-value
problem for these systems in space using Galerkin-finite element methods and prove error estimates
for the resulting semidiscrete problems and also for their fully discrete analogs effected by
explicit Runge-Kutta time-stepping procedures. The theoretical orders of convergence obtained are
consistent with the results of numerical experiments that are also presented.
\end{abstract}
\maketitle
\section{Introduction}
In this paper we will analyze Galerkin approximations to the so-called `classical' Boussinesq system
\begin{equation}
\begin{aligned}
\eta_{t} & + u_{x} + (\eta u)_{x} = 0\,,\\
u_{t} & + \eta_{x} + uu_{x} - \tfrac{1}{3}u_{xxt} = 0\,,
\end{aligned}
\label{eq11}
\end{equation}
which is an approximation of the two-dimensional Euler equations of water-wave theory that
models two-way propagation of long waves of small amplitude on the surface of an ideal fluid
in a uniform horizontal channel of finite depth. The variables in $(\ref{eq11})$ are nondimensional
and unscaled; $x$ and $t$ are proportional to position along the channel and time, respectively,
and $\eta=\eta(x,t)$ and $u=u(x,t)$ are proportional to the elevation of the free surface above
a level of rest represented by $\eta=0$, and to the depth-averaged mean horizontal velocity
of the fluid. \par
The system $(\ref{eq11})$ is a member of a general family of Boussinesq systems derived in
\cite{bcs1} that are approximations to the Euler equations of the same order as $(\ref{eq11})$
and whose nonlinear and dispersive terms are of equal importance when written in scaled form.
These systems are written as
\begin{equation}
\begin{aligned}
\eta_{t} & + u_{x} + (\eta u)_{x} + au_{xxx} - b\eta_{xxt} = 0\,, \\
u_{t} & + \eta_{x} + uu_{x} + c\eta_{xxx} - du_{xxt} = 0\,,
\end{aligned}
\label{eq12}
\end{equation}
where $a$, $b$, $c$, $d$ are real parameters satisfying $a+b=\tfrac{1}{2}(\theta^{2}-1/3)$,
$c+d=\tfrac{1}{2}(1-\theta^{2})$, where $0\leq \theta\leq 1$.
The specific system $(\ref{eq11})$
has been previously formally derived from the Euler equations, in the appropriate parameter
regime, in \cite{per1}, \cite{per3}, and \cite{wh}. It has been widely used in the engineering fluid
mechanics literature for computations of long, nonlinear dispersive waves, starting with \cite{per1}
and \cite{per2}. \par
Existence and uniqueness of the initial-value problem for $(\ref{eq11})$ posed for $x \in \mathbb{R}$
and $t\geq 0$ and supplemented by initial conditions of the form
\begin{equation}
\eta(x,0) = \eta_{0}(x)\,, \quad u(x,0)=u_{0}(x)\,, \quad x\in \mathbb{R}\,,
\label{eq13}
\end{equation}
was studied by Schonbeck, \cite{sch} and Amick, \cite{am}. In these papers global existence and
uniqueness was established for infinitely differentiable initial data of compact support such that
$\eta_{0}(x) > -1$, $x \in \mathbb{R}$. In \cite{bcs2} the theory of \cite{sch} and \cite{am}
was used to prove that given initial data
$(\eta_{0},u_{0}) \in H^{s}(\mathbb{R})\times H^{s+1}(\mathbb{R})$ for $s\geq 1$, such that
$\inf_{x\in\mathbb{R}}\eta_{0}(x) > -1$, there is a unique solution $(\eta,u)$ which, for any
$T>0$, lies in $C(0,T;H^{s}(\mathbb{R}))\times C(0,T;H^{s+1}(\mathbb{R}))$. (Here, $H^{s}(\mathbb{R})$
is the usual, $L^{2}$-based Sobolev space of functions on $\mathbb{R}$ and $C(0,T;X)$ denotes
the space of functions $\phi=\phi(t)$ having, for each $t\in [0,T]$, values in a Banach space $X$
and are such that the map $[0,T] \mapsto \|\phi\|_{X}$ is continuous.) \par
It is well known that the initial-value problem $(\ref{eq11})$, $(\ref{eq13})$ has classical
{\em{solitary-wave}} solutions. In \cite{ad} we review the properties of such solutions, we construct
them numerically and investigate, by means of numerical experiments, the resolution of general
initial profiles into sequences of solitary waves, the details of the interaction of solitary waves
during head-on and overtaking collisions, the interaction of solitary waves with boundaries, and
several other issues. These numerical experiments were performed in \cite{ad} by fully discrete
Galerkin methods that approximate solutions of $(\ref{eq11})$ posed on finite intervals and subject
to boundary conditions. It is of interest therefore to study initial-boundary-value problems
for $(\ref{eq11})$ and establish error estimates for their numerical approximations. \par
In this paper we shall analyze the numerical solution of the following initial-boundary-value problem
(ibvp) for $(\ref{eq11})$: For some $0<T<\infty$, we seek $\eta=\eta(x,t)$, $u=u(x,t)$, defined
for $0\leq x\leq 1$, $0\leq t\leq T$, and satisfying
\begin{equation}
\begin{aligned}
\eta_{t} & + u_{x} + (\eta u)_{x} = 0\,, \qquad \qquad (x,t) \in [0,1]\times [0,T]\,, \\
u_{t} & + \eta_{x} + uu_{x} - \tfrac{1}{3}u_{xxt} = 0\,, \quad (x,t) \in [0,1]\times [0,T]\,, \\
\eta(& x,0) = \eta_{0}(x)\,, \quad u(x,0)=u_{0}(x)\,, \quad x \in [0,1]\,, \\
u(&0,t) = 0\,, \quad u(1,t)=0\,, \quad t \in [0,T]\,.
\end{aligned}
\tag{CB}
\label{cb}
\end{equation}
This ibvp (for $0 < t < \infty$) has been studied by Adamy, \cite{ada}, who showed
that it has weak (distributional)
solutions $(\eta, u) \in L^{\infty}(\mathbb{R}^{+};L^{1}\times H_{0}^{1})$ provided e.g. that
$\eta_{0} \in L^{1}$, $u_{0}\in H_{0}^{1}$ with $\inf_{x\in [0,1]}\eta_{0}(x) > -1$. (Here
$L^{1}=L^{1}(0,1)$, and $H_{0}^{1}=H_{0}^{1}(0,1)$ is the subspace of the Sobolev space
$H^{1}(0,1)$ whose elements vanish at $x=0$ and $x=1$.) The proof uses a parabolic
regularization of the first pde of $(\ref{cb})$, a technique used in the context of the Cauchy
problem by Schonbeck, \cite{sch}. It should be noted that the homogeneous Dirichlet boundary
conditions on $u$ in $(\ref{cb})$ are one kind of boundary conditions that lead to well posed ibvp's
in the case of the linearized system, \cite{fp}. It is noteworthy that the CB system needs
only two boundary conditions for well-posedness as opposed to the four boundary conditions
(for example, Dirichlet conditions on $\eta$ and $u$ at each endpoint of the interval) required
in the case of other Boussinesq systems, such as the BBM-BBM ($a=c=0$, $b=d$ in $(\ref{eq12})$,\cite{bc}),
or the Bona-Smith systems ($a=0$, $b=d>0$, $c<0$ in $(\ref{eq12})$, \cite{adm1}.)
The case of the homogeneous Dirichlet boundary conditions in
$(\ref{cb})$ may be viewed as  a base for studying the nonhomogeneous analog wherein $u(0,t)$
and $u(1,t)$ are given functions of $t\geq 0$ corresponding to measurements of the velocity
variable at two points along the channel. \par
In \cite{bcl}, Bona, Colin, and Lannes introduced another type of Boussinesq systems that they called
`completely symmetric'. These are obtained by a nonlinear change of variables from the usual systems
$(\ref{eq12})$, and have certain mathematical and modelling advantages over the latter. In this paper
we shall also therefore consider the analogous problem for the symmetric system, specifically the ibvp
\begin{equation}
\begin{aligned}
\qquad \eta_{t} & + u_{x} + \tfrac{1}{2}(\eta u)_{x} = 0\,, \qquad \qquad \qquad
\quad (x,t) \in [0,1]\times [0,T]\,, \\
u_{t} & + \eta_{x} + \tfrac{3}{2}uu_{x} + \tfrac{1}{2}\eta\eta_{x}
- \tfrac{1}{3}u_{xxt} = 0\,, \quad (x,t) \in [0,1]\times [0,T]\,, \\
\eta(& x,0) = \eta_{0}(x)\,, \quad u(x,0)=u_{0}(x)\,, \quad x \in [0,1]\,, \\
u(&0,t) = 0\,, \quad u(1,t)=0\,, \quad t \in [0,T]\,.
\end{aligned}
\tag{SCB}
\label{scb}
\end{equation}
It is not hard to see that the solution of $(\ref{scb})$ satisfies the conservation property
\begin{equation}
\|\eta(t)\|^{2} + \|u(t)\|^{2} + \tfrac{1}{3}\|u_{x}(t)\|^{2} =
\|\eta_{0}\|^{2} + \|u_{0}\|^{2}+ \tfrac{1}{3}\|u_{0}'\|^{2}\,,
\label{eq14}
\end{equation}
for $t\geq 0$, which simplifies the study of its well-posedness and the analysis of its numerical
approximations as will be seen in the sequel. \par
In this paper we shall analyze semidiscrete and fully discrete Galerkin-finite element methods
for the ibvp's $(\ref{cb})$ and $(\ref{scb})$ assuming that their solutions are sufficiently
smooth. Previously, rigorous error estimates for Galerkin
methods for these `classical' Boussinesq systems were proved in the case of the {\em{periodic}}
initial-value problem in \cite{ant} and \cite{adm3}. (For numerical work for this type and for other
Boussinesq systems of the form $(\ref{eq12})$ we refer the reader e.g. to \cite{ant}, \cite{ad1998},
\cite{ad2000}, \cite{p1996}, \cite{pd}, \cite{bc}, \cite{adm2}, \cite{bdm1}, \cite{bdm2},
\cite{ddlm}, \cite{ad}.) \par
The analysis of Galerkin methods for $(\ref{cb})$ or $(\ref{scb})$ is of considerable interest due to
the loss of optimal order of accuracy that emerges from the error estimates and is also supported
by the numerical experiments. We shall investigate this phenomenon in detail in the paper but one
may in general say that in the uniform spatial mesh case the (limited) loss of accuracy seems
to be due to a combination of effects related to the boundary conditions and the specific form of the
`classical' Boussinesq systems. (For example, no loss of accuracy occurs in the case of periodic
boundary conditions on $\eta$ and $u$ for these systems, cf. \cite{ant}, \cite{adm3}.) In the case
of general (quasiuniform) mesh the (more severe) loss of accuracy seems to stem from the lack of
cancellation effects that are present in the uniform mesh case, and from the hyperbolic character of
the first p.d.e. of these systems. (The loss of optimal order of accuracy in standard Galerkin
semidiscretizations of first-order hyperbolic equations manifested in various contexts is well
known and was early observed e.g. by Dupont in \cite{d}.) \par
In section $2$ we consider the standard Galerkin semidiscretizations of $(\ref{cb})$ and $(\ref{scb})$
in the space of piecewise linear, continuous functions. In the case of a quasiuniform mesh, we prove
that if the solution $(\eta,u)$ of $(\ref{cb})$ or $(\ref{scb})$ is sufficiently smooth and
$(\eta_{h}, u_{h})$ is the semidiscrete approximation, then $\|\eta-\eta_{h}\|=O(h)$ and
$\|u-u_{h}\|_{1}=O(h)$. (Here $\|\cdot\|$, $\|\cdot\|_{1}$ denote, respectively, the $L^{2}$ and
$H^{1}$ norms on $[0,1]$.) In the case of uniform mesh, a suitable superaccuracy result for the error
of the interpolant into the finite element subspace (a consequence of cancellations due to the uniform
mesh) affords proving the improved estimates $\|\eta-\eta_{h}\|=O(h^{3/2})$ and $\|u-u_{h}\|=O(h^{2})$.
These rates of convergence are confirmed by the numerical experiments at the end of the section. \par
In section $3$ we turn to the semidiscretization of $(\ref{cb})$ or $(\ref{scb})$ using as finite
element subspace the $C^{2}$ cubic splines. In the case of quasiuniform mesh we now get, as expected,
$\|\eta-\eta_{h}\|=O(h^{3})$, $\|u-u_{h}\|_{1}=O(h^{3})$, following the technique of the analogous proof
in the previous section. Use of a uniform mesh and of relevant superconvergence results of
Wahlbin, \cite{w}, allows proving a series of suitable superaccuracy estimates for the error of the
interpolant and the error of the elliptic projection into the cubic spline subspaces. These results
lead to error estimates such as $\|\eta-\eta_{h}\|=O(h^{3.5}\sqrt{\ln 1/h})$,
$\|u-u_{h}\|=O(h^{4}\sqrt{\ln 1/h})$, that are consistent with the rates of convergence of the
errors obtained from numerical experiments.\par
In section $4$ we turn to the analysis of fully discrete schemes. We consider only {\em{explicit}}
time-stepping schemes in order to avoid the more costly implicit methods that require solving
nonlinear systems of equations at every time step. Of course, with explicit methods there arises
the issue of stability of the fully discrete schemes. We confine ourselves to a uniform mesh
spatial discretization and consider three representative explicit Runge-Kutta schemes, namely,
the Euler, the improved Euler, and the classical, four-stage, Runge-Kutta methods, whose orders
of accuracy are $1$, $2$, and $4$, respectively. We couple the Euler and improved Euler schemes
with a piecewise linear spatial discretization and the fourth-order RK scheme with cubic splines.
We show that the stability restrictions on the time step $k$ needed by these schemes are of form
$k=O(h^{2})$, $k=O(h^{4/3})$, and $k\leq \lambda_{0}h$ for a constant $\lambda_{0}$ sufficiently small,
for the schemes with Euler, improved Euler, and fourth-order RK temporal discretizations, respectively.
Under these restrictions, we prove optimal-order in time error estimates for the fully discrete schemes;
the spatial rates of convergence are these of the semidiscrete approximations. The evidence of numerical
experiments is consistent with the theoretical results. \par
In section $5$ we analyze a nonstandard Galerkin semidiscrete approximation of $(\ref{cb})$ and
$(\ref{scb})$ that approximates $\eta$ by piecewise linear continuous functions and $u$ by piecewise
quadratic $C^{1}$ functions defined on the same (uniform) mesh. (The method may be generalized using
the analogous pairs of higher-order splines but here we confine ourselves to the low-order case.)
A series of superaccuracy results for the error of the $L^{2}$ projection onto the space of
piecewise linear continuous functions is developed, one of which makes use of a result of Demko,
\cite{de}, on the exponential decay of the off-diagonal elements of the inverse of a certain
tridiagonal matrix. With the aid of these results we show optimal-order error estimates e.g.
of the type $\|\eta-\eta_{h}\|=O(h^{2})$, $\|u-u_{h}\|=O(h^{3})$. Finally, in section $6$ we make
some remarks on the application of standard Galerkin methods on some simple first-order hyperbolic
problems. Specifically, superaccuracy tools that were developed in the case of uniform mesh in
previous sections are combined with further similar estimates proved in this section to provide
the basis for extending the optimal-order $L^{2}$ error estimate of Dupont, \cite{d}, for the periodic
initial value problem, to the case of two types of initial-boundary value problems for first-order
hyperbolics. The situation is contrasted with what happens in the case of nonuniform meshes by means
of numerical examples. \par
In the paper we use the following notation: We let $C^{k}=C^{k}[0,1]$, $k=0,1,2,\dots$, denote the
space of $k$ times continuously differentiable functions on $[0,1]$ and define
$C_{0}^{k}=\{\phi \in C^{k}; \phi(0)=\phi(1)=0\}$. We let for integer $k\geq 0$ $H^{k}$,
$\|\cdot\|_{k}$ denote the usual, $L^{2}$-based Sobolev space of classes of functions on $[0,1]$
and its associated norm. (In the case $k=1$ we use the equivalent norm defined by
$\|v\|_{1} = (\|v\|^{2} + \tfrac{1}{3}\|v'\|^{2})^{1/2}$.) The inner product and norm on
$L^{2}=L^{2}(0,1)$ we denote simply by $\|\cdot\|$, $(\cdot,\cdot)$, respectively. The norms on
$L^{\infty}=L^{\infty}(0,1)$ and on $W_{\infty}^{k}=W_{\infty}^{k}(0,1)$ we denote by
$\|\cdot\|_{\infty}$, $\|\cdot\|_{k,\infty}$, respectively. We let $\mathbb{P}_{r}$ be the
polynomials of degree $\leq r$,and by $\langle\cdot,\cdot\rangle$, $|\cdot|$, we denote the
Euclidean inner product and norm on $\mathbb{R}^{N}$.
\section{Standard Galerkin semidiscretization with piecewise linear, continuous functions}
\subsection{Semidiscretization on a quasiuniform mesh}
Let $0=x_{1}<x_{2}<\dots<x_{N+1}=1$ denote a quasiuniform partition of $[0,1]$ with
$h:=\max_{i}(x_{i+1}-x_{i})$, and let
$S_{h}^{2}:=\{\phi \in C^{0} : \phi\big|_{[x_{j},x_{j+1}]} \in \mathbb{P}_{1}\,, 1\leq j\leq N\}$,
$S_{h,0}^{2}=\{\phi \in S_{h}^{2}\,, \phi(0)=\phi(1)=0\}$.
Let $I_{h}$, $I_{h,0}$ denote the interpolation operators, with respect to the partition
$\{x_{j}\}$, into the spaces $S_{h}^{2}$, $S_{h,0}^{2}$ respectively. Then, it is well known that
there exists a constant $C$ independent of $h$ such that
\begin{equation}
\|w-I_{h}w\| + h\|(w - I_{h}w)'\| \leq Ch^{k}\|w^{(k)}\|\,,
\label{eq21}
\end{equation}
for $w \in H^{k}$, $k=1,2$, and that a similar estimate holds for $I_{h,0}w$ if
$w \in H^{k}\cap H_{0}^{1}$. (In the sequel, $C$ will denote a generic constant,
independent of $h$.) Let $a(\cdot,\cdot)$ denote the bilinear form
\[
a(\psi,\chi) := (\psi,\chi) + \tfrac{1}{3}(\psi',\chi') \quad \forall \psi,\chi \in S_{h,0}^{2}\,,
\]
and $R_{h} : H^{1} \to S_{h,0}^{2}$ be the elliptic projection operator relative to
$a(\cdot,\cdot)$, defined by
\[
a(R_{h}w,\chi) = a(w,\chi) \quad \forall \chi \in S_{h,0}^{2}\,.
\]
It follows by standard estimates that for $k=0,1$
\begin{equation}
\|R_{h}w - w\|_{k} \leq C h^{2-k}\|w''\| \quad \text{if}\quad w \in H^{2}\cap H_{0}^{1}\,.
\label{eq22}
\end{equation}
There also holds, \cite{ddw}, that
\begin{equation}
\|R_{h}w - w\|_{\infty} + h\|R_{h}w - w\|_{1,\infty} \leq Ch^{2} \|w\|_{2,\infty}\,,
\label{eq23}
\end{equation}
provided that $w \in W^{2,\infty}\cap H_{0}^{1}$. \par
As a consequence of the quasiuniformity of the mesh, the inverse inequalities
\begin{align}
\|\chi\|_{1} & \leq Ch^{-1} \|\chi\|\,,
\label{eq24} \\
\|\chi\|_{\infty} & \leq C h^{-1/2} \|\chi\|\,,
\label{eq25}
\end{align}
hold for any $\chi \in S_{h}^{2}$ (or $\chi \in S_{h,0}^{2}$), and so does the
estimate, \cite{ddw},
\begin{equation}
\|Pv - v\|_{\infty} \leq C h^{2} \|v\|_{2,\infty}\,, \  \quad \text{for}
\quad v \in W^{2,\infty}\,,
\label{eq26}
\end{equation}
where $P : L^{2} \to S_{h}^{2}$ is the $L^{2}$-projection operator onto $S_{h}^{2}$. \par
The {\em standard Galerkin semidiscretization} on $S_{h}^{2}$ of $(\ref{cb})$ is defined
as follows: We seek $\eta_{h} : [0,T] \to S_{h}^{2}$, $u_{h} : [0,T] \to S_{h,0}^{2}$,
such that for $t \in [0,T]$
\begin{equation}
\begin{aligned}
(\eta_{ht},\phi) + (u_{hx},\phi) + ((\eta_{h}u_{h})_{x},\phi) & = 0 \quad \forall \phi \in S_{h}^{2}\,,\\
a(u_{ht},\chi) + (\eta_{hx},\chi) + (u_{h}u_{hx},\chi) & = 0 \quad \forall \chi \in S_{h,0}^{2}\,,
\end{aligned}
\label{eq27}
\end{equation}
with initial conditions
\begin{equation}
\eta_{h}(0) = P\eta_{0}\,, \qquad u_{h}(0)=R_{h}u_{0}\,.
\label{eq28}
\end{equation}
Similarly, we define the analogous semidiscretization of $(\ref{scb})$, which is given
for $0\leq t\leq T$ by
\begin{equation}
\begin{aligned}
(\eta_{ht},\phi) + (u_{hx},\phi) + \tfrac{1}{2}((\eta_{h}u_{h})_{x},\phi) & = 0
\quad \forall \phi \in S_{h}^{2}\,,\\
a(u_{ht},\chi) + (\eta_{hx},\chi) + \tfrac{3}{2}(u_{h}u_{hx},\chi) + \tfrac{1}{2}(\eta_{h}\eta_{hx},\chi)& = 0
\quad \forall \chi \in S_{h,0}^{2}\,,
\end{aligned}
\label{eq29}
\end{equation}
with
\begin{equation}
\eta_{h}(0) = P\eta_{0}\,, \qquad u_{h}(0)=R_{h}u_{0}\,.
\label{eq210}
\end{equation}
Upon choice of a basis for $S_{h}^{2}$, it is seen that the semidiscrete problems
$(\ref{eq27})$-$(\ref{eq28})$ and $(\ref{eq29})$-$(\ref{eq210})$ represent initial-value problems
for systems of o.d.e's. Clearly, these systems have unique solutions at least locally in time.
One conclusion of the next proposition is that they possess unique solutions up to $t=T$,
where $[0,T]$ will denote henceforth the interval of existence of solutions of
$(\ref{cb})$ or $(\ref{scb})$.
\begin{proposition}
Let $h$ be sufficiently small. Suppose that the solutions of $(\ref{cb})$, and $(\ref{scb})$,
are such that $\eta \in C(0,T;W_{\infty}^{2})$, $u\in C(0,T;W_{\infty}^{2}\cap H_{0}^{1})$. Then,
the semidiscrete problems $(\ref{eq27})$, $(\ref{eq28})$ and $(\ref{eq29})$, $(\ref{eq210})$
have unique solutions $(\eta_{h},u_{h})$ for $0\leq t\leq T$ that satisfy
\begin{align}
\max_{0\leq t\leq T}\|\eta(t) - \eta_{h}(t)\| & \leq C h\,,
\label{eq211} \\
\max_{0\leq t\leq T}\|u(t) - u_{h}(t)\|_{1} & \leq C h\,.
\label{eq212}
\end{align}
\end{proposition}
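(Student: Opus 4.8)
The plan is to run the standard energy method based on two fixed projections chosen to match the initial data in $(\ref{eq28})$: the $L^{2}$-projection $P$ for the first (hyperbolic) variable $\eta$, and the elliptic projection $R_{h}$ associated with $a(\cdot,\cdot)$ for the velocity $u$. I would split the errors as $\eta-\eta_{h}=\rho+\theta$ with $\rho:=\eta-P\eta$ and $\theta:=P\eta-\eta_{h}\in S_{h}^{2}$, and $u-u_{h}=\sigma+\xi$ with $\sigma:=u-R_{h}u$ and $\xi:=R_{h}u-u_{h}\in S_{h,0}^{2}$. The projection parts are controlled immediately: $\|\rho\|=O(h^{2})$, $\|\sigma\|_{1}=O(h)$, $\|\sigma\|=O(h^{2})$ from $(\ref{eq22})$, together with the $L^{\infty}$ bounds $(\ref{eq23})$, $(\ref{eq26})$. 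Thus the whole problem reduces to estimating $\theta$ and $\xi$, and by the choice of initial data one has $\theta(0)=\xi(0)=0$.

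Next I would form the error equations by testing the exact equations against $\phi\in S_{h}^{2}$, $\chi\in S_{h,0}^{2}$ and subtracting $(\ref{eq27})$. The definition of $P$ kills $(\rho_{t},\phi)$ since $\rho_{t}\perp S_{h}^{2}$, and the definition of $R_{h}$ kills $a(\sigma_{t},\chi)$; here it is essential that these projections are time-independent, so the orthogonality commutes with $\partial_{t}$. Taking $\phi=\theta$, $\chi=\xi$ and adding, the left side becomes $\tfrac12\frac{d}{dt}(\|\theta\|^{2}+\|\xi\|_{1}^{2})$, using $a(\xi,\xi)=\|\xi\|_{1}^{2}$. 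The structural key is that the linear coupling terms $(\xi_{x},\theta)$ and $(\theta_{x},\xi)$ sum to $\int_{0}^{1}(\theta\xi)_{x}\,dx=[\theta\xi]_{0}^{1}=0$ because $\xi$ vanishes at the endpoints; this exact cancellation is what makes the argument close. The surviving linear terms $(\sigma_{x},\theta)=-(\sigma,\theta_{x})$ and $(\rho_{x},\xi)=-(\rho,\xi_{x})$ are consistency terms (boundary contributions vanish since $\sigma,\xi\in H_{0}^{1}$): the first is bounded by $Ch\,\|\theta\|$ using the inverse inequality $(\ref{eq25})$ with $\|\sigma\|=O(h^{2})$, and the second by $Ch^{2}\|\xi\|_{1}$. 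It is precisely this invocation of the inverse inequality on the hyperbolic component that caps the rate at $O(h)$ in the quasiuniform case.

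The main obstacle is the nonlinear terms. Expanding via $\eta u-\eta_{h}u_{h}=\eta(u-u_{h})+u_{h}(\eta-\eta_{h})$ and $uu_{x}-u_{h}u_{hx}=\tfrac12(u^{2}-u_{h}^{2})_{x}$, inserting the splittings, and integrating by parts, I would use that $u$, $u_{h}$, $\sigma$, $\xi$ all vanish at $x=0,1$ to discard every boundary term. The terms carrying a projection error ($\rho$ or $\sigma$) give $O(h)$ or $O(h^{2})$ contributions after one further integration by parts throws the derivative onto a smooth factor. The genuinely quadratic terms in $(\theta,\xi)$ are treated by the antisymmetry identity, e.g.\ $(u_{h}\theta,\theta_{x})=-\tfrac12(u_{hx},\theta^{2})$ with $u_{hx}=u_{x}-\sigma_{x}-\xi_{x}$, and by shifting derivatives onto $\xi$ (whose derivative is controlled by $\|\xi\|_{1}$). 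All of these can be bounded by $C(\|\theta\|^{2}+\|\xi\|_{1}^{2})$ provided one has uniform-in-$h$ bounds on $\|\eta_{h}\|_{\infty}$, $\|u_{h}\|_{\infty}$ and $\|u_{hx}\|_{\infty}$, so the crux is securing those bounds.

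To obtain them I would use a continuation (bootstrap) argument. On the maximal subinterval of $[0,T]$ where an a priori bound such as $\|\eta_{h}\|_{\infty}+\|u_{h}\|_{1,\infty}\le M$ holds (with $M$ fixed in terms of the exact solution), the estimates above yield $\frac{d}{dt}(\|\theta\|^{2}+\|\xi\|_{1}^{2})\le C(\|\theta\|^{2}+\|\xi\|_{1}^{2})+Ch^{2}$. Since $\theta(0)=\xi(0)=0$, Gronwall's lemma gives $\|\theta(t)\|+\|\xi(t)\|_{1}\le Ch$ on that subinterval. Feeding this back through the inverse inequalities $(\ref{eq24})$, $(\ref{eq25})$ (so that $\|\theta\|_{\infty}$ and $\|\xi\|_{1,\infty}$ are $O(h^{1/2})$) together with the $L^{\infty}$ estimates $(\ref{eq23})$, $(\ref{eq26})$ for $P\eta$ and $R_{h}u$ shows that, for $h$ sufficiently small, the bound $M$ is never attained; hence the subinterval is all of $[0,T]$, giving simultaneously existence of the o.d.e.\ solution up to $t=T$ and the bound $\|\theta\|+\|\xi\|_{1}\le Ch$. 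The triangle inequality with $(\ref{eq22})$ then yields $(\ref{eq211})$ and $(\ref{eq212})$. The system $(\ref{scb})$ is handled identically: the extra term $\tfrac12\eta\eta_{x}$ and the altered coefficients change only constants, while the coupling cancellation and consistency estimates are unchanged (and $(\ref{eq14})$ could be used to streamline the a priori control). The hard part throughout is the nonlinear analysis coupled to the bootstrap, and in particular the reliance on the inverse inequality tied to quasiuniformity, which both enables closure and limits the rate to $O(h)$.
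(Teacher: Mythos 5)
Your proposal is correct and follows essentially the same route as the paper: the same splitting $\rho=\eta-P\eta$, $\theta=P\eta-\eta_{h}$, $\sigma=u-R_{h}u$, $\xi=R_{h}u-u_{h}$, the same energy/Gronwall estimate for $\|\theta\|+\|\xi\|_{1}$, and the same continuation argument closing the $L^{\infty}$ bounds via the inverse inequalities (the paper bootstraps on the simpler quantity $\|\theta\|_{\infty}\leq 1$, and for $(\ref{scb})$ obtains global existence of the semidiscrete solution directly from the discrete invariant $(\ref{eq213})$). One minor remark: the exact cancellation $(\xi_{x},\theta)+(\theta_{x},\xi)=0$ is not actually needed at this order --- the paper bounds each cross term crudely by $\|\theta\|\,\|\xi\|_{1}$, which Gronwall absorbs, and the $O(h)$ barrier comes from $(\sigma_{x},\theta)$ being of size $\|\sigma\|_{1}\|\theta\|=O(h)\|\theta\|$ (equivalently your inverse-inequality route, for which the relevant estimate is $(\ref{eq24})$ rather than $(\ref{eq25})$), exactly as you identify.
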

\begin{proof}
We first consider the case of the symmetric system $(\ref{scb})$. Putting
$\phi = \eta_{h}$ and $\chi = u_{h}$ in $(\ref{eq29})$ and adding the resulting equations,
we obtain the discrete analog of $(\ref{eq14})$, i.e. that
\begin{equation}
\|\eta_{h}(t)\|^{2} + \|u_{h}(t)\|_{1}^{2} = \|\eta_{h}(0)\|^{2} + \|u_{h}(0)\|_{1}^{2}
\label{eq213}
\end{equation}
is valid in the temporal interval of existence of solutions of $(\ref{eq29})$-$(\ref{eq210})$.
By standard o.d.e. theory we conclude that the system $(\ref{eq29})$-$(\ref{eq210})$ possesses
a unique solution in any finite temporal interval $[0,T]$. \par
We now let $\rho := \eta - P\eta$, $\theta := P\eta - \eta_{h}$, $\sigma := u-R_{h}u$,
$\xi := R_{h}u - u_{h}$. Using $(\ref{scb})$ and $(\ref{eq29})$-$(\ref{eq210})$ we obtain
for $0\leq t\leq T$,
\small
\begin{align}
(\theta_{t},\phi) + (\sigma_{x} + \xi_{x},\phi) +
\tfrac{1}{2}((\eta u - \eta_{h}u_{h}),\phi) & = 0 \quad \forall \phi \in S_{h}^{2}\,,
\label{eq214} \\
a(\xi_{t},\chi) + (\rho_{x}+\theta_{x},\chi) + \tfrac{3}{2}(uu_{x}-u_{h}u_{hx},\chi)
+\tfrac{1}{2}(\eta\eta_{x}-\eta_{h}\eta_{hx},\chi) & = 0 \quad \forall \chi \in S_{h,0}^{2}\,.
\label{eq215}
\end{align}
\normalsize
Note that
\begin{align*}
\eta u - \eta_{h}u_{h} & = u(\rho + \theta) - (\rho + \theta)(\sigma + \xi) + \eta (\sigma + \xi)\,, \\
uu_{x} - u_{h}u_{hx} & = (u\sigma)_{x} + (u\xi)_{x} - (\sigma \xi)_{x} - \sigma\sigma_{x} - \xi\xi_{x}\,,\\
\eta\eta_{x} - \eta_{h}\eta_{hx} & = (\eta\rho)_{x} + (\eta\theta)_{x} - (\rho\theta)_{x} - \rho\rho_{x}
-\theta\theta_{x}\,.
\end{align*}
By continuity, in view of $(\ref{eq210})$, we conclude that there exists a maximal temporal instance
$t_{h} > 0$ such that $\|\theta(t)\|_{\infty} \leq 1$ for $t\leq t_{h}$\,. Suppose that
$t_{h} < T$. Then, taking $\phi=\theta$ in $(\ref{eq214})$ and using $(\ref{eq21})$, $(\ref{eq22})$,
$(\ref{eq23})$, $(\ref{eq24})$, $(\ref{eq26})$, and integrating by parts, we have for $0\leq t\leq t_{h}$
\begin{equation}
\begin{aligned}
\tfrac{1}{2}\tfrac{d}{dt}\|\theta\|^{2} & = - (\sigma_{x},\theta) - (\xi_{x},\theta)
-\tfrac{1}{2}\bigl[ ((\rho u)_{x},\theta) + \tfrac{1}{2}(u_{x}\theta,\theta)
- ((\rho\sigma)_{x},\theta) \\
&\hspace{10.8pt} - ((\rho\xi)_{x},\theta) -\tfrac{1}{2}(\sigma_{x}\theta,\theta)
- \tfrac{1}{2}(\xi_{x}\theta,\theta) + ((\eta\sigma)_{x},\theta) + ((\eta\xi)_{x},\theta\bigr]\\
& \leq \|\sigma_{x}\|\|\theta\| + \|\xi_{x}\|\|\theta\| + \tfrac{1}{2}\|u\|_{\infty}\|\rho_{x}\|\|\theta\|
+ \tfrac{1}{2}\|u_{x}\|_{\infty}\|\rho\|\|\theta\| \\
&\hspace{10.8pt} + \tfrac{1}{4}\|u_{x}\|_{\infty}\|\theta\|^{2} + \tfrac{1}{2}\|\rho_{x}\|\|\sigma\|_{\infty}\|\theta\|
+ \tfrac{1}{2}\|\rho\|\|\sigma_{x}\|_{\infty}\|\theta\|
+ \tfrac{1}{2}\|\rho\|_{\infty}\|\xi_{x}\|\|\theta\| \\
&\hspace{10.8pt} + C\|\rho_{x}\|\|\xi\|_{1}\|\theta\| + \tfrac{1}{4}\|\sigma_{x}\|_{\infty}\|\theta\|^{2}
+ \tfrac{1}{4}\|\xi_{x}\|\|\theta\| + \tfrac{1}{2}\|\eta\|_{\infty}\|\sigma_{x}\|\|\theta\| \\
&\hspace{10.8pt} + \tfrac{1}{2}\|\eta_{x}\|_{\infty}\|\sigma\|\|\theta\|
+ \tfrac{1}{2}\|\eta\|_{\infty}\|\xi_{x}\|\|\theta\| + \tfrac{1}{2}\|\eta_{x}\|_{\infty}\|\xi\|\|\theta\|\\
& \leq C (h + \|\xi\|_{1} + \|\theta\|)\|\theta\|\,,
\end{aligned}
\label{eq216}
\end{equation}
where $C$ is independent of $t_{h}$. In addition, putting $\chi=\xi$ in $(\ref{eq215})$ we similarly
obtain, for $0\leq t\leq t_{h}$
\begin{equation}
\begin{aligned}
\tfrac{1}{2}\tfrac{d}{dt}\|\xi\|^{2}_{1} & = (\rho + \theta,\xi_{x})
+ \tfrac{3}{2}\bigl[(u\sigma,\xi_{x}) + (u\xi,\xi_{x}) - (\sigma\xi,\xi_{x}) + (\sigma\sigma_{x},\xi)\bigr] \\
&\hspace{10.8pt} + \tfrac{1}{2}\bigl[ (\eta\rho,\xi_{x}) + (\eta\theta,\xi_{x})
-(\rho\theta,\xi_{x}) - \tfrac{1}{2}(\rho\xi_{x},\rho) - (\theta\xi_{x},\theta)\bigr]\\
&\leq \|\rho\|\|\xi_{x}\| + \|\theta\|\|\xi_{x}\| + \tfrac{3}{2}\|u\|_{\infty}\|\sigma\|\|\xi_{x}\|
+\tfrac{3}{2}\|u\|_{\infty}\|\xi\|\|\xi_{x}\| \\
&\hspace{10.8pt} + \tfrac{3}{2}\|\sigma\|_{\infty}\|\xi\|\|\xi_{x}\|
+ \tfrac{3}{2}\|\sigma\|_{\infty}\|\sigma_{x}\|\|\xi\| + \tfrac{1}{2}\|\eta\|_{\infty}\|\rho\|\|\xi_{x}\|\\
&\hspace{10.8pt} + \tfrac{1}{2}\|\eta\|_{\infty}\|\theta\|\|\xi_{x}\|
+ \tfrac{1}{2}\|\rho\|\|\xi_{x}\| + C\|\rho\|_{1}\|\rho\|\|\xi\|_{1} + \tfrac{1}{2}\|\xi_{x}\|\|\theta\| \\
&\leq C (h^{2} + \|\xi\|_{1} + \|\theta\|)\|\xi\|_{1}\,.
\end{aligned}
\label{eq217}
\end{equation}
From $(\ref{eq216})$ and $(\ref{eq217})$ it is seen that for $0\leq t\leq t_{h}$ there holds
\[
\tfrac{d}{dt}(\|\theta\| + \|\xi\|_{1}) \leq C (h + \|\theta\| + \|\xi\|_{1})\,,
\]
from which, by Gronwall's Lemma and $(\ref{eq210})$, we conclude that
\begin{equation}
\|\theta(t)\| + \|\xi(t)\|_{1} \leq C h\,, \quad 0\leq t\leq t_{h}\,,
\label{eq218}
\end{equation}
where $C$ is independent of $t_{h}$. Since by $(\ref{eq25})$
$\|\theta\|_{\infty}\leq Ch^{-1/2}\|\theta\|$, if $h$ is sufficiently small the maximality
property of $t_{h}$ is contradicted. Therefore  we may take $t_{h}=T$, and $(\ref{eq211})$ and
$(\ref{eq212})$ follow from $(\ref{eq218})$, $(\ref{eq21})$ and $(\ref{eq22})$. \par
Consider now $(\ref{cb})$ and its semidiscrete analog $(\ref{eq27})$-$(\ref{eq28})$. The invariance property
$(\ref{eq213})$ no longer holds, and the i.v.p. $(\ref{eq27})$-$(\ref{eq28})$ has a local
unique solution. Using the same notation as in the case of $(\ref{scb})$,
we consider the i.v.p. of finding $\theta(t) \in S_{h}^{2}$, $\xi(t) \in S_{h,0}^{2}$ for $t\geq 0$,
such that
\begin{equation}
\begin{aligned}
(\theta_{t},\phi) & + (\sigma_{x}+\xi_{x},\phi) +
\bigl( [u(\rho + \theta) - (\rho+\theta)(\sigma+\xi)+\eta(\sigma+\xi)],\phi\bigr) = 0
\quad \forall \phi \in S_{h}^{2}\,,\\
a(\xi_{t},\chi) & + (\rho_{x}+\theta_{x},\chi)
- \bigl( u(\sigma+\xi)-\sigma\xi,\chi'\bigr) - (\sigma\sigma_{x}+\xi\xi_{x},\chi) = 0
\qquad \forall \chi \in S_{h,0}^{2}\,,\\
\theta(0) & = 0\,, \quad \xi(0) = 0\,.
\end{aligned}
\label{eq219}
\end{equation}
Obviously, $(\ref{eq219})$ has a local unique solution. Let $t_{h} \in (0,T)$ be the maximal time instance
for which this solution exists and satisfies $\|\theta(t)\|_{\infty}\leq 1$ for $0\leq t\leq t_{h}$. Then,
as in the case of $(\ref{scb})$, we obtain again that
\[
\|\theta(t)\| + \|\xi(t)\|_{1} \leq C h\,, \quad 0\leq t\leq t_{h}\,,
\]
where $C$ is independent of $t_{h}$. We conclude that we may take $t_{h}=T$. If $\eta_{h}=P\eta-\theta$,
$u_{h} = R_{h}u - \xi$, it follows that $(\eta_{h},u_{h})$ is a unique solution of $(\ref{eq27})$-$(\ref{eq28})$
in $[0,T]$ and that it satisfies the estimates $(\ref{eq211})$ and $(\ref{eq212})$.
\end{proof}
\subsection{Uniform mesh}
We now turn to the case of {\em uniform mesh}. For integer $N \geq 2$ we let $h=1/N$ and $x_{i}=(i-1)h$,
$i=1,2,\dots,N+1$. (We shall use the previously established notation for the finite-dimensional spaces
$S_{h}^{2}$, $S_{h,0}^{2}$, and their associated interpolation and projection operators.)
We let $\{\phi_{j}\}_{j=1}^{N+1}$ denote the basis
of $S_{h}^{2}$ satisfying $\phi_{j}(x_{i}) = \delta_{ij}$, $1\leq i,j\leq N+1$. In the following lemma
we collect results that will prove useful in the error estimates that follow.
\begin{lemma}
(i) Let $G_{ij} = (\phi_{j},\phi_{i})$, $1\leq i,j\leq N+1$. Then there exist positive constants
$c_{1}$ and $c_{2}$ such that
\[
c_{1}h|\gamma|^{2} \leq \langle G\gamma,\gamma\rangle
 \leq c_{2}h|\gamma|^{2}\quad \forall \gamma \in \mathbb{R}^{N+1}\,.
\]
(ii) Let $b \in \mathbb{R}^{N+1}$, $\gamma = G^{-1}b$, and $\psi=\sum_{j=1}^{N+1}\gamma_{j}\phi_{j}$.
Then
\[
\|\psi\| \leq (c_{1}h)^{-1/2}|b|\,.
\]
(iii) Let $w \in C^{3}$. Then, there exists a constant $C_{1}=C_{1}(\|w^{(3)}\|_{\infty})$ such that
for any $\hat{x} \in [x_{i},x_{i+1}]$
\[
(w-I_{h}w)(x) = -\tfrac{1}{2}w''(\hat{x})(x-x_{i})(x_{i+1}-x) + \tilde{g}(x)\,, \quad
x_{i}\leq x\leq x_{i+1}\,,
\]
where $\|\tilde{g}\|_{\infty} + h\|\tilde{g}'\|_{\infty} \leq C_{1}h^{3}$.
\end{lemma}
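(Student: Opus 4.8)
The plan is to establish the three parts in the order stated, since (ii) depends on (i) while (iii) is self-contained.

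For part (i) the key observation is that $\langle G\gamma,\gamma\rangle = \|\psi\|^2$ with $\psi=\sum_j\gamma_j\phi_j$, so the assertion is exactly the norm equivalence $c_1 h|\gamma|^2 \le \|\psi\|^2 \le c_2 h|\gamma|^2$. I would compute $\|\psi\|^2$ element by element: on $[x_i,x_{i+1}]$ the function $\psi$ is the linear interpolant of $\gamma_i,\gamma_{i+1}$, and an elementary integration gives $\int_{x_i}^{x_{i+1}}\psi^2\,dx = \frac{h}{3}(\gamma_i^2+\gamma_i\gamma_{i+1}+\gamma_{i+1}^2)$. Summing over $i$ and invoking the scalar inequality $\frac12(a^2+b^2)\le a^2+ab+b^2 \le \frac32(a^2+b^2)$ (the associated $2\times2$ form has eigenvalues $1/2$ and $3/2$) reduces matters to comparing $\sum_{i=1}^N(\gamma_i^2+\gamma_{i+1}^2)$ with $|\gamma|^2$; since each $\gamma_i^2$ is counted once or twice, this sum lies between $|\gamma|^2$ and $2|\gamma|^2$, yielding admissible constants such as $c_1=1/6$, $c_2=1$.

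For part (ii) I would write $\|\psi\|^2=\langle G\gamma,\gamma\rangle=\langle b,\gamma\rangle$, using $G\gamma=b$. The lower bound of part (i) gives $|\gamma|\le (c_1h)^{-1/2}\|\psi\|$, and Cauchy--Schwarz yields $\|\psi\|^2=\langle b,\gamma\rangle \le |b|\,|\gamma| \le (c_1h)^{-1/2}|b|\,\|\psi\|$; dividing by $\|\psi\|$ (the case $\psi=0$ being trivial) gives the claim.

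For part (iii) I would begin from the standard pointwise interpolation identity $(w-I_hw)(x)=-\frac12 w''(\zeta_x)(x-x_i)(x_{i+1}-x)$ with $\zeta_x\in(x_i,x_{i+1})$, and \emph{define} $\tilde g$ as the stated remainder, namely $\tilde g(x):=(w-I_hw)(x)+\frac12 w''(\hat x)(x-x_i)(x_{i+1}-x)$. For the sup-norm bound the identity gives $\tilde g(x)=\frac12[w''(\hat x)-w''(\zeta_x)](x-x_i)(x_{i+1}-x)$, and since $w\in C^3$ one has $|w''(\hat x)-w''(\zeta_x)|\le \|w^{(3)}\|_\infty h$ while $(x-x_i)(x_{i+1}-x)\le h^2/4$, so $\|\tilde g\|_\infty \le \frac18\|w^{(3)}\|_\infty h^3$.

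The main obstacle is the derivative estimate $h\|\tilde g'\|_\infty\le Ch^3$, since $\zeta_x$ depends on $x$ in a generally non-differentiable way and the $\zeta_x$-identity cannot be differentiated. Instead I would differentiate the explicit expression for $\tilde g$: as $I_hw$ is linear on the element, $(I_hw)'=\frac{w(x_{i+1})-w(x_i)}{h}$, and with $\bar x=\frac12(x_i+x_{i+1})$ one gets $\tilde g'(x)=w'(x)-\frac{w(x_{i+1})-w(x_i)}{h}+w''(\hat x)(\bar x-x)$. Expanding $w(x_{i+1})$ and $w(x_i)$ by Taylor's theorem about $x$ with third-order remainder shows $\frac{w(x_{i+1})-w(x_i)}{h}=w'(x)+w''(x)(\bar x-x)+R$ with $|R|\le \frac13\|w^{(3)}\|_\infty h^2$, whence $\tilde g'(x)=[w''(\hat x)-w''(x)](\bar x-x)-R$. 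Bounding $|w''(\hat x)-w''(x)|\le \|w^{(3)}\|_\infty h$ and $|\bar x-x|\le h/2$ then gives $|\tilde g'(x)|\le \frac56\|w^{(3)}\|_\infty h^2$, so that $\|\tilde g\|_\infty+h\|\tilde g'\|_\infty\le C_1 h^3$ with $C_1$ depending only on $\|w^{(3)}\|_\infty$.
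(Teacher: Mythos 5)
Your proof is correct. For this lemma the paper gives no argument of its own: it simply refers to Dupont's paper \cite{d}, where these facts are proved for periodic piecewise linear functions, and asserts the adaptation to $S_h^2$ is straightforward. Your write-up supplies exactly the standard details: the identity $\langle G\gamma,\gamma\rangle=\|\psi\|^2$ together with the elementwise computation $\int_{I_i}\psi^2=\tfrac{h}{3}(\gamma_i^2+\gamma_i\gamma_{i+1}+\gamma_{i+1}^2)$ and the eigenvalue bounds $1/2,3/2$ give (i) with explicit constants; (ii) is the usual duality step $\|\psi\|^2=\langle b,\gamma\rangle\le|b|\,|\gamma|$ combined with the lower bound of (i); and in (iii) you correctly identify, and correctly handle, the one genuinely delicate point — that the mean-value representation $-\tfrac12 w''(\zeta_x)(x-x_i)(x_{i+1}-x)$ cannot be differentiated in $x$ because $\zeta_x$ is not differentiable, so the bound on $\tilde g'$ must come from differentiating the explicit formula for $\tilde g$ and Taylor-expanding $\tfrac{w(x_{i+1})-w(x_i)}{h}$ about $x$. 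The numerical constants you obtain ($c_1=1/6$, $c_2=1$, $C_1\le\tfrac{1}{8}+\tfrac{5}{6}$ times $\|w^{(3)}\|_\infty$) are all valid. Nothing is missing.
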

\begin{proof} The proofs of $(i)$, $(ii)$, $(iii)$ are given in \cite{d} in case the finite element subspace
consists of continuous, piecewise linear, periodic functions on $[0,1]$. It is straightforward to adapt
them in the case of $S_{h}^{2}$ at hand.
\end{proof}
The following superapproximation result, a consequence of cancellations due to the uniform mesh, is central
for the sequel.
\begin{lemma} Let $v \in C^{3}$ and $w \in C^{1}$. If $\ve:=v - I_{h}v$ and $\psi \in S_{h}^{2}$
is such that
\[
(\psi,\phi) = ((w\ve)',\phi) \quad \forall \phi \in S_{h}^{2}\,,
\]
then $\|\psi\|\leq C h^{3/2}$. If in addition $w(0)=w(1)=0$, then $\|\psi\|\leq C h^{2}$.
\end{lemma}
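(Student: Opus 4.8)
The plan is to read the defining relation as saying that $\psi$ is the $L^2$-projection onto $S_h^2$ of $(w\ve)'$, and then to extract the sharp order of convergence from the pointwise structure of the interpolation error $\ve$ supplied by part (iii) of the preceding Lemma, exactly in the spirit of Dupont \cite{d}. First I would take $\phi=\psi$ in the defining relation, so that $\|\psi\|^2=((w\ve)',\psi)$, and integrate by parts. Since $w\ve$ is continuous and piecewise $C^1$, this is legitimate globally, and because $\ve=v-I_hv$ vanishes at every node, in particular at $x=0$ and $x=1$, the boundary contribution $[w\ve\psi]_0^1$ drops out \emph{for either hypothesis on $w$}, leaving
\[
\|\psi\|^2 = -(w\ve,\psi') = -\sum_{i=1}^N \psi_i'\int_{x_i}^{x_{i+1}} w\ve\,dx,
\]
where $\psi_i'$ is the constant value of the piecewise-constant function $\psi'$ on $[x_i,x_{i+1}]$.

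Next I would insert the representation $\ve(x)=-\tfrac12 v''(\bar x_i)(x-x_i)(x_{i+1}-x)+\tilde g(x)$ from part (iii), choosing $\hat x$ to be the midpoint $\bar x_i$. Using $\int_{x_i}^{x_{i+1}}(x-x_i)(x_{i+1}-x)\,dx=h^3/6$, replacing $w$ by the constant $w(\bar x_i)$ (which costs only $O(h^4)$ per interval since $w$ is Lipschitz), and the bound $\|\tilde g\|_\infty\le Ch^3$, one gets $\int_{x_i}^{x_{i+1}}w\ve\,dx=-\tfrac{h^3}{12}v''(\bar x_i)w(\bar x_i)+R_i$ with $|R_i|\le Ch^4$. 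The contribution of the remainders to $\|\psi\|^2$ is handled by Cauchy--Schwarz in $\ell^2$ together with the inverse inequality $(\ref{eq24})$ (which yields $\|\psi'\|\le Ch^{-1}\|\psi\|$), and turns out to be of size $Ch^2\|\psi\|$, hence subdominant.

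The crux is to estimate the principal sum $S:=\tfrac{h^3}{12}\sum_{i=1}^N\psi_i'\,g_i$ with $g_i:=v''(\bar x_i)w(\bar x_i)$. A crude bound by $\sum_i|\psi_i'|$ recovers only the non-sharp $O(h)$, so the whole point is cancellation via summation by parts. Writing $\psi_i'=(\psi(x_{i+1})-\psi(x_i))/h$ and summing by parts transfers a forward difference onto the \emph{smooth} sequence $g_i$; since $v\in C^3$ and $w\in C^1$ give $v''w\in C^1$, we have $|g_i-g_{i-1}|\le Ch$, so the interior sum $\sum_{i=2}^N\psi(x_i)(g_{i-1}-g_i)$ is bounded by $Ch\sum_i|\psi(x_i)|\le C\|\psi\|$, where the last step uses the mass-matrix equivalence (part (i)) to pass between nodal values and $\|\psi\|$. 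The leftover boundary terms $\psi(x_{N+1})g_N-\psi(x_1)g_1$ are estimated by the $L^\infty$ inverse inequality $(\ref{eq25})$, namely $\|\psi\|_\infty\le Ch^{-1/2}\|\psi\|$, and contribute $Ch^{-1/2}\|\psi\|$. Collecting these, $|S|\le Ch^{3/2}\|\psi\|$, and combined with the remainder estimate this gives $\|\psi\|^2\le Ch^{3/2}\|\psi\|$, i.e. $\|\psi\|\le Ch^{3/2}$.

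Finally, for the improved bound I would note that the sole obstruction to $O(h^2)$ is the boundary term. If $w(0)=w(1)=0$, then $|w(\bar x_1)|=|w(h/2)-w(0)|\le\tfrac12\|w'\|_\infty h$ and likewise $|w(\bar x_N)|\le Ch$, so $g_1$ and $g_N$ are $O(h)$ rather than $O(1)$; this recovers the missing factor of $h$ in the boundary terms, after which the interior sum is dominant and $|S|\le Ch^2\|\psi\|$, whence $\|\psi\|\le Ch^2$. I expect the main obstacle to be the bookkeeping in the summation by parts: one must check that pushing the difference onto $g_i$ genuinely produces the gain of one power of $h$ (so the interior contribution is $O(\|\psi\|)$ and not $O(h^{-1}\|\psi\|)$), and that the boundary terms are the true bottleneck, since it is precisely their order that separates the $h^{3/2}$ and $h^2$ regimes.
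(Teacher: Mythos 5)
Your proof is correct and is essentially the paper's argument in a different packaging: the paper estimates the load vector $b_i=-(w\ve,\phi_i')$ componentwise (each interior $b_i$ is, up to $O(h^3)$, a multiple of $h^2\bigl(v''w(\bar x_{i-1})-v''w(\bar x_i)\bigr)$, with the endpoint components $b_1,b_{N+1}$ the bottleneck) and then invokes Lemma $2.1(ii)$, whereas you test with $\psi$ and sum by parts, which is exactly Lemma $2.1(ii)$ unrolled together with the same adjacent-interval cancellation from the uniform mesh. Both versions identify the boundary contributions as the sole obstruction separating $O(h^{3/2})$ from $O(h^2)$, so no further comment is needed.
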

\begin{proof}
Let $b_{i}:=((w\ve)',\phi_{i}) = -(w\ve,\phi_{i}')$, $1\leq i\leq N+1$. Clearly,
$|b_{1}|=O(h^{2})$ and $|b_{N+1}|=O(h^{2})$. Let $2\leq i\leq N$. Then, from Lemma $2.1(iii)$ we have
\begin{align*}
b_{i} & = \tfrac{v''(x_{i})}{2h}\int_{x_{i-1}}^{x_{i}}w(x)(x-x_{i-1})(x_{i}-x)dx
-\tfrac{v''(x_{i})}{2h}\int_{x_{i}}^{x_{i+1}}w(x)(x-x_{i})(x_{i+1}-x)dx + O(h^{3}) \\
& = \tfrac{-v''(x_{i})}{2h}\int_{x_{i}}^{x_{i+1}}[w(x)-w(x-h)](x-x_{i})(x_{i+1}-x)dx + O(h^{3})\\
& = \tfrac{-v''(x_{i})}{2}\int_{x_{i}}^{x_{i+1}}w'(\xi_{x})(x-x_{i})(x_{i+1}-x)dx + O(h^{3})\,.
\end{align*}
Therefore, $b_{i}=O(h^{3})$, $2\leq i\leq N$; consequently $|b|=O(h^{2})$ and $\|\psi\|=O(h^{3/2})$
by Lemma $2.1(ii)$. If $w(0)=0$, Lemma $2.1(iii)$ gives
\begin{align*}
b_{1} = -(w\ve,\phi_{1}') & = \tfrac{-v''(0)}{2h}\int_{0}^{h}w(x)x(h-x)dx + O(h^{3}) \\
& = \tfrac{-v''(0)}{2}\int_{0}^{h}w'(\xi_{x})x^{2}(h-x)dx + O(h^{3}) = O(h^{3})\,.
\end{align*}
Similarly, if $w(1)=0$, then $b_{N+1}=O(h^{3})$. Hence, $|b|=O(h^{5/2})$, giving $\|\psi\|=O(h^{2})$
by Lemma $2.1(ii)$.
\end{proof}
In the uniform mesh case at hand we consider again the semidiscretizations $(\ref{eq27})$ and
$(\ref{eq29})$ of $(\ref{cb})$ and $(\ref{scb})$ respectively, with the initial conditions for
both systems now given by the interpolants of $\eta_{0}$, $u_{0}$:
\begin{equation}
\eta_{h}(0) = I_{h}\eta_{0}\,, \quad u_{h}(0) = I_{h,0}u_{0}\,.
\label{eq220}
\end{equation}
The main result of this paragraph is
\begin{theorem} Let $h=1/N$ be sufficiently small. Suppose that the solutions of
$(\ref{cb})$ and $(\ref{scb})$ are such that $\eta \in C(0,T;C^{3})$, $\eta_{t} \in C(0,T;C^{2})$,
$u$, $u_{t} \in C(0,T;C_{0}^{3})$. Then, the semidiscrete problems $(\ref{eq27})$, $(\ref{eq220})$
and $(\ref{eq29})$, $(\ref{eq220})$ have unique solutions $(\eta_{h},u_{h})$ for $0\leq t\leq T$
that satisfy
\begin{align*}
(i) \quad \max_{0\leq t\leq T}\|\eta(t)-\eta_{h}(t)\| & \leq Ch^{3/2}\,, \quad
\max_{0\leq t\leq T}\|u(t)-u_{h}(t)\|_{1}\leq Ch\,, \\
(ii) \quad \max_{0\leq t\leq T}\|u(t)-u_{h}(t)\| & \leq Ch^{2}\,, \quad \,\,\,
\max_{0\leq t\leq T}\|u_{t}(t)-u_{ht}(t)\|\leq Ch^{2}\,.
\end{align*}
\end{theorem}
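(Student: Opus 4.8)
The plan is to imitate the structure of Proposition 2.1, but to split the error with the \emph{interpolants} (so that the superapproximation Lemma 2.2 becomes applicable and the initial conditions $(\ref{eq220})$ give a clean start). I would set $\rho:=\eta-I_h\eta$, $\theta:=I_h\eta-\eta_h$, $\sigma:=u-I_{h,0}u$, $\xi:=I_{h,0}u-u_h$; since $u\in C_0^3$ one has $I_{h,0}u=I_hu$, so Lemma 2.2 applies to $\sigma$, and $(\ref{eq220})$ yields $\theta(0)=\xi(0)=0$. Existence on $[0,T]$ is obtained exactly as before (from the conservation identity $(\ref{eq213})$ for $(\ref{scb})$, and from local solvability of $(\ref{eq219})$ plus the a priori bound below for $(\ref{cb})$), carrying the maximal-time device $\|\theta(t)\|_\infty\le1$ of Proposition 2.1; once part (i) is proved, the inverse inequality $(\ref{eq25})$ gives $\|\theta\|_\infty\le Ch^{-1/2}\|\theta\|\le Ch$, which contradicts maximality for small $h$ and forces $t_h=T$.

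For part (i) I would test the $\theta$-equation with $\phi=\theta$ and the $\xi$-equation with $\chi=\xi$ and add. Four points are decisive. (a) The linear coupling cancels, since $(\xi_x,\theta)+(\theta_x,\xi)=[\theta\xi]_0^1=0$ because $\xi$ vanishes at the endpoints. (b) The term $(\sigma_x,\theta)$ is exactly of the form in Lemma 2.2 with weight $w\equiv1$; as $1$ does not vanish at the boundary it yields only $\|P\sigma_x\|=O(h^{3/2})$, hence $(\sigma_x,\theta)\le Ch^{3/2}\|\theta\|$ — this is the sole origin of the suboptimal exponent. (c) The time-derivative term $a(\sigma_t,\xi)$ collapses to $(\sigma_t,\xi)=O(h^2)\|\xi\|$, because its $H^1$ part $\tfrac13(\sigma_{tx},\xi_x)$ vanishes identically: $\sigma_t=u_t-I_hu_t$ is zero at every node while $\xi_x$ is constant on each subinterval. (d) Every nonlinear product in which an interpolation error is differentiated can, after integrating by parts, be written as $((w\sigma)',\cdot)$ or $((w\rho)',\cdot)$, so Lemma 2.2 gives $O(h^2)$ when the weight is $u$ (which vanishes at the endpoints) and $O(h^{3/2})$ otherwise; the rest is absorbed via $(\ref{eq21})$, $(\ref{eq22})$ into $C(\|\theta\|+\|\xi\|_1)(\cdots)$. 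This produces
\[
\tfrac{d}{dt}\bigl(\|\theta\|+\|\xi\|_1\bigr)\le C\bigl(h^{3/2}+\|\theta\|+\|\xi\|_1\bigr),
\]
and Gronwall with $\theta(0)=\xi(0)=0$ gives $\|\theta\|+\|\xi\|_1\le Ch^{3/2}$. The triangle inequality then yields $\|\eta-\eta_h\|\le\|\rho\|+\|\theta\|=O(h^{3/2})$ and $\|u-u_h\|_1\le\|\sigma\|_1+\|\xi\|_1=O(h)$, the $O(h)$ coming entirely from the interpolation error $\sigma$.

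Part (ii) cannot be read off from this energy estimate: the skew coupling ties $\|\xi\|_1$ to $\|\theta\|$, so that $\|\xi\|_1=O(h^{3/2})$ is essentially sharp, and no test against $\xi$ or $\xi_t$ removes the contribution $(\theta,\xi_x)$, which is genuinely $O(h^{3/2})$. The optimal bound $\|u-u_h\|=O(h^2)$ must therefore be reached by a route that does not pass through $\|\xi\|_1$, namely a duality (Nitsche-type) argument. For fixed $t$ and arbitrary $g\in L^2$ I would let $w\in H^2\cap H_0^1$ solve the continuous problem $a(w,v)=(g,v)$ for all $v\in H_0^1$, so $\|w\|_2\le C\|g\|$; using $\xi\in S_{h,0}^2$ and the defining property of $R_h$, one has $(\xi(t),g)=a(\xi(t),R_hw)$, and since $\xi(0)=0$ this equals $\int_0^t a(\xi_s,R_hw)\,ds$, which the error equation turns into a time integral of the source terms tested against $R_hw$. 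Writing $R_hw=w-(w-R_hw)$ and integrating by parts so that all spatial derivatives fall on the smooth factor $w$, the interpolation and projection errors contribute $O(h^2)\|g\|$ by $(\ref{eq21})$ and $(\ref{eq22})$ (the boundary-polluted term becomes $(\sigma_x,w')=-(\sigma,w'')=O(h^2)$, the extra half power being recovered precisely because $w$ is smooth, and again the $H^1$ part $(\sigma_{sx},(R_hw)_x)$ vanishes identically). Taking the supremum over $g$ gives $\|\xi\|=O(h^2)$, whence $\|u-u_h\|=O(h^2)$; the estimate for $\|u_t-u_{ht}\|$ follows by the same argument applied to the time-differentiated equations.

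The hard part is exactly the last reduction. After integrating by parts, the coupling source collapses to the pairing $(\theta,w')$ of the $\eta$-error against a smooth function, and here the naive bound $\|\theta\|\,\|w'\|=O(h^{3/2})$ is too weak; one needs a weak (negative-norm) superaccuracy estimate showing $(\theta,w')=O(h^2)$ despite $\|\theta\|=O(h^{3/2})$. Establishing this \emph{without} reintroducing the $O(h^{3/2})$ loss through the coupling back to $\xi$ is the genuine obstacle: it forces one to treat the dual problem for the linearized $(\theta,\xi)$-system simultaneously and to exploit that the elliptic smoothing $a$ acts only on the $u$-component, which is what allows the $L^2$ rate of $u$ to decouple from, and improve upon, that of $\eta$. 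Everything in part (i) is then a routine if lengthy energy computation once Lemma 2.2 is in hand; the substance lies in the duality bookkeeping of part (ii).
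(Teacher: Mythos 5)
Your part (i) is essentially the paper's own argument: the same interpolant-based splitting $\rho=\eta-I_h\eta$, $\theta=I_h\eta-\eta_h$, $\sigma=u-I_{h,0}u$, $\xi=I_{h,0}u-u_h$, the same cancellation of the linear coupling, the same use of Lemma $2.2$ with a non-vanishing weight as the sole source of the $h^{3/2}$, the same observation that $a(\sigma_t,\chi)=(\sigma_t,\chi)$, and Gronwall from $\theta(0)=\xi(0)=0$. That half of the proposal is correct.

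Part (ii), however, has a genuine gap, and it sits exactly where you flag it. You correctly reduce the problem to showing that the pairing of $\theta+\rho$ against the derivative of a smooth dual solution is $O(h^2)$ even though $\|\theta\|$ is only $O(h^{3/2})$, but you then stop at ``one needs a weak (negative-norm) superaccuracy estimate'' and gesture at treating the dual problem for the coupled system, without supplying the mechanism. The paper closes this by three concrete steps that your proposal does not contain. First, taking $\phi=1$ in the $\theta$-equation gives $(\theta_t+\rho_t,1)=0$, so $\int_0^1(\theta+\rho)\,dx$ is a constant $J$; this lets one define the antiderivative $\gamma(x,t)=\int_0^x(\theta+\rho)\,ds-xJ$, which lies in $H_0^1$ and satisfies $\gamma_x=\theta+\rho-J$. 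Second, in the duality identity one writes $(\theta+\rho,V')=(\gamma_x,V')+J(1,V')=-(\gamma,V'')+\cdots$, transferring the derivative onto the smooth dual solution $V$ and reducing the offending term to $\|\gamma\|\,\|V''\|$, i.e.\ to an $L^2$ bound on $\gamma$ --- this is the negative-norm estimate you postulate, realized explicitly. Third, $\|\gamma\|$ is not free: the paper derives a separate evolution inequality for $\|P_0\gamma\|$ (with $P_0$ the $L^2$ projection onto piecewise constants) by testing the $\theta$-equation with $\phi\in S_h^2$ chosen so that $\phi'=P_0\gamma$, obtaining $\tfrac{1}{2}\tfrac{d}{dt}\|P_0\gamma\|^2\le C(h^4+\|P_0\gamma\|^2+\|\xi\|^2)$, which is then added to the inequality $\tfrac{1}{2}\tfrac{d}{dt}\|\xi\|^2\le C(h^4+\|P_0\gamma\|^2+\|\xi\|^2)$ and closed by Gronwall to give $\|P_0\gamma\|+\|\xi\|\le Ch^2$. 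Without this coupled $(\gamma,\xi)$ system the duality argument does not close, because the $(\theta,w')$ term feeds $O(h^{3/2})$ straight back into $\|\xi\|$. Your final estimate $\|u_t-u_{ht}\|=O(h^2)$ also needs the paper's representation $\xi_t=R_hv$ together with the already-established bounds $\|v\|\le C(h^2+\|\gamma\|+\|\xi\|)$ and $\|v\|_1\le Ch^{3/2}$, rather than a time-differentiation of the error equations, which would reintroduce uncontrolled $\theta_t$ terms.
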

\begin{proof}
We give the proof in detail in the case of $(\ref{scb})$, where existence of solutions of
the i.v.p. $(\ref{eq29})$, $(\ref{eq220})$ for $0\leq t\leq T$ follows from $(\ref{eq213})$.
The proof in the case of $(\ref{cb})$ folows from an argument analogous to that given in the
proof of Proposition $2.1$ and will be omitted. Let
\[
\rho := \eta - I_{h}\eta\,, \,\,\, \theta:=I_{h}\eta - \eta_{h}\,, \,\,\,
\sigma:=u-I_{h,0}u\,, \,\,\, \xi:=I_{h,0}u - u_{h}\,.
\]
Note that
\[
\eta u - \eta_{h}u_{h} = \eta\sigma + u\theta - \theta\xi + F\,,
\]
where
\[
F:=\eta\xi + u\rho -\rho\sigma -\rho\xi - \theta\sigma\,.
\]
In addition
\[
\eta\eta_{x} - \eta_{h}\eta_{hx} = -\theta\theta_{x} + (\eta\theta)_{x} + G_{x}\,,
\]
where
\[
G:=\eta\rho - \rho\theta - \tfrac{1}{2}\rho^{2}\,,
\]
and
\[
uu_{x} - u_{h}u_{hx} = H_{x}\,,
\]
where
\[
H:=u\sigma + u\xi - \sigma\xi - \tfrac{1}{2}\sigma^{2} - \tfrac{1}{2}\xi^{2}\,.
\]
Then, from $(\ref{scb})$ and $(\ref{eq29})$, $(\ref{eq220})$ it follows for $0\leq t\leq T$ that
\begin{equation}
\begin{aligned}
(\theta_{t},\phi) + (\xi_{x},\phi) + \bigl( ((1 + \tfrac{1}{2}\eta & )\sigma)_{x},\phi\bigr)
+ \tfrac{1}{2}((u\theta)_{x},\phi) - \tfrac{1}{2}((\theta\xi)_{x},\phi) \\
& + \tfrac{1}{2}(F_{x},\phi) = - (\rho_{t},\phi) \quad \forall \phi \in S_{h}^{2}\,,
\end{aligned}
\label{eq221}
\end{equation}
\begin{equation}
\begin{aligned}
a(\xi_{t},\chi) + (\theta_{x},\chi) & + (\rho_{x},\chi) - \tfrac{1}{2}(\theta\theta_{x},\chi)
+ \tfrac{1}{2}((\eta\theta)_{x},\chi) + \tfrac{1}{2}(G_{x},\chi) \\
& + \tfrac{3}{2}(H_{x},\chi) = -(\sigma_{t},\chi) \quad \forall \chi \in S_{h,0}^{2}\,,
\end{aligned}
\label{eq222}
\end{equation}
with
\begin{equation}
\theta(0) = 0\,, \quad \xi(0) = 0\,.
\label{eq223}
\end{equation}
(In the right-hand side of $(\ref{eq222})$ we used the fact that for $\chi\in S_{h,0}^{2}$
$a(\sigma_{t},\chi) = (\sigma_{t},\chi)$, since $(v'-(I_{h,0}v)',\chi')=0$ for
$v \in H_{0}^{1}$.) In order to show the estimates in $(i)$, we put $\phi = \theta$ and $\chi = \xi$
in $(\ref{eq221})$ and $(\ref{eq222})$, integrate by parts, and add the resulting equations to get
for $0\leq t\leq T$
\begin{equation}
\begin{aligned}
\tfrac{1}{2}\tfrac{d}{dt}(\|\theta\|^{2} & + \|\xi\|_{1}^{2})
+ \bigl( ((1 + \tfrac{1}{2}\eta)\sigma)_{x},\theta\bigr) + \tfrac{1}{2}((u\theta)_{x},\theta)
+ \tfrac{1}{2}(F_{x},\theta) \\
& + (\rho_{x},\xi) + \tfrac{1}{2}((\eta\theta)_{x},\xi) + \tfrac{1}{2}(G_{x},\xi)
+ \tfrac{3}{2}(H_{x},\xi) = -(\rho_{t},\theta) - (\sigma_{t},\xi)\,.
\end{aligned}
\label{eq224}
\end{equation}
Using now the approximation properties of $S_{h}^{2}$ and $S_{h,0}^{2}$, integration by parts,
and Lemmas $2.1$ and $2.2$ we see that
\[
\Bigl| \bigl( ((1+\tfrac{1}{2}\eta)\sigma)_{x},\theta\bigr)\Bigr| =
\Bigl| \bigl( P[((1+\tfrac{1}{2}\eta)\sigma)_{x}],\theta\bigr) \Bigr| \leq Ch^{3/2}\|\theta\|\,,
\]
\[
|((u\theta)_{x},\theta)| = \tfrac{1}{2}|(u_{x}\theta,\theta)| \leq C\|\theta\|^{2}\,,
\]
\begin{align*}
|(F_{x},\theta)| & \leq |((\eta\xi)_{x},\theta)| + |((u\rho)_{x},\theta)| + |((\rho\sigma)_{x},\theta)|
+ |((\rho\xi)_{x},\theta)| + \tfrac{1}{2}|(\sigma_{x}\theta,\theta)| \\
& \leq C\bigl( \|\xi\|_{1}\|\theta\| + h^{2}\|\theta\| + h^{3}\|\theta\| + h\|\xi\|_{1}\|\theta\|
+ h\|\theta\|^{2}\bigr) \\
& \leq  C\bigl( \|\xi\|_{1}\|\theta\| + h^{2}\|\theta\| + h \|\theta\|^{2}\bigr)\,,
\end{align*}
\[
|(\rho_{x},\xi)| \leq C h^{2}\|\xi\|_{1}\,,
\]
\[
|((\eta\theta)_{x},\xi)| \leq C \|\xi\|_{1}\|\theta\|\,,
\]
\begin{align*}
|(G_{x},\xi)| & \leq |(\eta\rho,\xi_{x})| + |(\rho\theta,\xi_{x})| + \tfrac{1}{2}|(\rho^{2},\xi_{x})|\\
& \leq C\bigl( h^{2}\|\xi\|_{1} + h^{2}\|\theta\|\|\xi\|_{1} + h^{4}\|\xi\|_{1}\bigr)\\
& \leq C \bigl( h^{2}\|\xi\|_{1} + h^{2}\|\theta\|\|\xi\|_{1}\bigr)\,,
\end{align*}
\begin{align*}
|(H_{x},\xi)| & \leq |(u\sigma,\xi_{x})| + \tfrac{1}{2}|(u_{x}\xi,\xi)| + |(\sigma\xi,\xi_{x})|
+ \tfrac{1}{2}|(\sigma^{2},\xi_{x})| \\
& \leq C\bigl( h^{2}\|\xi\|_{1} + \|\xi\|^{2} + h^{2}\|\xi\|_{1}^{2} + h^{4}\|\xi\|_{1}\bigr)\\
& \leq C \bigl( h^{2}\|\xi\|_{1} + \|\xi\|_{1}^{2}\bigr)\,,
\end{align*}
\[
|(\rho_{t},\theta)| + |(\sigma_{t},\xi)| \leq C \bigl( h^{2}\|\theta\| + h^{2}\|\xi\|\bigr)\,.
\]
Hence, from $(\ref{eq224})$ we conclude that for $0\leq t\leq T$
\begin{align*}
\tfrac{d}{dt}(\|\theta\|^{2} + \|\xi\|_{1}^{2}) & \leq C\bigl( h^{3/2}\|\theta\| + h^{2}\|\xi\|_{1}\bigr)\\
& \,\,\,\,\,\, + C\bigl( \|\theta\|^{2} + \|\xi\|_{1}^{2}\bigr)
\leq C\bigl[ h^{3} + (\|\theta\|^{2} + \|\xi\|_{1}^{2})\bigr]\,.
\end{align*}
Hence, by Gronwall's Lemma we obtain
\[
\|\theta(t)\|^{2} + \|\xi(t)\|_{1}^{2} \leq C[h^{3} + \|\theta(0)\|^{2} + \|\xi(0)\|_{1}^{2}]\,,
\]
from which, in view of $(\ref{eq220})$, we get
\begin{equation}
\|\theta\| + \|\xi\|_{1} \leq C h^{3/2}\,, \quad 0\leq t\leq T\,,
\label{eq225}
\end{equation}
and the estimates in $(i)$ follow. In addition, from $(\ref{eq225})$ and the approximation
properties of $S_{h}^{2}$ and $S_{h,0}^{2}$ one may easily derive the following $L^{2}$
estimates of $F$, $G$, and $H$, that we note for further reference
\begin{equation}
\begin{aligned}
\|F\| & \leq C(\|\xi\| + h^{2})\,, \\
\|G\| & \leq C h^{2}\,, \\
\|H\| & \leq C (\|\xi\| + h^{2})\,.
\end{aligned}
\label{eq226}
\end{equation}
\par
We proceed now to prove the optimal-order error estimates in $(ii)$. Equation $(\ref{eq222})$
may be written in the form
\begin{equation}
\xi_{t} = R_{h}v\,,
\label{eq227}
\end{equation}
where $v$ is the solution of the problem
\begin{equation}
\begin{aligned}
v - \tfrac{1}{3}v'' & = -(\theta+\rho)_{x} - \tfrac{1}{2}(\eta\theta)_{x}
-\tfrac{1}{2}(G - \tfrac{1}{2}\theta^{2}+3H)_{x} - \sigma_{t}\,, \quad x \in [0,1]\,, \\
v(0) & = v(1) = 0\,.
\end{aligned}
\label{eq228}
\end{equation}
Considering the weak form of $(\ref{eq228})$ in $H_{0}^{1}$, and using integration by parts
in the right-hand side, $(\ref{eq225})$ and $(\ref{eq226})$ we see that
\begin{equation}
\|v\|_{1} \leq C h^{3/2}\,.
\label{eq229}
\end{equation}
In order to derive a bound for $\|v\|$, let $\zeta \in L^{2}$ and $V$ be the solution of the
problem
\begin{equation}
\begin{aligned}
V - \tfrac{1}{3}V'' & = \zeta\,, \quad x \in [0,1]\,, \\
V(0) & = V(1) = 0\,.
\end{aligned}
\label{eq230}
\end{equation}
Then, by $(\ref{eq228})$
\begin{equation}
(v,\zeta) = a(v,V)  = (\theta+\rho,V') + \tfrac{1}{2}(\eta\theta,V')
+ \tfrac{1}{2}(G-\tfrac{1}{2}\theta^{2}+3H,V')-(\sigma_{t},V)\,.
\label{eq231}
\end{equation}
From $(\ref{eq221})$ with $\phi=1$ we see that $(\theta_{t}+\rho_{t},1)=0$, $0\leq t\leq T$\,.
Hence,
\[
\int_{0}^{1}(\theta + \rho)dx = \int_{0}^{1}\rho(x,0)dx =:J=\text{const}\,.
\]
Therefore, if for $(x,t) \in [0,1]\times [0,T]$
\begin{equation}
\gamma(x,t):= \int_{0}^{x}\bigl( \theta(s,t) + \rho(s,t)\bigr) ds - xJ\,,
\label{eq232}
\end{equation}
it follows that $\gamma \in H_{0}^{1}$ and $\gamma_{x} = \theta + \rho - J$. Hence,
$(\ref{eq231})$ yields
\begin{align*}
(v,\zeta) & = (\gamma_{x},V') + \tfrac{1}{2}(\eta\gamma_{x},V')
+ \tfrac{1}{2}(G-\tfrac{1}{2}\theta^{2} + 3H - \eta\rho+\eta J,V') - (\sigma_{t},V)\\
& = -(\gamma,V'') - \tfrac{1}{2}(\gamma,\eta V''+\eta_{x}V')
+ \tfrac{1}{2}(G-\tfrac{1}{2}\theta^{2} + 3H - \eta\rho + \eta J,V') - (\sigma_{t},V)\,.
\end{align*}
Now, using $(\ref{eq225})$, $(\ref{eq226})$, the approximation and inverse properties
of $S_{h}^{2}$, $S_{h,0}^{2}$, and elliptic regularity in $(\ref{eq230})$ we obtain
\begin{align*}
|(v,\zeta)| & \leq \|\gamma\|\|V''\| + C\|\gamma\|(\|V''\| + \|V'\|)
+ C(h^{2} + \|\xi\|)\|V'\| + Ch^{2}\|V\| \\
& \leq C(h^{2} + \|\gamma\| + \|\xi\|)\|\zeta\|\,,
\end{align*}
and conclude that
\begin{equation}
\|v\| \leq C(h^{2} + \|\gamma\| + \|\xi\|)\,.
\label{eq233}
\end{equation}
Let now $W$ the solution of the problem
\begin{equation}
\begin{aligned}
W - \tfrac{1}{3}W'' & = \xi\,, \quad x \in [0,1]\,, \\
W(0) = W(1) & = 0\,.
\end{aligned}
\label{eq234}
\end{equation}
Using $(\ref{eq227})$, $(\ref{eq215})$, elliptic regularity in $(\ref{eq234})$, and the estimates
$(\ref{eq229})$ and $(\ref{eq233})$ gives
\begin{align*}
(\xi,\xi_{t}) & = a(W,\xi_{t}) = a(W,R_{h}v) = a(R_{h}W,R_{h}v) \\
& = a(R_{h}v,R_{h}W) = a(v,R_{h}W) = a(v,R_{h}W-W) + a(W,v)\\
& = a(v,R_{h}W-W) + (\xi,v) \leq Ch\|\xi\|\|v\|_{1} + \|\xi\|\|v\|\\
& \leq C(h^{2} + \|\gamma\| + \|\xi\|)\|\xi\|\,,
\end{align*}
from which there follows that
\begin{equation}
\tfrac{1}{2}\tfrac{d}{dt}\|\xi\|^{2} \leq C(h^{4} + \|\gamma\|^{2} + \|\xi\|^{2})\,,
\quad 0\leq t\leq T\,.
\label{eq235}
\end{equation}
In order to obtain the required optimal-order estimate for $\|\xi\|$ from $(\ref{eq235})$ we need
a similar estimate for a suitable approximation of $\gamma$. For this purpose, observe that
$(\ref{eq221})$ yields, for $0\leq t\leq T$ and $\phi \in S_{h}^{2}$,
\begin{equation}
(\gamma_{xt},\phi) + \tfrac{1}{2}((u\theta)_{x},\phi) = - (w_{x},\phi)\,,
\label{eq236}
\end{equation}
where $w=\xi + (1 + \tfrac{1}{2}\eta)\sigma - \tfrac{1}{2}\theta\xi + \tfrac{1}{2}F$; note  that
$w \in H_{0}^{1}$ and that $(\ref{eq225})$ and $(\ref{eq226})$ give $\|w\| \leq C(\|\xi\|+h^{2})$.
Using integration by parts and the definition of $\gamma$ in $(\ref{eq236})$ yields for
$0\leq t\leq T$
\begin{equation}
(\gamma_{t},\phi') + \tfrac{1}{2}(u\gamma_{x},\phi') =
-(w - \tfrac{1}{2}u\rho+\tfrac{1}{2}u J,\phi')\quad \forall \phi \in S_{h}^{2}\,.
\label{eq237}
\end{equation}
Consider now the space $S_{h}^{-1}$ of discontinuous, piecewise constant functions relative to
the partition $\{x_{j}\}$. Given any $\psi \in S_{h}^{-1}$ consider in $(\ref{eq237})$
$\phi \in S_{h}^{2}$ such that $\phi' = \psi$. Hence we have for $0\leq t\leq T$
\[
(\gamma_{t},\psi) + \tfrac{1}{2}(u\gamma_{x},\psi) = -(K,\psi)\quad \forall \psi \in S_{h}^{-1}\,,
\]
where $K:= w - \tfrac{1}{2}u\rho + \tfrac{1}{2}u J$ satisfies
$\|K\| \leq C(\|\xi\| + h^{2})$. Taking now $\psi = P_{0}\gamma$ in the above, where
$P_{0}$ is the $L^{2}$-projection operator onto $S_{h}^{-1}$, yields for $0\leq t\leq T$
\begin{equation}
\tfrac{1}{2}\tfrac{d}{dt}\|P_{0}\gamma\|^{2} + \tfrac{1}{2}(u\gamma_{x},P_{0}\gamma)
= - (K,P_{0}\gamma)\,.
\label{eq238}
\end{equation}
Since $\|P_{0}\gamma - \gamma\| \leq Ch\|\gamma\|_{1}$, cf. e.g. $(16.24)$ of \cite{t}, and
$\|\gamma\|_{1}\leq Ch^{3/2}$ by $(\ref{eq225})$, we have
\begin{align*}
|(u\gamma_{x},P_{0}\gamma)| & = |(u\gamma_{x},P_{0}\gamma-\gamma)-\tfrac{1}{2}(u_{x}\gamma,\gamma)|
\leq C(h\|\gamma\|_{1}^{2} + \|\gamma\|^{2}) \\
&\leq C(h\|\gamma\|_{1}^{2} + \|P_{0}\gamma-\gamma\|^{2} + \|P_{0}\gamma\|^{2})\\
& \leq C(h^{4} + \|P_{0}\gamma\|^{2})\,.
\end{align*}
Hence, $(\ref{eq238})$ yields, for $0\leq t\leq T$, that
\begin{equation}
\tfrac{1}{2}\tfrac{d}{dt}\|P_{0}\gamma\|^{2} \leq C(h^{4} + \|P_{0}\gamma\|^{2} + \|\xi\|^{2})\,.
\label{eq239}
\end{equation}
Now, from $(\ref{eq235})$, since $\|\gamma\|\leq \|P_{0}\gamma-\gamma\| + \|P_{0}\gamma\|
\leq Ch^{5/2} + \|P_{0}\gamma\|$, we get for $0\leq t\leq T$
\begin{equation}
\tfrac{1}{2}\tfrac{d}{dt}\|\xi\|^{2} \leq C(h^{4} + \|P_{0}\gamma\|^{2} + \|\xi\|^{2})\,.
\label{eq240}
\end{equation}
Adding $(\ref{eq239})$ and $(\ref{eq240})$ we finally obtain by Gronwall's Lemma and $(\ref{eq223})$
that
\begin{equation}
\|P_{0}\gamma\|^{2} + \|\xi\|^{2} \leq Ch^{4}\,, \quad 0\leq t\leq T\,.
\label{eq241}
\end{equation}
Therefore, the first inequality of the conclusion $(ii)$ of the theorem holds;
in addition, by similar estimates with the ones used above, we also obtain
\begin{equation}
\|\gamma\| \leq Ch^{2}\,, \quad 0\leq t\leq T\,.
\label{eq242}
\end{equation}
Finally, we prove the second estimate of $(ii)$. Let $Z$ be the solution of the problem
\begin{align*}
Z - \tfrac{1}{3}Z'' & = \xi_{t}\,, \quad x \in [0,1]\,, \\
Z(0) = Z(1) & = 0\,.
\end{align*}
Then, by $(\ref{eq227})$
\[
\|\xi_{t}\|^{2} = a(Z,\xi_{t}) = a(Z,R_{h}v) = a(R_{h}Z,v) = a(R_{h}Z-Z,v) + (\xi_{t},v)\,.
\]
Hence, elliptic regularity and $(\ref{eq229})$, $(\ref{eq233})$, $(\ref{eq241})$, $(\ref{eq242})$
give
\[
\|\xi_{t}\|^{2}\leq Ch\|Z\|_{2}\|v\|_{1} + \|\xi_{t}\|\|v\| \leq C h^{2}\|\xi_{t}\|\,,
\]
i.e. $\|\xi_{t}\| \leq Ch^{2}$, and the second estimate of $(ii)$ follows.
\end{proof}
\begin{remark}
It is not hard to see that the conclusions of Theorem $2.1$ hold if we take $\eta_{h}(0)$ as any
approximation of $\eta_{0}$ in $S_{h}^{2}$ with optimal-order $L^{2}$ rate of convergence. However,
$u_{h}(0)$ has to be taken as $I_{h,0}u_{0}$ or $R_{h}u_{0}$.
\end{remark}
\begin{remark}
The superaccuracy estimate $\|\xi\|_{1}=O(h^{3/2})$ of $(\ref{eq225})$ combined with
$(\ref{eq23})$ and Sobolev's inequality yields the $L^{\infty}$ estimate
$\|u-u_{h}\|_{\infty} = O(h^{3/2})$ for $u$.
\end{remark}
\begin{remark}
In the case of a {\em quasiuniform} mesh with $h = \max_{i}(x_{i+1}-x_{i})$,
one may easily check
that the analog of Lemma $2.1$ still holds; however in the proof of Lemma $2.2$ there is no
cancellation anymore from adjacent intervals and the conclusion is just that
$\|\psi\|\leq Ch$. As a consequence, the techniques of the proof of Theorem $2.1$ now yield
that $\|\eta - \eta_{h}\| = O(h)$ and $\|u - u_{h}\|_{1} = O(h)$, and, instead of the optimal-order
estimates in $(ii)$, that $\|u-u_{h}\|= O(h^{3/2})$ and $\|u_{t} - u_{ht}\|=O(h^{3/2})$. However,
for the {\em linearized} problem
\begin{equation}
\begin{aligned}
\begin{aligned}
\eta_{t} & + u_{x} = 0\,, \\
u_{t} & + \eta_{x} - \tfrac{1}{3}u_{xxt} = 0\,,
\end{aligned}
&\,\,\,\,\,\,\,\,\,\,  \quad (x,t) \in [0,1]\times [0,T]\,, \\
u(0,t) = 0\,, \,\,\quad u(1,t) & = 0\,, \quad t \in [0,T]\,, \\
\eta(x,0) = \eta_{0}(x)\,, \quad u(&x,0) = u_{0}(x)\,, \quad x\in [0,1]\,,
\end{aligned}
\label{eq243}
\end{equation}
the last two estimates may be improved to yield optimal order, i.e. to give $\|u-u_{h}\|=O(h^{2})$,
$\|u_{t}-u_{ht}\|=O(h^{2})$.
Our numerical experiments suggest that $\|u-u_{h}\|= O(h^{2})$ even in the nonlinear case,
but we have not been able to prove this thusfar.
\end{remark}
\subsection{Numerical experiments}
We first consider the case of approximations on a uniform mesh with $h=1/N$ on $[0,1]$.
Table \ref{tbl21} shows the errors and the associated rates of convergence,
\def\baselinestretch{1}
\captionsetup[subtable]{labelformat=empty,position=top,margin=1pt,singlelinecheck=false}
\scriptsize
\begin{table}[h]
\subfloat[$L^{2}$-errors]{
\begin{tabular}[t]{ | c | c | c | c | c | }\hline
$N$   &    $\eta$      &  $order$  &      $u$       &  $order$   \\ \hline
$40$  &  $0.1894(-1)$  &           &  $0.1749(-3)$  &            \\ \hline
$80$  &  $0.6849(-2)$  &  $1.467$  &  $0.4259(-4)$  &  $2.038$   \\ \hline
$120$ &  $0.3761(-2)$  &  $1.478$  &  $0.1877(-4)$  &  $2.021$   \\ \hline
$160$ &  $0.2454(-2)$  &  $1.484$  &  $0.1051(-4)$  &  $2.015$   \\ \hline
$200$ &  $0.1761(-2)$  &  $1.487$  &  $0.6710(-5)$  &  $2.011$   \\ \hline
$240$ &  $0.1342(-2)$  &  $1.490$  &  $0.4652(-5)$	&  $2.009$   \\ \hline
$280$ &  $0.1066(-2)$  &  $1.491$  &  $0.3413(-5)$	&  $2.008$   \\ \hline
$320$ &  $0.8738(-3)$  &  $1.492$  &  $0.2611(-5)$  &  $2.007$   \\ \hline
$360$ &  $0.7328(-3)$  &  $1.493$  &  $0.2062(-5)$	&  $2.006$   \\ \hline
$400$ &  $0.6261(-3)$  &  $1.494$  &  $0.1669(-5)$  &  $2.005$   \\ \hline
$440$ &  $0.5430(-3)$  &  $1.495$  &  $0.1379(-5)$  &  $2.005$   \\ \hline
$480$ &  $0.4767(-3)$  &  $1.495$  &  $0.1158(-5)$  &  $2.004$   \\ \hline
$520$ &  $0.4230(-3)$  &  $1.496$  &  $0.9864(-6)$  &  $2.004$   \\ \hline
\end{tabular}
} \qquad
\subfloat[$L^{\infty}$-errors]{
\begin{tabular}[t]{ | c | c | c | c | c | }\hline
$N$   &    $\eta$      &  $order$  &      $u$       & $order$    \\ \hline
$40$  &  $0.1126$      &           &  $0.5916(-3)$  &            \\ \hline
$80$  &  $0.5617(-1)$  &  $1.004$  &  $0.1509(-3)$  & $1.971$    \\ \hline
$120$ &  $0.3741(-1)$  &  $1.002$  &  $0.6755(-4)$  & $1.983$    \\ \hline
$160$ &  $0.2804(-1)$  &  $1.002$  &  $0.3813(-4)$  & $1.988$    \\ \hline
$200$ &  $0.2243(-1)$  &  $1.001$  &  $0.2445(-4)$  & $1.991$    \\ \hline
$240$ &  $0.1868(-1)$  &  $1.001$  &  $0.1701(-4)$  & $1.992$    \\ \hline
$280$ &  $0.1601(-1)$  &  $1.001$  &  $0.1251(-4)$  & $1.994$    \\ \hline
$320$ &  $0.1401(-1)$  &  $1.001$  &  $0.9582(-5)$  & $1.994$    \\ \hline
$360$ &  $0.1245(-1)$  &  $1.001$  &  $0.7575(-5)$  & $1.995$    \\ \hline
$400$ &  $0.1121(-1)$  &  $1.001$  &  $0.6139(-5)$  & $1.996$    \\ \hline
$440$ &  $0.1019(-1)$  &  $1.001$  &  $0.5075(-5)$  & $1.996$    \\ \hline
$480$ &  $0.9337(-2)$  &  $1.001$  &  $0.4266(-5)$  & $1.996$    \\ \hline
$520$ &  $0.8618(-2)$  &  $1.001$  &  $0.3636(-5)$  & $1.997$    \\ \hline
\end{tabular}
}
\\
\subfloat[$H^{1}$-errors]{
\begin{tabular}[t]{ | c | c | c | c | c | }\hline
$N$   &    $\eta$      &  $order$  &      $u$       & $order$    \\ \hline
$40$  &  $0.2440(+1)$  &           &  $0.2449(-1)$  &            \\ \hline
$80$  &  $0.1776(+1)$  &  $0.459$  &  $0.1192(-1)$  & $1.039$    \\ \hline
$120$ &  $0.1466(+1)$  &  $0.473$  &  $0.7875(-2)$  & $1.022$    \\ \hline
$160$ &  $0.1277(+1)$  &  $0.480$  &  $0.5880(-2)$  & $1.016$    \\ \hline
$200$ &  $0.1146(+1)$  &  $0.484$  &  $0.4691(-2)$	& $1.012$    \\ \hline
$240$ &  $0.1049(+1)$  &  $0.487$  &  $0.3902(-2)$	& $1.010$    \\ \hline
$280$ &  $0.9725$	   &  $0.489$  &  $0.3341(-2)$  & $1.008$    \\ \hline
$320$ &  $0.9109$	   &  $0.490$  &  $0.2920(-2)$	& $1.007$    \\ \hline
$360$ &  $0.8596$	   &  $0.492$  &  $0.2594(-2)$	& $1.006$    \\ \hline
$400$ &  $0.8161$	   &  $0.493$  &  $0.2333(-2)$  & $1.006$    \\ \hline
$440$ &	 $0.7787$	   &  $0.493$  &  $0.2120(-2)$	& $1.005$    \\ \hline
$480$ &  $0.7459$      &  $0.494$  &  $0.1942(-2)$  & $1.005$    \\ \hline
$520$ &  $0.7170$      &  $0.494$  &  $0.1792(-2)$  & $1.004$    \\ \hline
\end{tabular}
}
\normalsize
\caption{
Errors and orders of convergence. $(\ref{cb})$ system, standard Galerkin semidiscretization
with piecewise linear, continuous functions on a uniform mesh.}
\label{tbl21}
\end{table}
\def\baselinestretch{1}
\captionsetup[subtable]{labelformat=empty,position=top,margin=1pt,singlelinecheck=false}
\scriptsize
\begin{table}[t]
\subfloat[$L^{2}$-errors]{
\begin{tabular}[b]{ | c | c | c | c | c | }\hline
$N$   &    $\eta$      &  $order$  &      $u$       &  $order$   \\ \hline
$40$  &  $0.7423(-2)$  &           &  $0.3613(-3)$  &            \\ \hline
$80$  &  $0.2678(-2)$  &  $1.471$  &  $0.8849(-4)$  &  $2.030$   \\ \hline
$120$ &	 $0.1469(-2)$  &  $1.481$  &  $0.3907(-4)$  &  $2.016$   \\ \hline
$160$ &  $0.9579(-3)$  &  $1.486$  &  $0.2190(-4)$  &  $2.011$   \\ \hline
$200$ &  $0.6871(-3)$  &  $1.489$  &  $0.1399(-4)$  &  $2.009$   \\ \hline
$240$ &  $0.5236(-3)$  &  $1.491$  &  $0.9703(-5)$  &  $2.007$   \\ \hline
$280$ &	 $0.4160(-3)$  &  $1.492$  &  $0.7122(-5)$	&  $2.006$   \\ \hline
$320$ &  $0.3408(-3)$  &  $1.493$  &  $0.5449(-5)$	&  $2.005$   \\ \hline
$360$ &	 $0.2858(-3)$  &  $1.494$  &  $0.4303(-5)$	&  $2.005$   \\ \hline
$400$ &	 $0.2441(-3)$  &  $1.495$  &  $0.3484(-5)$	&  $2.004$   \\ \hline
$440$ &	 $0.2117(-3)$  &  $1.495$  &  $0.2878(-5)$	&  $2.004$   \\ \hline
$480$ &	 $0.1859(-3)$  &  $1.496$  &  $0.2418(-5)$	&  $2.003$   \\ \hline
$520$ &	 $0.1649(-3)$  &  $1.496$  &  $0.2060(-5)$	&  $2.003$   \\ \hline
\end{tabular}
}\qquad
\subfloat[$L^{\infty}$-errors]{
\begin{tabular}[b]{ | c | c | c | c | c | }\hline
$N$   &    $\eta$      &  $order$  &      $u$       &  $order$   \\ \hline
$40$  &  $0.3491(-1)$  &           &  $0.9655(-3)$  &            \\ \hline
$80$  &	 $0.1704(-1)$  &  $1.034$  &  $0.2537(-3)$  &  $1.928$   \\ \hline
$120$ &	 $0.1125(-1)$  &  $1.024$  &  $0.1140(-3)$	&  $1.972$   \\ \hline
$160$ &	 $0.8394(-2)$  &  $1.018$  &  $0.6442(-4)$	&  $1.985$   \\ \hline
$200$ &	 $0.6694(-2)$  &  $1.014$  &  $0.4132(-4)$	&  $1.991$   \\ \hline
$240$ &	 $0.5567(-2)$  &  $1.012$  &  $0.2873(-4)$	&  $1.994$   \\ \hline
$280$ &	 $0.4764(-2)$  &  $1.010$  &  $0.2112(-4)$	&  $1.995$   \\ \hline
$320$ &	 $0.4164(-2)$  &  $1.008$  &  $0.1618(-4)$	&  $1.996$   \\ \hline
$360$ &	 $0.3698(-2)$  &  $1.008$  &  $0.1279(-4)$	&  $1.997$   \\ \hline
$400$ &	 $0.3326(-2)$  &  $1.007$  &  $0.1036(-4)$	&  $1.998$   \\ \hline
$440$ &	 $0.3021(-2)$  &  $1.006$  &  $0.8563(-5)$	&  $1.998$   \\ \hline
$480$ &	 $0.2768(-2)$  &  $1.006$  &  $0.7197(-5)$	&  $1.998$   \\ \hline
$520$ &	 $0.2554(-2)$  &  $1.005$  &  $0.6133(-5)$	&  $1.999$   \\ \hline
\end{tabular}
}\\
\subfloat[$H^{1}$-errors]{
\begin{tabular}[b]{ | c | c | c | c | c | }\hline
$N$   &    $\eta$      &  $order$  &      $u$       & $order$    \\ \hline
$40$  &  $0.1049(+1)$  &           &  $0.2670(-1)$  &            \\ \hline
$80$  &	 $0.7383$	   &  $0.506$  &  $0.1249(-1)$	& $1.096$    \\ \hline
$120$ &	 $0.6026$	   &  $0.501$  &  $0.8131(-2)$	& $1.059$    \\ \hline
$160$ &	 $0.5219$	   &  $0.500$  &  $0.6024(-2)$	& $1.042$    \\ \hline
$200$ &	 $0.4668$	   &  $0.500$  &  $0.4784(-2)$	& $1.033$    \\ \hline
$240$ &	 $0.4262$	   &  $0.499$  &  $0.3967(-2)$	& $1.027$    \\ \hline
$280$ &	 $0.3946$	   &  $0.499$  &  $0.3388(-2)$	& $1.023$    \\ \hline
$320$ &	 $0.3692$	   &  $0.499$  &  $0.2956(-2)$	& $1.020$    \\ \hline
$360$ &	 $0.3481$	   &  $0.500$  &  $0.2622(-2)$	& $1.018$    \\ \hline
$400$ &	 $0.3302$	   &  $0.500$  &  $0.2356(-2)$	& $1.016$    \\ \hline
$440$ &	 $0.3149$	   &  $0.500$  &  $0.2139(-2)$	& $1.015$    \\ \hline
$480$ &	 $0.3015$	   &  $0.500$  &  $0.1959(-2)$	& $1.013$    \\ \hline
\end{tabular}
}
\normalsize
\caption{
Errors and orders of convergence. $(\ref{scb})$ system, standard Galerkin semidiscretization
with piecewise linear, continuous functions on a uniform mesh.}
\label{tbl22}
\end{table}
\normalsize
in the $L^{2}$-, $L^{\infty}$-
and $H^{1}$-norms at $T =1$, of the standard Galerkin approximation (using piecewise linear,
continuous functions) to the $(\ref{cb})$ system with suitable right-hand so that its exact
solution is given by $\eta = \exp(2t)(\cos(\pi x) + x + 2)$, $u=\exp(-xt)x\sin(\pi x)$. The system
was integrated up to $T=1$ with the classical, four-stage, fourth-order explicit Runge-Kutta
method (see Section $4$) using a time step $k=h/10$. We checked that the temporal error of the
discretization was very small compared to the spatial error, so that the errors and rates of
convergence shown are essentially those of the semidiscrete problem $(\ref{eq27})$,
$(\ref{eq220})$. The table suggests that
the $L^{2}$ rates of convergence of $\eta$ and $u$ approach $3/2$ and $2$, respectively,
thus confirming the relevant estimates of Theorem $2.1$. It also suggests that
$\|u-u_{h}\|_{1} = O(h)$ (confirming the result of Theorem $2.1$), and
$\|\eta - \eta_{h}\|_{1}=O(h^{1/2})$, $\|\eta - \eta_{h}\|_{\infty}=O(h)$,
$\|u-u_{h}\|_{\infty}=O(h^{2})$.
Table \ref{tbl22} shows the analogous errors and rates in the case of a
nonhomogeneous $(\ref{scb})$ system with the same exact solution
$\eta = \exp(2t)(\cos(\pi x) + x + 2)$, $u=\exp(-xt)x\sin(\pi x)$, integrated on a uniform spatial mesh
up to $T=1$ with the same time stepping procedure.
The convergence rates are essentially the same.
Table \ref{tbl23}(a) shows some results from a computation of the exact solution
$\eta = \exp(2t)(\cos(\pi x) + x + 2)$, $u=\exp(xt)(\sin(\pi x) + x^{3} - x^{2})$ of a suitably
nonhomogeneous version
of the $(\ref{cb})$ system with the standard Galerkin method with piecewise
linear, continuous functions on the quasiuniform mesh on $[0,1]$ given by $h_{2i-1} = 1.2\Delta x$,
$h_{2i} = 0.8\Delta x$, $1\leq i\leq N/2$, where $h_{i}=x_{i+1}-x_{i}$ and
\normalsize
$\Delta x=1/N$. Again
the fully discrete problem was solved by the fourth-order accurate four-stage classical explicit
Runge-kutta scheme with timestep $k=\Delta x/10$. We integrated the system up to $T=0.4$ starting with
the $L^{2}$-projections of $\eta_{0}$ and $u_{0}$ on the finite element subspaces. The temporal error
was much smaller than the spatial
\def\baselinestretch{1}
\captionsetup[subtable]{labelformat=empty, position=bottom, singlelinecheck=true}
\scriptsize
\begin{table}[t]
\subfloat[(a)]{
\begin{tabular}[b]{ | c | c | c | c | c | }\hline
$N$   &    $\eta$      &  $order$  &      $u$       &  $order$   \\ \hline
$80$  &  $0.1277(-1)$  &           &  $0.7432(-4)$  &            \\ \hline
$160$ &  $0.6383(-2)$  &  $1.000$  &  $0.1858(-4)$  &  $2.000$   \\ \hline
$240$ &  $0.4258(-2)$  &  $0.999$  &  $0.8259(-5)$  &  $2.000$   \\ \hline
$320$ &  $0.3194(-2)$  &  $0.999$  &  $0.4646(-5)$	&  $2.000$   \\ \hline
$400$ &	 $0.2556(-2)$  &  $0.999$  &  $0.2973(-5)$	&  $2.000$   \\ \hline
$480$ &	 $0.2131(-2)$  &  $0.999$  &  $0.2065(-5)$	&  $2.000$   \\ \hline
$560$ &	 $0.1826(-2)$  &  $0.999$  &  $0.1517(-5)$	&  $2.000$   \\ \hline
$640$ &  $0.1598(-2)$  &  $0.999$  &  $0.1161(-5)$  &  $2.000$   \\ \hline
$720$ &  $0.1421(-2)$  &  $0.999$  &  $0.9177(-5)$  &  $2.000$   \\ \hline
\end{tabular}
}\qquad
\scriptsize
\subfloat[(b)]{
\begin{tabular}[b]{ | c | c | c | c | c | }\hline
$N$   &    $\eta$      &  $order$  &      $u$       &  $order$   \\ \hline
$40$  &  $0.5852(-1)$  &           &  $0.1693(-2)$  &            \\ \hline
$80$  &  $0.2933(-1)$  &  $0.997$  &  $0.4271(-3)$  &  $1.987$   \\ \hline
$120$ &  $0.1942(-1)$  &  $1.017$  &  $0.1899(-3)$  &  $2.000$   \\ \hline
$160$ &  $0.1449(-1)$  &  $1.019$  &  $0.1068(-3)$	&  $2.000$   \\ \hline
$200$ &	 $0.1155(-1)$  &  $1.016$  &  $0.6834(-4)$	&  $2.001$   \\ \hline
$240$ &	 $0.9600(-2)$  &  $1.014$  &  $0.4745(-4)$	&  $2.001$   \\ \hline
$280$ &	 $0.8213(-2)$  &  $1.012$  &  $0.3486(-4)$	&  $2.001$   \\ \hline
$320$ &  $0.7176(-2)$  &  $1.011$  &  $0.2669(-4)$  &  $2.001$   \\ \hline
$360$ &  $0.6371(-2)$  &  $1.010$  &  $0.2109(-4)$  &  $2.000$   \\ \hline
$400$ &  $0.5729(-2)$  &  $1.009$  &  $0.1708(-4)$  &  $2.001$   \\ \hline
\end{tabular}
}
\normalsize
\caption{$L^{2}$-errors and orders of convergence. $(\ref{cb})$ system, standard Galerkin
semidiscretization with piecewise linear, continuous functions on a quasiuniform mesh
with $\tfrac{\max h_{i}}{\min h_{i}}=1.5$ (a), $\tfrac{\max h_{i}}{\min h_{i}}=150$ (b)}
\label{tbl23}
\end{table}
\normalsize
error. Table\ref{tbl23}(a) shows the $L^{2}$-errors for $\eta$ and $u$
and the associated rates of convergence at $T=0.4$. The data strongly suggest that
$\|\eta - \eta_{h}\|=O(h)$ and $\|u-u_{h}\|=O(h^{2})$, thus confirming the relevant theoretical result
for $\eta$ (cf. Proposition $2.1$ and Remark $2.2$), and supporting the conjecture that the
$L^{2}$ rate of convergence for $u$ is actually equal to $2$ even in the case of the nonlinear problem;
recall that the technique of proof of Theorem $2.1$ in the case of a quasiuniform mesh gives a
pessimistic bound of $O(h^{3/2})$ for $\|u-u_{h}\|$, cf. Remark $2.2$. These results are confirmed by the
rates shown in Table \ref{tbl23}(b), which was obtained by integrating the same problem with the same method
on the quasiuniform mesh on $[0,1]$ with $h_{10i-9}=0.02\Delta x$, $h_{10i-8}=0.05\Delta x$,
$h_{10i-7}=0.08\Delta x$, $h_{10i-6}=0.35\Delta x$, $h_{10i-5}=0.5\Delta x$,
$h_{10i-4}=h_{10i-3}=\Delta x$, $h_{10i-2}=h_{10i-1}=2\Delta x$, $h_{10i}=3\Delta x$,
$1\leq i\leq N/10$, and $k=\Delta x/10$.
\normalsize
\section{Standard Galerkin semidiscretization with cubic splines}
\subsection{Semidiscretization on a quasiuniform mesh}
Let $0=x_{1} < x_{2} < \ldots < x_{N+1}=1$ denote a quasiuniform partition of $[0,1]$ with
$h:=max_{i}(x_{i+1} - x_{i})$ and let
\[
S_{h}^{4}:=\{\phi \in C^{2} : \phi\big|_{[x_{j},x_{j+1}]} \in \mathbb{P}_{3}\,, 1\leq j\leq N\}\,, \quad
S_{h,0}^{4}=\{\phi \in S_{h}^{4} : \phi(0)=\phi(1)=0\}\,,
\]
be the space of (the $C^{2}$) cubic splines on $[0,1]$ relative to the partition $\{x_{j}\}$,
and the space of cubic splines that vanish at $x=0$ and at $x=1$. In this section we shall
denote by $I_{h} : C^{1} \to S_{h}^{4}$ the interpolation operator, with the properties that for any
$v \in C^{1}$, $(I_{h}v)(x_{i}) = v(x_{i})$, $1\leq i\leq N+1$, $(I_{h}v)'(x_{k}) = v'(x_{k})$,
$k=1,N+1$, and let $I_{h,0} : C^{1}_{0} \to S_{h,0}^{4}$ be the analogous interpolant onto $S_{h,0}^{4}$.
It is well known that
\begin{equation}
\sum_{j=0}^{2}h^{j}\|w-I_{h}w\|_{j} \leq C h^{k}\|w^{(k)}\|
\label{eq31}
\end{equation}
holds for any $w \in H^{k}$ for $k=2$, $3$, $4$ and that a similar estimate holds for $I_{h,0}w$ if
$w\in H^{k}\cap H_{0}^{1}$. More generally, \cite{schr}, if $1\leq k\leq 4$ and $0\leq j < k$ we have
that
\[
\min_{\chi \in S_{h}^{4}}\|(w - \chi)^{(j)}\| \leq C h^{k-j} \|w^{(k)}\| \quad
\text{if} \quad w \in H^{k}
\]
and
\[
\min_{\chi \in S_{h}^{4}}\|(w - \chi)^{(j)}\|_{\infty} \leq C h^{k-j} \|w^{(k)}\|_{\infty} \quad
\text{if} \quad w \in C^{k}\,,
\]
and that a similar estimate holds in $S_{h,0}^{4}$ for $w$ in $H^{k}$ or in $C^{k}$ that also vanishes
at $x=0$ and $x=1$.
Let $a(\cdot,\cdot)$ denote again the bilinear form
\[
a(\psi,\chi) = (\psi,\chi) + \tfrac{1}{3}(\psi',\chi')\,, \quad \forall \psi,\chi \in S_{h,0}^{4}\,,
\]
and $R_{h} : H^{1} \to S_{h,0}^{4}$ be the associated elliptic projection operator
defined by
\[
a(R_{h}v,\chi) = a(v,\chi)\,, \quad \forall \chi \in S_{h,0}^{4}\,.
\]
It follows by standard estimates that if $1\leq k\leq 4$
\begin{equation}
\|R_{h}w - w\|_{j} \leq Ch^{k-j}\|w\|_{k}\quad j=0,1\,, \,\,\, \text{if}\,\,\,
w\in H^{k} \cap H_{0}^{1}\,,
\label{eq32}
\end{equation}
and that, \cite{ddw},
\begin{equation}
\|R_{h}w - w\|_{\infty} + h\|R_{h}w - w\|_{1,\infty}\leq Ch^{4}\|w\|_{4,\infty}\,\,\,\,
\text{if}\,\,\,\, w\in W^{4,\infty}\cap H_{0}^{1}\,.
\label{eq33}
\end{equation}
Similar estimates hold for the analogous elliptic projection into $S_{h}^{4}$. In addition,
as a consequence of the quasiuniformity of the mesh, the inverse inequalities
\begin{equation}
\begin{aligned}
\|\chi\|_{\beta} & \leq C h^{-(\beta-\alpha)}\|\chi\|_{\alpha}\,, \quad
0\leq \alpha\leq\beta\leq 2\,, \\
\|\chi\|_{s,\infty} & \leq Ch^{-(s +1/2)}\|\chi\|\,, \quad 0\leq s\leq 2\,,
\end{aligned}
\label{eq34}
\end{equation}
hold for any $\chi \in S_{h}^{4}$ (or any $\chi \in S_{h,0}^{4}$), and so does the estimate,
\cite{ddw},
\begin{equation}
\|Pv - v\|_{\infty} \leq C h^{4}\|v\|_{4,\infty}\,, \quad v \in W^{4,\infty}\,,
\label{eq35}
\end{equation}
where $P : L^{2} \to S_{h}^{4}$ is the $L^{2}$-projection operator onto $S_{h}^{4}$.
As a consequence of the approximation and inverse properties of the cubic spline spaces we also have that
$P$ is stable in $L^{\infty}$ and in $H^{1}$, and that $R_{h}$ is stable in $H_{0}^{1}$ and in
$H^{2}\cap H_{0}^{1}$. \par
We let the {\em standard Galerkin semidiscretization} on $S_{h}^{4}$ of $(\ref{cb})$ be defined
as follows: We seek $\eta_{h} : [0,T] \to S_{h}^{4}$, $u_{h} : [0,T] \to S_{h,0}^{4}$, such that
for $t \in [0,T]$
\begin{equation}
\begin{aligned}
(\eta_{ht},\phi) + (u_{hx},\phi) + ((\eta_{h}u_{h})_{x},\phi) & = 0 \quad \forall \phi \in S_{h}^{4}\,,\\
a(u_{ht},\chi) + (\eta_{hx},\chi) + (u_{h}u_{hx},\chi) & = 0\quad \forall \chi \in S_{h,0}^{4}\,,
\end{aligned}
\label{eq36}
\end{equation}
with initial conditions
\begin{equation}
\eta_{h}(0) = P\eta_{0}\,, \quad u_{h}(0) = R_{h}u_{0}\,.
\label{eq37}
\end{equation}
We also similarly define the analogous semidiscretization of $(\ref{scb})$ which is given
for $t \in [0,T]$ by
\begin{equation}
\begin{aligned}
(\eta_{ht},\phi) & + (u_{hx},\phi) + \tfrac{1}{2}((\eta_{h}u_{h})_{x},\phi) = 0
\quad \forall \phi \in S_{h}^{4}\,,\\
a(u_{ht},\chi) & + (\eta_{hx},\chi) + \tfrac{3}{2}(u_{h}u_{hx},\chi)
+ \tfrac{1}{2}(\eta_{h}\eta_{hx},\phi) = 0 \quad \forall \chi \in S_{h,0}^{4}\,,
\end{aligned}
\label{eq38}
\end{equation}
with
\begin{equation}
\eta_{h}(0) = P\eta_{0}\,, \quad u_{h}(0) = R_{h}u_{0}\,.
\label{eq39}
\end{equation}
The following result may be proved in a totally analogous manner to the analogous result
of Proposition $2.1$ and, hence, its proof will be omitted.
\begin{proposition} Let $h$ be sufficiently small. Suppose that the solutions of $(\ref{cb})$
and $(\ref{scb})$ are such that $\eta \in C(0,T;W_{\infty}^{4})$,
$u \in C(0,T;W_{\infty}^{4}\cap H_{0}^{1})$. Then, the semidiscrete problems
$(\ref{eq36})$, $(\ref{eq37})$ and $(\ref{eq38})$, $(\ref{eq39})$ have unique solutions
$(\eta_{h},u_{h})$ for $0\leq t\leq T$ that satisfy
\begin{align}
\max_{0\leq t\leq T}\|\eta(t) - \eta_{h}(t)\| & \leq C h^{3}\,,
\label{eq310} \\
\max_{0\leq t\leq T}\|u(t) - u_{h}(t)\|_{1} & \leq C h^{3}\,.
\label{eq311}
\end{align}
\end{proposition}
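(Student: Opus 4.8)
The plan is to follow the proof of Proposition 2.1 verbatim, replacing the piecewise-linear spaces $S_h^2$, $S_{h,0}^2$ by the cubic-spline spaces $S_h^4$, $S_{h,0}^4$ and the approximation and inverse estimates $(\ref{eq21})$--$(\ref{eq26})$ by their cubic-spline analogs $(\ref{eq31})$--$(\ref{eq35})$. As before I would dispose of existence first. For the symmetric system $(\ref{scb})$, taking $\phi=\eta_h$ and $\chi=u_h$ in $(\ref{eq38})$ and adding gives the discrete conservation law $\|\eta_h(t)\|^2+\|u_h(t)\|_1^2=\|\eta_h(0)\|^2+\|u_h(0)\|_1^2$, which furnishes an a priori bound and hence, by standard o.d.e. theory, a unique solution on all of $[0,T]$. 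For the classical system $(\ref{cb})$ the invariant is lost, so I would instead write down the error initial-value problem directly, as in $(\ref{eq219})$, and recover global existence a posteriori from the error bound.

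I would then introduce the splitting $\rho:=\eta-P\eta$, $\theta:=P\eta-\eta_h$, $\sigma:=u-R_hu$, $\xi:=R_hu-u_h$, so that $\eta-\eta_h=\rho+\theta$ and $u-u_h=\sigma+\xi$. The choice of initial data $(\ref{eq37})$, $(\ref{eq39})$ produces the two structural cancellations on which the argument rests: because $P$ is the $L^2$-projection, $(\rho_t,\phi)=0$ for every $\phi\in S_h^4$, so no $\rho_t$ term survives in the $\eta$-equation; and because $R_h$ is the elliptic projection relative to $a(\cdot,\cdot)$, one has $a(\sigma_t,\chi)=\tfrac{d}{dt}a(\sigma,\chi)=0$ for every $\chi\in S_{h,0}^4$, so only $a(\xi_t,\chi)$ remains in the $u$-equation. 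Subtracting $(\ref{eq38})$ from the weak form of $(\ref{scb})$ then yields error equations of exactly the same shape as $(\ref{eq214})$--$(\ref{eq215})$, after expanding $\eta u-\eta_hu_h$, $uu_x-u_hu_{hx}$ and $\eta\eta_x-\eta_h\eta_{hx}$ in $\rho,\theta,\sigma,\xi$.

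Next I would run the continuity argument: by $(\ref{eq39})$ and continuity there is a maximal $t_h>0$ with $\|\theta(t)\|_\infty\le1$ on $[0,t_h]$. Setting $\phi=\theta$, $\chi=\xi$ and adding, I would bound each term with Cauchy--Schwarz, the smoothness of $\eta,u$, and $(\ref{eq31})$--$(\ref{eq33})$. The key quantitative inputs are that the first-derivative projection errors $\|\rho_x\|$ and $\|\sigma_x\|$ are now $O(h^3)$ (from $(\ref{eq31})$, $(\ref{eq32})$ with $k=4$, $j=1$, using also the $H^1$-stability of $P$), that any factor $\|\xi_x\|$ must be absorbed into $\tfrac13\|\xi_x\|^2\le\|\xi\|_1^2$, and that the quadratic error terms are harmless under $\|\theta\|_\infty\le1$. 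This delivers $\tfrac{d}{dt}(\|\theta\|+\|\xi\|_1)\le C(h^3+\|\theta\|+\|\xi\|_1)$ with $C$ independent of $t_h$, so Gronwall's Lemma and $\theta(0)=\xi(0)=0$ give $\|\theta(t)\|+\|\xi(t)\|_1\le Ch^3$ on $[0,t_h]$. The inverse inequality $(\ref{eq34})$ then yields $\|\theta\|_\infty\le Ch^{-1/2}\|\theta\|\le Ch^{5/2}<1$ for $h$ small, contradicting the maximality of $t_h$ and forcing $t_h=T$. Estimates $(\ref{eq310})$, $(\ref{eq311})$ follow by the triangle inequality, since $\|\rho\|=O(h^4)$ and $\|\sigma\|_1=O(h^3)$; the classical system $(\ref{cb})$ is treated identically starting from its error problem, as in the second half of the proof of Proposition 2.1.

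The step I expect to be the main obstacle is the term-by-term bookkeeping in this energy estimate: one must verify that \emph{every} contribution is either $O(h^3)$, or of the form $(\text{error})\times(\|\theta\|+\|\xi\|_1)$, or absorbable into the single controllable quantity $\|\xi\|_1$ (there being no dissipation to absorb a loose $\|\theta\|_1$). Because the cubic-spline estimates $(\ref{eq31})$--$(\ref{eq35})$ are merely higher-order versions of the piecewise-linear ones while the algebraic structure of the nonlinearities is unchanged, no new idea beyond those in Proposition 2.1 should be needed, which is precisely why the detailed computation may be omitted.
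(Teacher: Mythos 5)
Your proposal is correct and takes essentially the same route as the paper: the authors omit the proof of this proposition entirely, stating only that it "may be proved in a totally analogous manner" to Proposition 2.1, which is precisely the substitution of $S_{h}^{4}$, $S_{h,0}^{4}$ and the estimates $(\ref{eq31})$--$(\ref{eq35})$ for their piecewise-linear counterparts that you carry out. Your two structural observations (vanishing of $(\rho_{t},\phi)$ and of $a(\sigma_{t},\chi)$) and the use of the $H^{1}$-stability of $P$ to get $\|\rho\|_{1}=O(h^{3})$ are exactly the ingredients the paper relies on.
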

\subsection{Uniform mesh}
For integer $N \geq 2$ we let $h=1/N$ and $x_{i}=(i-1)h$, $i=1,2,\ldots,N+1$. In this section we will
denote by $\{\phi_{j}\}_{j=1}^{N+3}$ the usual $B$-spline basis of $S_{h}^{4}$ defined by
the restrictions on $[0,1]$ of the functions
$\phi_{j}(x)=\Phi(\tfrac{x}{h}-(j-2))$, where $\Phi$ is the cubic spline on $\mathbb{R}$ with
respect to the partition $\{-2,-1,0,1,2\}$ with support $[-2,2]$ and nodal values
$\Phi(0)=1$, $\Phi(\pm 1)=1/4$, $\Phi(\pm 2)=0$. Thus, e.g. supp$(\phi_{j})=[x_{j-3},x_{j+1}]$
and $\phi_{j}(x_{j-1})=1$ for $4\leq j\leq N$, etc. Our aim in this paragraph is to prove that
the cubic spline standard Galerkin semidiscretizations of the two systems on this mesh satisfy the
estimates $\|\eta-\eta_{h}\|=O( h^{3.5}\sqrt{\ln 1/h})$, $\|u-u_{h}\|_{1}=O(h^{3})$,
$\|u-u_{h}\|=O(h^{4}\sqrt{\ln 1/h})$, $\|u_{t}-u_{ht}\|=O(h^{4}\sqrt{\ln 1/h})$.
For this purpose we shall first state and prove a series of auxiliary results. \par
Our first lemma
is a well known result, the cubic spline analog of Lemma $2.1$.
\begin{lemma} (i) Let $G_{ij}=(\phi_{j},\phi_{i})$, $1\leq i,j\leq N+3$. Then, there exist
positive constants $c_{1}$ and $c_{2}$ such that
\[
c_{1}h|\gamma|^{2} \leq \langle G\gamma,\gamma\rangle \leq c_{2}h|\gamma|^{2}\quad
\forall \gamma \in \mathbb{R}^{N+3}\,.
\]
(ii) Let $b \in \mathbb{R}^{N+3}$, $\gamma=G^{-1}b$, and
$\psi = \sum_{j=1}^{N+3}\gamma_{j}\psi_{j}$. Then
\[
\|\psi\| \leq (c_{1}h)^{-1/2}|b|\,.
\]
(iii) Let $w \in C^{5}$. Then, there exists a constant $C_{1}=C_{1}(\|w^{(5)}\|_{\infty})$
such that for any $\hat{x} \in [x_{i},x_{i+1}]$,
\[
(w - I_{h}w)(x)=\tfrac{1}{4!}w^{(4)}(\hat{x})(x-x_{i})^{2}(x_{i+1}-x)^{2} + \tilde{g}(x)\,,
\quad x_{i}\leq x\leq x_{i+1}\,,
\]
where $\|\tilde{g}\|_{\infty} + h\|\tilde{g}'\|_{\infty}\leq C_{1}h^{5}$.
\end{lemma}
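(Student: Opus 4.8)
The plan is to follow the same blueprint as in Lemma 2.1, treating the three parts in turn: parts (i)--(ii) rest on the $L^2$-stability of the cubic $B$-spline basis, while (iii) exploits the relation between the complete cubic spline interpolant and local cubic Hermite interpolation.

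For (i), I would first note that $\langle G\gamma,\gamma\rangle = \|\psi\|^2$ for $\psi=\sum_j\gamma_j\phi_j$, so the claim is precisely that the $B$-spline basis is $L^2$-stable (a Riesz basis) with constants independent of $h$ and $N$. The upper bound is elementary: each $\phi_j$ satisfies $\|\phi_j\|^2=O(h)$, at most a fixed number of the $\phi_j$ are nonzero on any subinterval (since $\mathrm{supp}(\phi_j)$ has width $4h$), and a Cauchy--Schwarz/finite-overlap argument yields $\|\psi\|^2\le c_2 h|\gamma|^2$. For the lower bound I would pass to the reference variable $x=h\xi$, which turns $G$ into $h$ times a banded, essentially Toeplitz matrix whose entries are the fixed inner products of the reference spline $\Phi$ and its integer translates; the uniform positive-definiteness of this matrix is the classical linear independence/stability of $B$-splines and can be read off from the strict positivity of its symbol, giving $\langle G\gamma,\gamma\rangle\ge c_1 h|\gamma|^2$.

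Part (ii) is then a purely algebraic consequence of (i), exactly as in the piecewise linear case: with $G\gamma=b$ we have $\langle G\gamma,\gamma\rangle=\langle b,\gamma\rangle\le |b|\,|\gamma|$, while (i) gives $\langle G\gamma,\gamma\rangle\ge c_1 h|\gamma|^2$, whence $|\gamma|\le (c_1 h)^{-1}|b|$ and $\|\psi\|^2=\langle G\gamma,\gamma\rangle=\langle b,\gamma\rangle\le |b|\,|\gamma|\le (c_1 h)^{-1}|b|^2$, i.e. $\|\psi\|\le (c_1 h)^{-1/2}|b|$. For (iii), the key observation is that on $[x_i,x_{i+1}]$ the interpolant $s:=I_h w$ is the cubic matching $w$ at the endpoints with the spline slopes $m_i:=s'(x_i)$, $m_{i+1}:=s'(x_{i+1})$, i.e. a cubic Hermite interpolant to modified data. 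Writing $H_i$ for the genuine cubic Hermite interpolant of $w$ (matching $w$ and $w'$ at $x_i,x_{i+1}$), I would split $w-s=(w-H_i)+(H_i-s)$. The standard Hermite error formula supplies the leading term $\tfrac{1}{4!}w^{(4)}(\hat x)(x-x_i)^2(x_{i+1}-x)^2$, since $(x-x_{i+1})^2=(x_{i+1}-x)^2$, and the deviation of $w-H_i$ from this frozen-coefficient term is $O(h^5)$ in sup norm and $O(h^4)$ in the derivative because $w\in C^5$ makes $w^{(4)}$ Lipschitz with constant $O(\|w^{(5)}\|_\infty)$. For $H_i-s$, which is the cubic vanishing at both endpoints with endpoint slopes equal to the nodal slope errors $e_i:=w'(x_i)-m_i$, the classical nodal-derivative superconvergence of the complete cubic spline on a uniform mesh, $|e_i|=O(h^4)$, gives $\|H_i-s\|_\infty=O(h^5)$ and $\|(H_i-s)'\|_\infty=O(h^4)$. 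Collecting both remainders into $\tilde g$ yields $\|\tilde g\|_\infty+h\|\tilde g'\|_\infty\le C_1 h^5$.

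The main obstacle I anticipate is the derivative bound $h\|\tilde g'\|_\infty=O(h^5)$ in the $w-H_i$ part, because the mean-value form of the Hermite error introduces an $x$-dependent evaluation point for $w^{(4)}$ that is awkward to differentiate. To sidestep this I would use the Peano-kernel representation $(w-H_i)(x)=\int_{x_i}^{x_{i+1}} K(x,s)\,w^{(4)}(s)\,ds$, which is smooth in $x$ and satisfies $\int_{x_i}^{x_{i+1}} K(x,s)\,ds=\tfrac{1}{4!}(x-x_i)^2(x_{i+1}-x)^2$; subtracting the constant-coefficient integral and estimating the difference and its $x$-derivative via $|w^{(4)}(s)-w^{(4)}(\hat x)|\le \|w^{(5)}\|_\infty h$ together with $\int|\partial_x K(x,s)|\,ds=O(h^3)$ then gives the claim. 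The remaining ingredient, the $O(h^4)$ bound on $e_i$, follows from the tridiagonal system satisfied by the spline slopes on a uniform mesh; this is classical and, as in Lemma 2.1, may simply be adapted from the known cubic-spline interpolation theory.
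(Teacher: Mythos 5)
Your proposal is correct. Note, however, that the paper does not actually prove this lemma: its ``proof'' consists of the remark that (i)--(iii) are established by Dupont \cite{d} for \emph{periodic} cubic splines and that the adaptation to $S_{h}^{4}$ and the complete-spline interpolant $I_{h}$ is straightforward. What you have written is, in effect, that adaptation carried out, and your route correctly isolates the two places where the nonperiodic setting genuinely differs from Dupont's. First, in (i) the Gram matrix is only Toeplitz in its interior: the first and last few basis functions are truncated at $x=0$ and $x=1$, so their mutual inner products are not the translate values and the symbol (Euler--Frobenius) argument alone does not cover the corner block; the clean fix is exactly the classical local linear independence / dual-functional stability of $B$-splines that you invoke, so this is a minor imprecision rather than a gap. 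Second, in (iii) your comparison with the local cubic Hermite interpolant, the Peano-kernel treatment of the frozen-coefficient remainder (which neatly sidesteps differentiating the mean-value point), and the $O(h^{4})$ superconvergence of the nodal slopes are the standard ingredients; the end conditions $(I_{h}w)'(x_{k})=w'(x_{k})$, $k=1,N+1$, make the boundary slope errors vanish, and the strictly diagonally dominant tridiagonal slope system on the uniform mesh gives $\max_{j}|e_{j}|\leq \tfrac{1}{2}\max_{j}|r_{j}|=O(h^{4}\|w^{(5)}\|_{\infty})$ --- note that the $C^{5}$ hypothesis is exactly what is needed here, since with only $C^{4}$ the residual of the slope equations is merely $O(h^{3})$. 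Both facts you defer to as classical, which is a fair level of citation given that the paper defers the entire lemma to \cite{d} at the same level.
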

\begin{proof}
The proofs of $(i)$-$(iii)$ are given in \cite{d} in the case of periodic cubic splines.
It is straightforward to adapt them to the case of $S_{h}^{4}$ and $I_{h}$ at hand.
\end{proof}
We next prove a superaccuracy estimate for $P[(w\ve)']$, where $\ve$ is the error of the
cubic spline interpolant of a sufficiently smooth function and $w$ is a $C^{1}$ weight.
This estimate is a consequence of cancellation effects due to the uniform mesh and may be viewed as the cubic
spline analog of Lemma $2.2$.
\begin{lemma} Let $v\in C^{5}$ and $w\in C^{1}$. If $\ve=v-I_{h}v$ and $\psi \in S_{h}^{4}$ is
such that
\[
(\psi,\phi) = ((w\ve)',\phi)\quad \forall \phi \in S_{h}^{4}\,,
\]
then $\|\psi\|\leq Ch^{3.5}$. If in addition $w(0)=w(1)=0$, then $\|\psi\|\leq Ch^{4}$.
\end{lemma}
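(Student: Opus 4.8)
The plan is to mirror the proof of Lemma 2.2, using the $B$-spline basis and Lemma 3.1 in place of Lemma 2.1. Writing $\psi=\sum_{j=1}^{N+3}\gamma_j\phi_j$ with $\gamma=G^{-1}b$ and $b_i:=((w\ve)',\phi_i)$, an integration by parts gives $b_i=-(w\ve,\phi_i')$, the boundary contributions vanishing because $\ve=v-I_hv$ vanishes at the nodes, in particular at $x=0,1$. By Lemma 3.1(ii) it then suffices to estimate $|b|$: if the interior components satisfy $b_i=O(h^5)$ and the $O(1)$ components whose $B$-spline touches $x=0$ or $x=1$ satisfy $b_i=O(h^4)$, then $|b|^2=O(h^8)+O(N)\,O(h^{10})=O(h^8)$, whence $\|\psi\|\le(c_1h)^{-1/2}|b|=O(h^{3.5})$. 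When $w(0)=w(1)=0$ the boundary components will be shown to improve to $O(h^5)$, giving $|b|=O(h^{4.5})$ and $\|\psi\|=O(h^4)$.

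For an interior index $i$ I would insert the local expansion of Lemma 3.1(iii): on each $[x_j,x_{j+1}]$ in $\operatorname{supp}\phi_i$ we have $\ve(x)=\tfrac{1}{24}v^{(4)}(\hat x_j)(x-x_j)^2(x_{j+1}-x)^2+\tilde g(x)$ with $\|\tilde g\|_\infty+h\|\tilde g'\|_\infty=O(h^5)$. Since $|\phi_i'|=O(h^{-1})$ and $\operatorname{supp}\phi_i$ has length $O(h)$, the $\tilde g$-part of $b_i$ is immediately $O(h^5)$. For the leading quartic part I would change variables $x=x_j+sh$ on each interval, using $\phi_i'(x_j+sh)=h^{-1}\Phi'(s+j-i+1)$, to write the four interval integrals as $h^4\sum_m c_{i+m}\int_0^1 w(x_{i+m}+sh)\,s^2(1-s)^2\,\Phi'(s+m+1)\,ds$, where $c_{i+m}=\tfrac{1}{24}v^{(4)}(\hat x_{i+m})$ and $m=j-i$. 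Freezing the $C^1$ factors $w$ and $c$ at $x_i$ costs only $O(h)$ per interval, hence $O(h^5)$ overall, and the frozen sum assembles into $c_i\,w(x_i)\,h^4\int_{\mathbb R}\tilde B(t)\Phi'(t)\,dt$, where $\tilde B$ is the $1$-periodic extension of $s^2(1-s)^2$.

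The crux is that this integral vanishes. Indeed $\tilde B$ is even and $\Phi'$ is odd about $0$ (as $\Phi$ is symmetric about its center), so $\tilde B\,\Phi'$ is odd about $0$ and integrates to zero over the symmetric support $[-2,2]$ of $\Phi'$; this is precisely the uniform-mesh cancellation that the lemma exploits. Hence the frozen term drops out and $b_i=O(h^5)$ for interior $i$. For the boundary indices I would estimate directly, $|b_i|\le\|w\|_\infty\,\|\ve\|_{L^\infty(\operatorname{supp}\phi_i)}\,\|\phi_i'\|_{L^1}=O(1)\cdot O(h^4)\cdot O(1)=O(h^4)$; and if $w(0)=w(1)=0$ then $|w|=O(h)$ throughout the $O(h)$-neighbourhood of the endpoint containing $\operatorname{supp}\phi_i$, gaining one power and yielding $O(h^5)$. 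Collecting these estimates as in the first paragraph gives the two claimed bounds.

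I expect the main obstacle to be the interior step: correctly assembling the four single-interval integrals into the periodic integral $\int_{\mathbb R}\tilde B\Phi'$, keeping track of the index shift $s+j-i+1$ and of the fact that $\Phi'$ is piecewise quadratic over four knots rather than $\pm h^{-1}$ over two as in Lemma 2.2, and verifying that freezing the variable coefficients $w$ and $v^{(4)}$ perturbs $b_i$ only at order $h^5$. Once the symmetry identity $\int_{\mathbb R}\tilde B\,\Phi'=0$ is in hand, the remaining bookkeeping is routine.
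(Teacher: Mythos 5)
Your proposal is correct and follows essentially the same route as the paper's proof: the same reduction to the coefficients $b_i=-(w\ve,\phi_i')$ via Lemma $3.1(ii)$, the same insertion of the local expansion of Lemma $3.1(iii)$ with freezing of $v^{(4)}$ and $w$ at a cost of $O(h^{5})$, and the same uniform-mesh cancellation, which you express as the parity identity $\int_{-2}^{2}\tilde B\,\Phi'=0$ while the paper maps all four intervals to $[0,h]$ and uses the reflection symmetries of the $\phi_k$ and of $q$ --- the two formulations are equivalent. The one genuine (minor) improvement is your treatment of the truncated boundary splines when $w(0)=w(1)=0$: the direct bound $|b_i|\leq\|w\|_{L^{\infty}(\mathrm{supp}\,\phi_i)}\|\ve\|_{\infty}\|\phi_i'\|_{L^{1}}=O(h)\cdot O(h^{4})\cdot O(1)$ replaces the paper's explicit telescoping computation of $b_1,b_2,b_3$ and suffices.
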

\begin{proof}
Let $b_{j} = ((w\ve)',\phi_{j})=-(w\ve,\phi_{j}')$, $1\leq j\leq N+3$. In view of Lemma $3.1(ii)$
it suffices to show that $|b|=O(h^{4})$. It is clear by $(\ref{eq31})$ and the properties of
the basis functions $\phi_{j}$ that $b_{j}=O(h^{4})$ for all $j$. We will prove that actually
$b_{j}=O(h^{5})$ for $j=4,5,\ldots,N$, thus establishing that $|b|=O(h^{4})$. Let $4\leq j\leq N$.
Using Lemma $3.1(iii)$ and putting $q(x)=x^{2}(h-x)^{2}/4!$ we have
\begin{align*}
(w\ve,\phi_{j}') & = \sum_{k=0}^{3}\int_{x_{j-3+k}}^{x_{j-2+k}}w(x)\ve(x)\phi_{j}'(x)dx \\
& = v^{(4)}(x_{j-2})\int_{x_{j-3}}^{x_{j-2}}w(x)q(x-x_{j-3})\phi_{j}'(x)dx
+ v^{(4)}(x_{j-1})\int_{x_{j-2}}^{x_{j-1}}w(x)q(x-x_{j-2})\phi_{j}'(x)dx \\
& \,\,\,\,\,
+ v^{(4)}(x_{j-1})\int_{x_{j-1}}^{x_{j}}w(x)q(x-x_{j-1})\phi_{j}'(x)dx
+ v^{(4)}(x_{j})\int_{x_{j}}^{x_{j+1}}w(x)q(x-x_{j})\phi_{j}'(x)dx + O(h^{5}) \\
&=: v^{(4)}(x_{j-2})J_{1} + v^{(4)}(x_{j-1})(J_{2}+J_{3}) + v^{(4)}(x_{j})J_{4} + O(h^{5})\,.
\end{align*}
Since $v^{(4)}(x_{j-2}) = v^{(4)}(x_{j-1}) - h v^{(5)}(t_{1j})$ and
$v^{(4)}(x_{j})=v^{(4)}(x_{j-1}) + h v^{(5)}(t_{2j})$ for some $t_{1j}\in (x_{j-2},x_{j-1})$
and $t_{2j}\in (x_{j-1},x_{j})$, we obtain
\begin{equation}
(w\ve,\phi_{j}') = v^{(4)}(x_{j-1})(J_{1} + J_{2} + J_{3} + J_{4}) + O(h^{5})\,.
\label{eq312}
\end{equation}
Suitable changes of variable in each of the four integrals yield
\begin{align*}
J_{1} & = \int_{0}^{h}w(x+x_{j-3})q(x)\phi_{4}'(x)dx\,, \quad
J_{2} = \int_{0}^{h}w(x+x_{j-2})q(x)\phi_{3}'(x)dx\,, \\
J_{3} & = \int_{0}^{h}w(x+x_{j-1})q(x)\phi_{2}'(x)dx\,, \quad
J_{4} = \int_{0}^{h}w(x+x_{j})q(x)\phi_{1}'(x)dx\,.
\end{align*}
In addition, if $x_{j\mu}=(x_{j-2} + x_{j-1})/2$, we have for $x \in [0,h]$
\begin{align*}
w(x+x_{j-3}) & = w(x+x_{j\mu}) - \tfrac{3h}{2}w'(\tau_{1jx})\,, \quad
w(x+x_{j-2}) = w(x+x_{j\mu}) -\tfrac{h}{2}w'(\tau_{2jx})\,,\\
w(x+x_{j-1}) & = w(x+x_{j\mu}) + \tfrac{h}{2}w'(\tau_{3jx})\,, \quad
w(x+x_{j}) = w(x+x_{j\mu}) + \tfrac{3h}{2}w'(\tau_{4jx})\,,
\end{align*}
for some $\tau_{ijx}$ between $x+x_{j-4+i}$ and $x+x_{j\mu}$, $i=1,2,3,4$. Thus,
\begin{align*}
J_{1}+J_{2}+J_{3}+J_{4} & = \int_{0}^{h}w(x+x_{j\mu})q(x)[\phi_{1}'(x)+\phi_{2}'(x)
+\phi_{3}'(x) + \phi_{4}'(x)]dx + O(h^{5})\\
& = w(x_{j\mu})\int_{0}^{h}q(x)[\phi_{1}'(x)+\phi_{2}'(x)
+\phi_{3}'(x) + \phi_{4}'(x)]dx + O(h^{5})\,.
\end{align*}
The last integral is equal to zero, since $\phi_{4}(x)=\phi_{1}(h-x)$,
$\phi_{3}(x) =\phi_{4}(h-x)$, and $q(x)=q(h-x)$ for $x\in [0,h]$. Hence
\[
J_{1} + J_{2} + J_{3} + J_{4} = O(h^{5})\,,
\]
and $(\ref{eq312})$ implies that $b_{j}=O(h^{5})$. Thus, the first assertion of the lemma
is verified. To prove the second assertion, suppose that $w(0)=0$. Then $b_{j}=O(h^{5})$
for $j=1,2,3$. Indeed, using Lemma $3.1(iii)$, we have
\begin{align*}
-b_{1} = (w\ve,\phi_{1}') & = v^{(4)}(h)\int_{0}^{h}w(x)q(x)\phi_{1}'(x)dx + O(h^{5}) \\
& = v^{(4)}(h)\int_{0}^{h}xw'(t_{x})q(x)\phi_{1}'(x)dx + O(h^{5}) = O(h^{5})\,.
\end{align*}
In addition,
\begin{align*}
-b_{2} & = (w\ve,\phi_{2}') = \int_{0}^{h}w(x)\ve(x)\phi_{2}'(x)dx +
\int_{h}^{2h}w(x)\ve(x)\phi_{2}'(x)dx \\
& = v^{(4)}(h)\int_{0}^{h}w(x)q(x)\phi_{2}'(x)dx + v^{(4)}(h)\int_{h}^{2h}w(x)q(h-x)\phi_{2}'(x)dx
+ O(h^{5})\\
& = v^{(4)}(h)\int_{0}^{h}w(x)q(x)\phi_{2}'(x)dx + v^{(4)}(h)\int_{0}^{h}w(x+h)q(x)\phi_{1}'(x)dx
+ O(h^{5})\\
& = v^{(4)}(h)\int_{0}^{h}xw'(t_{x})q(x)\phi_{2}'(x)dx
+ v^{(4)}(h)\int_{0}^{h}(w(x+h)-w(x))q(x)\phi_{1}'(x)dx\\
& = - b_{1} + O(h^{5}) = O(h^{5})\,.
\end{align*}
Finally,
\begin{align*}
-b_{3} & = (w\ve,\phi_{3}') = \int_{0}^{h}w(x)\ve(x)\phi_{3}'(x)dx +
\int_{h}^{2h}w(x)\ve(x)\phi_{3}'(x)dx + \int_{2h}^{3h}w(x)\ve(x)\phi_{3}'(x)dx \\
& = v^{(4)}(h)\Bigl[\int_{0}^{h}w(x)q(x)\phi_{3}'(x)dx + \int_{0}^{h}w(x+h)q(x)\phi_{2}'(x)dx
\\
&\qquad \qquad \quad + \int_{0}^{h}w(x+2h)q(x)\phi_{1}'(x)dx\Bigr] + O(h^{5})\\
& = v^{(4)}(h)\Bigl[\int_{0}^{h}xw'(t_{x})q(x)\phi_{3}'(x)dx - b_{1} - b_{2}\Bigr] + O(h^{5})
= O(h^{5})\,.
\end{align*}
Similarly, if $w(1)=0$ we have $b_{j}=O(h^{5})$ for $j=N+1,N+2,N+3$. Hence, if $w$
vanishes at $x=0$ and $x=1$, $|b|=O(h^{5})$ and the second assertion of the lemma follows by
Lemma $3.1(ii)$.
\end{proof}
We shall also derive a superaccuracy estimate for $P[(we)']$, where
$e$ is the error of the elliptic projection of a function $v \in C_{0}^{5}$ and
$w$ is a $C^{1}$ weight. For this purpose, we first state two superconvergence results that
follow from the analysis of Wahlbin in \cite{w}, and which are valid for interior nodes whose
distance from the endpoints of the interval is at least of $O(h \ln 1/h)$.
\begin{proposition}
Suppose that $v\in C_{0}^{5}$ and let $v_{h}=R_{h}v$ be its elliptic projection onto
$S_{h,0}^{4}$. Then the following hold:
\begin{itemize}
\item[$(i)$] There exists a constant $C$ independent of $h$ such that
\begin{equation}
|(v-v_{h})'(x_{i})| \leq Ch^{4}\|v\|_{W_{\infty}^{5}} \quad
\text{provided} \quad \text{dist}(x_{i},\partial I)\geq C_{1}h\ln\tfrac{1}{h}\,,
\label{eq313}
\end{equation}
where $C_{1}$ is a sufficiently large constant independent of $h$.
\item[$(ii)$] If $x_{i}$, $x_{i+1}$ are two adjacent nodes for which the second inequality in
$(\ref{eq313})$ holds and $e(x)=v(x) - v_{h}(x)$, we have
\begin{equation}
e(x_{i+1}) - e(x_{i}) = O(h^{5})\|v\|_{W_{\infty}^{5}}\,.
\label{eq314}
\end{equation}
\end{itemize}
\end{proposition}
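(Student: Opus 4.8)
The plan is to reduce the statement about the elliptic projection $v_{h}=R_{h}v$ to a statement about a single finite element function, by comparison with the interpolant. I would write $e=v-v_{h}=\ve-\theta$, where $\ve:=v-I_{h}v$ and $\theta:=R_{h}v-I_{h}v\in S_{h,0}^{4}$ (note $I_{h}v\in S_{h,0}^{4}$ since $v\in C_{0}^{5}$). From the definition of $R_{h}$ and $R_{h}I_{h}v=I_{h}v$ one checks that $a(\theta,\chi)=a(\ve,\chi)$ for all $\chi\in S_{h,0}^{4}$, i.e. $\theta=R_{h}\ve$. The interpolation part carries \emph{no} interior restriction: Lemma $3.1(iii)$ gives $\ve(x_{j})=0$ at every node, and differentiating the representation there (the factor $(x-x_{i})^{2}(x_{i+1}-x)^{2}$ has a double root at each endpoint) yields $\ve'(x_{j})=\tilde g'(x_{j})=O(h^{4})\|v\|_{W_{\infty}^{5}}$ at every node. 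Hence $(i)$ and $(ii)$ follow once we show that the finite element correction $\theta$ satisfies $\theta'(x_{i})=O(h^{4})$ and $\theta(x_{i+1})-\theta(x_{i})=O(h^{5})$ at the interior nodes in question; \emph{all} the boundary-distance hypotheses enter only through $\theta$.

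Next I would represent the nodal quantities of $\theta$ by discrete Green's functions. Since $a(\cdot,\cdot)$ is symmetric and coercive on $S_{h,0}^{4}$, there exist unique $\Gamma_{i},\Delta_{i}\in S_{h,0}^{4}$ with $a(\chi,\Gamma_{i})=\chi'(x_{i})$ and $a(\chi,\Delta_{i})=\chi(x_{i+1})-\chi(x_{i})$ for all $\chi\in S_{h,0}^{4}$. Taking $\chi=\theta$ and using the Galerkin orthogonality $a(\ve-\theta,\cdot)=0$ gives $\theta'(x_{i})=a(\ve,\Gamma_{i})$ and $\theta(x_{i+1})-\theta(x_{i})=a(\ve,\Delta_{i})$, so everything hinges on estimating $a(\ve,\cdot)$ against these Green's functions. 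Two ingredients are needed. First, the exponential decay of $\Gamma_{i}$ and $\Delta_{i}$ away from $x_{i}$, uniformly in $h$: the operator $1-\tfrac13\partial^{2}$ has a continuous Green's function decaying like $e^{-\sqrt3\,|x-x_{i}|}$, and the discrete effect is governed by the exponential decay of the off-diagonal entries of the inverse of the banded Gram/stiffness matrix, exactly of the type furnished by Demko's result \cite{de}; this gives $|\Gamma_{i}(x)|+h|\Gamma_{i}'(x)|\le C\exp(-c\,|x-x_{i}|/h)$ with a fixed negative power of $h$ in front, and similarly for $\Delta_{i}$. Second, the uniform-mesh cancellation of Lemma $3.2$.

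With these in hand I would split $a(\ve,\Gamma_{i})=(\ve,\Gamma_{i})+\tfrac13(\ve',\Gamma_{i}')$ into an interior and a boundary contribution. In the interior the smooth, slowly varying $\Gamma_{i}$ plays the role of the weight $w$ in Lemma $3.2$, and the symmetric cancellation exploited in its proof upgrades the naive $O(h^{3})$ size of $(\ve',\Gamma_{i}')$ to $O(h^{4})$. Near $\partial I$, $\Gamma_{i}$ loses the translation-invariant/symmetric structure that drives the cancellation, producing a pollution term; but by the decay estimate it is bounded by $Ch^{-m}\exp(-cC_{1}\ln\tfrac1h)=Ch^{cC_{1}-m}$ once $\operatorname{dist}(x_{i},\partial I)\ge C_{1}h\ln\tfrac1h$, which is $O(h^{5})$ for $C_{1}$ sufficiently large. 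This is precisely the origin of the logarithmic factor and of the requirement that $C_{1}$ be large. Collecting the two pieces yields $\theta'(x_{i})=O(h^{4})$, and the same argument applied to $\Delta_{i}$—whose defining functional is an $O(h)$-scale averaged derivative and therefore carries one extra power of $h$ (in energy, $\|\Delta_{i}\|_{a}=O(h^{1/2})$ against $\|\Gamma_{i}\|_{a}=O(h^{-1/2})$)—gives $\theta(x_{i+1})-\theta(x_{i})=O(h^{5})$. Alternatively, all of this is subsumed by Wahlbin's interior superconvergence estimates \cite{w}, specialised to the constant-coefficient operator $1-\tfrac13\partial^{2}$ and the translation-invariant cubic spline space on a uniform mesh; the plan above amounts to verifying their hypotheses.

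The main obstacle I anticipate is the quantitative, $h$-uniform exponential-decay estimate for $\Gamma_{i},\Delta_{i}$ coupled with the exact bookkeeping of powers of $h$: one must confirm that the interior cancellation of Lemma $3.2$ interacts with the weight $\Gamma_{i}$ so as to produce \emph{exactly} $h^{4}$ (respectively $h^{5}$ for the difference functional), and that the pollution from the two boundary layers is genuinely dominated for $C_{1}$ large rather than merely comparable. Pinning down the correct negative power $m$ in the decay bound and checking that $cC_{1}-m\ge 5$ is achievable is the delicate point; once the decay rate and the cancellation order are matched, $(i)$ and $(ii)$ follow from the Green's-function representation above.
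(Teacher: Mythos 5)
Your reduction is sound and worth keeping: writing $e=\ve-\theta$ with $\ve=v-I_{h}v$ and $\theta=R_{h}\ve\in S_{h,0}^{4}$, Lemma $3.1(iii)$ does give $\ve(x_{j})=0$ and $\ve'(x_{j})=O(h^{4})\|v\|_{W_{\infty}^{5}}$ at every node with no interior restriction, so the whole statement indeed rests on the nodal behaviour of the Ritz correction $\theta$, and your identification of the two mechanisms (uniform-mesh cancellation in the interior, exponential decay of the discrete Green's functions to control boundary pollution, whence the $C_{1}h\ln\tfrac1h$ hypothesis and the need for $C_{1}$ large) is the right picture. The genuine gap is in the near field of $x_{i}$ itself. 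The discrete Green's function $\Gamma_{i}$ for the functional $\chi\mapsto\chi'(x_{i})$ approximates $\partial_{y}G_{y}|_{y=x_{i}}$ for the operator $1-\tfrac13\partial^{2}$, which has an $O(1)$ jump at $x_{i}$; hence $\Gamma_{i}$ varies by $O(1)$ across one mesh interval and $\|\Gamma_{i}\|_{W_{\infty}^{1}}=O(h^{-1})$ there. It is therefore not an admissible weight for Lemma $3.2$, whose cancellation argument requires a fixed $C^{1}$ weight $w$ with $w(x)-w(x-h)=O(h)$. On the two intervals adjacent to $x_{i}$ the naive bound for the contribution to $(\ve',\Gamma_{i}')$ is $h\cdot h^{3}\cdot h^{-1}=O(h^{3})$, which is exactly the order you must beat, and beating it requires a separate local symmetry/parity argument for $\ve'$ against $\Gamma_{i}'$ near $x_{i}$ (or Wahlbin's local error expansion) that your sketch does not supply. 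The same issue arises for $\Delta_{i}$ in part $(ii)$: Cauchy--Schwarz with $\|\ve\|_{a}=O(h^{3})$ and $\|\Delta_{i}\|_{a}=O(h^{1/2})$ gives only $O(h^{3.5})$, so the extra $h^{1.5}$ must again come from near-field cancellation that is not established.

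For comparison, the paper does not attempt this construction at all: part $(i)$ is quoted from Corollary $1.6.2$ of Wahlbin together with Theorems $1.3.1$--$1.3.2$ (needed to pass from the projection defined by $(\tilde v_{h}',\phi')=(v',\phi')$ to the one defined by $a(\cdot,\cdot)$ --- a transfer step you would also need if you fall back on the citation), and part $(ii)$ is obtained not from a Green's function for the difference functional but from the local representation $\tilde e(x)=Q_{i}(x)+O(h^{5})$ with $Q_{i}\in\mathbb{P}_{4}$ satisfying $Q_{i}(x_{i})=Q_{i}(x_{i+1})$, adapted from the Hermite-cubic case in Section $1.8$ of Wahlbin. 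Your closing remark that everything is ``subsumed by Wahlbin's interior superconvergence estimates'' is in fact the paper's entire proof; as a self-contained argument, however, your proposal is incomplete at precisely the point where the superconvergence has to be won.
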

\begin{proof}
$(i)$ The estimate $(\ref{eq313})$ follows from Corollary $1.6.2$ of \cite{w} (which is
strictly valid when the elliptic projection is defined by $(\tilde{v}_{h}',\phi')=(v',\phi')$
$\forall \phi \in S_{h,0}^{4}$), combined with Theorem $1.3.1$ of \cite{w} that allows us
to state the result for $v_{h}=R_{h}v$ defined as in section $2$ of the paper at hand. \par
$(ii)$ The cancellation property expressed by $(\ref{eq314})$ is a consequence of the fact that
$\tilde{e}(x) = v(x) - \tilde{v}_{h}(x)$ may be represented in the form
\[
\tilde{e}(x) = Q_{i}(x) + O(h^{5})\|w\|_{W_{\infty}^{5}(x_{i},x_{i+1})}\,, \quad
x\in [x_{i},x_{i+1}]\,,
\]
where $Q_{i}(x)$ is a polynomial of degree four such that $Q_{i}(x_{i})=Q_{i}(x_{i+1})$.
This representation follows from the remarks in Example $1.8.2$ of \cite{w} and by adapting
the arguments of the proof in Section $1.8$ of \cite{w} (which are valid for $C^{1}$ Hermite cubics)
to the case of the $C^{2}$ cubic splines at hand. Hence
$\tilde{e}(x_{i+1}) - \tilde{e}(x_{i}) = O(h^{5})\|w\|_{W_{\infty}^{5}(x_{i},x_{i+1})}$, and
$(\ref{eq314})$ follows by the function values superconvergence estimate for elliptic
projections given in Theorem $1.3.2$ of \cite{w}.
\end{proof}
In the next lemma we present some further formulas for the error $v-R_{h}v$ that will be
used in the sequel.
\begin{lemma}
(i) Let $v\in C_{0}^{5}$, $v_{h}=R_{h}v$, $e=v-v_{h}$, and $1\leq i\leq N$. Then,
there exists a constant $C=C(\|w\|_{W_{\infty}^{5}})$ such that for any
$\hat{x} \in [x_{i},x_{i+1}]$,
\begin{equation}
e(x) = \gamma_{i}(x) + \tfrac{1}{4!}v^{(4)}(\hat{x})(x-x_{i})^{2}(x_{i+1}-x)^{2}
+ \delta_{i}(x)\,, \quad x\in[x_{i},x_{i+1}]\,,
\label{eq315}
\end{equation}
where $\|\delta_{i}\|_{\infty}\leq Ch^{5}$ and $\gamma_{i}$ is the cubic Hermite polynomial
interpolating the values of $e$ and its derivative at the nodes $x_{i}$ and $x_{i+1}$.\\
(ii) In addition to the hypotheses in (i), suppose that $x_{i}$ and $x_{i+1}$ satisfy the
second inequality in $(\ref{eq313})$. Then, there is a constant $C=C(\|v\|_{W_{\infty}^{5}})$
such that for any $\hat{x}\in [x_{i},x_{i+1}]$
\begin{equation}
e(x)=e(x_{i}) + \tfrac{1}{4!}v^{(4)}(\hat{x})(x-x_{i})^{2}(x_{i+1}-x)^{2}
+ \tilde{\delta}_{i}(x)\,, \quad x\in [x_{i},x_{i+1}]\,,
\label{eq316}
\end{equation}
where $\|\tilde{\delta}_{i}\|_{\infty} \leq Ch^{5}$.
\end{lemma}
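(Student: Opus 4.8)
The plan is to prove both parts by comparing the elliptic projection error $e=v-R_hv$ on each subinterval $[x_i,x_{i+1}]$ to the \emph{local} cubic Hermite interpolant of $v$, which is exactly solvable there. For part (i), fix $i$ and let $H_i$ be the cubic polynomial on $[x_i,x_{i+1}]$ that interpolates $v$ and $v'$ at both nodes. The classical Hermite remainder formula gives $(v-H_i)(x)=\tfrac{1}{4!}v^{(4)}(\zeta_x)(x-x_i)^2(x_{i+1}-x)^2$ for some $\zeta_x\in(x_i,x_{i+1})$, and freezing $v^{(4)}$ at $\hat x$ costs only $O(h^5)$ since $|v^{(4)}(\zeta_x)-v^{(4)}(\hat x)|\le\|v^{(5)}\|_\infty h$ while $(x-x_i)^2(x_{i+1}-x)^2\le h^4$. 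Thus $(v-H_i)(x)=\tfrac{1}{4!}v^{(4)}(\hat x)(x-x_i)^2(x_{i+1}-x)^2+O(h^5)$, which already produces both the quartic bump and the $O(h^5)$ remainder demanded by $(\ref{eq315})$.

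The key observation is that $H_i-R_hv$ is a cubic on $[x_i,x_{i+1}]$ (being the difference of two cubics, as $R_hv\in S_{h,0}^4$ is cubic on each subinterval) that reproduces the Hermite data of $e$ at the nodes: since $H_i$ matches $v$ and $v'$ at $x_i$ and $x_{i+1}$, we have $(H_i-R_hv)(x_j)=e(x_j)$ and $(H_i-R_hv)'(x_j)=e'(x_j)$ for $j=i,i+1$. By uniqueness of the cubic Hermite interpolant, $H_i-R_hv=\gamma_i$ on $[x_i,x_{i+1}]$. Writing $e=(v-H_i)+(H_i-R_hv)$ then yields $(\ref{eq315})$ with $\delta_i:=(v-H_i)-\tfrac{1}{4!}v^{(4)}(\hat x)(x-x_i)^2(x_{i+1}-x)^2$ satisfying $\|\delta_i\|_\infty\le Ch^5$, where $C=C(\|v\|_{W_\infty^5})$.

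For part (ii) it remains only to show that, under the distance hypothesis, replacing $\gamma_i$ by the constant $e(x_i)$ is accurate to $O(h^5)$. Expanding $\gamma_i$ in the standard cubic Hermite basis on $[x_i,x_{i+1}]$ (value functions $H_{00},H_{01}$ with $H_{00}+H_{01}\equiv1$, derivative functions $H_{10},H_{11}$) and using this partition-of-unity identity gives
\[
\gamma_i(x)-e(x_i)=[e(x_{i+1})-e(x_i)]H_{01}+h\,e'(x_i)H_{10}+h\,e'(x_{i+1})H_{11},
\]
where the factors of $h$ on the derivative terms come from the chain rule in the local variable $s=(x-x_i)/h$. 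Since the four Hermite basis functions are bounded independently of $h$, the superconvergence estimates of Proposition $3.1$ finish the argument: $(\ref{eq313})$ bounds $|e'(x_i)|$ and $|e'(x_{i+1})|$ by $Ch^4$, making the last two terms $O(h^5)$, while $(\ref{eq314})$ bounds $|e(x_{i+1})-e(x_i)|$ by $Ch^5$, controlling the first. Hence $\|\gamma_i-e(x_i)\|_\infty\le Ch^5$, and setting $\tilde\delta_i:=\delta_i+(\gamma_i-e(x_i))$ converts $(\ref{eq315})$ into $(\ref{eq316})$.

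The main obstacle is conceptual rather than computational and lies in part (i): recognizing that the spline projection error, which is not locally a polynomial, can nonetheless be decomposed into the purely polynomial quantity $H_i-R_hv$ (identified with $\gamma_i$) plus a standard local Hermite remainder. Once this splitting is in place both parts reduce to elementary estimates; in part (ii) the only delicate point is that Proposition $3.1$ is valid solely for nodes whose distance to $\partial I$ is at least $C_1h\ln(1/h)$, which is exactly why the extra hypothesis is imposed and why \emph{both} endpoints $x_i$ and $x_{i+1}$ of the subinterval are required to satisfy it.
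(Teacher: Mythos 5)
Your proof is correct and follows essentially the same route as the paper: part (i) is the standard cubic Hermite remainder applied to $e$ (using that $R_hv$ is a cubic on each subinterval, so the fourth derivative in the remainder is just $v^{(4)}$) plus the mean-value theorem to freeze $v^{(4)}$ at $\hat{x}$, and part (ii) expands $\gamma_i$ in the Hermite basis, uses the partition-of-unity identity for the value basis functions, and invokes $(\ref{eq313})$ and $(\ref{eq314})$ exactly as the paper does. The only difference is presentational: you route part (i) through the explicit local interpolant $H_i$ and the identification $\gamma_i = H_i - R_hv$, whereas the paper applies the Hermite error representation to $e$ directly; the two are the same argument.
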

\begin{proof} $(i)$ By the standard representation of the error of Hermite interpolation
we have, since $v_{h}\in\mathbb{P}_{3}$ in $[x_{i},x_{i+1}]$, that for $x\in [x_{i},x_{i+1}]$
there holds
\[
e(x) - \gamma_{i}=\tfrac{1}{4!}(x-x_{i})^{2}(x_{i+1}-x)^{2}v^{(4)}(t_{x})
\]
for some $t_{x} \in (x_{i},x_{i+1})$. Hence $(\ref{eq315})$ follows from the mean-value
theorem since $v \in C^{5}$.\\
$(ii)$ We let $I=[0,1]$, and $K=\{x\in I :
dist(x,\partial I)\geq C_{1}h\ln 1/h\}$. In view of $(\ref{eq315})$ it suffices to show
that
\[
\gamma_{i}(x) = e(x_{i}) + O(h^{5})\|v\|_{W_{\infty}^{5}} \quad \text{for}\quad
x_{i},x_{i+1} \in K\,.
\]
Now
\[
\gamma_{i}(x) = e(x_{i})A_{i,1}(x) + e'(x_{i})B_{i,1}(x) + e(x_{i+1})A_{i,2}(x)
+e'(x_{i+1})B_{i,2}(x)\,,
\]
where
\begin{align*}
A_{i,1}(x) & = \bigl[ 1+\tfrac{2(x-x_{i})}{h}\bigr] \tfrac{(x_{i+1}-x)^{2}}{h^{2}}\,, \quad
B_{i,1}(x) = \tfrac{(x-x_{i})(x_{i+1}-x)^{2}}{h^{2}}\,, \\
A_{i,2}(x) & = \bigl[ 1+\tfrac{2(x_{i+1}-x)}{h}\bigr] \tfrac{(x-x_{i})^{2}}{h^{2}}\,, \quad
B_{i,2}(x) = \tfrac{-(x_{i+1}-x)(x-x_{i})^{2}}{h^{2}}\,.
\end{align*}
Hence, using $(\ref{eq313})$ and $(\ref{eq314})$ we see that
\begin{align*}
\gamma_{i}(x) & = e(x_{i})A_{i,1}(x) + e(x_{i+1})A_{i,2}(x) + O(h^{5})\|v\|_{W_{\infty}^{5}}\\
& = e(x_{i})(A_{i,1}(x) + A_{i,2}(x)) + O(h^{5})\|v\|_{W_{\infty}^{5}}\\
& = e(x_{i}) + O(h^{5})\|v\|_{W_{\infty}^{5}}\,,
\end{align*}
which concludes the proof of the lemma.
\end{proof}
We are now ready to prove the required estimate for $P[(we)']$, where $w$ is a $C^{1}$ weight.
\begin{lemma} Let $v\in C_{0}^{5}$ and $w\in C^{1}$. If $e=v-R_{h}v$ and
$\psi \in S_{h}^{4}$ is such that
\[
(\psi,\phi) = ((we)',\phi) \quad \forall \phi \in S_{h}^{4}\,,
\]
then $\|\psi\| \leq Ch^{3.5}\sqrt{\ln 1/h}$.
\end{lemma}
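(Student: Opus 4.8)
The plan is to follow the template of Lemma $3.2$, replacing the interpolation error by the elliptic-projection error and paying the extra price that the convenient pointwise representation of $e=v-R_hv$ is only available away from the endpoints. Writing $b_j=((we)',\phi_j)=-(we,\phi_j')$ for $1\le j\le N+3$, Lemma $3.1(ii)$ reduces everything to showing $|b|=O(h^{4}\sqrt{\ln 1/h})$, since then $\|\psi\|\le (c_1h)^{-1/2}|b|=O(h^{3.5}\sqrt{\ln 1/h})$. Accordingly I would split the index set into an \emph{interior} part, consisting of those $j$ for which every node of $\operatorname{supp}\phi_j=[x_{j-3},x_{j+1}]$ lies in $K=\{x:\operatorname{dist}(x,\partial I)\ge C_1h\ln\tfrac1h\}$, and a \emph{boundary} part collecting the remaining indices. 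Since the boundary layer has width $O(h\ln\tfrac1h)$, the boundary part contains only $O(\ln\tfrac1h)$ indices.

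For a boundary index I would use only the crude bound: by the Cauchy--Schwarz inequality, the $L^\infty$ estimate $(\ref{eq33})$ giving $\|e\|_\infty=O(h^4)$, and $\|\phi_j'\|=O(h^{-1/2})$ on a support of length $O(h)$, one gets $|b_j|\le \|w\|_\infty\|e\|_{L^2(\operatorname{supp}\phi_j)}\|\phi_j'\|=O(h^{4})$. Thus the boundary indices contribute $O(\ln\tfrac1h)\cdot O(h^8)=O(h^8\ln\tfrac1h)$ to $|b|^2$.

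For an interior index the representation $(\ref{eq316})$ of Lemma $3.3(ii)$ applies on each of the four subintervals of $\operatorname{supp}\phi_j$, writing $e$ as the sum of the nodal constant $e(x_i)$, the quartic $\tfrac{1}{4!}v^{(4)}(\hat x)(x-x_i)^2(x_{i+1}-x)^2$, and a remainder $\tilde\delta_i$ with $\|\tilde\delta_i\|_\infty\le Ch^5$. I would then bound the three resulting contributions to $b_j$ separately. The remainder term contributes $O(h^5)$ directly, since $\int_{x_i}^{x_{i+1}}|w\phi_j'|=O(1)$ on each of the finitely many subintervals. The quartic term is, up to the factor $v^{(4)}$, exactly the leading part of the interpolation error treated in Lemma $3.2$, so the same $B$-spline symmetry cancellation ($\phi_4(x)=\phi_1(h-x)$, $\phi_3(x)=\phi_2(h-x)$, $q(x)=q(h-x)$) yields $O(h^5)$. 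Finally, for the nodal-constant term I would use the superconvergent cancellation $(\ref{eq314})$, $e(x_{i+1})-e(x_i)=O(h^5)$, to replace all four node values by a single reference value $e(x_{j-1})$ at the cost of $O(h^5)$; then integration by parts gives $\int_{\operatorname{supp}\phi_j}w\phi_j'=-\int w'\phi_j=O(h)$, and combined with $|e(x_{j-1})|\le\|e\|_\infty=O(h^4)$ from $(\ref{eq33})$ this contributes $O(h^5)$ as well. Hence $|b_j|=O(h^5)$ for each interior index.

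Summing, $|b|^2=\sum_j|b_j|^2=O(1/h)\cdot O(h^{10})+O(\ln\tfrac1h)\cdot O(h^8)=O(h^9)+O(h^8\ln\tfrac1h)=O(h^8\ln\tfrac1h)$, so $|b|=O(h^4\sqrt{\ln 1/h})$ and the assertion follows. The main obstacle is the nodal-constant term, which has no analogue in Lemma $3.2$ (where the interpolant is exact at the nodes): it is precisely here that both superconvergence ingredients of Proposition $3.2$ are used, the difference estimate $(\ref{eq314})$ to collapse the four node values and the $L^\infty$ bound to size the survivor. The $\sqrt{\ln 1/h}$ factor, and the degradation from the interior-only rate $O(h^4)$ to $O(h^{3.5}\sqrt{\ln 1/h})$, enter only through the $O(\ln\tfrac1h)$ boundary-layer indices, where the clean representation $(\ref{eq316})$ fails and one must fall back on the crude $O(h^4)$ bound for $b_j$.
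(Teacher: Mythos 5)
Your proposal is correct and follows essentially the same route as the paper's own proof: reduce to bounding $|b|$ via Lemma $3.1(ii)$, use the crude $O(h^{4})$ bound from the $L^{\infty}$ estimate for the $O(\ln\tfrac1h)$ indices near the boundary, and for interior indices combine the representation of Lemma $3.3(ii)$ with the $B$-spline cancellation of Lemma $3.2$ and the superconvergence $(\ref{eq314})$ to get $O(h^{5})$, yielding $|b|^{2}=O(h^{8}\ln\tfrac1h)$. Your explicit three-term splitting of $e$ and the integration-by-parts identity $J_{1}+J_{2}+J_{3}+J_{4}=-\int w'\phi_{j}=O(h)$ are just cleaner statements of the steps the paper carries out.
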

\begin{proof}
Let $b_{j}=((we)',\phi_{j})$, $1\leq j\leq N+3$. In view of Lemma $3.1(ii)$ it suffices to show
that $|b| = O(h^{4}\sqrt{\ln 1/h})$. It is clear by $(\ref{eq32})$ and the properties of the basis
functions $\phi_{j}$ that $b_{j}=O(h^{4})$. We will show that for `most' of the the indices $j$
it is true that $b_{j}=O(h^{5})$.
Let $I=[0,1]$ and $K=\{x\in I : dist(x,\partial I)\geq C_{1}h\ln 1/h\}$. Since
$x_{i}=(i-1)h$, $1\leq i\leq N+1$, it is clear that if $x_{i} \in K$ then
$1+C_{1}\ln 1/h \leq i\leq N+1-C_{1}\ln 1/h$. Therefore $x_{i}\in K$ if and only if
$i\in M=\{\nu \in \mathbb{N} : \nu \in [1+C_{1}\ln 1/h, N+1-C_{1}\ln 1/h] \}$. Let $\mu=\min M$ and $m=\max M$.
We shall show that
\begin{equation}
|b_{j}| \leq Ch^{5}\,, \quad j=\mu + 4,\ldots, m-1\,.
\label{eq317}
\end{equation}
Indeed, for such $j$ we have
\begin{align*}
-b_{j} & = (we,\phi_{j}') = \sum_{k=0}^{3}\int_{x_{j-3+k}}^{x_{j-2+k}}w(x)e(x)\phi_{j}'(x)dx \\
& = \sum_{k=0}^{3}\Bigl{\{}e(x_{j-3+k})\int_{x_{j-3+k}}^{x_{j-2+k}}w(x)\phi_{j}'(x)dx
+ \int_{x_{j-3+k}}^{x_{j-2+k}}(e(x)-e(x_{j-3+k}))w(x)\phi_{j}'(x)dx\Bigr{\}}\,.
\end{align*}
Using now Lemma $3.3(ii)$ in each one of the second group of integrals of this expression and similar
considerations as in the proof of Lemma $3.2$, we see that
\[
-b_{j} = e(x_{j-3})J_{1} + e(x_{j-2})J_{2} + e(x_{j-1})J_{3} + e(x_{j})J_{4} + O(h^{5})\,,
\]
where $J_{i} = \int_{x_{j-4+i}}^{x_{j-3+i}}w(x)\phi_{j}'(x)dx$, $1\leq i\leq 4$. But since
$e(x_{i+1}) = e(x_{i}) + O(h^{5})$ when $x_{i},x_{i+1} \in K$ (by Proposition $3.2(ii)$), and
taking into account that the $J_{i}$ are bounded independently of $h$ and that
$J_{1}+ J_{2} + J_{3} + J_{4} = O(h)$ (by similar considerations as in the proof of
Lemma $3.2$), we finally obtain $(\ref{eq317})$. By the definitions of $\mu$ and $m$ we now
have
\begin{align*}
|b|^{2} & = \sum_{j=1}^{\mu+3}b_{j}^{2} + \sum_{j=\mu+4}^{m-1}b_{j}^{2}
+ \sum_{j=m}^{N+3}b_{j}^{2} \leq C[(\mu+3)h^{8} + h^{9} + (N+4-m)h^{8}]\\
& \leq C[(C_{2} + C_{1}\ln\tfrac{1}{h})h^{8} + h^{9}]\leq C_{3}h^{8}\ln\tfrac{1}{h}\,,
\end{align*}
where $C_{3}=C_{3}(\|v\|_{W_{\infty}^{5}}, \|w\|_{W_{\infty}^{1}}, C_{1})$, thus
concluding the proof of the lemma.
\end{proof}
The main result of this section follows.
\begin{theorem} Let $h=1/N$ be sufficiently small. Suppose that the solutions of
$(\ref{cb})$ and $(\ref{scb})$ are such that $\eta \in C(0,T;C^{5})$,
$\eta_{t}\in C(0,T;C^{4})$, $u, u_{t} \in C(0,T;C_{0}^{5})$. Then the semidiscrete
problems $(\ref{eq36})$ and $(\ref{eq38})$ with initial conditions
\begin{equation}
\eta_{h}(0)=I_{h}\eta_{0}\,, \quad u_{h}(0)=R_{h}u_{0}\,,
\label{eq318}
\end{equation}
have unique solutions $(\eta_{h},u_{h})$ for $0\leq t\leq T$ that satisfy
\begin{align*}
(i)\,\,\,\,\, \max_{0\leq t\leq T}\|\eta(t)-\eta_{h}(t)\| & \leq Ch^{3.5}\sqrt{\ln\tfrac{1}{h}}\,,
\quad \max_{0\leq t\leq T}\|u(t)-u_{h}(t)\|_{1}\leq Ch^{3}\,, \\
(ii)\,\,\,\, \max_{0\leq t\leq T}\|u(t)-u_{h}(t)\|&\leq Ch^{4}\sqrt{\ln\tfrac{1}{h}}\,,\quad
\,\,\,\max_{0\leq t\leq T}\|u_{t}(t)-u_{ht}(t)\|\leq Ch^{4}\sqrt{\ln\tfrac{1}{h}}\,.
\end{align*}
\end{theorem}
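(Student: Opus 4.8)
The plan is to follow the proof of Theorem $2.1$ step by step, substituting the cubic-spline superaccuracy estimates (Lemmas $3.2$ and $3.4$, which rest on the Wahlbin-type superconvergence of Proposition $3.2$) for the piecewise-linear one (Lemma $2.2$), and using the elliptic projection $R_h$ for the velocity variable. I carry out the symmetric system $(\ref{scb})$ in detail; its solution exists on all of $[0,T]$ because the choice $\phi=\eta_h$, $\chi=u_h$ in $(\ref{eq38})$ yields the conservation identity derived as in $(\ref{eq213})$, and the case $(\ref{cb})$ is handled by the continuation argument of Proposition $2.1$ and omitted. Writing $\rho:=\eta-I_h\eta$, $\theta:=I_h\eta-\eta_h$, $\sigma:=u-R_hu$, $\xi:=R_hu-u_h$, the decisive bookkeeping gain over Theorem $2.1$ is that, since $a(\sigma_t,\chi)=\tfrac{d}{dt}a(u-R_hu,\chi)=0$ for $\chi\in S_{h,0}^{4}$, the term $(\sigma_t,\chi)$ drops out of the error equation for $\xi$ entirely. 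With the same algebraic splittings $\eta u-\eta_hu_h=\eta\sigma+u\theta-\theta\xi+F$, $\eta\eta_x-\eta_h\eta_{hx}=-\theta\theta_x+(\eta\theta)_x+G_x$ and $uu_x-u_hu_{hx}=H_x$ as in the proof of Theorem $2.1$, one obtains the cubic-spline analogs of $(\ref{eq221})$ and $(\ref{eq222})$, the latter now with vanishing right-hand side.

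For part $(i)$ I set $\phi=\theta$, $\chi=\xi$, integrate by parts and add. Every term is readily $O\bigl(h^{7}\ln\tfrac1h+\|\theta\|^{2}+\|\xi\|_1^{2}\bigr)$ except the convective term $\bigl(((1+\tfrac12\eta)\sigma)_x,\theta\bigr)=\bigl(P[((1+\tfrac12\eta)\sigma)_x],\theta\bigr)$; because $\sigma$ is the error of the elliptic projection of $u$ and the weight $1+\tfrac12\eta$ does not vanish at the endpoints, Lemma $3.4$ bounds it by $Ch^{3.5}\sqrt{\ln\tfrac1h}\,\|\theta\|$. The $\rho$-terms, e.g. $((u\rho)_x,\theta)=(P[(u\rho)_x],\theta)$, carry the weight $u$ which \emph{does} vanish at $x=0,1$, so the second assertion of Lemma $3.2$ contributes only $O(h^{4})\|\theta\|$; the remaining terms are controlled with $(\ref{eq31})$, $(\ref{eq32})$, the inverse inequalities $(\ref{eq34})$ and Young's inequality. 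Gronwall's lemma together with $\theta(0)=\xi(0)=0$ then gives $\|\theta\|+\|\xi\|_1\leq Ch^{3.5}\sqrt{\ln\tfrac1h}$, and combining with $\|\rho\|=O(h^{4})$ and $\|\sigma\|_1=O(h^{3})$ proves $(i)$: the $\eta$-error inherits the logarithm from $\theta$, while the $H^{1}$ velocity error is dominated by the clean $O(h^{3})$ projection error $\sigma$. I also record, as in $(\ref{eq226})$, the bounds $\|F\|,\|H\|\leq C(\|\xi\|+h^{4})$ and $\|G\|\leq Ch^{4}$.

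For part $(ii)$ I reproduce the duality argument of Theorem $2.1$. The $\xi$-equation reads $\xi_t=R_hv$ as in $(\ref{eq227})$, where $v$ solves the two-point problem analogous to $(\ref{eq228})$ (but now \emph{without} the $\sigma_t$ forcing); its weak form and the bounds of part $(i)$ give $\|v\|_1\leq Ch^{3.5}\sqrt{\ln\tfrac1h}$. To estimate $\|v\|$ I use the dual problem $(\ref{eq230})$ and the antiderivative $\gamma$ of $(\ref{eq232})$ with $\gamma_x=\theta+\rho-J$, $J=\int_0^1\rho(\cdot,0)\,dx$ (the identity $(\theta_t+\rho_t,1)=0$ following from the $\eta$-equation with $\phi=1$, since all boundary terms vanish); two integrations by parts as in $(\ref{eq231})$ yield $\|v\|\leq C(h^{4}+\|\gamma\|+\|\xi\|)$. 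The heart of the matter is controlling $\|\gamma\|$: from the $\eta$-equation I derive, exactly as in $(\ref{eq237})$, that $(\gamma_t,\phi')+\tfrac12(u\gamma_x,\phi')=-(K,\phi')$ for all $\phi\in S_h^{4}$ with $\|K\|\leq C(\|\xi\|+h^{4})$. Since $\{\phi':\phi\in S_h^{4}\}$ is now the space of $C^{1}$ quadratic splines, I test with $\psi=\widetilde P\gamma$, the $L^{2}$-projection of $\gamma$ onto that space, and absorb the convective term using $\|\widetilde P\gamma-\gamma\|\leq Ch\|\gamma\|_1=O(h^{4.5}\sqrt{\ln\tfrac1h})$. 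The resulting differential inequality for $\|\widetilde P\gamma\|^{2}$, added to the one for $\|\xi\|^{2}$ coming from $\|v\|$, gives by Gronwall $\|\widetilde P\gamma\|^{2}+\|\xi\|^{2}\leq Ch^{8}\ln\tfrac1h$, i.e. $\|\xi\|=O(h^{4}\sqrt{\ln\tfrac1h})$ and $\|\gamma\|=O(h^{4}\sqrt{\ln\tfrac1h})$. The first estimate in $(ii)$ then follows since $\|\sigma\|=O(h^{4})$. Finally, with $Z$ solving $Z-\tfrac13Z''=\xi_t$, $Z(0)=Z(1)=0$, one has $\|\xi_t\|^{2}=a(v,R_hZ-Z)+(\xi_t,v)\leq C\bigl(h\|v\|_1+\|v\|\bigr)\|\xi_t\|\leq Ch^{4}\sqrt{\ln\tfrac1h}\,\|\xi_t\|$, and together with $\|\sigma_t\|=O(h^{4})$ this gives the second estimate in $(ii)$.

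The step I expect to be the main obstacle is this last part of $(ii)$, namely controlling $\|\gamma\|$ at optimal order. In the piecewise-linear setting the derivatives $\phi'$ fill out the piecewise constants and the projection $P_0$ is trivial; here one must instead work with the $L^{2}$-projection onto the quadratic-spline space $\{\phi':\phi\in S_h^{4}\}$ and check that its approximation and stability properties are good enough to close the coupled Gronwall system for $(\|\widetilde P\gamma\|,\|\xi\|)$. Equally delicate is the consistent propagation of the $\sqrt{\ln\tfrac1h}$ factor — which originates in the Wahlbin superconvergence of Proposition $3.2$ and in Lemma $3.4$ — through every one of these estimates, so that it appears exactly where the statement predicts and nowhere worse.
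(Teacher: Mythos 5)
Your proposal is correct and follows essentially the same route as the paper's own proof: the same error decomposition with $\rho,\theta,\sigma,\xi$, the disappearance of the $(\sigma_t,\chi)$ term because $\sigma$ is the elliptic-projection error, the use of Lemma $3.2$ (with vanishing weight) for the $\rho$-terms and Lemma $3.4$ for the $\sigma$-convection term, and the duality/antiderivative argument for part $(ii)$ with the $L^{2}$-projection onto the $C^{1}$ quadratic splines $\{\phi':\phi\in S_h^{4}\}$ (the paper's $P_0$ onto $S_h^{3}$) replacing the piecewise constants. The step you flag as delicate is handled in the paper exactly as you propose, and the $\sqrt{\ln\tfrac1h}$ factor propagates as you describe.
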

\begin{proof}
Again, we give the proof in the case of $(\ref{scb})$, noting that the analogous proof
for $(\ref{cb})$ follows as in Proposition $2.1$. The technique of proof is basically the
same as the one used in Theorem $2.1$ and we shall give here the details that are different.
We define now
\[
\rho:=\eta-I_{h}\eta\,, \quad \theta:=I_{h}\eta - \eta_{h}\,, \quad
\sigma:=u-R_{h}u\,, \quad \xi:=R_{h}u - u_{h}\,,
\]
and, arguing as in the proof of Theorem $2.1$, we have for $0\leq t\leq T$
\begin{equation}
\begin{aligned}
\tfrac{1}{2} & \tfrac{d}{dt}(\|\theta\|^{2} + \|\xi\|_{1}^{2})
+ \bigl( ((1+\tfrac{1}{2}\eta)\sigma)_{x},\theta\bigr) + \tfrac{1}{2}((u\theta)_{x},\theta)
+ \tfrac{1}{2}(F_{x},\theta) \\
& +(\rho_{x},\xi) + \tfrac{1}{2}((\eta\theta)_{x},\xi)
+ \tfrac{1}{2}(G_{x},\xi) + \tfrac{3}{2}(H_{x},\xi) \\
& = -(\rho_{t},\theta)\,,
\end{aligned}
\label{eq319}
\end{equation}
where
\begin{align*}
F & := \eta\xi + u\rho - \rho\sigma - \rho\xi - \theta\sigma\,, \\
G & := \eta\rho - \rho\theta - \tfrac{1}{2}\rho^{2}\,, \\
H & := u\sigma + u\xi - \sigma\xi - \tfrac{1}{2}\sigma^{2} - \tfrac{1}{2}\xi^{2}\,.
\end{align*}
Using the approximation properties of $S_{h}^{4}$ and $S_{h,0}^{4}$, integration by parts,
and Lemmas $3.2$ and $3.4$ we have
\begin{align*}
\bigl| \bigl( ((1+\tfrac{1}{2}\eta)\sigma)_{x},\theta\bigr)\bigr| & \leq
Ch^{3.5}\sqrt{\ln\tfrac{1}{h}}\|\theta\|\,, \hspace{60pt}
|((u\theta)_{x},\theta)|\leq C\|\theta\|^{2}\,,\\
|(F_{x},\theta)| & \leq C(\|\xi\|_{1}\|\theta\| + h^{4}\|\theta\| + \|\theta\|^{2})\,, \hspace{20pt}
|(\rho_{x},\xi)| \leq Ch^{4}\|\xi\|_{1}\,, \\
|((\eta\theta)_{x},\xi)| & \leq C\|\xi\|_{1}\|\theta\|\,, \hspace{94pt}
|(G_{x},\xi)| \leq C(\|\xi\|_{1}\|\theta\| + h^{4}\|\xi\|_{1})\,,\\
|(H_{x},\xi)| & \leq C(\|\xi\|^{2} + h^{4}\|\xi\|_{1})\,, \hspace{64pt}
|(\rho_{t},\theta)| \leq C h^{4}\|\theta\|\,.
\end{align*}
(Here and in the rest of the proof of this theorem we have omitted details that are similar to those
in the analogous steps of the proof of Theorem $2.1$.) Therefore, $(\ref{eq319})$ gives for
$0\leq t\leq T$
\[
\tfrac{d}{dt}(\|\theta\|^{2} + \|\xi\|_{1}^{2}) \leq C(h^{7}\ln \tfrac{1}{h}
+ \|\theta\|^{2} + \|\xi\|_{1}^{2})\,,
\]
from which, by Gronwall's Lemma and $(\ref{eq318})$ we get
\begin{equation}
\|\theta\| + \|\xi\|_{1} \leq Ch^{3.5}\sqrt{\ln \tfrac{1}{h}}\,,
\label{eq320}
\end{equation}
and the estimates $(i)$ follow. \par
In order to prove the estimates $(ii)$, we first note that that
\[
\|F\|\leq C(\|\xi\| + h^{4})\,, \quad \|G\|\leq Ch^{4}\,, \quad \|H\|\leq C(\|\xi\| + h^{4})\,.
\]
Noting that $\xi_{t} = R_{h}v$, where $v$ is the solution of the problem
\begin{align*}
& v - \tfrac{1}{3}v'' = -(\theta + \rho)_{x} - \tfrac{1}{2}(\eta\theta)_{x}
-\tfrac{1}{2}(G-\tfrac{1}{2}\theta^{2}+3H)_{x}\,, \quad x\in [0,1]\,,\\
& v(0) = v(1) = 0\,,
\end{align*}
we see by $(\ref{eq320})$ that $\|v\|_{1} \leq Ch^{3.5}\sqrt{\ln 1/h}$. Arguing as in the proof of
Theorem $2.1$ we have now $\|v\| \leq C(h^{4} + \|\gamma\| + \|\xi\|)$,
where $\gamma = \int_{0}^{x}(\theta(s,t)+ \rho(s,t))ds - x\int_{0}^{1}\rho(s,0)ds$. Then there follows
that
\[
\tfrac{1}{2}\tfrac{d}{dt} \|\xi\|^{2} \leq C(h^{8} + \|P_{0}\gamma\|^{2} + \|\xi\|^{2})\,,
\]
where $P_{0}$ is the $L^{2}$-projection operator onto $S_{h}^{3}$, the space of $C^{1}$
piecewise quadratic functions relative to the partition $\{x_{j}\}$. We also have, in view of
$(\ref{eq320})$ and the estimate $\|P_{0}\gamma - \gamma\| \leq Ch\|\gamma\|_{1}$, that
\[
\tfrac{1}{2}\tfrac{d}{dt} \|P_{0}\gamma\|^{2} \leq C(h^{8}\ln\tfrac{1}{h} + \|P_{0}\gamma\|^{2}
+ \|\xi\|^{2})\,.
\]
From the last two inequalities and Gronwall's Lemma we obtain
\[
\|P_{0}\gamma\| + \|\xi\| \leq Ch^{4}\sqrt{\ln\tfrac{1}{h}}\,,
\]
from which the first estimate of $(ii)$ follows. The second estimate also follows along the lines
of the proof of Theorem $2.1$.
\end{proof}
\begin{remark}
The results of the Theorem also hold if we take as $\eta_{h}(0)$ any other approximation
of $\eta_{0}$ in $S_{h}^{4}$ of optimal order of accuracy in $L^{2}$.
\end{remark}
\begin{remark}
The superaccuracy estimate $\|\xi\|_{1} = O(h^{3.5}\sqrt{\ln 1/h})$ of $(\ref{eq320})$, combined
with $(\ref{eq33})$ and Sobolev's inequality yield the $L^{\infty}$ estimate
$\|u-u_{h}\|_{\infty} = O(h^{3.5}\sqrt{\ln 1/h})$.
\end{remark}
\subsection{Numerical experiments}
We considered the nonhomogeneous $(\ref{scb})$ system and we discretized it on a uniform mesh
with diminishing $h = 1/N$ on $[0,1]$
\def\baselinestretch{1}
\captionsetup[subtable]{labelformat=empty,position=top,margin=1pt,singlelinecheck=false}
\scriptsize
\begin{table}[h]
\subfloat[$L^{2}$-errors]{
\begin{tabular}[h]{ | c | c | c | c | c | }\hline
$N$   &    $\eta$      &  $order$  &      $u$       &   $order$   \\ \hline
$40$  &  $0.8063(-6)$  &           &  $0.8032(-7)$  &            \\ \hline
$80$  &  $0.7178(-7)$  &  $3.490$  &  $0.5062(-8)$  &   $3.988$   \\ \hline
$120$ &  $0.1744(-7)$  &  $3.489$  &  $0.1003(-8)$  &   $3.993$   \\ \hline
$160$ &  $0.6393(-8)$  &  $3.489$  &  $0.3178(-9)$	&   $3.994$   \\ \hline
$200$ &	 $0.2934(-8)$  &  $3.490$  &  $0.1303(-9)$	&   $3.996$   \\ \hline
$240$ &	 $0.1553(-8)$  &  $3.490$  &  $0.6288(-10)$	&   $3.996$   \\ \hline
$280$ &	 $0.9068(-9)$  &  $3.490$  &  $0.3395(-10)$	&   $3.998$   \\ \hline
$320$ &  $0.5691(-9)$  &  $3.490$  &  $0.1986(-10)$  &  $4.015$   \\ \hline
$360$ &  $0.3773(-9)$  &  $3.490$  &  $0.1238(-10)$  &  $4.012$   \\ \hline
$400$ &  $0.2612(-9)$  &  $3.489$  &  $0.8106(-11)$  &  $4.021$   \\ \hline
$440$ &  $0.1873(-9)$  &  $3.489$  &  $0.5533(-11)$  &  $4.008$   \\ \hline
$480$ &  $0.1382(-9)$  &  $3.492$  &  $0.4011(-11)$  &  $3.696$   \\ \hline
$520$ &  $0.1046(-9)$  &  $3.488$  &  $0.2840(-11)$  &  $4.315$   \\ \hline
\end{tabular}
}\qquad
\subfloat[$L^{\infty}$-errors]{
\begin{tabular}[h]{ | c | c | c | c | c | }\hline
$N$   &    $\eta$      &  $order$  &      $u$       &   $order$   \\ \hline
$40$  &  $0.2666(-5)$  &           &  $0.2456(-6)$  &            \\ \hline
$80$  &  $0.3611(-6)$  &  $2.884$  &  $0.1590(-7)$  &   $3.949$   \\ \hline
$120$ &  $0.1100(-6)$  &  $2.931$  &  $0.3175(-8)$  &   $3.973$   \\ \hline
$160$ &  $0.4708(-7)$  &  $2.951$  &  $0.1010(-8)$	&   $3.982$   \\ \hline
$200$ &	 $0.2431(-7)$  &  $2.962$  &  $0.4150(-9)$	&   $3.986$   \\ \hline
$240$ &	 $0.1415(-7)$  &  $2.969$  &  $0.2005(-9)$	&   $3.989$   \\ \hline
$280$ &	 $0.8947(-8)$  &  $2.973$  &  $0.1084(-9)$	&   $3.990$   \\ \hline
$320$ &  $0.6010(-8)$  &  $2.980$  &  $0.6367(-10)$  &  $3.986$   \\ \hline
$360$ &  $0.4230(-8)$  &  $2.982$  &  $0.3980(-10)$  &  $3.988$   \\ \hline
$400$ &  $0.3088(-8)$  &  $2.986$  &  $0.2616(-10)$  &  $3.982$   \\ \hline
$440$ &  $0.2323(-8)$  &  $2.988$  &  $0.1790(-10)$  &  $3.985$   \\ \hline
$480$ &  $0.1797(-8)$  &  $2.950$  &  $0.1256(-10)$  &  $4.070$   \\ \hline
$520$ &  $0.1412(-8)$  &  $3.014$  &  $0.9168(-11)$  &  $3.932$   \\ \hline
\end{tabular}
}
\\
\subfloat[$H^{1}$-errors]{
\begin{tabular}[h]{ | c | c | c | c | c | }\hline
$N$   &    $\eta$      &  $order$  &      $u$       &   $order$   \\ \hline
$40$  &  $0.1298(-3)$  &           &  $0.2014(-4)$  &            \\ \hline
$80$  &  $0.2215(-4)$  &  $2.550$  &  $0.2540(-5)$  &   $2.987$   \\ \hline
$120$ &  $0.7923(-5)$  &  $2.536$  &  $0.7550(-6)$  &   $2.992$   \\ \hline
$160$ &  $0.3829(-5)$  &  $2.528$  &  $0.3190(-6)$	&   $2.995$   \\ \hline
$200$ &	 $0.2182(-5)$  &  $2.521$  &  $0.1635(-6)$	&   $2.996$   \\ \hline
$240$ &	 $0.1379(-5)$  &  $2.516$  &  $0.9467(-7)$	&   $2.997$   \\ \hline
$280$ &	 $0.9363(-6)$  &  $2.512$  &  $0.5964(-7)$	&   $2.997$   \\ \hline
$320$ &  $0.6699(-6)$  &  $2.508$  &  $0.3997(-7)$  &   $2.998$   \\ \hline
$360$ &  $0.4987(-6)$  &  $2.506$  &  $0.2808(-7)$  &   $2.998$   \\ \hline
$400$ &  $0.3831(-6)$  &  $2.503$  &  $0.2047(-7)$  &   $2.998$   \\ \hline
$440$ &  $0.3018(-6)$  &  $2.502$  &  $0.1538(-7)$  &   $2.998$   \\ \hline
$480$ &  $0.2428(-6)$  &  $2.500$  &  $0.1185(-7)$  &   $2.998$   \\ \hline
$520$ &  $0.1988(-6)$  &  $2.499$  &  $0.9323(-8)$  &   $2.999$   \\ \hline
\end{tabular}
}
\normalsize
\caption{Errors and orders of convergence. $(\ref{scb})$ system, standard Galerkin
semidiscretization with cubic splines on a uniform mesh.}
\label{tbl31}
\end{table}
\normalsize
using cubic splines for the spatial discretization and
fourth-order accurate explicit Runge Kutta time stepping, (cf. Section $4$), with time step
$k=h/10$ for which the temporal discretization error was negligible in comparison with the
spatial error.
We took a suitable right-hand side so that the exact solution of the system
was $\eta = \exp(2t)(\cos(\pi x) + x + 2)$, $u=\exp(xt)(\sin(\pi x) + x^{3}-x^{2})$. The errors
and orders of convergence produced by this numerical experiments are shown in Table \ref{tbl31}.
The rates are close to the theoretical predictions of Theorem $3.1$. The table suggests that
the $L^{2}$ rate of convergence for $\eta$ is slightly less than $3.5$, while that for $u$
is essentially four. It further suggests that $\|\eta - \eta_{h}\|_{1} = O(h^{2.5})$,
$\|u-u_{h}\|_{1} = O(h^{3})$ (agreeing with the second estimate of $(i)$ of Theorem $3.1$),
$\|\eta - \eta_{h}\|_{\infty} = O(h^{3})$ and $\|u-u_{h}\|_{\infty}=O(h^{4})$. We also mention
that the $W^{1,\infty}$ orders of convergence (not shown here) were approximately equal to
$2.2$ for $\eta$ and $3$ for $u$, and that the convergence rates from a similar experiment
with $(\ref{cb})$ were practically the same.
In Figure $3.1$ we plot the quantity $\kappa : = \|\eta - \eta_{h}\|/(h^{3.5}\sqrt{\ln 1/h})$ as a
function of $N=1/h$; here $\|\eta - \eta_{h}\|$ are the $L^{2}$-errors from Table \ref{tbl31}. We
observe that $\kappa$ apparently approaches a constant close to $0.13$ as $N$ grows, which
seems to be consistent with the presence of a slow-varying modulation of
$h^{3.5}$ as $h\to0$, such as $(\ln 1/h)^{1/2}$. We close this paragraph with a remark on the `effect of
the boundary' on the error estimates
\begin{figure}[h]
\includegraphics[totalheight=1.8in,width=3.86in]{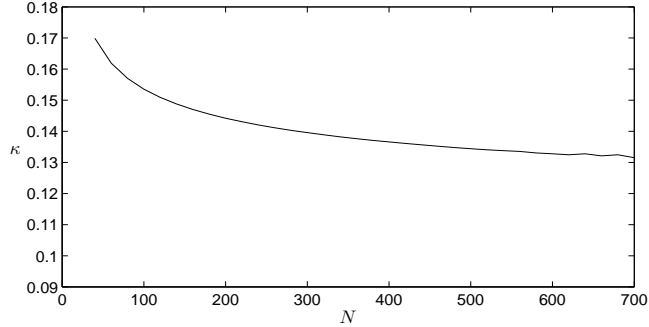}
\caption{$\kappa:=\|\eta - \eta_{h}\|/h^{3.5}\sqrt{\ln 1/h}$ as a function of $N=1/h$;
the $\|\eta - \eta_{h}\|$ are the $L^{2}$ errors from Table \ref{tbl31}.}
\label{fig31}
\end{figure}
of Theorem $3.1$. The proofs of Lemma $3.4$ and Theorem $3.1$ suggest that the accuracy of $\psi$
in Lemma $3.4$ and e.g. of $\|\eta - \eta_{h}\|$ in Theorem $3.1$ degenerates near the boundary
of the interval. This is consistent with the results of the following numerical experiment.
We integrated in time the $(\ref{scb})$ system on $[0,1]$ with suitable right-hand side and
initial conditions so that the wave elevation is given by the travelling Gaussian profile
$\eta(x,t) = 0.5\exp [-144(x - 0.5 - 0.2t)^{2}]$ and the velocity by
$u(x,t) = 6(\sqrt{\eta + 1} - 1)x(x-1)$. (We use cubic splines in space and the explicit
fourth-order RK scheme in time.) The support of the initial $\eta$-profile is effectively
contained in the interval $[0.3,0.5]$ and the wave moves to the right and starts crossing the
boundary at $x=1$ at about $t=1.5$ (see Figure $3.2$).
\captionsetup[subfloat]{labelformat=empty,position=bottom,singlelinecheck=true}
\begin{figure}[h]
  \begin{center}
    \subfloat[(a) $\eta_{h}$ at $t=0.0$]{\includegraphics[scale=0.27]{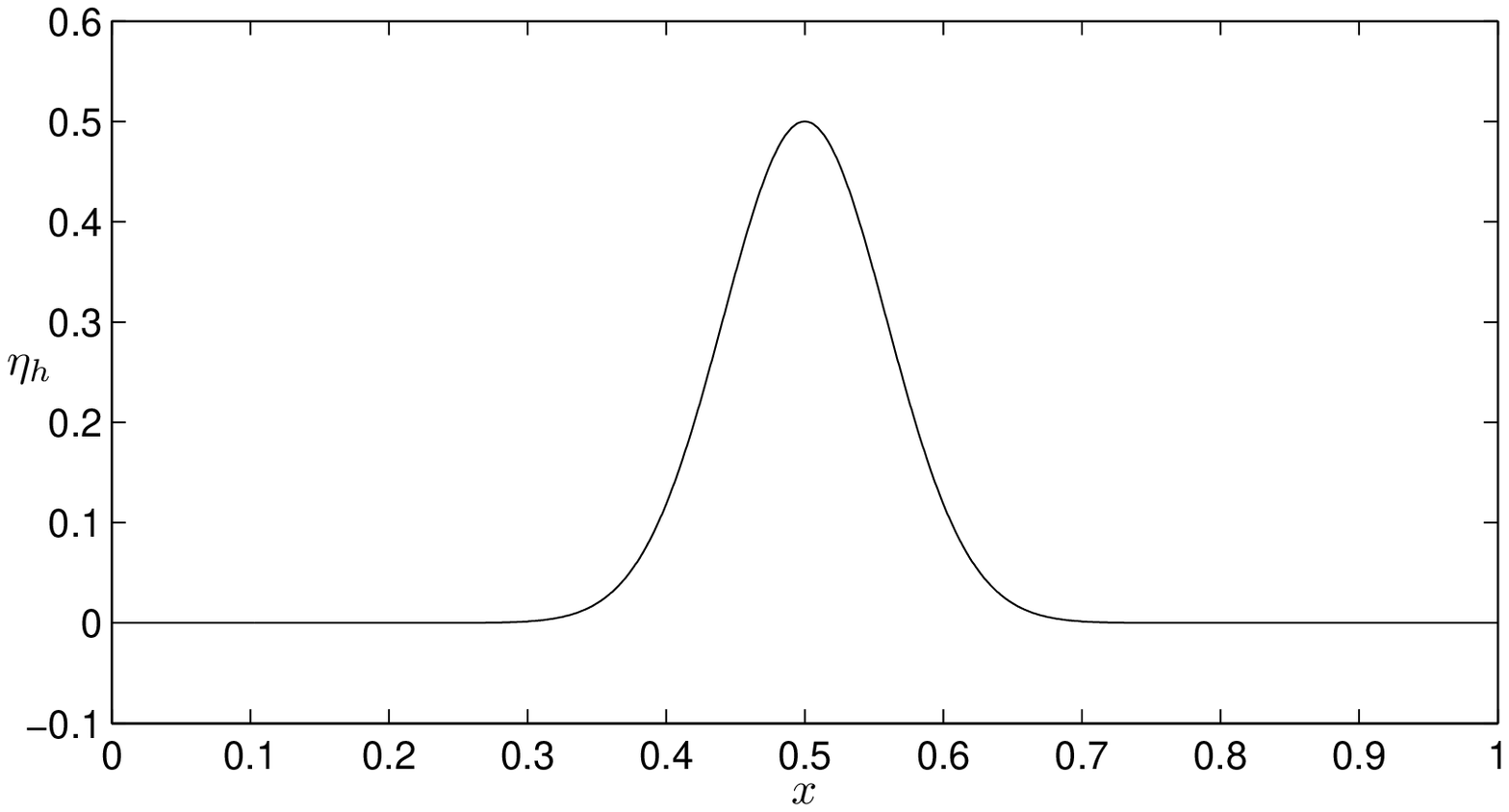}}
    \subfloat[(b) $\eta_{h}$ at $t=0.5$]{\includegraphics[scale=0.27]{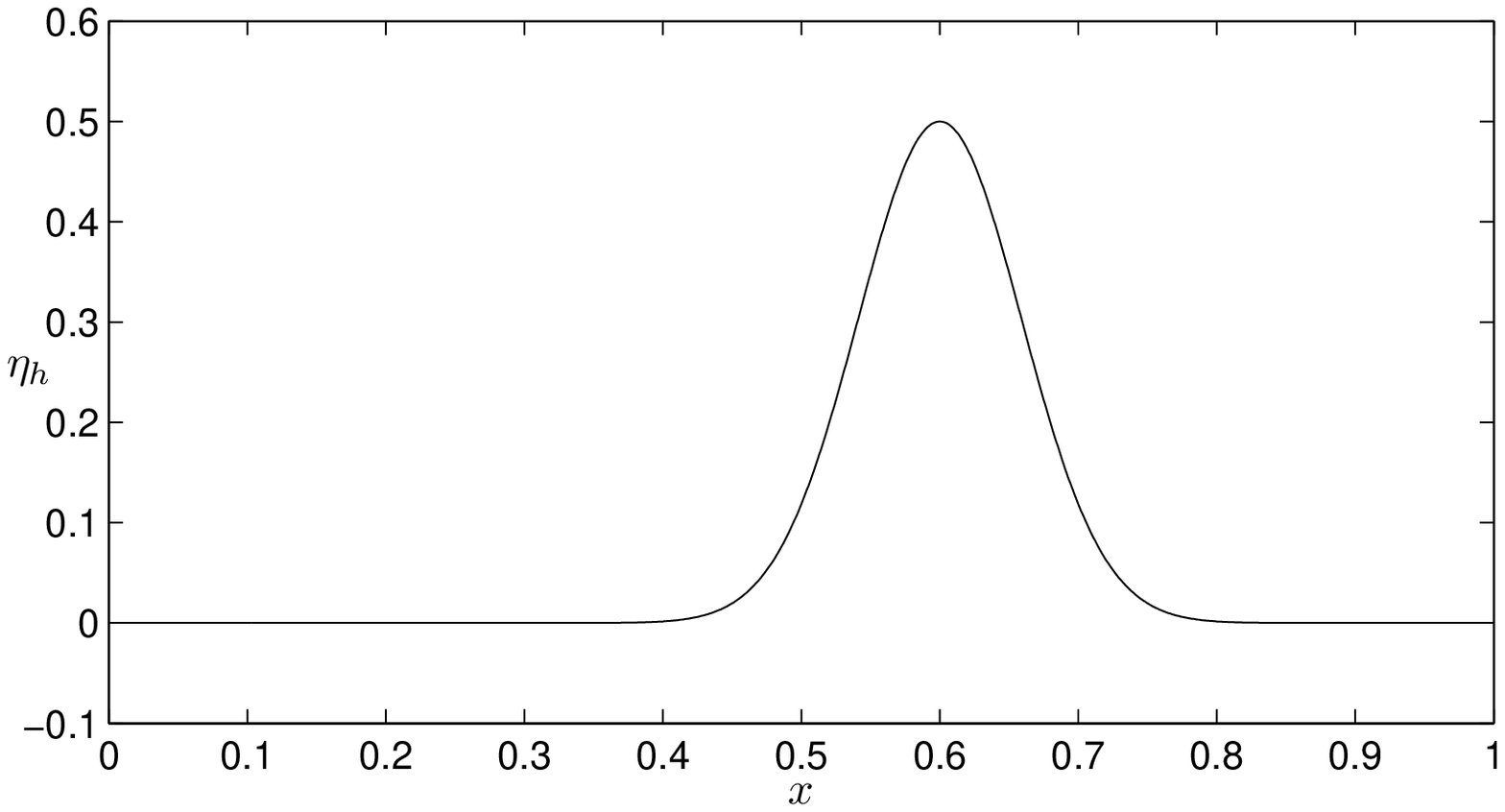}}
    \subfloat[(c) $\eta_{h}$ at $t=1.5$]{\includegraphics[scale=0.27]{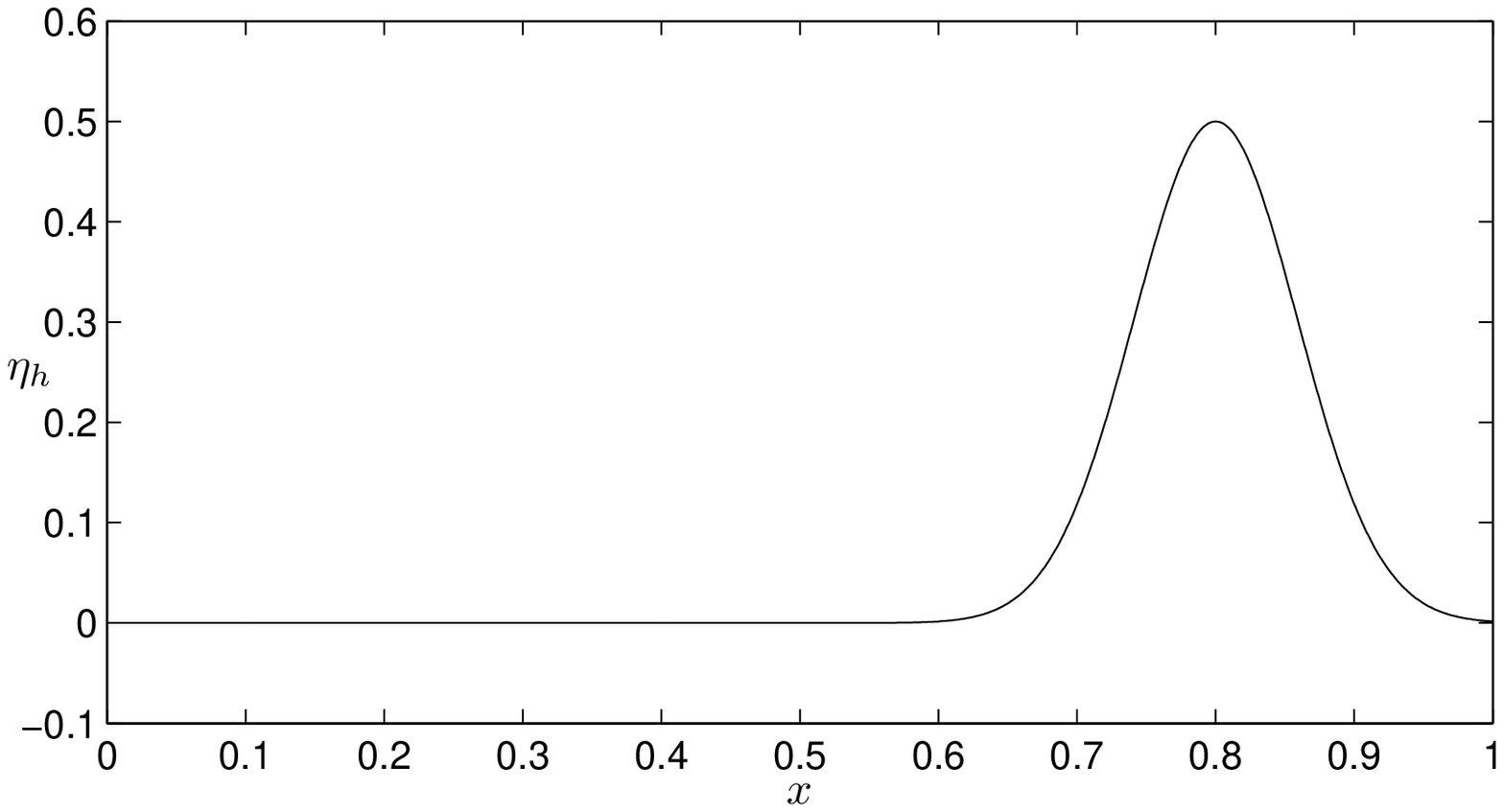}}
  \end{center}
  \caption{Travelling Gaussian $\eta$-profile. Nonhomogeneous $(\ref{scb})$ system.}
  \label{fig32}
\end{figure}
In Table \ref{tbl32} we show the $L^{2}$ errors of $\eta$, as $N=1/h$ increases, at the temporal
instances $t=0.5$, $1.0$, $1.5$, $2.0$ and $2.5$. The rates of convergence are practically
equal to four up to $t=1.5$ but as $\eta$ becomes
nonzero at the boundary they fall to a value consistent with the first inequality of $(i)$
of Theorem $3.1$.
\def\baselinestretch{1}
\scriptsize
\begin{table}[h]
\begin{center}
\begin{tabular}[h]{ | c | c | c | c | c | c | c | c | c | c | c | }\hline
$time$ & \multicolumn{2}{c |}{$0.5$} &
\multicolumn{2}{c |}{$1.0$} & \multicolumn{2}{c |}{$1.5$} & \multicolumn{2}{c |}{$2.0$}
& \multicolumn{2}{c |}{$2.5$} \\ \hline
$N$ &  $L^{2}$-$error$  &  $order$ &  $L^{2}$-$error$ & $order$ &
$L^{2}$-$error$ & $order$ & $L^{2}$-$error$  &  $order$ & $L^{2}$-$error$  &  $order$ \\ \hline
$250$   &  $1.3314(-08)$  &             &  $1.0661(-08)$ &        & $1.3596(-08)$
&       & $1.5924(-08)$  &             & $1.9906(-08)$ &  \\ \hline
$500$   &  $8.2780(-10)$  &  $4.008$ &  $6.6223(-10)$ & $4.009$ & $8.4585(-10)$ & $4.007$ &
$1.0596(-09)$  & $3.910 $ & $1.7594(-09)$  & $3.500$ \\ \hline
$750$   &  $1.6334(-10)$  & $4.003$ &  $1.3067(-10)$ & $4.003$ & $1.6706(-10)$   & $4.000$ &
$2.2223(-10)$ & $3.852$  & $4.2637(-10)$ & $3.496$ \\ \hline
$1000$   &  $5.1617(-11)$  & $4.004$ &  $4.1350(-11)$ & $4.000$ & $5.2838(-11)$ & $4.001$ &
$7.4176(-11)$ & $3.814$  & $1.5595(-10)$ & $3.496$ \\ \hline
$1250$  & $2.1179(-11)$   &  $3.992$ &  $1.6922(-11)$ & $4.004$ & $2.1710(-11)$ & $3.986$ &
$3.1966(-11)$ & $3.772$  & $7.1471(-11)$ & $3.497$ \\ \hline
$1500$  & $1.0287(-11)$  &  $3.961$  &  $8.1703(-12)$ & $3.994$ & $1.0554(-11)$ & $3.956$ &
$1.6213(-11)$ & $3.724$  & $3.7803(-11)$ & $3.493$ \\ \hline
\end{tabular}
\end{center}
\caption{$L^{2}$-errors of $\eta$ and orders of convergence. Example of Figure \ref{fig32}.}
\label{tbl32}
\end{table}
\normalsize
\newpage
\section{Fully discrete schemes}
In this section we turn to the study of some temporal discretizations of the o.d.e. systems
represented by the standard Galerkin spatial discretizations of $(\ref{cb})$ or $(\ref{scb})$,
such as $(\ref{eq22})$ or $(\ref{eq24})$, for example. We shall confine ourselves to
{\em explicit} time stepping schemes in order to avoid the more costly implicit methods
that require solving nonlinear systems of equations at each time step. Of course, with
explicit methods there arises the issue of {\em stability} of the fully discrete schemes.
We will not be exhaustive in our analysis but we will study as examples three simple,
well known explicit Runge-Kutta temporal discretizations, that require, respectively,
stability conditions of the type $k=O(h^{2})$, $k=O(h^{4/3})$, and $k\leq \lambda_{0} h$ for
$\lambda_{0}$ sufficiently small, where $k$ is the time step.
\subsection{The explicit Euler scheme}
Let $M$ be a positive integer, $k=T/M$ denote the (uniform) time step, and put $t^{n}=nk$,
$n=0,1,\dots,M$. We consider the standard Galerkin semidiscretizations with piecewise linear,
continuous functions on a uniform spatial mesh on $[0,1]$ with $h=1/N$, given by the
initial-value problems $(\ref{eq27})$, $(\ref{eq220})$ and $(\ref{eq29})$, $(\ref{eq220})$ in the case
of the $(\ref{cb})$ and the $(\ref{scb})$ systems, respectively. We discretize the systems in time
with the explicit Euler scheme. Hence, we seek for $0\leq n\leq M$ $H_{h}^{n} \in S_{h}^{2}$,
$U_{h}^{n} \in S_{h,0}^{2}$, approximations of the solution $\eta(x,t^{n})$, $u(x,t^{n})$ of the
$(\ref{cb})$ system, such that for $0\leq n \leq M-1$
\begin{equation}
\begin{aligned}
(H_{h}^{n+1}-H_{h}^{n},\phi) & + k(U_{hx}^{n},\phi) + k((H_{h}^{n}U_{h}^{n})_{x}),\phi) = 0
\quad \forall \phi \in S_{h}^{2}\,,\\
a(U_{h}^{n+1}-U_{h}^{n},\chi) & + k(H_{hx}^{n},\chi) + k(U_{h}^{n}U_{hx}^{n},\chi)
= 0 \quad \forall \chi \in S_{h,0}^{2}\,,
\end{aligned}
\label{eq41}
\end{equation}
with
\begin{equation}
H_{h}^{0} = I_{h}\eta^{0}\,, \quad U_{h}^{0} = I_{h,0}u^{0}\,.
\label{eq42}
\end{equation}
The analogous fully discrete approximation of the $(\ref{scb})$ system is defined, for
$0 \leq n\leq M-1$, by
\begin{equation}
\begin{aligned}
(H_{h}^{n+1}-H_{h}^{n},\phi) & + k(U_{hx}^{n},\phi) + \tfrac{k}{2}((H_{h}^{n}U_{h}^{n})_{x}),\phi) = 0
\quad \forall \phi \in S_{h}^{2}\,,\\
a(U_{h}^{n+1}-U_{h}^{n},\chi) & + k(H_{hx}^{n},\chi) + \tfrac{3k}{2}(U_{h}^{n}U_{hx}^{n},\chi)
+ \tfrac{k}{2}(H_{h}^{n}H_{hx}^{n},\chi)= 0 \quad \forall \chi \in S_{h,0}^{2}\,,
\end{aligned}
\label{eq43}
\end{equation}
with
\begin{equation}
H_{h}^{0} = I_{h}\eta^{0}\,, \quad U_{h}^{0} = I_{h,0}u^{0}\,.
\label{eq44}
\end{equation}
Let $A : L^{2}\to S_{h,0}^{2}$, be defined for $f \in L^{2}$ by
\begin{equation}
a(Af,\chi) = (f,\chi) \quad \forall \chi \in S_{h,0}^{2}\,,
\label{eq45}
\end{equation}
i.e. as the discrete solution operator such that $w_{h}=Af$, where $w_{h}$ is the standard Galerkin
approximation in $S_{h,0}^{2}$ of the solution of the two-point bvp $-\tfrac{1}{3}w''+w=f$\,,
$0\leq x\leq 1$, $w(0)=w(1)=0$. From $(\ref{eq45})$ we have immediately that
\begin{equation}
\|Af\|_{1} \leq C \|f\|_{-1}\,,
\label{eq46}
\end{equation}
where the $\|\cdot\|_{-1}$ norm is defined for $f\in L^{2}$ by
\[
\|f\|_{-1} = \sup_{\substack{g\in H_{0}^{1}\\ g\ne 0}}\frac{(f,g)}{\|g\|_{1}}\,.
\]
With this notation in place and letting as before $P$ denote the $L^{2}$ projection operator
onto $S_{h}^{2}$, we may rewrite the fully discrete scheme $(\ref{eq41})$, $(\ref{eq42})$ for
$(\ref{cb})$ as
\begin{equation}
\begin{aligned}
H_{h}^{n+1}-H_{h}^{n} & + kP U_{hx}^{n} + kP(H_{h}^{n}U_{h}^{n})_{x} = 0\,, \\
U_{h}^{n+1}-U_{h}^{n} & + kA H_{hx}^{n} + kA(U_{h}^{n}U_{hx}^{n})= 0\,,
\end{aligned}
\label{eq47}
\end{equation}
for $0\leq n\leq M-1$, with $H_{h}^{0} = I_{h}\eta^{0}$,\,\, $U_{h}^{0} = I_{h,0}u^{0}$.
Similarly, for $(\ref{scb})$ we have from $(\ref{eq43})$, $(\ref{eq44})$
\begin{equation}
\begin{aligned}
H_{h}^{n+1}-H_{h}^{n} & + kPU_{hx}^{n} + \tfrac{k}{2}P(H_{h}^{n}U_{h}^{n})_{x} = 0\,,\\
U_{h}^{n+1}-U_{h}^{n} & + kA H_{hx}^{n} + \tfrac{3k}{2}A(U_{h}^{n}U_{hx}^{n})
+ \tfrac{k}{2}A(H_{h}^{n}H_{hx}^{n}) = 0\,,
\end{aligned}
\label{eq48}
\end{equation}
for $0\leq n\leq M-1$, with $H_{h}^{0}=I_{h}\eta^{0}$, $U_{h}^{0}=I_{h,0}u^{0}$.
We will prove error estimates for the schemes $(\ref{eq47})$ and $(\ref{eq48})$
by comparing $H_{h}^{n}$ with $I_{h}\eta(t^{n})$ and $U_{h}^{n}$ with $I_{h,0}u(t^{n})$.
For this purpose, it is useful to establish the following estimates of the
{\em truncation errors} of the interpolants. (In the sequel we shall analyze mainly the
approximation of the $(\ref{scb})$ system; the analogous results for $(\ref{cb})$ follow as
in Sections $2$ and $3$. Frequently, we shall suppress the $x$ variable, denoting e.g.
$\eta(\cdot,t)$ by $\eta(t)$ etc.)
\begin{lemma} Suppose that the solution $(\eta,u)$ of $(\ref{scb})$ is sufficiently
smooth in $[0,T]$. Let $H(t)=I_{h}\eta(t)$, $U(t)=I_{h,0}u(t)$, and define
$\psi(t) \in S_{h}^{2}$, $\zeta(t) \in S_{h,0}^{2}$ for $0\leq t\leq T$ by
\begin{align}
H_{t} + PU_{x} + \tfrac{1}{2}P(HU)_{x} & = \psi\,, \label{eq49} \\
U_{t} + AH_{x} + \tfrac{3}{2}A(UU_{x}) + \tfrac{1}{2}A(HH_{x})& = \zeta\,. \label{eq410}
\end{align}
Then
\begin{equation}
\begin{aligned}
\|\psi\| & \leq Ch^{3/2}\,, \quad \|\psi_{t}\| \leq Ch^{3/2}\,, \\
\|\zeta\|_{1} &\leq Ch^{2}\,, \quad \,\,\,\,\|\zeta_{t}\|_{1}\leq Ch^{2}\,,
\end{aligned}
\label{eq411}
\end{equation}
hold for $0\leq t\leq T$. An analogous result holds for $(\ref{cb})$.
\end{lemma}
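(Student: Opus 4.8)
The plan is to compare the interpolant-based truncation errors with the exact equations of $(\ref{scb})$, using the superapproximation Lemma $2.2$ for the $\psi$-estimate and a nodal-cancellation identity for the $\zeta$-estimate. Throughout I write $\rho:=\eta - I_h\eta$ and $\sigma := u - I_{h,0}u$, so that $H=\eta-\rho$, $U=u-\sigma$, and both $\rho$ and $\sigma$ vanish at the mesh nodes. Since $u\in C_0^3$, $I_{h,0}u = I_hu$, so $\sigma$ is the plain interpolation error of $u$; moreover $\rho_t,\sigma_t$ are the interpolation errors of $\eta_t,u_t$ (interpolation commutes with $\partial_t$), and similarly for higher $t$-derivatives. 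Hence $(\ref{eq21})$ gives $\|\rho\|+\|\rho_t\|+\|\sigma\|+\|\sigma_t\| = O(h^2)$ and $\|\sigma_x\|=O(h)$, etc.

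First I would treat $\psi$. Since $\psi\in S_h^2$ and $P$ is the $L^2$-projection, testing $(\ref{eq49})$ against $\phi\in S_h^2$ removes the projections; substituting $H=\eta-\rho$, $U=u-\sigma$ and cancelling the exact combination $\eta_t+u_x+\tfrac12(\eta u)_x$ by the first equation of $(\ref{scb})$ leaves
\[
(\psi,\phi) = -(\rho_t,\phi) - (\sigma_x,\phi) - \tfrac{1}{2}((\eta\sigma)_x,\phi) - \tfrac{1}{2}((u\rho)_x,\phi) + \tfrac{1}{2}((\rho\sigma)_x,\phi).
\]
The terms $(\sigma_x,\phi)$ and $((\eta\sigma)_x,\phi)$ are exactly of the form covered by Lemma $2.2$, with weight $w=1$ and $w=\eta$ respectively (and $v=u$); since neither weight vanishes at the endpoints, each contributes $O(h^{3/2})\|\phi\|$. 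The term $((u\rho)_x,\phi)$ also falls under Lemma $2.2$, now with $v=\eta$ and weight $w=u$, which \emph{does} vanish at $x=0,1$, so it is $O(h^2)\|\phi\|$; the quadratic term is bounded after one integration by parts by $\|\rho\sigma\|\,\|\phi'\| \le Ch^4\cdot h^{-1}\|\phi\|$ using the inverse inequality $(\ref{eq24})$; and $(\rho_t,\phi)$ is $O(h^2)\|\phi\|$. Taking the supremum over $\phi$ gives $\|\psi\|\le Ch^{3/2}$. Differentiating $(\ref{eq49})$ in $t$ with $\phi$ fixed reproduces the identical structure with $\rho,\sigma$ replaced by $\rho_t,\sigma_t$ (again interpolation errors), so the same argument yields $\|\psi_t\|\le Ch^{3/2}$.

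For $\zeta$ I would test $(\ref{eq410})$ in the bilinear form. Using $a(Af,\chi)=(f,\chi)$ and $U_t=u_t-\sigma_t$, the second equation of $(\ref{scb})$ in weak form cancels all exact terms and leaves, for every $\chi\in S_{h,0}^2$,
\[
a(\zeta,\chi) = -a(\sigma_t,\chi) - (\rho_x,\chi) - \tfrac{3}{2}((u\sigma)_x,\chi) + \tfrac{3}{4}((\sigma^2)_x,\chi) - \tfrac{1}{2}((\eta\rho)_x,\chi) + \tfrac{1}{4}((\rho^2)_x,\chi).
\]
The crucial point is the treatment of $a(\sigma_t,\chi)=(\sigma_t,\chi)+\tfrac13(\sigma_{tx},\chi')$: because $\chi'$ is piecewise constant and $\sigma_t$ vanishes at every node, $\int_{x_i}^{x_{i+1}}\sigma_{tx}\,\chi' = \chi'|_{[x_i,x_{i+1}]}\,(\sigma_t(x_{i+1})-\sigma_t(x_i)) = 0$, so $a(\sigma_t,\chi)=(\sigma_t,\chi)$ exactly. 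Choosing $\chi=\zeta$, so that $a(\zeta,\zeta)=\|\zeta\|_1^2$, and integrating each first-order term by parts (boundary terms vanish since $\zeta\in S_{h,0}^2$), every contribution is $O(h^2)\|\zeta\|_1$: the factors $\|\sigma_t\|,\|\rho\|,\|\sigma\|$ are $O(h^2)$ and are paired with $\|\zeta\|$ or $\|\zeta_x\|\le\sqrt3\,\|\zeta\|_1$, while the quadratic terms are $O(h^4)\|\zeta\|_1$. Hence $\|\zeta\|_1\le Ch^2$. Differentiating in $t$ and setting $\chi=\zeta_t$ works verbatim, the same nodal cancellation giving $a(\sigma_{tt},\zeta_t)=(\sigma_{tt},\zeta_t)$, whence $\|\zeta_t\|_1\le Ch^2$.

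The main obstacle, and the point that makes the $\zeta$-estimate optimal, is precisely the nodal cancellation $(\sigma_{tx},\chi')=0$: without it the term $\tfrac13(\sigma_{tx},\chi')$ would only be $O(h)\|\zeta\|_1$ since $\|\sigma_{tx}\|=O(h)$, destroying the $O(h^2)$ bound. For the $\psi$-estimate the corresponding difficulty is isolating which interpolation-error products carry a boundary-vanishing weight, so that Lemma $2.2$ yields $O(h^{3/2})$ rather than a cruder rate; the irreducible $O(h^{3/2})$ contributions come from the weights $1$ and $\eta$, consistent with the stated bound. The $(\ref{cb})$ case is identical in structure, differing only in the coefficients of the nonlinear terms, and so follows by the same computation.
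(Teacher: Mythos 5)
Your proof is correct and follows essentially the same route as the paper: cancel the exact equations of \eqref{scb}, express the truncation errors through the interpolation errors $\rho,\sigma$, invoke Lemma $2.2$ (with non-vanishing weights for the $O(h^{3/2})$ terms and the vanishing weight $u$ for the $O(h^{2})$ term) to bound $\psi$, and use the nodal identity $(\sigma_{t}',\chi')=0$ to reduce $a(\sigma_t,\chi)$ to $(\sigma_t,\chi)$ for the $\zeta$-estimate. The only cosmetic difference is that you test the $\zeta$-equation directly against $\zeta$ and integrate by parts, whereas the paper routes the same estimate through the operator bound $\|Af\|_{1}\leq C\|f\|_{-1}$ of $(\ref{eq46})$.
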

\begin{proof}
Subtracting the equations $P(\eta_{t} + u_{x}+\tfrac{1}{2}(u\eta)_{x})=0$ and
$(\ref{eq49})$, and putting $\rho:=\eta-I_{h}\eta$, $\sigma:=u-I_{h,0}u$, we
obtain
\[
P\bigl( \rho_{t} + [(1+\tfrac{1}{2}\eta)\sigma]_{x} + \tfrac{1}{2}(u\rho)_{x}
-\tfrac{1}{2}(\rho\sigma)_{x}\bigr) = -\psi\,.
\]
Therefore, using the approximation properties of $S_{h}^{2}$ and $S_{h,0}^{2}$, and
Lemma $2.2$, we have
\[
\|\psi\| \leq \|\rho_{t}\| + \|P[(1+\tfrac{1}{2}\eta)\sigma]_{x}\|
+ \tfrac{1}{2}\|P(u\rho)_{x}\| + \|(\rho\sigma)_{x}\|
\leq C(h^{2} + h^{3/2} + h^{2} + h^{3}) \leq Ch^{3/2}\,.
\]
Similarly, since e.g. by Lemma $2.2$
\[
\|P[(1+\tfrac{1}{2}\eta)\sigma]_{xt}\| \leq \|P(\tfrac{1}{2}\eta_{t}\sigma)_{x}\|
+ \|P[(1+\tfrac{1}{2}\eta)\sigma_{t}]_{x}\| \leq Ch^{3/2}\,,
\]
we have
\[
\|\psi_{t}\| \leq \|\rho_{t}\| + \|P[(1+\tfrac{1}{2}\eta)\sigma]_{xt}\|
+ \tfrac{1}{2}\|P(u\rho)_{xt}\| + \|(\rho\sigma)_{xt}\|
\leq C(h^{2} + h^{3/2} + h^{2} + h^{3}) \leq Ch^{3/2}\,.
\]
Note now that for any $\chi \in S_{h,0}^{2}$, $(\ref{eq45})$ and the fact that
$(\sigma_{t}',\chi')=0$ yield
\[
a(A(u_{t}-\tfrac{1}{3}u_{txx})-U_{t},\chi)=(u_{t},\chi)+\tfrac{1}{3}(u_{tx},\chi_{x})
-a(U_{t},\chi) = a(\sigma_{t},\chi)=(\sigma_{t},\chi)=a(A\sigma_{t},\chi)\,.
\]
Hence
\[
A(u_{t} - \tfrac{1}{3}u_{txx}) - U_{t} = A\sigma_{t}\,,
\]
which implies, in view of the second p.d.e. of $(\ref{scb})$ that
\[
A\sigma_{t} + U_{t} + A(\eta_{x} + \tfrac{3}{2}uu_{x} + \tfrac{1}{2}\eta\eta_{x})=0\,.
\]
Subtracting now this equation from $(\ref{eq410})$ we see, after some algebra, that
\begin{equation}
A\bigl( \sigma_{t} + \rho_{x} + \tfrac{3}{2}[(u\sigma)_{x} - \sigma\sigma_{x}]
+ \tfrac{1}{2}[(\eta\rho)_{x} - \rho\rho_{x}]\bigr) = -\zeta\,.
\label{eq412}
\end{equation}
Therefore, using $(\ref{eq46})$ and the approximation properties of $S_{h}^{2}$ and
$S_{h,0}^{2}$, we obtain
\begin{align*}
\|\zeta\|_{1} & \leq C(\|\sigma_{t}\|_{-1} + \|\rho_{x}\|_{-1}
+ \|(u\sigma-\tfrac{1}{2}\sigma^{2})_{x}\|_{-1} + \|(\eta\rho - \tfrac{1}{2}\rho^{2})_{x}\|_{-1}\\
& \leq C(\|\sigma_{t}\| + \|\rho\| + \|u\sigma\| + \|\sigma^{2}\| + \|\eta\rho\|
+ \|\rho^{2}\|) \leq Ch^{2}\,.
\end{align*}
Similarly, after differentiating $(\ref{eq412})$ with respect to $t$, we see that
\[
\|\zeta_{t}\|_{1} \leq Ch^{2}\,,
\]
thus ending the proof. The same results hold for $(\ref{cb})$ of course.
\end{proof}
We now proceed to prove error estimates for the explicit Euler-Galerkin schemes $(\ref{eq47})$
and $(\ref{eq48})$. We begin by a consistency result.
\begin{lemma} Suppose that the solution $(\eta,u)$ of $(\ref{scb})$ is sufficiently smooth.
Let $H^{n}:=H(t^{n})=I_{h}\eta(t^{n})$, $U^{n}=U(t^{n})=I_{h,0}u(t^{n})$, and define,
for $0\leq n\leq M-1$, $\delta_{1}^{n}$ and $\delta_{2}^{n}$ by the equations
\begin{align*}
\delta_{1}^{n} & := H^{n+1} - H^{n} + kPU_{x}^{n} + \tfrac{k}{2}P(H^{n}U^{n})_{x}\,, \\
\delta_{2}^{n} & := U^{n+1} - U^{n} + kAH_{x}^{n} + \tfrac{3k}{2}A(U^{n}U_{x}^{n})
+ \tfrac{k}{2}A(H^{n}H_{x}^{n})\,.
\end{align*}
Then
\[
\max_{0\leq n\leq M-1} (\|\delta_{1}^{n}\| + \|\delta_{2}^{n}\|_{1})
\leq C k (k + h^{3/2})\,.
\]
The analogous result holds for $(\ref{cb})$ as well.
\end{lemma}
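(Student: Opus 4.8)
The plan is to compare the one-step increments of the nodal interpolants $H^{n}=I_{h}\eta(t^{n})$ and $U^{n}=I_{h,0}u(t^{n})$ with the time-continuous identities of Lemma $4.1$. The essential observation is that $\delta_{1}^{n}$ and $\delta_{2}^{n}$ differ from $k$ times the right-hand sides of $(\ref{eq49})$ and $(\ref{eq410})$ only through the replacement of the time derivative by a forward difference quotient, so a single Taylor expansion in $t$ will expose the consistency error.

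First I would write, using Taylor's theorem with the integral form of the remainder,
\[
H^{n+1}-H^{n} = k\,H_{t}(t^{n}) + \int_{t^{n}}^{t^{n+1}}(t^{n+1}-s)\,H_{tt}(s)\,ds\,,
\]
and substitute $H_{t}(t^{n})$ from $(\ref{eq49})$, i.e. $H_{t}=\psi - PU_{x}-\tfrac{1}{2}P(HU)_{x}$ evaluated at $t=t^{n}$. The terms $kPU_{x}^{n}$ and $\tfrac{k}{2}P(H^{n}U^{n})_{x}$ occurring in $\delta_{1}^{n}$ then cancel exactly, leaving
\[
\delta_{1}^{n} = k\,\psi(t^{n}) + \int_{t^{n}}^{t^{n+1}}(t^{n+1}-s)\,H_{tt}(s)\,ds\,.
\]
An identical manipulation for $U$, using $(\ref{eq410})$ in the form $U_{t}=\zeta - AH_{x}-\tfrac{3}{2}A(UU_{x})-\tfrac{1}{2}A(HH_{x})$, gives
\[
\delta_{2}^{n} = k\,\zeta(t^{n}) + \int_{t^{n}}^{t^{n+1}}(t^{n+1}-s)\,U_{tt}(s)\,ds\,.
\]

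It then remains to estimate the two contributions in the appropriate norms. For $\delta_{1}^{n}$ I would bound $\|k\psi(t^{n})\|\leq Ckh^{3/2}$ by $(\ref{eq411})$ and $\bigl\|\int_{t^{n}}^{t^{n+1}}(t^{n+1}-s)H_{tt}(s)\,ds\bigr\|\leq \tfrac{k^{2}}{2}\max_{[t^{n},t^{n+1}]}\|H_{tt}\|$; since $H_{tt}=I_{h}\eta_{tt}$ and interpolation is $L^{2}$-stable on smooth functions, $\|H_{tt}\|\leq C$ uniformly in $h$ under the smoothness hypotheses, so this term is $O(k^{2})$ and $\|\delta_{1}^{n}\|\leq Ck(k+h^{3/2})$. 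For $\delta_{2}^{n}$ I would argue in the $\|\cdot\|_{1}$ norm: $\|k\zeta(t^{n})\|_{1}\leq Ckh^{2}\leq Ckh^{3/2}$ by $(\ref{eq411})$, while the remainder is controlled by $\tfrac{k^{2}}{2}\max\|U_{tt}\|_{1}$ with $U_{tt}=I_{h,0}u_{tt}$; here I would invoke the $H^{1}$-stability of the interpolant, $\|I_{h,0}w\|_{1}\leq C\|w\|_{2}$ (an immediate consequence of $(\ref{eq21})$ with $k=2$), to get $\|U_{tt}\|_{1}\leq C$ and hence an $O(k^{2})$ bound. Taking the maximum over $0\leq n\leq M-1$ and combining yields $\max_{n}(\|\delta_{1}^{n}\|+\|\delta_{2}^{n}\|_{1})\leq Ck(k+h^{3/2})$, as claimed; the $(\ref{cb})$ case follows verbatim after adjusting the nonlinear terms as in Sections $2$ and $3$.

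The only genuine subtlety, and the step I expect to need the most care, is ensuring that the second-order temporal remainders are bounded uniformly in $h$ in the correct norms, i.e. that $\|I_{h}\eta_{tt}\|$ and $\|I_{h,0}u_{tt}\|_{1}$ remain $O(1)$. This is precisely where the stability of the interpolation operators (in $L^{2}$ and in $H^{1}$, respectively) together with the assumed temporal smoothness of the exact solution enters; everything else is the exact algebraic cancellation furnished by Lemma $4.1$, which reduces the truncation error to $k\psi(t^{n})$, $k\zeta(t^{n})$ plus the Taylor remainders.
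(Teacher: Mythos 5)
Your proposal is correct and follows essentially the same route as the paper: the paper likewise uses $(\ref{eq49})$--$(\ref{eq410})$ to rewrite $\delta_{1}^{n}=H^{n+1}-H^{n}-kH_{t}^{n}+k\psi^{n}$ and $\delta_{2}^{n}=U^{n+1}-U^{n}-kU_{t}^{n}+k\zeta^{n}$, then bounds the Taylor remainders by $O(k^{2})$ and the $\psi^{n}$, $\zeta^{n}$ terms via $(\ref{eq411})$. Your extra remarks on the $L^{2}$- and $H^{1}$-stability of the interpolants merely make explicit what the paper leaves implicit in "by Taylor's theorem".
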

\begin{proof}
Using $(\ref{eq49})$ and $(\ref{eq410})$ we have, with $\psi^{n}=\psi(t^{n})$,
$\zeta^{n} = \zeta(t^{n})$, that
$\delta_{1}^{n} = H^{n+1} - H^{n} -kH_{t}^{n} + k\psi^{n}$,
$\delta_{2}^{n} = U^{n+1} - U^{n} -kU_{t}^{n}+k\zeta^{n}$. Hence, for $0\leq n\leq M-1$,
\begin{align*}
\|\delta_{1}^{n}\| + \|\delta_{2}^{n}\|_{1} & \leq \|H^{n+1}-H^{n}-kH_{t}^{n}\|
+ k\|\psi^{n}\| + \|U^{n+1}-U^{n}-kU_{t}^{n}\|_{1} + k\|\zeta^{n}\|_{1}\\
& \leq C(k^{2} + kh^{3/2} + k^{2} + kh^{2}) \leq Ck(k + h^{3/2})\,,
\end{align*}
by Taylor's theorem and $(\ref{eq411})$.
\end{proof}
\begin{proposition}
Suppose that the solutions $(\eta,u)$ of $(\ref{scb})$ and $(\ref{cb})$ are sufficiently smooth
on $[0,T]$. Then, if $\mu = k/h^{2}$, there is a constant $C=C(\mu)$, which is an increasing
continuous function of $\mu$, such that
\begin{equation}
\max_{0\leq  n\leq M} \|H_{h}^{n} - \eta(t^{n})\| \leq C(k + h^{3/2})\,,
\quad \max_{0\leq n\leq M}\|U_{h}^{n} - u(t^{n})\|_{1} \leq C(k+h)\,,
\label{eq413}
\end{equation}
where $(H_{h}^{n},U_{h}^{n})$ satisfy $(\ref{eq47})$ or $(\ref{eq48})$ as the case may be.
\end{proposition}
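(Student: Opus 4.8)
The plan is to carry out a fully discrete energy argument, comparing the computed quantities with the \emph{interpolants} of the exact solution rather than with the exact solution itself. Writing $\theta^{n}:=I_{h}\eta(t^{n})-H_{h}^{n}$ and $\xi^{n}:=I_{h,0}u(t^{n})-U_{h}^{n}$, I note that the initial conditions $(\ref{eq44})$ give $\theta^{0}=\xi^{0}=0$. Subtracting the scheme $(\ref{eq48})$ from the consistency relations defining $\delta_{1}^{n}$, $\delta_{2}^{n}$ in Lemma $4.2$, and expanding the nonlinear differences exactly as in the proof of Theorem $2.1$ (so that $H^{n}U^{n}-H_{h}^{n}U_{h}^{n}=H^{n}\xi^{n}+U^{n}\theta^{n}-\theta^{n}\xi^{n}$ and $U^{n}U_{x}^{n}-U_{h}^{n}U_{hx}^{n}=(U^{n}\xi^{n}-\tfrac{1}{2}(\xi^{n})^{2})_{x}$, etc.), I obtain evolution equations of the form $\theta^{n+1}-\theta^{n}=\delta_{1}^{n}-kP\xi_{x}^{n}-\tfrac{k}{2}P[(\cdots)_{x}]$ and $\xi^{n+1}-\xi^{n}=\delta_{2}^{n}-kA\theta_{x}^{n}-\tfrac{3k}{2}A[(\cdots)_{x}]-\tfrac{k}{2}A[(\cdots)_{x}]$.

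For the energy step I would use the identity $\tfrac{1}{2}(\|\theta^{n+1}\|^{2}-\|\theta^{n}\|^{2})=(\theta^{n+1}-\theta^{n},\theta^{n})+\tfrac{1}{2}\|\theta^{n+1}-\theta^{n}\|^{2}$, and the analogous identity for $\xi$ with the inner product $a(\cdot,\cdot)$ (so that $\|\cdot\|_{1}$ appears). Pairing the $\theta$-equation with $\theta^{n}$ and the $\xi$-equation with $\xi^{n}$ and adding, the two linear coupling terms $-k(\xi_{x}^{n},\theta^{n})$ and $-k(\theta_{x}^{n},\xi^{n})$ cancel after one integration by parts (using $a(Af,\chi)=(f,\chi)$, that $P$ is selfadjoint and fixes $S_{h}^{2}$, and that $\xi^{n},U^{n}\in S_{h,0}^{2}$ vanish at the endpoints), which is the same antisymmetry that underlies the conservation property $(\ref{eq14})$. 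The remaining nonlinear terms I would keep \emph{without} integrating by parts in $\theta$, bounding them by $CkE^{n}$ with $E^{n}:=\|\theta^{n}\|^{2}+\|\xi^{n}\|_{1}^{2}$, using $\|\xi_{x}^{n}\|\le\|\xi^{n}\|_{1}$ and the skew trick $(U^{n}\theta_{x}^{n},\theta^{n})=-\tfrac{1}{2}(U_{x}^{n}\theta^{n},\theta^{n})$; the genuinely cubic terms in $\theta^{n}\xi^{n}$ are controlled under a running hypothesis $\|\theta^{n}\|_{\infty}\le 1$, and the truncation contributions are estimated by $Ck(k+h^{3/2})^{2}$ using $(\ref{eq411})$.

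The crux is the pair of increment-squared terms $\tfrac{1}{2}\|\theta^{n+1}-\theta^{n}\|^{2}$ and $\tfrac{1}{2}\|\xi^{n+1}-\xi^{n}\|_{1}^{2}$, which is where the CFL-type restriction enters. For $\xi$ the operator $A$ smooths: by $(\ref{eq46})$ together with $\|g_{x}\|_{-1}\le C\|g\|$ one gets $\|A\theta_{x}^{n}\|_{1}\le C\|\theta^{n}\|$, and similarly for the nonlinear pieces, so that $\|\xi^{n+1}-\xi^{n}\|_{1}^{2}\le Ck^{2}(\cdots)\le CkE^{n}+Ck(k+h^{3/2})^{2}$ with no growth in $\mu$. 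For $\theta$, however, the terms $kP\xi_{x}^{n}$ and $\tfrac{k}{2}P[(U^{n}\theta^{n})_{x}]$ force the inverse inequality $(\ref{eq24})$, giving $\|\theta^{n+1}-\theta^{n}\|\le C[k(k+h^{3/2})+k\|\xi^{n}\|_{1}+kh^{-1}\|\theta^{n}\|]$; upon squaring, the factor $k^{2}h^{-2}=\mu k$ appears, which is precisely the source of the constant $C(\mu)$, increasing in $\mu$. Collecting everything yields the recursion $E^{n+1}\le(1+C(\mu)k)E^{n}+Ck(k+h^{3/2})^{2}$, and discrete Gronwall together with $E^{0}=0$ gives $E^{n}\le C(\mu)(k+h^{3/2})^{2}$, i.e. $\|\theta^{n}\|+\|\xi^{n}\|_{1}\le C(\mu)(k+h^{3/2})$.

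Finally I would conclude by the triangle inequality, writing $\eta(t^{n})-H_{h}^{n}=(\eta(t^{n})-I_{h}\eta(t^{n}))+\theta^{n}$ and $u(t^{n})-U_{h}^{n}=(u(t^{n})-I_{h,0}u(t^{n}))+\xi^{n}$ and invoking the interpolation bounds $(\ref{eq21})$ ($\|\eta-I_{h}\eta\|=O(h^{2})$, $\|u-I_{h,0}u\|_{1}=O(h)$), which absorb into $C(k+h^{3/2})$ and $C(k+h)$ respectively and give $(\ref{eq413})$. The running hypothesis $\|\theta^{n}\|_{\infty}\le 1$ is closed by induction, since the bound just obtained together with the inverse inequality $(\ref{eq25})$ gives $\|\theta^{n+1}\|_{\infty}\le Ch^{-1/2}(k+h^{3/2})=C(\mu h^{3/2}+h)\to 0$ as $h\to 0$. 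The main obstacle I anticipate is the careful bookkeeping of the $\theta$-increment term: one must route each nonlinear contribution either through $\|\xi^{n}\|_{1}$ (no inverse inequality) or through the single inverse inequality that produces the clean factor $\mu k$, so that the Gronwall constant depends on $\mu$ but no uncontrolled power of $h^{-1}$ survives. The $(\ref{cb})$ case is handled as in Proposition $2.1$, the conservation identity being replaced by the same bootstrap on $\|\theta^{n}\|_{\infty}$.
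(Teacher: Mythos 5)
Your proposal is correct and follows essentially the same route as the paper: the error is measured against the interpolants $I_{h}\eta(t^{n})$, $I_{h,0}u(t^{n})$, the truncation estimates of Lemmas $4.1$--$4.2$ supply the $O(k(k+h^{3/2}))$ consistency term, the smoothing $\|Af\|_{1}\leq C\|f\|_{-1}$ keeps the $\xi$-recursion free of inverse inequalities, and the single dangerous term $\tfrac{k}{2}P(U^{n}\theta^{n})_{x}$, treated by the skew-symmetry identity for the cross term and the inverse inequality for its square, is exactly where the paper also produces the factor $k^{2}h^{-2}=\mu k$ and hence the constant $C(\mu)$. The only difference is organizational: the paper expands the square of the norm only for $\ve^{n}-\tfrac{k}{2}P(U^{n}\ve^{n})_{x}$ and bounds everything else by the triangle inequality, whereas you apply the full discrete energy identity (with the harmless extra cancellation of the linear couplings $(\xi_{x}^{n},\theta^{n})+(\theta_{x}^{n},\xi^{n})$); both yield the same Gronwall recursion and conclusion.
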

\begin{proof}
Consider the case of $(\ref{scb})$. We use the notation of Lemmas $4.1$ and $4.2$ and put
$\ve^{n} = H^{n} - H_{h}^{n}$, $e^{n} = U^{n} - U_{h}^{n}$. Using the definition of
$\delta_{1}^{n}$, $\delta_{2}^{n}$, and $(\ref{eq48})$ we obtain, after some straightforward
computations, that for $0\leq n\leq M-1$
\begin{align}
\ve^{n+1} & = \ve^{n} -kP[e_{x}^{n} + \tfrac{1}{2}(H^{n}e^{n})_{x}
- \tfrac{1}{2}(\ve^{n}e^{n})_{x} + \tfrac{1}{2}(U^{n}\ve^{n})_{x}] + \delta_{1}^{n}\,,
\label{eq414} \\
e^{n+1} & = e^{n} - kA[\ve_{x}^{n} + \tfrac{3}{2}(U^{n}e^{n})_{x} - \tfrac{3}{2}(e^{n}e_{x}^{n})
+ \tfrac{1}{2}(H^{n}\ve^{n})_{x} - \tfrac{1}{2}\ve^{n}\ve_{x}^{n}] + \delta_{2}^{n}\,.
\label{eq415}
\end{align}
It follows from $(\ref{eq46})$ that
\begin{align}
\|A(U^{n}e^{n})_{x}\|_{1} & \leq C \|U^{n}e^{n}\| \leq C \|e^{n}\|\,,
\label{eq416} \\
\|A(H^{n}\ve^{n})_{x}\|_{1} & \leq C \|H^{n}\ve^{n}\| \leq C \|\ve^{n}\|\,.
\label{eq417}
\end{align}
Let now $0\leq n^{*}\leq M-1$ be the maximal integer such that
\begin{equation}
\|\ve^{n}\|_{1} + \|e^{n}\|_{1} \leq 1\,, \quad 0\leq n\leq n^{*}\,.
\label{eq418}
\end{equation}
Then, for $0\leq n\leq n^{*}$, using $(\ref{eq46})$, we have
\begin{equation}
\begin{aligned}
\|P\bigl( e_{x}^{n} + \tfrac{1}{2}(H^{n}e^{n})_{x} - \tfrac{1}{2}(\ve^{n}e^{n})_{x}\bigr)\|
&\leq \|e_{x}^{n}\| + \tfrac{1}{2}\|(H^{n}e^{n})_{x}\| +\tfrac{1}{2}\|(\ve^{n}e^{n})_{x}\| \\
& \leq C(\|e^{n}\|_{1} + \|e^{n}\|_{1}\|\ve^{n}\|_{1}) \leq C \|e^{n}\|_{1}\,, \label{eq419}
\end{aligned}
\end{equation}
\begin{equation}
\|A(e^{n}e_{x}^{n})\|_{1} + \|A(\ve^{n}\ve_{x}^{n})\|_{1} \leq
C(\|(e^{n})^{2}\| + \|(\ve^{n})^{2}\|) \leq C(\|e^{n}\| + \|\ve^{n}\|)\,.
\label{eq420}
\end{equation}
Using $(\ref{eq416})$-$(\ref{eq420})$ in $(\ref{eq414})$ and $(\ref{eq415})$ we have,
for $0\leq n\leq n^{*}$
\begin{align}
\|\ve^{n+1}\| & \leq \|\ve^{n} - \tfrac{k}{2}P(U^{n}\ve^{n})_{x}\|
+ Ck\|e^{n}\|_{1} + \|\delta_{1}^{n}\|\,, \label{eq421}\\
\|e^{n+1}\|_{1} & \leq \|e^{n}\|_{1} + Ck(\|\ve^{n}\| + \|e^{n}\|) + \|\delta_{2}^{n}\|_{1}\,.
\label{eq422}
\end{align}
Now
\begin{equation}
\|\ve^{n} - \tfrac{k}{2}P(U^{n}\ve^{n})_{x}\|^{2} =
\|\ve^{n}\|^{2} + \tfrac{k^{2}}{4}\|P(U^{n}\ve^{n})_{x}\|^{2}
-k(\ve^{n},P(U^{n}\ve^{n})_{x})\,. \label{eq423}
\end{equation}
But, by inverse assumptions
\[
\|P(U^{n}\ve^{n})_{x}\| \leq \|(U^{n}\ve^{n})_{x}\| \leq \tfrac{C}{h}\|\ve^{n}\|\,.
\]
In addition,
\[
|(\ve^{n},P(U^{n}\ve^{n})_{x})| = |(\ve^{n},(U^{n}\ve^{n})_{x})|
= \tfrac{1}{2}|(U_{x}^{n}\ve^{n},\ve^{n})| \leq C\|\ve^{n}\|^{2}\,.
\]
Hence, $(\ref{eq423})$ yields
\[
\|\ve^{n} - \tfrac{k}{2}P(U^{n}\ve^{n})_{x}\|^{2} \leq \|\ve^{n}\|^{2}
+ C\mu k\|\ve^{n}\|^{2} + Ck\|\ve^{n}\|^{2}\,,
\]
i.e. that
\[
\|\ve^{n} - \tfrac{k}{2}P(U^{n}\ve^{n})_{x}\| \leq (1 + C_{\mu}k)\|\ve^{n}\|\,,
\]
where $C_{\mu}$ is a generic polynomial in $\mu$ of degree one. Therefore $(\ref{eq421})$
becomes
\begin{equation}
\|\ve^{n+1}\| \leq \|\ve^{n}\| + C_{\mu}k(\|\ve^{n}\| + \|e^{n}\|_{1})
+ \|\delta_{1}^{n}\|\,.
\label{eq424}
\end{equation}
Hence, by $(\ref{eq422})$, $(\ref{eq424})$, and Lemma $4.2$ we obtain for $0\leq n\leq n^{*}$
\[
\|\ve^{n+1}\| + \|e^{n+1}\|_{1} \leq \|\ve^{n}\| + \|e^{n}\|_{1}
+ C_{\mu}k(\|\ve^{n}\| + \|e^{n}\|_{1}) + Ck(k+h^{3/2})\,.
\]
By Gronwall's Lemma we conclude that
\begin{equation}
\|\ve^{n}\| + \|e^{n}\|_{1} \leq C(\mu,T)(k+h^{3/2})\,, \quad
0\leq n\leq n^{*}+1\,, \label{eq425}
\end{equation}
where $C(\mu,T)=\exp((C_{0}+C_{1}\mu)T)$. Taking $h$ sufficiently small, we see from the
maximality property $(\ref{eq418})$ of $n^{*}$ that we may take $n^{*}=M-1$, and the conclusion
of the proposition follows from $(\ref{eq21})$. The case of $(\ref{cb})$ is entirely similar.
\end{proof}
\begin{remark} The estimate $(\ref{eq425})$ and Sobolev's inequality imply that
$\|e^{n}\|_{\infty} = O(k+h^{3/2})$. Therefore,
$\max_{n}\|u(t^{n})-U_{h}^{n}\|_{\infty} = O(k + h^{3/2})$.
\end{remark}
\begin{remark} Consider the {\em linearized} problem $(\ref{eq243})$. In this case, the analogous
fully discrete scheme is
\begin{align*}
H_{h}^{n+1} - H_{h}^{n} + kPU_{hx}^{n} & = 0\,, \\
U_{h}^{n+1} - U_{h}^{n} + kAH_{hx}^{n} & = 0\,,
\end{align*}
for $0\leq n\leq M-1$, with $H_{h}^{0}=I_{h}\eta^{0}$, $U_{h}^{0}=I_{h,0}u^{0}$.
Consequently, using the notation of the proof of Proposition $4.1$, we now have for
$0\leq n\leq M-1$ the error equations
\begin{align*}
\ve^{n+1} & = \ve^{n} - kPe_{x}^{n} + \delta_{1}^{n}\,,\\
e^{n+1} & = e^{n} -kA\ve_{x}^{n} + \delta_{2}^{n}\,,
\end{align*}
from which there easily follows the estimate
\[
\|\ve^{n}\| + \|e^{n}\|_{1} \leq C(k+h^{3/2})\,, \quad 0\leq n\leq M\,,
\]
and the conclusions of the analog of Proposition $4.1$, without the stability restriction
$k=O(h^{2})$. In other words, the linearized system {\em is not stiff}. This may also be verified
by examining the spectrum of the spatial discretization operator of the semidiscrete linearized
system: The latter may be written for $0\leq t\leq T$ in the form
\begin{align*}
\eta_{ht} + L_{h}u_{h} & = 0\,, \\
M_{h}u_{ht} + \widetilde{L}_{h}\eta_{h} & = 0\,,
\end{align*}
where the operators $L_{h} : S_{h,0}^{2}\to S_{h}^{2}$,
$\widetilde{L}_{h} : S_{h}^{2}\to S_{h,0}^{2}$,
$M_{h} : S_{h,0}^{2}\to S_{h,0}^{2}$ are defined by the equations
\begin{align*}
(L_{h}\psi,\phi) & = (\psi_{x},\phi) \quad \forall \psi \in S_{h,0}^{2}, \phi \in S_{h}^{2}\,, \\
(\widetilde{L}_{h}\phi,\psi) & =
(\phi_{x},\psi) \quad \forall \phi \in S_{h}^{2}, \psi \in S_{h,0}^{2}\,, \\
(M_{h}\psi,\chi) & = a(\psi,\chi) \quad \forall \psi, \chi \in S_{h,0}^{2}\,.
\end{align*}
Hence, the semidiscrete system may be written on $S_{h}^{2} \times S_{h,0}^{2}$ as
\[
A_{h} W_{ht} + B_{h}W_{h} = 0\,,
\]
where $W_{h} = [\eta_{h}, u_{h}]^{T}$, and
\[
A_{h} =
\begin{bmatrix}
I & 0 \\
0 & M_{h}
\end{bmatrix}
\,, \quad B_{h} =
\begin{bmatrix}
0 & L_{h} \\
\widetilde{L}_{h} & 0
\end{bmatrix}
\,.
\]
Therefore, the spectrum of the spatial discretization operator coincides with that of the
generalized eigenvalue problem
\begin{equation}
B_{h}V_{h} = -\lambda_{h}A_{h}V_{h}\,.
\label{eq426}
\end{equation}
Denoting the eigenfunctions as $V_{h} = [H_{h}, U_{h}]^{T}$, where $H_{h}$, $U_{h}$ are elements
of $S_{h}^{2}$, $S_{h,0}^{2}$, respectively, regarded as vector spaces over
$\mathbb{C}$, we have $L_{h}U_{h}=-\lambda_{h}H_{h}$,
$\widetilde{L}_{h}H_{h} = -\lambda_{h}M_{h}U_{h}$, from which
\begin{equation}
\begin{aligned}
(L_{h}U_{h},H_{h}) & = -\lambda_{h}\|H_{h}\|^{2}\,, \\
(\widetilde{L}_{h}H_{h},U_{h}) & = -\lambda_{h}(M_{h}U_{h},U_{h})\,,
\end{aligned}
\label{eq427}
\end{equation}
where the $L^{2}$ inner product for complex valued functions is defined as
$(f,g) = \int_{0}^{1}f(x)\overline{g(x)}dx$. Now
\[
(L_{h}U_{h},H_{h}) = (U_{hx},H_{h})=-(U_{h},H_{hx}) = -\overline{(\widetilde{L}_{h}H_{h},U_{h})}\,.
\]
Therefore, from $(\ref{eq427})$
\begin{equation}
\lambda_{h}=\frac{-2i \text{Im}(U_{hx},H_{h})}{\|H_{h}\|^{2} + a(U_{h},U_{h})}\,.
\label{eq428}
\end{equation}
We conclude that the spectrum of $(\ref{eq426})$ consists of purely imaginary eigenvalues
(and is symmetric about the origin as $-\lambda_{h}$ is also an eigenvalue corresponding to
the eigenvector $\overline{V}\!_{h}$). Moreover, it follows from $(\ref{eq428})$ that
\[
|\lambda_{h}| \leq \frac{2\|U_{hx}\|\|H_{h}\|}{\|H_{h}\|^{2} + a(U_{h},U_{h})} \leq C\,,
\]
i.e. that the spectrum is bounded by a constant $C$ independent of $h$, implying that the
linearized semidiscrete problem is not stiff. \par
As we saw in Proposition $4.1$, when the nonlinear $(\ref{cb})$ or $(\ref{scb})$ system is
discetized by the explicit Euler-Galerkin method, the mesh condition $k=O(h^{2})$ is sufficient
for stability. In a numerical experiment, we solved the nonlinear, nonhomogeneous $(\ref{cb})$
system using as exact solution $\eta(x,t)=\exp(2t)(\cos(\pi x) + x + 2)$,
$u(x,t) = \exp(xt)(\sin(\pi x) + x^{3} - x^{2})$ for $x \in [0,1]$, and discretizing by the explicit
Euler-standard Galerkin method with piecewise linear functions and fixed $N=1/h=400$. When we
integrated up to $T=1$ using $k=h^{2}$, we obtained an $L^{2}$ error for $\eta$ that was
approximately equal to $2.2090(-4)$. The accuracy degenerated when we took $k=h^{\alpha}$
with decreasing $\alpha < 2$. For example, at the temporal instance closest to $T=1$ the
computations yielded the following magnitudes of the $L^{2}$ errors of $\eta$:
\begin{itemize}
\item[] $k=h^{1.8}$ \qquad $3.8839(-4)$
\item[] $k=h^{1.6}$ \qquad $1.1257(-3)$
\item[] $k=h^{1.4}$ \qquad $3.6917(-3)$
\item[] $k=h^{1.2}$ \qquad overflow at about $t=0.8$\,.
\end{itemize}
\end{remark}
\subsection{The improved Euler method} We next study the temporal discretization of the
initial-value problems $(\ref{eq27})$, $(\ref{eq220})$ and $(\ref{eq29})$, $(\ref{eq220})$ by
the explicit, second-order accurate `improved Euler' scheme, that may be written in the case
of the o.d.e. $y'=f(t,y)$ in the two-step form
\begin{align*}
y^{n,1} & = y^{n} + \tfrac{k}{2}f(t^{n},y^{n})\,, \\
y^{n+1} & = y^{n} + k f(t^{n}+\tfrac{k}{2},y^{n,1})\,.
\end{align*}
Using notation analogous to that established in the previous paragraph, in the case of
$(\ref{cb})$ we seek for $0\leq n\leq M$ $H_{h}^{n}\in S_{h}^{2}$, $U_{h}^{n}\in S_{h,0}^{2}$,
and for $0\leq n\leq M-1$ $H_{h}^{n,1}\in S_{h}^{2}$, $U_{h}^{n,1}\in S_{h,0}^{2}$, such that
\begin{equation}
\begin{aligned}
H_{h}^{n,1} & - H_{h}^{n} + \tfrac{k}{2}PU_{hx}^{n} + \tfrac{k}{2}P(H_{h}^{n}U_{h}^{n})_{x}=0\,,\\
U_{h}^{n,1} & - U_{h}^{n} + \tfrac{k}{2}AH_{hx}^{n} + \tfrac{k}{2}A(U_{h}^{n}U_{hx}^{n}) = 0\,, \\
H_{h}^{n+1} & - H_{h}^{n} + kPU_{hx}^{n,1} + kP(H_{h}^{n,1}U_{h}^{n,1})_{x}=0\,, \\
U_{h}^{n+1} & - U_{h}^{n} + kAH_{hx}^{n,1} + kA(U_{h}^{n,1}U_{hx}^{n,1}) = 0\,,
\end{aligned}
\label{eq429}
\end{equation}
for $0\leq n\leq M-1$, with $H_{h}^{0}=I_{h}\eta^{0}$, $U_{h}^{0}=I_{h,0}u^{0}$. In the case
of $(\ref{scb})$ the analogous equations are
\begin{equation}
\begin{aligned}
H_{h}^{n,1} & - H_{h}^{n} + \tfrac{k}{2}PU_{hx}^{n} + \tfrac{k}{4}P(H_{h}^{n}U_{h}^{n})_{x}=0\,,\\
U_{h}^{n,1} & - U_{h}^{n} + \tfrac{k}{2}AH_{hx}^{n} + \tfrac{3k}{4}A(U_{h}^{n}U_{hx}^{n})
+ \tfrac{k}{4}A(H_{h}^{n}H_{hx}^{n}) = 0\,, \\
H_{h}^{n+1} & - H_{h}^{n} + kPU_{hx}^{n,1} + \tfrac{k}{2}P(H_{h}^{n,1}U_{h}^{n,1})_{x}=0\,, \\
U_{h}^{n+1} & - U_{h}^{n} + kAH_{hx}^{n,1} + \tfrac{3k}{2}A(U_{h}^{n,1}U_{hx}^{n,1})
+ \tfrac{k}{2}A(H_{h}^{n,1}H_{hx}^{n,1})= 0\,,
\end{aligned}
\label{eq430}
\end{equation}
for $0\leq n\leq M-1$, with $H_{h}^{0}=I_{h}\eta^{0}$, $U_{h}^{0}=I_{h,0}u^{0}$. In order to study
the consistency and convergence of the schemes, we let again
$H^{n} = H(t^{n})=I_{h}\eta(t^{n})$, $U^{n}=U(t^{n})=I_{h,0}u(t^{n})$, where
$(\eta, u)$ is the solution of $(\ref{cb})$ or $(\ref{scb})$, and define, in the case of
$(\ref{scb})$, $(H^{n,1}, U^{n,1}) \in S_{h}^{2}\times S_{h,0}^{2}$ for $0\leq n\leq M-1$
by the equations
\begin{equation}
\begin{aligned}
H^{n,1} & - H^{n} + \tfrac{k}{2}PU_{x}^{n} + \tfrac{k}{4}P(H^{n}U^{n})_{x} = 0\,, \\
U^{n,1} & - U^{n} + \tfrac{k}{2}AH_{x}^{n} + \tfrac{3k}{4}A(U^{n}U_{x}^{n})
+ \tfrac{k}{4}A(H^{n}H_{x}^{n}) = 0\,.
\end{aligned}
\label{eq431}
\end{equation}
In the case of $(\ref{cb})$ $H^{n,1}$, $U^{n,1}$ are defined analogously. Our consistency
result is:
\begin{lemma} Suppose that the solution $(\eta, u)$ of $(\ref{scb})$ is sufficiently smooth
and let $\lambda=k/h$. Define, for $0\leq n\leq M-1$, $\delta_{1}^{n}$, $\delta_{2}^{n}$
by the equations
\begin{align}
\delta_{1}^{n} & = H^{n+1} - H^{n} + kPU_{x}^{n,1} + \tfrac{k}{2}P(H^{n,1}U^{n,1})_{x}\,,
\label{eq432} \\
\delta_{2}^{n} & = U^{n+1} - U^{n} + kAH_{x}^{n,1} + \tfrac{3k}{2}A(U^{n,1}U_{x}^{n,1})
+ \tfrac{k}{2}A(H^{n,1}H_{x}^{n,1})\,.
\label{eq433}
\end{align}
Then, there exists a constant $C_{1}=C_{1}(\lambda)$, which is a polynomial of $\lambda$ of degree
one, such that
\[
\max_{0\leq n\leq M-1}(\|\delta_{1}^{n}\| + \|\delta_{2}^{n}\|_{1})\leq C_{1}k(k^{2}+h^{3/2})\,.
\]
The analogous result holds for $(\ref{cb})$ as well.
\end{lemma}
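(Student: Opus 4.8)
The plan is to view $\delta_1^n$ and $\delta_2^n$ as the \emph{local truncation error} of a second-order Runge--Kutta method applied to the o.d.e.\ system whose right-hand sides are, in the notation of Lemma $4.1$,
\[
\Phi(H,U) := -PU_x - \tfrac{1}{2}P(HU)_x, \qquad \Psi(H,U) := -AH_x - \tfrac{3}{2}A(UU_x) - \tfrac{1}{2}A(HH_x).
\]
With this notation Lemma $4.1$ reads $H_t = \Phi(H,U)+\psi$ and $U_t = \Psi(H,U)+\zeta$, the stage $(\ref{eq431})$ becomes $H^{n,1}=H^n+\tfrac{k}{2}\Phi(H^n,U^n)$, $U^{n,1}=U^n+\tfrac{k}{2}\Psi(H^n,U^n)$, and $(\ref{eq432})$--$(\ref{eq433})$ read $\delta_1^n = H^{n+1}-H^n - k\Phi(H^{n,1},U^{n,1})$, $\delta_2^n = U^{n+1}-U^n - k\Psi(H^{n,1},U^{n,1})$. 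First I would record the structural facts that $H=I_h\eta$, $U=I_{h,0}u$ commute with $\partial_t$, so $\Phi^n:=\Phi(H^n,U^n)=H_t^n-\psi^n=I_h\eta_t^n-\psi^n$ and $\Psi^n:=\Psi(H^n,U^n)=U_t^n-\zeta^n=I_{h,0}u_t^n-\zeta^n$; together with the approximation properties and the inverse inequality $(\ref{eq24})$ this shows that $\Phi^n$ and $\Psi^n$ are bounded in $H^1$ \emph{uniformly} in $h$ — their derivatives do not blow up like $h^{-1}$.

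Next I would Taylor-expand in $k$. Since $\Phi$ and $\Psi$ are quadratic polynomials in $(H,U)$, the stage substitution is exact, producing $k\Phi(H^{n,1},U^{n,1}) = k\Phi^n + \tfrac{k^2}{2}D\Phi^n[\Phi^n,\Psi^n] + r_1^n$ and likewise for $\Psi$, where $r_1^n,r_2^n$ collect the cubic-in-$k$ contributions. Differentiating the relations of Lemma $4.1$ gives $H_{tt}^n = D\Phi^n[\Phi^n,\Psi^n] + D\Phi^n[\psi^n,\zeta^n] + \psi_t^n$ and $U_{tt}^n = D\Psi^n[\Phi^n,\Psi^n] + D\Psi^n[\psi^n,\zeta^n] + \zeta_t^n$. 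Combining these with the second-order Taylor expansions $H^{n+1}-H^n = kH_t^n + \tfrac{k^2}{2}H_{tt}^n + R_1^n$, $U^{n+1}-U^n = kU_t^n + \tfrac{k^2}{2}U_{tt}^n + R_2^n$ (remainders $R_i^n$ controlled by $\|I_h\eta_{ttt}\|$, $\|I_{h,0}u_{ttt}\|_1$), the first-order terms and the $\tfrac{k^2}{2}D\Phi^n[\Phi^n,\Psi^n]$ terms cancel, leaving
\[
\delta_1^n = k\psi^n + \tfrac{k^2}{2}\bigl(D\Phi^n[\psi^n,\zeta^n]+\psi_t^n\bigr) + R_1^n - r_1^n,
\]
\[
\delta_2^n = k\zeta^n + \tfrac{k^2}{2}\bigl(D\Psi^n[\psi^n,\zeta^n]+\zeta_t^n\bigr) + R_2^n - r_2^n.
\]

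I would then estimate term by term using Lemma $4.1$. The leading contributions $k\|\psi^n\|\le Ckh^{3/2}$ and $k\|\zeta^n\|_1\le Ckh^2$ are of the required order, and $\tfrac{k^2}{2}\psi_t^n$, $\tfrac{k^2}{2}\zeta_t^n$ are lower order. The $H^1$-boundedness of $\Phi^n,\Psi^n$ noted above makes the cubic stage remainders $r_i^n$ genuinely $O(k^3)$ in the relevant norms, which, together with $R_i^n=O(k^3)$, supplies the $k\cdot k^2$ part of the bound. For $\delta_2^n$ the term $\tfrac{k^2}{2}D\Psi^n[\psi^n,\zeta^n]$ is harmless: because $A$ composed with a spatial derivative maps $L^2$ into $H^1$ boundedly by $(\ref{eq46})$, one gets $\|D\Psi^n[\psi^n,\zeta^n]\|_1\le C(\|\psi^n\|+\|\zeta^n\|)\le Ch^{3/2}$, hence $O(k^2h^{3/2})$.

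The main obstacle is the single summand $-\tfrac{k^2}{4}P(\psi^n U^n)_x$ inside $\tfrac{k^2}{2}D\Phi^n[\psi^n,\zeta^n]$ in $\delta_1^n$. Because the first equation uses the $L^2$-projection $P$ rather than the smoothing operator $A$, there is no gain of a derivative, and the inverse inequality $(\ref{eq24})$ yields only $\|\psi_x^n\|\le Ch^{-1}\|\psi^n\|\le Ch^{1/2}$, so this summand is merely $O(k^2h^{1/2})$; the remaining summands $-P\zeta_x^n$ and $-\tfrac12 P(H^n\zeta^n)_x$ are $O(h^2)$ since $\|\zeta^n\|_1\le Ch^2$. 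The resolution — and the reason $C_1$ must depend on $\lambda=k/h$ — is to absorb one power of $k$ into the mesh ratio: $k^2h^{1/2}=\lambda\,k\,h^{3/2}$, which is of the required form with $C_1$ linear in $\lambda$. Collecting all bounds gives $\|\delta_1^n\|+\|\delta_2^n\|_1\le C_1(\lambda)\,k\,(k^2+h^{3/2})$ uniformly in $n$, and the assertion for $(\ref{cb})$ follows from the identical computation with the nonlinear coefficients adjusted as in Sections $2$ and $3$.
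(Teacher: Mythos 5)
Your argument is correct and is in substance the paper's own proof: both Taylor-expand the stages using $(\ref{eq431})$ together with the truncation identities $(\ref{eq49})$--$(\ref{eq410})$, both rely on the bounds $\|\psi\|\leq Ch^{3/2}$, $\|\zeta\|_{1}\leq Ch^{2}$ (and their time derivatives) from Lemma $4.1$, and both locate the loss of order in the single term $P(U^{n}\psi^{n})_{x}$, which is handled exactly as you propose, via the inverse inequality $\|\psi^{n}\|_{1}\leq Ch^{-1}\|\psi^{n}\|$ and the absorption $k^{2}h^{1/2}=\lambda\,k\,h^{3/2}$ into the constant $C_{1}(\lambda)$. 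Your packaging of the expansion through the exact second-order Taylor formula for the quadratic maps $\Phi$, $\Psi$ (with stage remainders $r_{i}^{n}$) is only a cleaner bookkeeping of the paper's explicit expansions of $H^{n,1}U^{n,1}$, $U^{n,1}U_{x}^{n,1}$, $H^{n,1}H_{x}^{n,1}$ and their remainders $w_{1}^{n}$, $w_{2}^{n}$, $w_{3}^{n}$.
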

\begin{proof} Let $0\leq n\leq M-1$. By $(\ref{eq431})$, $(\ref{eq49})$ and $(\ref{eq410})$ we have
\begin{equation}
H^{n,1} = H^{n} + \tfrac{k}{2}H_{t}^{n} - \tfrac{k}{2}\psi^{n}\,, \quad
U^{n,1} = U^{n} + \tfrac{k}{2}U_{t}^{n} - \tfrac{k}{2}\zeta^{n}\,.
\label{eq434}
\end{equation}
From these expressions, after some algebra, we obtain
\[
H^{n,1}U^{n,1} = H^{n}U^{n} + \tfrac{k}{2}(HU)_{t}^{n} + w_{1}^{n}\,,
\]
where
\begin{equation}
w_{1}^{n}:=\tfrac{k^{2}}{4}H_{t}^{n}U_{t}^{n} - \tfrac{k}{2}(U^{n}+\tfrac{k}{2}U_{t}^{n})\psi^{n}
-\tfrac{k}{2}(H^{n}+\tfrac{k}{2}H_{t}^{n})\zeta^{n} + \tfrac{k^{2}}{4}\psi^{n}\zeta^{n}\,.
\label{eq435}
\end{equation}
Hence, by $(\ref{eq432})$, $(\ref{eq434})$, $(\ref{eq49})$, and the above we obtain
\begin{equation}
\delta_{1}^{n}=H^{n+1} - H^{n} - kH_{t}^{n} - \tfrac{k^{2}}{2}H_{tt}^{n} + k\psi^{n}
+ \tfrac{k^{2}}{2}\psi_{t}^{n} - \tfrac{k^{2}}{2}P\zeta_{x}^{n} + \tfrac{k}{2}Pw_{1x}^{n}\,.
\label{eq436}
\end{equation}
Now, $(\ref{eq435})$, in view of $(\ref{eq411})$ and the approximation and inverse properties
of $S_{h}^{2}$ and $S_{h,0}^{2}$, gives
\begin{align*}
\|w_{1}^{n}\|_{1} & \leq C(k^{2}\|H_{t}^{n}\|_{1} \|U_{t}^{n}\|_{1}
+ k\|\psi^{n}\|_{1}(\|U^{n}\|_{1} + k\|U_{t}^{n}\|_{1}) \\
& \qquad
+ k\|\zeta^{n}\|_{1}(\|H^{n}\|_{1} + k\|H_{t}^{n}\|_{1}) + k^{2}\|\psi^{n}\|_{1}\|\zeta^{n}\|_{1})\\
& \leq c(k^{2} + kh^{-1}h^{3/2}(1+ck) + k h^{2}(1+ck) + k^{2}h^{-1}h^{7/2}) \\
& \leq c(k^{2} + \lambda h^{3/2}).
\end{align*}
Therefore, by Taylor's theorem and $(\ref{eq411})$ we have
\begin{equation}
\|\delta_{1}^{n}\| \leq c(k^{3} + kh^{3/2} + k^{2}h^{3/2} + k^{2}h^{2} + k(k^{2}+\lambda h^{3/2})
\leq C_{1}k(k^{2} + h^{3/2})\,,
\label{eq437}
\end{equation}
where $C_{1}$ is a constant that is a polynomial of $\lambda$ of degree one. (Such constants
will be generically denoted by $C_{1}$ in the sequel of this proof.) In order to estimate
$\|\delta_{2}^{n}\|_{1}$ note that by $(\ref{eq434})$
\begin{equation}
U^{n,1}U_{x}^{n,1} = U^{n}U_{x}^{n} + \tfrac{k}{2}(UU_{x})_{t}^{n} + w_{2}^{n}\,,
\label{eq438}
\end{equation}
where
\[
w_{2}^{n}:= \tfrac{k^{2}}{4}U_{t}^{n}U_{tx}^{n}
- \tfrac{k}{2}\bigl( (U^{n}+\tfrac{k}{2}U_{t}^{n})\zeta^{n}\bigr)_{x}
+ \tfrac{k^{2}}{4}\zeta^{n}\zeta_{x}^{n}\,.
\]
By $(\ref{eq411})$ and the approximation properties of $S_{h,0}^{2}$ we have
\begin{equation}
\|w_{2}^{n}\| \leq C(k^{2} + kh^{2})\,.
\label{eq439}
\end{equation}
Similarly,
\begin{equation}
H^{n,1}H_{x}^{n,1} = H^{n}H_{x}^{n} + \tfrac{k}{2}(HH_{x})_{t}^{n} + w_{3}^{n}\,,
\label{eq440}
\end{equation}
where
\[
w_{3}^{n}:=\tfrac{k^{2}}{4}H_{t}^{n}H_{tx}^{n}
-\tfrac{k}{2}\bigl( (H^{n}+\tfrac{k}{2}H_{t}^{n})\psi^{n}\bigr)_{x}
+ \tfrac{k^{2}}{4}\psi^{n}\psi_{x}^{n}\,.
\]
By $(\ref{eq411})$ and the approximation and inverse properties of $S_{h}^{2}$ we have
\begin{equation}
\|w_{3}^{n}\| \leq C(k^{2} + \lambda h^{3/2})\,.
\label{eq441}
\end{equation}
By $(\ref{eq433})$, $(\ref{eq435})$, $(\ref{eq438})$, and $(\ref{eq440})$, we see now that
\[
\delta_{2}^{n} =( U^{n+1}-U^{n}-kU_{t}^{n}-\tfrac{k^{2}}{2}U_{tt}^{n})
+ k\zeta^{n} + \tfrac{k^{2}}{2}\zeta_{t}^{n}-\tfrac{k^{2}}{2}A\psi_{x}^{n}
+ \tfrac{3k}{2}Aw_{2}^{n} + \tfrac{k}{2}Aw_{3}^{n}\,.
\]
Therefore, by Taylor's theorem, $(\ref{eq411})$, $(\ref{eq46})$, $(\ref{eq439})$,
$(\ref{eq441})$,
\[
\|\delta_{2}^{n}\|_{1} \leq c(k^{3}+kh^{2}+k^{2}h^{2} + k^{2}h^{3/2}+k^{3}
+k^{2}h^{2} + k^{3}+\lambda kh^{3/2})\leq C_{1}k(k^{2} + h^{3/2})\,,
\]
which, with $(\ref{eq437})$, concludes the proof of the Lemma. The case of $(\ref{cb})$ is
entirely analogous.
\end{proof}
For the stability and convergence of the fully discrete scheme it does not suffice to suppose
that $k=O(h)$. The following result shows that the stronger condition $k=O(h^{4/3})$
is sufficient.
\begin{proposition} Suppose that the solutions $(\eta, u)$ of $(\ref{scb})$ and $(\ref{cb})$
are sufficiently smooth on $[0,T]$. Then, if $\mu = k/h^{4/3}$, there is a constant
$C=C(\mu)$, which is an increasing continuous function of $\mu$, such that
\[
\max_{0\leq n\leq M}\|H_{h}^{n} - \eta(t^{n})\| \leq C(k^{2} + h^{3/2})\,, \quad
\max_{0\leq n\leq M}\|U_{h}^{n} - u(t^{n})\|_{1} \leq C(k^{2} + h)\,.
\]
\end{proposition}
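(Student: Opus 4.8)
The plan is to follow the architecture of the proof of Proposition $4.1$, but to carry along the two‑stage structure of the improved Euler scheme and, crucially, to exploit a skew‑symmetry that is special to the energy $\|\ve\|^{2}+\|e\|_{1}^{2}$. Working with $(\ref{scb})$, I would set $\ve^{n}=H^{n}-H_{h}^{n}$, $e^{n}=U^{n}-U_{h}^{n}$ and introduce the intermediate‑stage errors $\ve^{n,1}=H^{n,1}-H_{h}^{n,1}$, $e^{n,1}=U^{n,1}-U_{h}^{n,1}$. Subtracting $(\ref{eq430})$ from the defining relations $(\ref{eq431})$ and $(\ref{eq432})$--$(\ref{eq433})$, and splitting each nonlinear difference $H^{n}U^{n}-H_{h}^{n}U_{h}^{n}=U^{n}\ve^{n}+H^{n}e^{n}-\ve^{n}e^{n}$ exactly as in Proposition $4.1$, gives a first‑stage identity expressing $(\ve^{n,1},e^{n,1})$ in terms of $(\ve^{n},e^{n})$ and a second‑stage identity expressing $(\ve^{n+1},e^{n+1})$ in terms of the stage errors plus the truncation errors $\delta_{1}^{n},\delta_{2}^{n}$. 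Eliminating the stage, the recursion takes the schematic form $w^{n+1}=w^{n}+k\mathcal{L}^{n}w^{n}+\tfrac{k^{2}}{2}(\mathcal{L}^{n})^{2}w^{n}+(\text{quadratic in }w^{n})+\delta^{n}$, where $w^{n}=(\ve^{n},e^{n})$ and $\mathcal{L}^{n}$ is the linearization frozen at $t^{n}$ whose first component carries $-Pe_{x}$ and the transport corrections and whose second component carries $-A\ve_{x}$ and its corrections. As in Proposition $4.1$ I would introduce the maximal index $n^{\ast}$ up to which $\|\ve^{n}\|_{1}+\|e^{n}\|_{1}\le 1$, so that the quadratic error terms are controlled.

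The heart of the matter is the one‑step energy estimate in $\|w\|_{E}^{2}:=\|\ve\|^{2}+\|e\|_{1}^{2}=\|\ve\|^{2}+a(e,e)$. The principal (derivative‑bearing) part of $\mathcal{L}^{n}$ is skew‑adjoint for $\langle\cdot,\cdot\rangle_{E}$: integrating by parts, using the definition $(\ref{eq45})$ of $A$ and the fact that $e\in S_{h,0}^{2}$ vanishes at the endpoints, one finds $\langle \mathcal{L}^{n}w,w\rangle_{E}\le C\|w\|_{E}^{2}$. Hence, writing $R=\tfrac12(\mathcal{L}^{n}+(\mathcal{L}^{n})^{\ast})$ for the symmetric part (adjoint taken in $\langle\cdot,\cdot\rangle_{E}$) and $S=\mathcal{L}^{n}-R$ for the skew part, $R$ is a bounded operator while $S$ is skew with $\|S\|=O(1/h)$. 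Expanding $\|w^{n+1}\|_{E}^{2}$, the two quadratic‑in‑$k$ contributions combine, because of the identity $\langle w,(\mathcal{L}^{n})^{2}w\rangle_{E}=\|Rw\|_{E}^{2}-\|Sw\|_{E}^{2}$, as
\[
k^{2}\langle w,(\mathcal{L}^{n})^{2}w\rangle_{E}+k^{2}\|\mathcal{L}^{n}w\|_{E}^{2}
=k^{2}\bigl(2\|Rw\|_{E}^{2}+2\langle Sw,Rw\rangle_{E}\bigr),
\]
the dangerous $+k^{2}\|Sw\|_{E}^{2}$ being cancelled. This is exactly the cancellation absent for the explicit Euler scheme, where the surviving $k^{2}\|Sw\|_{E}^{2}=O(k^{2}/h^{2})$ forces $k=O(h^{2})$. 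A second cancellation, $\langle Sw,S^{2}w\rangle_{E}=0$ (apply the skew identity $\langle z,Sz\rangle_{E}=0$ to $z=Sw$), removes the leading $O(1/h^{3})$ part of the cubic term $k^{3}\langle \mathcal{L}^{n}w,(\mathcal{L}^{n})^{2}w\rangle_{E}$, leaving it of size $O(k^{3}/h^{2})$.

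It then remains to collect sizes. The surviving stiff per‑step growth rates are $O(k^{2}/h)$ (from $\langle Sw,Rw\rangle_{E}$), $O(k^{3}/h^{2})$ (the cubic term) and $O(k^{4}/h^{4})$ (from $\tfrac{k^{4}}{4}\|(\mathcal{L}^{n})^{2}w\|_{E}^{2}$, using $\|\mathcal{L}^{n}\|=O(1/h)$). Summed over the $M=T/k$ steps with $\mu=k/h^{4/3}$ fixed these accumulate to $O(\mu h^{1/3})$, $O(\mu^{2}h^{2/3})$ and $O(\mu^{3})$; the first two vanish as $h\to0$ while the last stays bounded, so the effective one‑step factor is $(1+C(\mu)k)$ with $C(\mu)$ increasing in $\mu$ (a polynomial of degree three). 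The bounded and quadratic‑in‑$w$ terms contribute the usual $O(k)\|w\|_{E}^{2}$, and the truncation enters through $2\langle w^{n},\delta^{n}\rangle_{E}+\|\delta^{n}\|_{E}^{2}$, controlled by the preceding consistency lemma ($\|\delta_{1}^{n}\|+\|\delta_{2}^{n}\|_{1}\le C_{1}k(k^{2}+h^{3/2})$). Gronwall's inequality then yields $\|w^{n}\|_{E}\le C(\mu,T)(k^{2}+h^{3/2})$ for $0\le n\le n^{\ast}+1$; the inverse inequality $\|\ve^{n}\|_{1}\le Ch^{-1}\|\ve^{n}\|\le C(\mu)(h^{-1}k^{2}+h^{1/2})\to0$ contradicts the maximality of $n^{\ast}$ once $h$ is small, so $n^{\ast}=M-1$, and $(\ref{eq21})$ converts the energy bound into the two stated estimates. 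The main obstacle is the bookkeeping of the RK2 energy expansion: establishing the exact skew‑adjointness of $S$ in $\|\cdot\|_{E}$ and verifying that both cancellations, the $-k^{2}\|Sw\|_{E}^{2}$ and $\langle Sw,S^{2}w\rangle_{E}=0$, genuinely survive the presence of the variable coefficients and the smoothing operator $A$, so that the worst accumulated growth is governed by $k^{3}/h^{4}=O(1)$ rather than by $k/h^{2}$.
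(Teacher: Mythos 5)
Your proposal is correct and is, at bottom, the paper's own argument rewritten in operator language. The paper likewise eliminates the intermediate stage to arrive at $(\ref{eq452})$, namely $\ve^{n+1}=\ve^{n}-\tfrac{k}{2}\rho_{1}^{n}+\tfrac{k^{2}}{8}\rho_{2}^{n}-k\omega_{4}^{n}+\delta_{1}^{n}$ with $\rho_{1}^{n}=P(U^{n}\ve^{n})_{x}$, $\rho_{2}^{n}=P(U^{n}\rho_{1}^{n})_{x}$, which is exactly your $w+kSw+\tfrac{k^{2}}{2}S^{2}w$ for the stiff block; the identity you package as $\langle w,\mathcal{L}^{2}w\rangle_{E}=\|Rw\|_{E}^{2}-\|Sw\|_{E}^{2}$ appears there concretely as $(\ve^{n},\rho_{2}^{n})=-\|\rho_{1}^{n}\|^{2}+(U_{x}^{n}\ve^{n},\rho_{1}^{n})$, which cancels the dangerous $\tfrac{k^{2}}{4}\|\rho_{1}^{n}\|^{2}$ in $(\ref{eq465})$, and your tally of the survivors ($k^{2}/h$, $k^{3}/h^{2}$, $k^{4}/h^{4}$, whence the factor $(1+C_{\mu}k)$ with $k^{4}/h^{4}$ the binding term that forces $k=O(h^{4/3})$) coincides with $(\ref{eq466})$. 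The one genuine organizational difference is that the paper never needs your joint skew-adjointness of the coupling blocks in the energy inner product for the quadratic cancellation: on the discrete energy space the off-diagonal operators $P\partial_{x}:H^{1}\to L^{2}$ and $A\partial_{x}:L^{2}\to H^{1}$ are bounded uniformly in $h$, so the only $O(1/h)$ piece is the scalar transport $P(U^{n}\cdot)_{x}$ acting inside the $\eta$-component; the cancellation is therefore performed on $\ve^{n}$ alone, while $e^{n+1}$ is handled by a plain triangle inequality in $H^{1}$ as in $(\ref{eq468})$. Your framing buys a cleaner explanation of why the improved Euler scheme relaxes the explicit Euler restriction $k=O(h^{2})$ to $k=O(h^{4/3})$; the paper's componentwise computation buys the explicit verification of the two points you rightly flag as the remaining work, namely that the variable coefficient perturbs skewness only by the bounded multiplication $U_{x}^{n}$, and that the stage coefficients $V^{n,1},W^{n,1}$ differ from $H^{n},U^{n}$ by $O(k)$ in $H^{1}$ so that freezing $\mathcal{L}$ at $t^{n}$ only feeds the bounded remainder.
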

\begin{proof} We consider $(\ref{scb})$, and put $\ve^{n}=H^{n} - H_{h}^{n}$,
$e^{n}=U^{n}-U_{h}^{n}$, $\theta^{n} = H^{n,1} - H_{h}^{n,1}$, and $\xi^{n}=U^{n,1} - U_{h}^{n,1}$.
We will show that
\begin{equation}
\max_{0\leq n\leq M}(\|\ve^{n}\| + \|e^{n}\|_{1}) \leq C(k^{2} + h^{3/2})\,,
\label{eq442}
\end{equation}
from which the conclusion of the proposition follows. From $(\ref{eq430})$, $(\ref{eq432})$,
$(\ref{eq433})$ we have, for $0\leq n\leq M-1$
\begin{equation}
\ve^{n+1} = \ve^{n} - kP\xi_{x}^{n}
- \tfrac{k}{2}P(H^{n,1}U^{n,1}-H_{h}^{n,1}U_{h}^{n,1})_{x} + \delta_{1}^{n}\,,
\label{eq443}
\end{equation}
and
\begin{equation}
e^{n+1} = e^{n} - kA\theta_{x}^{n}
- \tfrac{3k}{2}A(U^{n,1}U_{x}^{n,1} - U_{h}^{n,1}U_{hx}^{n,1})
- \tfrac{k}{2}A(H^{n,1}H_{x}^{n,1} - H_{h}^{n,1}H_{hx}^{n,1}) + \delta_{2}^{n}\,.
\label{eq444}
\end{equation}
From Lemma $4.3$ we have an estimate of $\|\delta_{1}^{n}\| + \|\delta_{2}^{n}\|_{1}$.
Our goal is to obtain suitable estimates of the remaining terms of the right-hand sides of
$(\ref{eq443})$ and $(\ref{eq444})$ in terms of $\|\ve^{n}\| + \|e^{n}\|_{1}$. To do this,
note first that $(\ref{eq431})$ and $(\ref{eq430})$ give
\[
\theta^{n} = \ve^{n} - \tfrac{k}{2}Pe_{x}^{n} - \tfrac{k}{4}P(H^{n}U^{n}-H_{h}^{n}U_{h}^{n})_{x}\,.
\]
But $H^{n}U^{n} - H_{h}^{n}U_{h}^{n}=H^{n}e^{n} - \ve^{n}e^{n} + U^{n}\ve^{n}$. Therefore
\begin{equation}
\theta^{n} = \ve^{n} - \tfrac{k}{4}\rho_{1}^{n} - \tfrac{k}{2}\omega_{1}^{n}\,,
\label{eq445}
\end{equation}
where
\begin{equation}
\rho_{1}^{n}:=P(U^{n}\ve^{n})_{x}\,,
\label{eq446}
\end{equation}
and
\begin{equation}
\omega_{1}^{n}:=Pe_{x}^{n} + \tfrac{1}{2}P(H^{n}e^{n})_{x} - \tfrac{1}{2}P(\ve^{n}e^{n})_{x}\,.
\label{eq447}
\end{equation}
Similarly,
\[
\xi^{n}=e^{n}-\tfrac{k}{2}A\ve_{x}^{n}-\tfrac{3k}{4}A(U^{n}U_{x}^{n}-U_{h}^{n}U_{hx}^{n})
-\tfrac{k}{4}A(H^{n}H_{x}^{n} - H_{h}^{n}H_{hx}^{n})\,.
\]
But $U^{n}U_{x}^{n} - U_{h}U_{hx}^{n}=(U^{n}e^{n})_{x} - e^{n}e_{x}^{n}$,
$H^{n}H_{x}^{n} - H_{h}^{n}H_{hx}^{n} = (H^{n}\ve^{n})_{x} - \ve^{n}\ve_{x}^{n}$.
Therefore
\begin{equation}
\xi^{n} = e^{n} - \tfrac{k}{2}\omega_{2}^{n}\,,
\label{eq448}
\end{equation}
where
\begin{equation}
\omega_{2}^{n}:=A\ve_{x}^{n} + \tfrac{3}{2}A(U^{n}e^{n})_{x} - \tfrac{3}{2}A(e^{n}e_{x}^{n})
+ \tfrac{1}{2}A(H^{n}\ve^{n})_{x} - \tfrac{1}{2}A(\ve^{n}\ve_{x}^{n})\,.
\label{eq449}
\end{equation}
In addition, by $(\ref{eq445})$ we have
\begin{equation}
H^{n,1}U^{n,1} - H_{h}^{n,1}U_{h}^{n,1} = U^{n}\ve^{n} - \tfrac{k}{4}U^{n}\rho_{1}^{n}
-\tfrac{k}{2}U^{n}\omega_{1}^{n} + \omega_{3}^{n}\,,
\label{eq450}
\end{equation}
where
\begin{equation}
\omega_{3}^{n}:=(U^{n,1} - U^{n})\theta^{n} + H^{n,1}\xi^{n} - \theta^{n}\xi^{n}\,.
\label{eq451}
\end{equation}
From $(\ref{eq443})$, $(\ref{eq445})$-$(\ref{eq450})$ we conclude therefore that for $0\leq n\leq M-1$
\begin{equation}
\ve^{n+1} = \ve^{n} - \tfrac{k}{2}\rho_{1}^{n} + \tfrac{k^{2}}{8}\rho_{2}^{n}
-k\omega_{4}^{n} + \delta_{1}^{n}\,,
\label{eq452}
\end{equation}
where
\begin{equation}
\rho_{2}^{n}:=P(U^{n}\rho_{1}^{n})_{x}\,,
\label{eq453}
\end{equation}
and
\begin{equation}
\omega_{4}^{n}:=Pe_{x}^{n} - \tfrac{k}{2}P\omega_{2x}^{n} - \tfrac{k}{4}P(U^{n}\omega_{1}^{n})_{x}
+ \tfrac{1}{2}P\omega_{3x}^{n}\,.
\label{eq454}
\end{equation}
Finally, using the identities
$U^{n,1}U_{x}^{n,1} - U_{h}^{n,1}U_{hx}^{n,1}=(U^{n,1}\xi^{n})_{x}-\xi^{n}\xi_{x}^{n}$ and
$H^{n,1}H_{x}^{n,1} - H_{h}^{n,1}H_{hx}^{n,1}=(H^{n,1}\theta^{n})_{x} - \theta^{n}\theta_{x}^{n}$,
we obtain from $(\ref{eq444})$ that for $0\leq n\leq M-1$
\begin{equation}
e^{n+1} = e^{n} - kA\theta_{x}^{n} - \tfrac{3k}{2}A\bigl( (U^{n,1}\xi^{n})_{x} - \xi^{n}\xi_{x}^{n}\bigr)
- \tfrac{k}{2}A\bigl( (H^{n,1}\theta^{n})_{x} - \theta^{n}\theta_{x}^{n}\bigr) + \delta_{2}^{n}\,.
\label{eq455}
\end{equation}
We now estimate the various terms in the right-hand sides of $(\ref{eq452})$ and $(\ref{eq455})$.
Let $0\leq n^{*}\leq M-1$ be the maximal index for which
\begin{equation}
\|\ve^{n}\|_{1} + \|e^{n}\|_{1} \leq 1\,, \quad 0\leq n\leq n^{*}\,.
\label{eq456}
\end{equation}
Then, by $(\ref{eq447})$, the approximation properties of $S_{h}^{2}$ and $(\ref{eq456})$,
we have, for $0\leq n\leq n^{*}$
\begin{equation}
\|\omega_{1}^{n}\| \leq \|e^{n}\|_{1} + C\|H^{n}\|_{1}\|e^{n}\|_{1}
+ C\|\ve^{n}\|_{1}\|e^{n}\|_{1} \leq C\|e^{n}\|_{1}\,.
\label{eq457}
\end{equation}
By $(\ref{eq449})$, the approximation properties of $S_{h}^{2}$, $S_{h,0}^{2}$, and $(\ref{eq46})$
there follows for $0\leq n\leq n^{*}$
\begin{equation}
\|\omega_{2}^{n}\|_{1} \leq C(\|\ve^{n}\| + \|e^{n}\| + \|e^{n}\|_{1}\|e^{n}\|
+ \|\ve^{n}\| + \|\ve^{n}\|_{1}\|\ve^{n}\|) \leq C(\|\ve^{n}\| + \|e^{n}\|)\,.
\label{eq458}
\end{equation}
Hence, by $(\ref{eq448})$, for $0\leq n\leq n^{*}$
\begin{equation}
\|\xi^{n}\|_{1} \leq C(\|\ve^{n}\| + \|e^{n}\|_{1})\,.
\label{eq459}
\end{equation}
In addition, by $(\ref{eq445})$, $(\ref{eq446})$, $(\ref{eq457})$ and the inverse assumptions
we have for $0\leq n\leq n^{*}$
\begin{equation}
\|\theta^{n}\|\leq \|\ve^{n}\| + C\tfrac{k}{2}\|\ve^{n}\| + C\|e^{n}\|_{1}
\leq (1+C\lambda)\|\ve^{n}\| + C\|e^{n}\|_{1} \leq C_{\lambda}(\|\ve^{n}\|+\|e^{n}\|_{1})\,,
\label{eq460}
\end{equation}
where we have put again $\lambda=k/h$; in the sequel, $C_{\lambda}$ will denote various constants that
depend polynomially on $\lambda$. Note also that in view of $(\ref{eq456})$ we have for
$0\leq n\leq n^{*}$, from $(\ref{eq457})$, $(\ref{eq446})$ and inverse inequalities
\begin{equation}
\|\theta^{n}\|_{1} \leq \|\ve^{n}\|_{1} + Ck\|\rho_{1}^{n}\|_{1} + Ck\|\omega_{1}^{n}\|_{1}
\leq \|\ve^{n}\|_{1} + C\lambda\|\rho_{1}^{n}\| + C\lambda\|\omega_{1}^{n}\|
\leq C_{\lambda}\,.
\label{eq461}
\end{equation}
By $(\ref{eq451})$, $(\ref{eq434})$, $(\ref{eq411})$, $(\ref{eq460})$, $(\ref{eq459})$ and
$(\ref{eq456})$ we have for $0\leq n\leq n^{*}$
\begin{equation}
\begin{aligned}
\|\omega_{3}^{n}\| & \leq C(\|U^{n,1} - U^{n}\|_{1}\|\theta^{n}\| + \|H^{n,1}\| \|\xi^{n}\|_{1}
+ \|\theta^{n}\| \|\xi^{n}\|_{1}) \\
& \leq C((k+h^{2})\|\theta^{n}\| + \|\xi^{n}\|_{1} + \|\theta^{n}\|\|\xi^{n}\|_{1}) \\
& \leq C_{\lambda}(\|\ve^{n}\| + \|e^{n}\|_{1})\,.
\end{aligned}
\label{eq462}
\end{equation}
Also, by $(\ref{eq460})$, $(\ref{eq459})$, $(\ref{eq461})$, for $0\leq n\leq n^{*}$
\begin{equation}
\begin{aligned}
\|\omega_{3}^{n}\|_{1} & \leq C(\|U^{n,1} - U^{n}\|_{1}\|\theta^{n}\|_{1}
+ \|H^{n,1}\|_{1} \|\xi^{n}\|_{1} + \|\theta^{n}\|_{1} \|\xi^{n}\|_{1}) \\
& \leq (k+h^{2})h^{-1}C_{\lambda}(\|\ve^{n}\| + \|e^{n}\|_{1})
+ C_{\lambda}(\|\ve^{n}\| + \|e^{n}\|_{1}) + C_{\lambda}(\|\ve^{n}\| + \|e^{n}\|_{1})\\
& \leq C_{\lambda}(\|\ve^{n}\| + \|e^{n}\|_{1})\,.
\end{aligned}
\label{eq463}
\end{equation}
Hence, by $(\ref{eq454})$, $(\ref{eq458})$, $(\ref{eq457})$, $(\ref{eq462})$ and the inverse
inequalities we have for $0\leq n\leq n^{*}$
\[
\|\omega_{4}^{n}\| \leq \|e^{n}\|_{1} + Ck(\|\ve^{n}\| + \|e^{n}\|) + C\lambda \|e^{n}\|_{1}
+ C_{\lambda}(\|\ve^{n}\| + \|e^{n}\|_{1}) \leq C_{\lambda}(\|\ve^{n}\| + \|e^{n}\|_{1})\,.
\]
Therefore, in the right-hand side of $(\ref{eq452})$ we have for $0\leq n\leq n^{*}$ in view
of Lemma $4.3$,
\begin{equation}
\|-k\omega_{4}^{n} + \delta_{1}^{n}\| \leq C_{\lambda}k(\|\ve^{n}\| + \|e^{n}\|_{1})
+ C_{\lambda}k(k+h^{3/2})\,.
\label{eq464}
\end{equation}
We embark now upon obtaining a sharp $L^{2}$-estimate of the remaining term
$\ve^{n} - \tfrac{k}{2}\rho_{1}^{n} + \tfrac{k^{2}}{8}\rho_{2}^{n}$ in $(\ref{eq452})$. We have
\[
\|\ve^{n} - \tfrac{k}{2}\rho_{1}^{n} + \tfrac{k^{2}}{8}\rho_{2}^{n}\|^{2}
=\|\ve^{n}\|^{2} + \tfrac{k^{2}}{4}\|\rho_{1}^{n}\|^{2} + \tfrac{k^{4}}{64}\|\rho_{2}^{n}\|^{2}
-k(\ve^{n},\rho_{1}^{n}) + \tfrac{k^{2}}{4}(\ve^{n},\rho_{2}^{n})
- \tfrac{k^{3}}{8}(\rho_{1}^{n},\rho_{2}^{n})\,.
\]
Now, by $(\ref{eq446})$
\[
(\ve^{n},\rho_{1}^{n}) = (\ve^{n},(U^{n}\ve^{n})_{x}) = \tfrac{1}{2}(U_{x}^{n}\ve^{n},\ve^{n})\,.
\]
Also, by $(\ref{eq453})$, $(\ref{eq446})$
\begin{align*}
(\ve^{n},\rho_{2}^{n}) & = - (\ve_{x}^{n},U^{n}\rho_{1}^{n})
= - \|\rho_{1}^{n}\|^{2} + (U_{x}^{n}\ve^{n},\rho_{1}^{n})\,, \\
(\rho_{1}^{n},\rho_{2}^{n}) & = \tfrac{1}{2}(U_{x}^{n}\rho_{1}^{n},\rho_{1}^{n})\,.
\end{align*}
We conclude that
\begin{equation}
\|\ve^{n} - \tfrac{k}{2}\rho_{1}^{n} + \tfrac{k^{2}}{8}\rho_{2}^{n}\|^{2} =
\|\ve^{n}\|^{2} + \tfrac{k^{4}}{64}\|\rho_{2}^{n}\|^{2} - \tfrac{k}{2}(U_{x}^{n}\ve^{n},\ve^{n})
+ \tfrac{k^{2}}{4}(U_{x}^{n}\ve^{n},\rho_{1}^{n})
- \tfrac{k^{3}}{16}(U_{x}^{n}\rho_{1}^{n},\rho_{1}^{n})\,.
\label{eq465}
\end{equation}
Now, by the approximation and inverse properties of $S_{h,0}^{2}$ we have by $(\ref{eq446})$,
$(\ref{eq453})$
\begin{align*}
|(U_{x}^{n}\ve^{n},\ve^{n})| & \leq C\|\ve^{n}\|^{2}\,, \\
|(U_{x}^{n}\ve^{n},\rho_{1}^{n})| & \leq C\|\ve^{n}\|\|\rho_{1}^{n}\| \leq Ch^{-1}\|\ve^{n}\|^{2}\,, \\
|(U_{x}^{n}\rho_{1}^{n},\rho_{1}^{n})| & \leq C\|\rho_{1}^{n}\|^{2}\leq Ch^{-2}\|\ve^{n}\|^{2}\,,\\
\|\rho_{2}^{n}\| & \leq Ch^{-2}\|\ve^{n}\|\,.
\end{align*}
Inserting these estimates in $(\ref{eq465})$ and recalling that $\mu=k/h^{4/3}$ we are led to
the inequality
\[
\|\ve^{n} - \tfrac{k}{2}\rho_{1}^{n} + \tfrac{k^{2}}{8}\rho_{2}^{n}\|^{2}
\leq (1 + Ck\mu^{3} + Ck + Ck\lambda + Ck\lambda^{2})\|\ve^{n}\|^{2}
\leq (1 + C_{\mu}k)\|\ve^{n}\|^{2}\,,
\]
and, hence,
\begin{equation}
\|\ve^{n} - \tfrac{k}{2}\rho_{1}^{n} + \tfrac{k^{2}}{8}\rho_{2}^{n}\| \leq (1+C_{\mu}k)\|\ve^{n}\|\,,
\label{eq466}
\end{equation}
where, by $C_{\mu}$ we denote a constant depending polynomially on $\mu$; we have replaced
$C_{\lambda}$'s by $C_{\mu}$'s since $\lambda=h^{1/3}\mu\leq\mu$.
We finally obtain from $(\ref{eq466})$, $(\ref{eq464})$ and $(\ref{eq452})$, for $0\leq n\leq n^{*}$,
that
\begin{equation}
\|\ve^{n+1}\| \leq \|\ve^{n}\| + C_{\mu}k(\|\ve^{n}\| + \|e^{n}\|_{1})
+ C_{\mu}k(k^{2} + h^{3/2})\,.
\label{eq467}
\end{equation}
We now estimate the $\|\cdot\|_{1}$ norm of the various terms in the right-hand side
of $(\ref{eq455})$. For $0\leq n\leq n^{*}$, by $(\ref{eq46})$, $(\ref{eq434})$, $(\ref{eq456})$,
and $(\ref{eq459})$ we have
\[
\|A\bigl( (U^{n,1}\xi^{n})_{x} - \xi^{n}\xi_{x}^{n}\bigr)\|_{1}
\leq C\|U^{n,1}\xi^{n} - \tfrac{1}{2}(\xi^{n})^{2}\| \leq C \|\xi^{n}\|_{1}
\leq C(\|\ve^{n}\| + \|e^{n}\|_{1})\,.
\]
Similarly, for $0\leq n\leq n^{*}$, by $(\ref{eq460})$, we have
\[
\|A\bigl( (H^{n,1}\theta^{n})_{x} - \theta^{n}\theta_{x}^{n}\bigr)\|_{1}
\leq C\|H^{n,1}\theta^{n} - \tfrac{1}{2}(\theta^{n})^{2}\| \leq C_{\lambda} \|\theta^{n}\|
\leq C_{\lambda}(\|\ve^{n}\| + \|e^{n}\|_{1})\,.
\]
Therefore, using $(\ref{eq455})$, $(\ref{eq460})$ and Lemma $4.3$ we have for $0\leq n\leq n^{*}$
\begin{equation}
\begin{aligned}
\|e^{n+1}\|_{1} & \leq \|e^{n}\|_{1} + Ck\|\theta^{n}\|
+ C_{\lambda}k(\|\ve^{n}\| + \|e^{n}\|_{1}) + C_{\lambda}k(k^{2} + h^{3/2}) \\
& \leq \|e^{n}\|_{1} + C_{\lambda}k(\|\ve^{n}\| + \|e^{n}\|_{1}) + C_{\lambda}k(k^{2} + h^{3/2})\,.
\end{aligned}
\label{eq468}
\end{equation}
Adding now $(\ref{eq467})$ and $(\ref{eq468})$, we conclude for $0\leq n\leq n^{*}$ that
\[
\|\ve^{n+1}\| + \|e^{n+1}\|_{1} \leq (1+C_{\mu}k)(\|\ve^{n}\| + \|e^{n}\|_{1})
+ C_{\mu}k(k^{2} + h^{3/2})\,.
\]
Using Gronwall's Lemma and taking $h$ sufficiently small we conclude that $n^{*}$ may be taken equal to
$M-1$ and there holds that
\[
\|\ve^{n}\| + \|e^{n}\|_{1} \leq \exp(C_{\mu}T)(k^{2} + h^{3/2})\,, \quad
0\leq n\leq M\,,
\]
i.e. that $(\ref{eq442})$ is valid; the conclusion of the proposition follows.
\end{proof}
\begin{remark} Arguing as in Remark $4.1$ we have again
$\|U_{h}^{n} - u(t^{n})\|_{\infty} = O(k^{2} + h^{3/2})$.
\end{remark}
\begin{remark} If we repeat the numerical experiment in Remark $4.2$ using the improved Euler method
for time stepping, we obtain, for $N=1/h=400$, the evolution of the $L^{2}$ errors
$\|H_{h}^{n} - \eta(t^{n})\|$ when $k=h$ and $k=h^{4/3}$ shown in Table \ref{tbl41}.
These results suggest that the condition $k=h^{4/3}$ is probably necessary as well for the
stability of the scheme. If we repeat the experiment using $k=h$ and the fourth-order explicit
classical R-K scheme for time stepping (this scheme, as will be shown in the next paragraph, is
stable when $k=h$,) we obtain errors approximately equal to those of the last column of Table \ref{tbl41}:
For example at $t^{n}=0.95$ and $t^{n}=1.0$ the analogous errors are equal to $0.1810(-3)$ and
$0.1954(-3)$, respectively. This implies that the errors of the last column of Table \ref{tbl41}
are essentially due to the spatial discretization.
\def\baselinestretch{1}
\scriptsize
\begin{table}[h]
\begin{center}
\begin{tabular}[h]{ | c | c || c | c | }\hline
$t^{n}$ &  $k=h$         & $t^{n}$   &  $k=h^{4/3}$ \\ \hline
$0.05$  &  $0.9594(-5)$  & $0.05090$ &  $0.9833(-5)$ \\ \hline
$0.1$   &  $0.1857(-4)$  & $0.10179$ &  $0.1890(-4)$ \\ \hline
$0.3$   &  $0.5436(-4)$  & $0.30198$ &  $0.5394(-4)$ \\ \hline
$0.5$   &  $0.8538(-4)$  & $0.50217$ &  $0.8393(-4)$ \\ \hline
$0.7$   &  $0.1211(-3)$  & $0.70236$ &  $0.1206(-3)$ \\ \hline
$0.8$   &  $0.3949(-3)$  & $0.80075$ &  $0.1448(-3)$ \\ \hline
$0.825$ &  $0.2214(-2)$  & $0.82450$ &  $0.1507(-3)$ \\ \hline
$0.85$  &  $0.1445(-1)$  & $0.85165$ &  $0.1572(-3)$ \\ \hline
$0.9$   &  $0.8082$      & $0.90254$ &  $0.1691(-3)$ \\ \hline
$0.95$  &  $0.7706(+18)$ & $0.95005$ &  $0.1811(-3)$ \\ \hline
$1.0$   &  $overflow$    & $1.00026$ &  $0.1963(-3)$ \\ \hline
\end{tabular}
\end{center}
\caption{$L^{2}$-errors $\|H_{h}^{n} - \eta(t^{n})\|$ for the standard Galerkin method
with piecewise linear continuous functions with $h=1/400$ and improved Euler time stepping
with $k=h$ and $k=h^{4/3}$; example of Remark $4.2$.}
\label{tbl41}
\end{table}
\normalsize
\end{remark}
\subsection{Fourth-order Runge-Kutta scheme with cubic splines}
Our third example is time stepping with the classical, fourth-order accurate four-stage
explicit Runge-Kutta scheme, written in the case of the o.d.e. $y'=f(t,y)$ in the three-step form
\begin{align*}
y^{n,1} & = y^{n} + \tfrac{k}{2}f(t^{n}+\tfrac{k}{2},y^{n})\,, \\
y^{n,2} & = y^{n} + \tfrac{k}{2}f(t^{n}+\tfrac{k}{2},y^{n,1})\,, \\
y^{n,3} & = y^{n} + k f(t^{n}+k,y^{n,2})\,, \\
y^{n+1} & = y^{n} + k\bigl( \tfrac{1}{6}f(t^{n},y^{n}) + \tfrac{1}{3}f(t^{n}+\tfrac{k}{2},y^{n,1})
+ \tfrac{1}{3}f(t^{n}+\tfrac{k}{2},y^{n,2}) + \tfrac{1}{6}f(t^{n}+k,y^{n,3})\bigr)\,.
\end{align*}
We will couple this scheme with a space discretization that uses cubic splines on a uniform mesh
on $[0,1]$. We recall that the semidiscrete problem can be written, e.g. in the case of the $(\ref{scb})$,
in the form $(\ref{eq38})$ or, equivalently, as follows. We seek $(\eta_{h}, u_{h})$
$\in C^{1}(0,T; S_{h}^{4}\times S_{h,0}^{4})$ such that
\begin{equation}
\begin{aligned}
\begin{aligned}
\eta_{ht} & + Pu_{hx} + \tfrac{1}{2}P(\eta_{h}u_{h})_{x} = 0\,, \\
u_{ht} & + A\eta_{hx} + \tfrac{3}{2}A(u_{h}u_{hx}) + \tfrac{1}{2}A(\eta_{h}\eta_{hx}) = 0\,,
\end{aligned}
\quad 0\leq t\leq T\,, \\
\eta_{h}(0) = I_{h}\eta_{0}\,, \quad u_{h}(0) = R_{h}u_{0}\,,\hspace{111pt} &
\end{aligned}
\label{eq469}
\end{equation}
where $P : L^{2}\to S_{h}^{4}$ is the $L^{2}$-projection operator onto $S_{h}^{4}$,
$A : L^{2} \to S_{h,0}^{4}$ is defined as in $(\ref{eq45})$, but now in $S_{h,0}^{4}$,
$I_{h}$ is the interpolant in $S_{h}^{4}$, and $R_{h}$ is introduced in Paragraph $3.1$.
The semidiscrete scheme for the $(\ref{cb})$ system is defined analogously. \par
We start with an estimation of the truncation errors of the semidiscrete equations when applied to
$I_{h}\eta(t)$ and $R_{h}u(t)$ similar to that of Lemma $4.1$.
\begin{lemma} Suppose that the solution $(\eta_{h}, u_{h})$ of $(\ref{scb})$ is sufficiently smooth
in $[0,T]$. Let $H(t) = I_{h}\eta(t)$, $U(t) = R_{h}u(t)$ and define $\psi = \psi(t) \in S_{h}^{4}$,
$\zeta=\zeta(t) \in S_{h,0}^{4}$, for $0\leq t\leq T$ by
\begin{align}
H_{t}  + PU_{x} + \tfrac{1}{2}P(HU)_{x} & = \psi\,, \label{eq470}\\
U_{t} + AH_{x} + \tfrac{3}{2}A(UU_{x}) + \tfrac{1}{2}A(HH_{x}) & = \zeta\,. \label{eq471}
\end{align}
Then, for $j=0$, $1$, $2$, $3$
\begin{equation}
\|\partial_{t}^{j}\psi\| \leq Ch^{3.5}\sqrt{\ln 1/h}\,, \qquad
\|\partial_{t}^{j}\zeta\|_{1} \leq Ch^{4}\,,
\label{eq472}
\end{equation}
hold for $0\leq t\leq T$. An analogous result holds for $(\ref{cb})$ as well.
\end{lemma}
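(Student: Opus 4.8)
The plan is to follow the structure of the proof of Lemma $4.1$, replacing the piecewise-linear superapproximation estimates by their cubic-spline analogs, Lemmas $3.2$ and $3.4$; the decisive new feature is that here $U=R_h u$ is the elliptic projection rather than the interpolant, which I would exploit through the Galerkin orthogonality $a(\sigma,\chi)=0$ built into $R_h$.

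First I would treat $\psi$. Subtracting $P(\eta_t+u_x+\tfrac12(u\eta)_x)=0$ (i.e.\ $P$ applied to the first p.d.e.\ of $(\ref{scb})$) from $(\ref{eq470})$ and writing $\rho:=\eta-I_h\eta$, $\sigma:=u-R_h u$, I obtain
\[
\psi=-P\Bigl(\rho_t+[(1+\tfrac12\eta)\sigma]_x+\tfrac12(u\rho)_x-\tfrac12(\rho\sigma)_x\Bigr).
\]
Here $\|P\rho_t\|=O(h^4)$ by $(\ref{eq31})$. For $\|P[((1+\tfrac12\eta)\sigma)_x]\|$, since $\sigma$ is the error of the elliptic projection of $u\in C_0^5$ and $1+\tfrac12\eta\in C^1$, Lemma $3.4$ gives the bound $Ch^{3.5}\sqrt{\ln 1/h}$; this is the dominant contribution and the source of the logarithmic factor. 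The crucial point is the term $\tfrac12\|P(u\rho)_x\|$: the crude bound $\|(u\rho)_x\|\le C\|\rho\|_1=O(h^3)$ is too weak, so I would instead regard $\rho$ as the cubic-spline \emph{interpolation} error of $\eta\in C^5$ and apply Lemma $3.2$ (with weight $w=u\in C^1$) to get $O(h^{3.5})$. The quadratic remainder $\tfrac12\|P(\rho\sigma)_x\|$ is $O(h^7)$ by $(\ref{eq31})$, $(\ref{eq32})$ and Sobolev's inequality. Summing, $\|\psi\|\le Ch^{3.5}\sqrt{\ln 1/h}$.

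Next I would treat $\zeta$. Here the choice $U=R_h u$ pays off: differentiating $a(\sigma,\chi)=0$ in $t$ gives $a(\sigma_t,\chi)=0$ for all $\chi\in S_{h,0}^4$, whence, arguing as in Lemma $4.1$ via the identity $a(A(u_t-\tfrac13 u_{txx})-U_t,\chi)=a(\sigma_t,\chi)$, I obtain $U_t=A(u_t-\tfrac13 u_{txx})$ with \emph{no} residual $A\sigma_t$ term (precisely the simplification the interpolant failed to provide for $C^2$ splines, since $(\sigma',\chi')\neq0$ there). Using the second p.d.e.\ of $(\ref{scb})$ this reads $U_t+A(\eta_x+\tfrac32 uu_x+\tfrac12\eta\eta_x)=0$; subtracting from $(\ref{eq471})$ yields
\[
\zeta=-A\Bigl(\rho_x+\tfrac32[(u\sigma)_x-\sigma\sigma_x]+\tfrac12[(\eta\rho)_x-\rho\rho_x]\Bigr).
\]
Then $(\ref{eq46})$ together with $\|g_x\|_{-1}\le C\|g\|$ reduces the estimate to $L^2$ bounds, and $\|\rho\|$, $\|u\sigma\|$, $\|\eta\rho\|$ are all $O(h^4)$ by $(\ref{eq31})$, $(\ref{eq32})$, while $\|\sigma^2\|$, $\|\rho^2\|$ are $O(h^8)$; hence $\|\zeta\|_1\le Ch^4$.

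Finally, for the time derivatives $\partial_t^j\psi$, $\partial_t^j\zeta$, $j=1,2,3$, I would differentiate the two defining relations $j$ times. Since $P$, $A$, $I_h$, $R_h$ are independent of $t$ and commute with $\partial_t$, the differentiated equations have the same algebraic structure, the only new terms arising when $\partial_t$ falls on the smooth coefficients $\eta,u$; each such term is again of the superapproximation type handled by Lemmas $3.2$ and $3.4$ or is an innocuous product, so the same bounds persist (this is where the standing ``sufficiently smooth'' hypothesis, i.e.\ $\partial_t^j\eta\in C^5$ and $\partial_t^j u\in C_0^5$ for $j\le3$, is used). The $(\ref{cb})$ case is entirely analogous, with the symmetric-system nonlinearities replaced by those of $(\ref{cb})$. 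I expect the main obstacle to be the $P(u\rho)_x$ term in $\psi$: retaining the optimal $h^{3.5}$ there requires recognizing $\rho$ as an interpolation error and invoking the cancellation Lemma $3.2$ rather than a direct norm bound.
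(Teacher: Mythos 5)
Your proposal is correct and follows essentially the same route as the paper: the same decomposition of $\psi$ into $-P\bigl(\rho_t+\sigma_x+\tfrac12(\eta\sigma)_x+\tfrac12(u\rho)_x-\tfrac12(\rho\sigma)_x\bigr)$ handled by Lemmas $3.2$ and $3.4$, the same observation that $U_t=A(u_t-\tfrac13 u_{txx})$ with no residual $A\sigma_t$ term because $U=R_hu$, and the same reduction of $\|\zeta\|_1$ to $L^2$ bounds via $(\ref{eq46})$. The only cosmetic difference is that you group $P\sigma_x+\tfrac12 P(\eta\sigma)_x$ into a single application of Lemma $3.4$ with weight $1+\tfrac12\eta$, whereas the paper keeps the two terms separate.
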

\begin{proof} Subtracting the equations
\begin{align*}
P\eta_{t} + Pu_{x} + \tfrac{1}{2}P(\eta u)_{x} & = 0\,, \\
H_{t} + PU_{x} + \tfrac{1}{2}P(HU)_{x} & = \psi\,,
\end{align*}
we obtain, putting $\rho:=\eta - H$, $\sigma:=u - U$
\[
P\rho_{t} + P\sigma_{x} + \tfrac{1}{2}P(\eta u - HU)_{x} = -\psi\,.
\]
Since
\[
\eta u - HU = \eta(u-U) +U(\eta - H) = \eta(u-U)-(u-U)(\eta-H) + u(\eta-H)=\eta\sigma + u\rho-\rho\sigma\,,
\]
we have
\[
P\rho_{t} + P\sigma_{x} + \tfrac{1}{2}P(\eta\sigma)_{x}+\tfrac{1}{2}P(u\rho)_{x}
-\tfrac{1}{2}P(\rho\sigma)_{x}=-\psi\,,
\]
and conclude from Lemmas $3.2$ and $3.4$ that
\[
\|\partial_{t}^{j}\psi\| \leq C h^{3.5}\sqrt{\ln 1/h}\,, \quad j=0,1,2,3\,,
\]
which is the first estimate in $(\ref{eq472})$
By the definition of $A$ we have
\[
A(u_{t} - \tfrac{1}{3}u_{txx}) = U_{t}\,.
\]
Subtracting now the equations
\begin{align*}
A(u_{t}-\tfrac{1}{3}u_{txx}) + A\eta_{x} + \tfrac{3}{2}A(uu_{x}) + \tfrac{1}{2}A(\eta\eta_{x}) & =0\,,\\
U_{t} + AH_{x} + \tfrac{3}{2}A(UU_{x}) + \tfrac{1}{2}A(HH_{x}) & = \zeta\,,
\end{align*}
we obtain
\[
A\rho_{x} + \tfrac{3}{2}A(uu_{x}-UU_{x}) + \tfrac{1}{2}A(\eta\eta_{x}-HH_{x})=-\zeta\,.
\]
But
\[
uu_{x} - UU_{x} = (u\sigma)_{x} - \sigma\sigma_{x}\,, \quad
\eta\eta_{x}-HH_{x} = (\eta\rho)_{x} - \rho\rho_{x}\,.
\]
Hence
\[
A\rho_{x}+\tfrac{3}{2}\bigl(A(u\sigma)_{x}-A(\sigma\sigma_{x})\bigr)
+\tfrac{1}{2}\bigl(A(\eta\rho)_{x}-A(\rho\rho_{x})\bigr) = -\zeta\,,
\]
and from $(\ref{eq46})$ and the approximation properties of $S_{h}^{4}$, $S_{h,0}^{4}$ we conclude
\[
\|\partial_{t}^{j}\zeta\|_{1}\leq C h^{4}\,,
\]
for $j=0,1,2,3$, thus proving the Lemma. The $(\ref{cb})$ case is entirely similar.
\end{proof}
We now define the fully discrete scheme. We consider only the case of $(\ref{scb})$, that of $(\ref{cb})$
being analogous. We let as usual $M$ be a positive integer, $k=T/M$ and $t^{n}=nk$, for \,$n=0,1,\dots,M$.
For $0\leq n\leq M$ we seek $(H_{h}^{n}, U_{h}^{n}) \in S_{h}^{4}\times S_{h,0}^{4}$
approximations of $\eta(t^{n})$, $u(t^{n})$, and for $0\leq n\leq M-1$
$(H_{h}^{n,j}, U_{h}^{n,j}) \in S_{h}^{4}\times S_{h,0}^{4}$, $j=1,2,3$, such that for $0\leq n\leq M-1$
\begin{equation}
\begin{aligned}
H_{h}^{n,j} - H_{h}^{n} + k a_{j}P\bigl(U_{h}^{n,j-1} + \tfrac{1}{2}H_{h}^{n,j-1}U_{h}^{n,j-1}\bigr)_{x}& =0\,, \\
U_{h}^{n,j} - U_{h}^{n} + ka_{j}A\bigl(H_{hx}^{n,j-1} + \tfrac{3}{2}U_{h}^{n,j-1}U_{hx}^{n,j-1}
+\tfrac{1}{2}H_{h}^{n,j-1}H_{hx}^{n,j-1}\bigr) & = 0\,,
\end{aligned}
\label{eq473}
\end{equation}
for $j=1,2,3$ and
\begin{equation}
\begin{aligned}
H_{h}^{n+1} - H_{h}^{n} + kP\Bigl[ \sum_{j=1}^{4}b_{j}\bigl(U_{h}^{n,j-1}+
\tfrac{1}{2}H_{h}^{n,j-1}U_{h}^{n,j-1}\bigr)\Bigr]_{x} & = 0\,, \\
U_{h}^{n+1} - U_{h}^{n} + kA\Bigl[\sum_{j=1}^{4}b_{j}\bigl(H_{hx}^{n,j-1}+\tfrac{3}{2}U_{hx}^{n,j-1}U_{h}^{n,j-1}
+ \tfrac{1}{2}H_{hx}^{n,j-1}H_{h}^{n,j-1}\bigr)\Bigr] & = 0\,,
\end{aligned}
\label{eq474}
\end{equation}
where
\[
H_{h}^{n,0} = H_{h}^{n}\,, \quad U_{h}^{n,0} = U_{h}^{n}\,, \quad a_{1}=a_{2}=1/2\,, \quad a_{3}=1\,, \quad b_{1}=b_{4}=1/6\,,
\quad b_{2}=b_{3}=1/3\,,
\]
and
\[
H_{h}^{0} = \eta_{h}(0)\,, \quad U_{h}^{0}= u_{h}(0)\,.
\]
We first investigate the consistency of this scheme. We define
$H$, $U$ as in Lemma $4.4$ and put $H^{n}=H(t^{n})$, $U^{n}=U(t^{n})$. We let $V^{n,j} \in S_{h}^{4}$,
$W^{n,j} \in S_{h,0}^{4}$, for $j=0$, $1$, $2$, $3$ be defined for $0 \leq n \leq M-1$ by the equations
\begin{equation}
\begin{aligned}
V^{n,j} - H^{n} + ka_{j}P\bigl(W^{n,j-1} + \tfrac{1}{2}V^{n,j-1}W^{n,j-1}\bigr)_{x} & = 0\,, \\
W^{n,j} - U^{n} + ka_{j}A\bigl(V_{x}^{n,j-1} + \tfrac{3}{2}W^{n,j-1}W_{x}^{n,j-1} +
\tfrac{1}{2}V^{n,j-1}V_{x}^{n,j-1}\bigr)&=0\,,
\end{aligned}
\label{eq475}
\end{equation}
and
\[
V^{n,0} = H^{n}\,, \quad W^{n,0} = U^{n}\,.
\]
Before proving our consistency estimates we introduce some notation that will simplify the computations.
Let
\[
\Phi = U + \tfrac{1}{2}HU\,, \quad F = H_{x} + \tfrac{3}{2}UU_{x} + \tfrac{1}{2}HH_{x}\,,
\quad \Phi^{n} = \Phi(t^{n})\,, \quad F^{n} = F(t^{n})\,,
\]
and consider the functions
\begin{align*}
\Phi_{h}^{n,j} & = U_{h}^{n,j} + \tfrac{1}{2}H_{h}^{n,j}U_{h}^{n,j}\,,\\
F_{h}^{n,j} & = H_{hx}^{n,j} + \tfrac{3}{2}U_{h}^{n,j}U_{hx}^{n,j} + \tfrac{1}{2}H_{h}^{n,j}H_{hx}^{n,j}\,,
\end{align*}
for $j=0$, $1$, $2$, $3$. With a slight abuse of notation we put for $j=0$, $1$, $2$, $3$
\begin{align*}
\Phi^{n,j} & = W^{n,j} + \tfrac{1}{2}V^{n,j}W^{n,j}\,,\\
F^{n,j} & = V_{x}^{n,j} + \tfrac{3}{2}W^{n,j}W_{x}^{n,j} + \tfrac{1}{2}V^{n,j}V_{x}^{n,j}\,.
\end{align*}
We may then write $(\ref{eq470})$ and $(\ref{eq471})$ as
\begin{equation}
H_{t} + P\Phi_{x} = \psi\,,
\label{eq476}
\end{equation}
\begin{equation}
U_{t} + AF = \zeta\,,
\label{eq477}
\end{equation}
and $(\ref{eq473})$, $(\ref{eq474})$, respectively, as
\begin{equation}
\begin{aligned}
H_{h}^{n,j} - H_{h}^{n} + ka_{j}P\Phi_{hx}^{n,j-1} & = 0\,,\\
U_{h}^{n,j} - U_{h}^{n} + ka_{j}AF_{h}^{n,j-1} & = 0\,,
\end{aligned}
\label{eq478}
\end{equation}
for $j=1$, $2$, $3$, and
\begin{equation}
\begin{aligned}
H_{h}^{n+1} - H_{h}^{n} + kP\bigl[\sum_{j=1}^{4}b_{j}\Phi_{h}^{n,j-1}\bigr]_{x}=0\,,\\
U_{h}^{n+1} - U_{h}^{n} + kA\bigl[\sum_{j=1}^{4}b_{j}F_{h}^{n,j-1}\bigr] = 0\,.
\end{aligned}
\label{eq479}
\end{equation}
Finally, we may write the ralations $(\ref{eq475})$ in the form
\begin{equation}
\begin{aligned}
V^{n,j} - H^{n} + ka_{j}P\Phi_{x}^{n,j-1} &= 0\,,\\
W^{n,j} - U^{n} + ka_{j}AF^{n,j-1} & = 0\,,
\end{aligned}
\label{eq480}
\end{equation}
for $j=1$, $2$, $3$.
\begin{lemma} Suppose that the solution $(\eta, u)$ of $(\ref{scb})$ is sufficiently smooth and let
$\lambda = k/h$. Define, for $0\leq n\leq M-1$, $\delta_{1}^{n}$ and $\delta_{2}^{n}$ by the equations
\begin{equation}
\delta_{1}^{n} = H^{n+1} - H^{n} + kP\bigl[\sum_{j=1}^{4}b_{j}\Phi^{n,j-1}\bigr]_{x}\,,
\label{eq481}
\end{equation}
\begin{equation}
\delta_{2}^{n} = U^{n+1} - U^{n} + kA\bigl[\sum_{j=1}^{4}b_{j}F^{n,j-1}\bigr]\,.
\label{eq482}
\end{equation}
Then, there exists a constant $C_{\lambda}$ which is a polynomial of $\lambda$ with nonnegative coefficients,
such that
\[
\max_{0\leq n\leq M-1} \|\delta_{1}^{n}\| \leq C_{\lambda}k(k^{4} + h^{3.5}\sqrt{\ln 1/h})\,,\quad
\max_{0\leq n\leq M-1} \|\delta_{2}^{n}\|_{1} \leq C_{\lambda}k(k^{4} + h^{4})\,.
\]
\end{lemma}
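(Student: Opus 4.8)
The plan is to imitate the consistency analysis of Lemma $4.3$, now carried through the three nested Runge--Kutta stages $(\ref{eq475})$, exploiting the device of trading the semidiscrete spatial operators for time derivatives of the smooth interpolants plus the truncation terms. The exact interpolants $(H,U)$ satisfy $(\ref{eq476})$--$(\ref{eq477})$, i.e. $P\Phi^{n}_x=-H_t^n+\psi^n$ and $AF^{n}=-U_t^n+\zeta^n$, so the stage-zero contributions produce exactly $H_t^n$, $U_t^n$ together with the forcing $\psi^n$, $\zeta^n$. First I would expand the stages $(V^{n,j},W^{n,j})$, $j=1,2,3$, recursively in powers of $k$ about $(H^n,U^n)$, inserting $V^{n,j}=H^n+O(k)$, $W^{n,j}=U^n+O(k)$ into $\Phi^{n,j-1}=W^{n,j-1}+\tfrac12V^{n,j-1}W^{n,j-1}$ and $F^{n,j-1}=V_x^{n,j-1}+\tfrac32W^{n,j-1}W_x^{n,j-1}+\tfrac12V^{n,j-1}V_x^{n,j-1}$. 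Since $\Phi$ and $F$ are at most quadratic the recursion terminates, and every increment $V^{n,j}-H^n$, $W^{n,j}-U^n$ may be re-expressed, via $(\ref{eq476})$--$(\ref{eq477})$ and their $t$-derivatives, as a polynomial in $k$ whose coefficients are bounded, $h$-independent time derivatives $\partial_t^iH^n=I_h\partial_t^i\eta(t^n)$, $\partial_t^iU^n=R_h\partial_t^iu(t^n)$, plus remainder terms each carrying a factor of $\psi$, $\zeta$ or a time derivative thereof, exactly as the quantities $w_1^n,w_2^n,w_3^n$ of $(\ref{eq435})$, $(\ref{eq441})$ arose in the second-order case.

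Next I would substitute these expansions into the weighted sums $(\ref{eq481})$, $(\ref{eq482})$ and compare them with the Taylor developments $H^{n+1}-H^n=\sum_{i=1}^{4}\tfrac{k^i}{i!}\partial_t^iH^n+O(k^5)$ and its analogue for $U$. Because the relation $(\ref{eq476})$ is applied at every level, converting each $P\Phi_x$ into $-H_t+\psi$, the terms free of $\psi$, $\zeta$ coincide with this time-Taylor polynomial of $(H,U)$ up to $O(k^5)$ by the order conditions of the classical four-stage Runge--Kutta method; this is the only place where the weights $a_j,b_j$ enter. Since the surviving remainder is a genuine time-Taylor remainder with bounded coefficients $\partial_t^5H^n$, $\partial_t^5U^n$, it contributes the clean $k\cdot k^4$ part of both estimates. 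The forcing terms that remain assemble, in the spirit of $(\ref{eq436})$, into an explicit finite list which I would bound term by term using Lemma $4.4$, which supplies $\|\partial_t^j\psi\|\le Ch^{3.5}\sqrt{\ln 1/h}$ and $\|\partial_t^j\zeta\|_1\le Ch^4$ for $0\le j\le 3$.

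The decisive bookkeeping is an order count in $k$. In the $H$-equation the forcing $\psi$ already appears at the stage-zero level, so its leading occurrence in $\delta_1^n$ is the single term $k\psi^n$, giving the $kh^{3.5}\sqrt{\ln 1/h}$ part of $\|\delta_1^n\|$; the coupling of $\zeta$ into the $H$-equation can only come through the $O(k)$ corrections to the stages, hence enters $\delta_1^n$ at order $k^2$ and, being of size $h^4<h^{3.5}$, is subdominant. Symmetrically, in the $U$-equation the stage-zero forcing is $\zeta$, giving the leading $k\zeta^n$ and the $kh^4$ part of $\|\delta_2^n\|_1$, while $\psi$ reaches $\delta_2^n$ only through the coupling, again at order $k^2$; since $k^2h^{3.5}\sqrt{\ln 1/h}=\lambda h^{1/2}\sqrt{\ln 1/h}\cdot kh^4\le C_\lambda kh^4$ for $h$ small, the spare factor of $k$ demotes this below the claimed $h^4$. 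For $\delta_2^n$ the operator $A$ absorbs one spatial derivative for free, since $(\ref{eq46})$ gives $\|A(g)_x\|_1\le C\|g\|$, so the $\zeta$- and $\psi$-terms require no inverse estimates; for $\delta_1^n$ the projection $P$ does not smooth, and each spatial derivative falling on a $\psi$- or $\zeta$-factor must be removed by the inverse inequality $(\ref{eq34})$ at the cost of one power $h^{-1}$.

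The main obstacle, and the source of the constant $C_\lambda=C_\lambda(\lambda)$, $\lambda=k/h$, is precisely to verify that these inverse estimates never degrade the powers of $h$. The structural fact to confirm while tracking the quadratically generated cross terms through all three stages is that each extra spatial derivative applied to a $\psi$- or $\zeta$-bearing factor is invariably accompanied by an additional power of $k$, so that an application of $(\ref{eq34})$ converts $h^{-1}$ into $\lambda$ and leaves the $h$-order intact. Establishing this pairing for the full third-order stage recursion is the one genuinely laborious step; once it is secured one obtains bounds of the type $\|w\|\le C(k^2+\lambda h^{3.5}\sqrt{\ln 1/h})$ for the analogues of $w_1^n,w_2^n,w_3^n$, and summing the finitely many contributions and maximizing over $n$ yields the two asserted inequalities with $C_\lambda$ a polynomial in $\lambda$ with nonnegative coefficients.
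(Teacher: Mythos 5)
Your proposal is correct and follows essentially the same route as the paper: expand the stages $(V^{n,j},W^{n,j})$ recursively in $k$, use $(\ref{eq476})$--$(\ref{eq477})$ to trade $P\Phi_x$ and $AF$ for $-H_t+\psi$ and $-U_t+\zeta$, observe that the forcing-free part of $\sum_j b_j\Phi^{n,j-1}$ (resp. $\sum_j b_jF^{n,j-1}$) reduces to $\Phi^n+\tfrac{k}{2}\Phi_t^n+\tfrac{k^2}{6}\Phi_{tt}^n+\tfrac{k^3}{24}\Phi_{ttt}^n$ plus $O(k^4)$ remainders, and control the $\psi$-, $\zeta$-bearing remainders via Lemma $4.4$ and inverse inequalities, each $h^{-1}$ being absorbed into a power of $\lambda$ by an accompanying factor of $k$. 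The only difference is presentational: where you invoke the classical fourth-order Runge--Kutta order conditions to cancel the spurious elementary differentials (e.g.\ the $k^2H_t^nU_t^n$ and the $k^3$ cross terms), the paper verifies these cancellations by explicit computation of all three stages.
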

\begin{proof}
We need to find formulas for the functions $V^{n,j}$, $W^{n,j}$, for $j=1$, $2$, $3$ up to some remainder terms.
From the first equation of $(\ref{eq477})$ we have
\[
V^{n,1} - H^{n} + a_{1}kP\Phi_{x}^{n} = 0\,,
\]
whence from $(\ref{eq473})$
\begin{equation}
V^{n,1} = H^{n} + a_{1}kH_{t}^{n} - a_{1}k\psi^{n}\,.
\label{eq483}
\end{equation}
Also, from the second equation of $(\ref{eq480})$ we obtain
\[
W^{n,1} - U^{n} + a_{1}kAF^{n} = 0\,,
\]
and so from $(\ref{eq477})$
\begin{equation}
W^{n,1} = U^{n} + a_{1}k U_{t}^{n} - a_{1}k\zeta^{n}\,.
\label{eq484}
\end{equation}
Thus
\begin{equation}
V^{n,1}W^{n,1} = H^{n}U^{n} + a_{1}k(HU)_{t}^{n} + a_{1}^{2}k^{2}H_{t}^{n}U_{t}^{n} + v_{1}^{n}\,,
\label{eq485}
\end{equation}
where, in view of $(\ref{eq472})$ and inverse properties
\[
\|v_{1}^{n}\|_{1} \leq C_{\lambda}h^{3.5}\sqrt{\ln 1/h}\,.
\]
In addition, from $(\ref{eq484})$, $(\ref{eq485})$ we obtain
\[
W^{n,1} + \tfrac{1}{2}V^{n,1}W^{n,1} = U^{n}+\tfrac{1}{2}H^{n}U^{n}+
a_{1}k\bigl(U_{t}^{n} + \tfrac{1}{2}(HU)_{t}^{n}\bigr) + \tfrac{a_{1}^{2}}{2}k^{2}H_{t}^{n}U_{t}^{n}+v_{2}^{n}\,,
\]
i.e.
\begin{equation}
\Phi^{n,1} = \Phi^{n} + a_{1}k\Phi_{t}^{n} + \tfrac{a_{1}^{2}}{2}k^{2}H_{t}^{n}U_{t}^{n}+v_{2}^{n}\,,
\label{eq486}
\end{equation}
where
\[
\|v_{2}^{n}\|_{1} \leq C_{\lambda}h^{3.5}\sqrt{\ln 1/h}\,.
\]
Since now
\[
V^{n,2} = H^{n} - a_{2}kP\Phi_{x}^{n,1}\,,
\]
we have from $(\ref{eq486})$ that
\[
V^{n,2} = H^{n} - a_{2}kP\Phi_{x}^{n} - a_{1}a_{2}k^{2}
P\Phi_{xt}^{n} - \tfrac{a_{1}^{2}a_{2}}{2}k^{3}P(H_{t}^{n}U_{t}^{n})_{x}
- a_{2}kP(v_{2}^{n})_{x}\,,
\]
from which, in view of $(\ref{eq473})$, $(\ref{eq472})$, and inverse properties there follows that
\begin{equation}
V^{n,2} = H^{n} + a_{2}kH_{t}^{n}+a_{1}a_{2}k^{2}H_{tt}^{n}-
\tfrac{a_{1}^{2}a_{2}}{2}k^{3}P(H_{t}^{n}U_{t}^{n})_{x} + \psi_{1}^{n}\,,
\label{eq487}
\end{equation}
with
\[
\|\psi_{1}^{n}\| \leq C_{\lambda}h^{4}\,,
\]
and
\[
\|\psi_{1}^{n}\|_{1} \leq C_{\lambda}h^{3.5}\sqrt{\ln 1/h}\,.
\]
In addition, from $(\ref{eq484})$
\begin{align*}
W^{n,1}W_{x}^{n,1} & = (U^{n} + a_{1}kU_{t}^{n}-a_{1}k\zeta^{n})(U_{x}^{n}+a_{1}kU_{tx}^{n}
-a_{1}k\zeta_{x}^{n})\\
& = U^{n}U_{x}^{n} + a_{1}k(UU_{x})_{t}^{n} + a_{1}^{2}k^{2}U_{t}^{n}U_{tx}^{n} + w_{1}^{n}\,,
\end{align*}
where, in view of $(\ref{eq472})$
\[
\|w_{1}^{n}\| \leq C k h^{4}\,.
\]
Similarly, from $(\ref{eq480})$ there follows
\begin{align*}
V^{n,1}V_{x}^{n,1} & = (H^{n} + a_{1}kH_{t}^{n} - a_{1}k\psi^{n})(H_{x}^{n} + a_{1}kH_{tx}^{n} - a_{1}k\psi_{x}^{n})\\
& = H^{n}H_{x}^{n} + a_{1}k(HH_{x})_{t}^{n} + a_{1}^{2}k^{2}H_{t}^{n}H_{tx}^{n} + w_{2}^{n}\,,
\end{align*}
where, in view of $(\ref{eq472})$ and inverse properties
\[
\|w_{2}^{n}\|_{-1} \leq C_{\lambda}h^{4}\,,
\]
and
\[
\|w_{2}^{n}\| \leq C_{\lambda}h^{3.5}\sqrt{\ln 1/h}\,.
\]
Therefore, from the above relations and $(\ref{eq483})$ we have
\[
V_{x}^{n,1} + \tfrac{3}{2}W^{n,1}W_{x}^{n,1} + \tfrac{1}{2}V^{n,1}V_{x}^{n,1} =
F^{n} +  a_{1}kF_{t}^{n} + a_{1}^{2}k^{2}(\tfrac{3}{2}U_{t}^{n}U_{tx}^{n}
+ \tfrac{1}{2}H_{t}^{n}H_{tx}^{n}) + w_{3}^{n}\,,
\]
i.e.
\begin{equation}
F^{n,1} = F^{n} +  a_{1}kF_{t}^{n} + a_{1}^{2}k^{2}(\tfrac{3}{2}U_{t}^{n}U_{tx}^{n} +
\tfrac{1}{2}H_{t}^{n}H_{tx}^{n}) + w_{3}^{n}\,,
\label{eq488}
\end{equation}
where
\[
w_{3}^{n} = -k\psi_{1x}^{n} + \tfrac{3}{2}w_{1}^{n} + \tfrac{1}{2}w_{2}^{n}\,.
\]
Since
\[
W^{n,2} = U^{n} - a_{2}kAF^{n,1}\,,
\]
there follows from $(\ref{eq488})$, $(\ref{eq477})$, and previous estimates that
\begin{equation}
W^{n,2} = U^{n} + a_{2}kU_{t}^{n} + a_{1}a_{2}k^{2}U_{tt}^{n}
- a_{1}^{2}a_{2}k^{3}\bigl[\tfrac{3}{2}A(U_{t}^{n}U_{tx}^{n})
+ \tfrac{1}{2}A(H_{t}^{n}H_{tx}^{n})\bigr] + \zeta_{1}^{n}\,,
\label{eq489}
\end{equation}
where
\[
\|\zeta_{1}^{n}\|_{1} \leq C_{\lambda}kh^{4}\,.
\]
We now have from $(\ref{eq487})$, $(\ref{eq489})$, the fact that $a_{1}=a_{2}$, and similar estimates
to those used above, that
\begin{align*}
V^{n,2}W^{n,2} = & \,\,H^{n}U^{n} + a_{2}k(HU)_{t}^{n}+a_{1}a_{2}k^{2}(H^{n}U_{tt}^{n}+H_{t}^{n}U_{t}^{n}+H_{tt}^{n}U^{n})
+ a_{1}a_{2}^{2}k^{3}(H_{t}^{n}U_{tt}^{n} + H_{tt}^{n}U_{t}^{n})\\
& - a_{1}^{2}a_{2}k^{3}\bigl[
\tfrac{3}{2}H^{n}A(U_{t}^{n}U_{tx}^{n}) + \tfrac{1}{2}H^{n}A(H_{t}^{n}H_{tx}^{n})
-\tfrac{1}{2}U^{n}P(H_{t}^{n}U_{t}^{n})_{x}\bigr] + v_{3}^{n}\,,
\end{align*}
where
\[
\|v_{3}^{n}\|_{1} \leq C_{\lambda}h^{3.5}\sqrt{\ln 1/h}
\]
follows from similar, as above, estimates and from the stability of $P$ in $H^{1}$. Rewriting the previous
equation as
\begin{align*}
V^{n,2}W^{n,2} = & \,\,H^{n}U^{n} + a_{2}k(HU)_{t}^{n} + a_{1}a_{2}k^{2}(HU)_{tt}^{n}
- a_{1}a_{2}k^{2}H_{t}^{n}U_{t}^{n} + a_{1}a_{2}^{2}k^{3}(H_{t}U_{t})_{t}^{n} \\
& -a_{1}^{2}a_{2}k^{3}\bigl[\tfrac{3}{2}H^{n}A(U_{t}^{n}U_{tx}^{n}) + \tfrac{1}{2}H^{n}A(H_{t}^{n}H_{tx}^{n})
+\tfrac{1}{2}U^{n}P(H_{t}^{n}U_{t}^{n})_{x}\bigr] + v_{3}^{n}\,,
\end{align*}
we have, by the definition of $\Phi^{n,2}$ and $(\ref{eq489})$
\begin{equation}
\begin{aligned}
\Phi^{n,2} = & \,\,\Phi^{n} + a_{2}k\Phi_{t}^{n} + a_{1}a_{2}k^{2}\Phi_{tt}^{n}
- \tfrac{a_{1}a_{2}}{2}k^{2}H_{t}^{n}U_{t}^{n} + \tfrac{a_{1}a_{2}^{2}}{2}k^{3}(H_{t}U_{t})_{t}^{n}\\
& - a_{1}^{2}a_{2}k^{3}\bigl[\tfrac{3}{2}A(U_{t}^{n}U_{tx}^{n}) + \tfrac{1}{2}A(H_{t}^{n}H_{tx}^{n})
+ \tfrac{3}{4}H^{n}A(U_{t}^{n}U_{tx}^{n}) + \tfrac{1}{4}H^{n}A(H_{t}^{n}H_{tx}^{n})\\
&\qquad \qquad  + \tfrac{1}{4}U^{n}P(H_{t}^{n}U_{t}^{n})_{x}\bigr] + \zeta_{1}^{n} + \tfrac{1}{2}v_{3}^{n}\,.
\end{aligned}
\label{eq490}
\end{equation}
Since now
\[
V^{n,3} = H^{n} - a_{3}k P\Phi^{n,2}_{x} = H^{n} - kP\Phi^{n,2}_{x}\,,
\]
we have from $(\ref{eq490})$ and $(\ref{eq476})$ that
\begin{equation}
V^{n,3} = H^{n} + k H_{t}^{n} + a_{2}k^{2}H_{tt}^{n} + a_{1}a_{2}k^{3}H_{ttt}^{n}
+ \tfrac{a_{1}a_{2}}{2}k^{3}P(H_{t}^{n}U_{t}^{n})_{x} + \psi_{2}^{n}\,,
\label{eq491}
\end{equation}
where
\[
\|\psi_{2}^{n}\| \leq C_{\lambda}h^{4}\,,
\]
and (using the fact that $\|Af\|_{2}\leq C\|f\|$ for $f\in L^{2}$),
\[
\|\psi_{2}^{n}\|_{1} \leq C_{\lambda}h^{3.5}\sqrt{\ln 1/h}\,.
\]
In addition, from $(\ref{eq489})$
\begin{align*}
W^{n,2} W_{x}^{n,2}= & \,\Bigl[U^{n} + a_{2}kU_{t}^{n} + a_{1}a_{2}k^{2}U_{tt}^{n}
- a_{1}^{2}a_{2}k^{3}\bigl[\tfrac{3}{2}A(U_{t}^{n}U_{tx}^{n})
+ \tfrac{1}{2}A(H_{t}^{n}H_{tx}^{n})\bigr] + \zeta_{1}^{n}\Bigr]\cdot \\
&\,\Bigl[U_{x}^{n} + a_{2}kU_{tx}^{n} + a_{1}a_{2}k^{2}U_{ttx}^{n}
- a_{1}^{2}a_{2}k^{3}\bigl[\tfrac{3}{2}A(U_{t}^{n}U_{tx}^{n})
+ \tfrac{1}{2}A(H_{t}^{n}H_{tx}^{n})\bigr]_{x} + \zeta_{1x}^{n}\Bigr]\,,
\end{align*}
and so
\begin{equation}
\begin{aligned}
W^{n,2} W_{x}^{n,2}=&\,\,U^{n}U_{x}^{n} + a_{2}k(UU_{x})_{t}^{n}+a_{1}a_{2}k^{2}(UU_{x})_{tt}^{n}
- a_{1}a_{2}k^{2}U_{t}^{n}U_{tx}^{n} + a_{1}a_{2}^{2}k^{3}(U_{t}^{n}U_{tt}^{n})_{x}\\
& - a_{1}^{2}a_{2}k^{3}\bigl[\tfrac{3}{2}U^{n}A(U_{t}^{n}U_{tx}^{n}) +
\tfrac{1}{2}U^{n}A(H_{t}^{n}H_{tx}^{n})\bigr]_{x} + w_{4}^{n}\,,
\end{aligned}
\label{eq492}
\end{equation}
where $\|w_{4}^{n}\| \leq C_{\lambda}h^{4}$. In addition, from $(\ref{eq487})$
\begin{align*}
V^{n,2} V_{x}^{n,2}=&\,\bigl[H^{n} + a_{2}kH_{t}^{n} + a_{1}a_{2}k^{2}H_{tt}^{n}-
\tfrac{a_{1}^{2}a_{2}}{2}k^{3}P(H_{t}^{n}U_{t}^{n})_{x} + \psi_{1}^{n}\bigr]\cdot\\
&\,\bigl[H_{x}^{n} + a_{2}kH_{tx}^{n}+a_{1}a_{2}k^{2}H_{ttx}^{n}-
\tfrac{a_{1}^{2}a_{2}}{2}k^{3}\bigl(P(H_{t}^{n}U_{t}^{n})_{x}\bigr)_{x} + \psi_{1x}^{n}\bigr]\,.
\end{align*}
Hence,
\begin{equation}
\begin{aligned}
V^{n,2} V_{x}^{n,2}=&\,\,H^{n}H_{x}^{n} + a_{2}k(HH_{x})_{t}^{n} + a_{1}a_{2}k^{2}(HH_{x})_{tt}^{n}
- a_{1}a_{2}k^{2}H_{t}^{n}H_{tx}^{n} \\ &+ a_{1}a_{2}^{2}k^{3}(H_{t}^{n}H_{tt}^{n})_{x}
- \tfrac{a_{1}^{2}a_{2}}{2}k^{3}\bigl[H^{n}P(H_{t}^{n}U_{t}^{n})_{x}\bigr]_{x} + v_{4}^{n}\,,
\end{aligned}
\label{eq493}
\end{equation}
where
\[
\|v_{4}^{n}\|_{-1} \leq C_{\lambda}h^{4}\,,
\]
and
\[
\|v_{4}^{n}\| \leq C_{\lambda}h^{3.5}\sqrt{\ln 1/h}\,.
\]
From the definition of $F^{n,2}$,  $(\ref{eq487})$, $(\ref{eq492})$ and $(\ref{eq493})$
we have
\begin{equation}
\begin{aligned}
F^{n,2} = &\,\,F^{n} + a_{2}kF_{t}^{n}
+ a_{1}a_{2}k^{2}F_{tt}^{n} - a_{1}a_{2}k^{2}(\tfrac{3}{2}U_{t}^{n}U_{tx}^{n}+\tfrac{1}{2}H_{t}^{n}H_{tx}^{n})\\
& + a_{1}a_{2}^{2}k^{3}
\bigl[\tfrac{3}{2}U_{t}^{n}U_{tt}^{n} + \tfrac{1}{2}H_{t}^{n}H_{tt}^{n} -
\tfrac{1}{2}P(H_{t}^{n}U_{t}^{n})_{x} - \tfrac{9}{4}U^{n}A(U_{t}^{n}U_{tx}^{n})\\
& \qquad \qquad
-\tfrac{3}{4}U^{n}A(H_{t}^{n}H_{tx}^{n}) - \tfrac{1}{4}H^{n}P(H_{t}^{n}U_{t}^{n})_{x}\bigr]_{x} + w_{5}^{n}\,,
\end{aligned}
\label{eq494}
\end{equation}
where
\[
w_{5}^{n} = \psi_{1x}^{n} + \tfrac{3}{2}w_{4}^{n} + \tfrac{1}{2} v_{4}^{n}\,.
\]
Hence, since
\[
W^{n,3} = U^{n} - kAF^{n,2}\,,
\]
we obtain
\begin{equation}
\begin{aligned}
W^{n,3} = &\,\,U^{n} + k U_{t}^{n} + a_{2}k^{2}U_{tt}^{n} + a_{1}a_{2}k^{3}U_{ttt}^{n} \\
&+ a_{1}a_{2}k^{3}\bigl[\tfrac{3}{2}A(U_{t}^{n}U_{tx}^{n}) + \tfrac{1}{2}A(H_{t}^{n}H_{tx}^{n})\bigr] + \zeta_{2}^{n}\,,
\end{aligned}
\label{eq495}
\end{equation}
where
\[
\|\zeta_{2}^{n}\|_{1} \leq C_{\lambda} kh^{4} + C k^{4}\,.
\]
From $(\ref{eq491})$, $(\ref{eq495})$ we get
\begin{align*}
V^{n,3}W^{n,3} = &\,H^{n}U^{n} + k(HU)_{t}^{n} + a_{2}k^{2}(HU)_{tt}^{n} +
a_{1}a_{2}k^{3}(HU)_{ttt}^{n} - a_{1}a_{2}k^{3}(H_{t}U_{t})_{t}^{n}\\
&\,+a_{1}a_{2}k^{3}\bigl[ \tfrac{3}{2}H^{n}A(U_{t}^{n}U_{tx}^{n}) +
\tfrac{1}{2}H^{n}A(H_{t}^{n}H_{tx}^{n}) + \tfrac{1}{2}U^{n}P(H_{t}^{n}U_{t}^{n})_{x}\bigr]
+ v_{5}^{n}\,,
\end{align*}
where
\[
\|v_{5}^{n}\|_{1} \leq C_{\lambda}h^{3.5}\sqrt{\ln 1/h}\,.
\]
Hence, from the definition of $\Phi^{n,3}$, $(\ref{eq495})$ and the last relation, we see that
\begin{equation}
\begin{aligned}
\Phi^{n,3} = & \,\,\Phi^{n} + k\Phi_{t}^{n} + a_{2}k^{2}\Phi_{tt}^{n}
+ a_{1}a_{2}k^{3}\Phi_{ttt}^{n} - \tfrac{a_{1}a_{2}}{2}k^{3}(H_{t}U_{t})_{t}^{n}\\
&+ a_{1}a_{2}k^{3}\bigl[ \tfrac{3}{2}A(U_{t}^{n}U_{tx}^{n}) + \tfrac{1}{2}A(H_{t}^{n}H_{tx}^{n})
+ \tfrac{3}{4}H^{n}A(U_{t}^{n}U_{tx}^{n}) \\
& \qquad \qquad + \tfrac{1}{4}H^{n}A(H_{t}^{n}H_{tx}^{n}) + \tfrac{1}{4}U^{n}P(H_{t}^{n}U_{t}^{n})_{x}\bigr]
+ \zeta_{2}^{n} + \tfrac{1}{2}v_{5}^{n}\,.
\end{aligned}
\label{eq496}
\end{equation}
We are now ready to estimate $\delta_{1}^{n}$. By $(\ref{eq490})$ and $(\ref{eq496})$ we have
\begin{align*}
b_{3}\Phi^{n,2} + b_{4}\Phi^{n,3}
= &\,\,(b_{3} + b_{4})\Phi^{n} + (b_{3}a_{2}+b_{4})k\Phi_{t}^{n}
+ (a_{1}a_{2}b_{3} + a_{2}b_{4})k^{2}\Phi_{tt}^{n}\\
&+ a_{1}a_{2}b_{4}k^{3}\Phi_{ttt}^{n}
- \tfrac{a_{1}a_{2}b_{3}}{2}k^{2}H_{t}^{n}U_{t}^{n} +
b_{3}(\zeta_{1}^{n} + \tfrac{1}{2}v_{3}^{n}) + b_{4}(\zeta_{2}^{n} + \tfrac{1}{2}v_{5}^{n})\,.
\end{align*}
In addition, in view of $(\ref{eq486})$
\[
b_{1}\Phi^{n} + b_{2}\Phi^{n,1} = (b_{1}+b_{2})\Phi^{n}
+ a_{1}b_{2}k\Phi_{t}^{n} + \tfrac{a_{1}^{2}b_{2}}{2}k^{2}H_{t}^{n}U_{t}^{n} + b_{2}v_{2}^{n}\,.
\]
Hence,
\[
\sum_{j=1}^{4}b_{j}\Phi^{n,j-1} = \Phi^{n}
+ \tfrac{k}{2}\Phi_{t}^{n} + \tfrac{k^{2}}{6}\Phi_{tt}^{n} + \tfrac{k^{3}}{24}\Phi_{ttt}^{n} + v_{6}^{n}\,,
\]
where
\[
v_{6}^{n} = b_{2}v_{2}^{n} + b_{3}(\zeta_{1}^{n} + \tfrac{1}{2}v_{3}^{n}) +
b_{4}(\zeta_{2}^{n} + \tfrac{1}{2}v_{5}^{n})\,.
\]
Therefore, by $(\ref{eq481})$ and $(\ref{eq476})$ we obtain
\[
\delta_{1}^{n} = H^{n+1} - H^{n} - kH_{t}^{n} - \tfrac{k^{2}}{2}H_{tt}^{n}-\tfrac{k^{3}}{6}H_{ttt}^{n}
- \tfrac{k^{4}}{24}H_{tttt}^{n} + kP(v_{6}^{n})_{x} + v_{7}^{n}\,,
\]
where, by $(\ref{eq472})$
\[
\|v_{7}^{n}\| \leq C_{\lambda}kh^{3.5}\sqrt{\ln 1/h}\,.
\]
Since, in addition
\[
\|v_{6}^{n}\|_{1} \leq C_{\lambda}h^{3.5}\sqrt{\ln 1/h}\,,
\]
we conclude that
\[
\|\delta_{1}^{n}\| \leq C_{\lambda}k(k^{4} + h^{3.5}\sqrt{\ln 1/h})\,.
\]
To estimate $\delta_{2}^{n}$ we must also compute $F^{n,3}$. To this end, using $(\ref{eq495})$ we have
after some algebra
\begin{equation}
\begin{aligned}
W^{n,3}W_{x}^{n,3} =&\,\,U^{n}U_{x}^{n} + k(UU_{x})_{t}^{n} + a_{2}k^{2}(UU_{x})_{tt}^{n}
+ a_{1}a_{2}k^{3}(UU_{x})_{ttt}^{n} - a_{1}a_{2}k^{3}(U_{t}^{n}U_{tt}^{n})_{x}\\
&\, + a_{1}a_{2}k^{3}[ \tfrac{3}{2}U^{n}A(U_{t}^{n}U_{tx}^{n}) + \tfrac{1}{2}U^{n}A(H_{t}^{n}H_{tx}^{n})]_{x}
+ w_{6}^{n}\,,
\end{aligned}
\label{eq497}
\end{equation}
where
\[
\|w_{6}^{n}\| \leq C_{\lambda}h^{4}\,.
\]
Similarly, from $(\ref{eq491})$, we have
\begin{align*}
V^{n,3}V_{x}^{n,3}=&\, H^{n}H_{x}^{n} + k(HH_{x})_{t}^{n} + a_{2}k^{2}(HH_{x})_{tt}^{n}
+ a_{1}a_{2}k^{3}(HH_{x})_{ttt}^{n} \\ \,& - a_{1}a_{2}k^{3}(H_{t}^{n}H_{tt}^{n})_{x}
+ \tfrac{a_{1}a_{2}}{2}k^{3}[H^{n}P(H_{t}^{n}U_{t}^{n})_{x}]_{x} + v_{7}^{n}\,,
\end{align*}
where
\[
\|v_{7}^{n}\|_{-1} \leq C_{\lambda}h^{4}\,, \qquad \|v_{7}^{n}\| \leq C_{\lambda}h^{3.5}\sqrt{\ln 1/h}\,.
\]
Hence, from the definition of $F^{n,3}$, $(\ref{eq491})$, $(\ref{eq497})$ and the last relation,
we obtain
\begin{equation}
\begin{aligned}
F^{n,3} = &\,
F^{n} + kF_{t}^{n} + a_{2}k^{2}F_{tt}^{n} + a_{1}a_{2}k^{3}F_{ttt}^{n}
- a_{1}a_{2}k^{3}[\tfrac{3}{2}U_{t}^{n}U_{tt}^{n} + \tfrac{1}{2}H_{t}^{n}H_{tt}^{n}]_{x}\\
&\, + a_{1}a_{2}k^{3}[\tfrac{1}{2}P(H_{t}^{n}U_{t}^{n})_{x} + \tfrac{9}{4}U^{n}A(U_{t}^{n}U_{tx}^{n})
+ \tfrac{3}{4}U^{n}A(H_{t}^{n}H_{tx}^{n}) \\
&\qquad \qquad + \tfrac{1}{4}H^{n}P(H_{t}^{n}U_{t}^{n})_{x}\bigr]_{x}
+ w_{7}^{n}\,,
\end{aligned}
\label{eq498}
\end{equation}
where
\[
w_{7}^{n} = \psi_{2x}^{n} + \tfrac{3}{2}w_{6}^{n} + \tfrac{1}{2}v_{7}^{n}\,.
\]
We may now estimate $\delta_{2}^{n}$. By $(\ref{eq494})$ and $(\ref{eq498})$ we obtain
\begin{align*}
b_{3}F^{n,2} + b_{4}F^{n,3}
& = (b_{3}+b_{4})F^{n} + (a_{2}b_{3}+b_{4})kF_{t}^{n} + (a_{1}a_{2}b_{3} + a_{2}b_{4})k^{2}F_{tt}^{n}
+ a_{1}a_{2}b_{4}k^{3}F_{ttt}^{n} \\
&\quad - a_{1}a_{2} b_{3}k^{2}(\tfrac{3}{2}U_{t}^{n}U_{tx}^{n} + \tfrac{1}{2}H_{t}^{n}H_{tx}^{n})
+ b_{3}w_{5}^{n} + b_{4}w_{7}^{n}\,.
\end{align*}
Also, in view of $(\ref{eq488})$
\[
b_{1}F^{n} + b_{2}F^{n,1} =
(b_{1} + b_{2})F^{n} + a_{1}b_{2}kF_{t}^{n}
+ a_{1}^{2}b_{2}k^{2}(\tfrac{3}{2}U_{t}^{n}U_{tx}^{n} + \tfrac{1}{2}H_{t}H_{tx}^{n}) + b_{2}w_{3}^{n}\,.
\]
Therefore
\[
\sum_{j=1}^{4}b_{j}F^{n,j-1} = F^{n} + \tfrac{k}{2}F_{t}^{n} + \tfrac{k^{2}}{6}F_{tt}^{n} + \tfrac{k^{3}}{24}F_{ttt}^{n} + w_{8}^{n}\,,
\]
where
\[
w_{8}^{n} = b_{2}w_{3}^{n} + b_{3}w_{5}^{n} + b_{4}w_{7}^{n}\,.
\]
Hence, from $(\ref{eq482})$ and $(\ref{eq477})$ we obtain
\[
\delta_{2}^{n} = U^{n+1} - U^{n} - kU_{t}^{n} - \tfrac{k^{2}}{2}U_{tt}^{n}
- \tfrac{k^{3}}{6}U_{ttt}^{n} - \tfrac{k^{4}}{24}U_{tttt}^{n} + kAw_{8}^{n} + w_{9}^{n}\,,
\]
where, by $(\ref{eq472})$
\[
\|w_{9}^{n}\|_{1} \leq Ckh^{4}\,.
\]
Since we also have, by previous estimates,
\[
\|w_{8}^{n}\|_{-1} \leq C_{\lambda}h^{4}\,,
\]
we finally conclude that
\[
\|\delta_{2}^{n}\|_{1} \leq C_{\lambda}k(k^{4} + h^{4})\,,
\]
and the proof of the Lemma $4.5$ is completed.
\end{proof}
We show stability and convergence of the fully discrete scheme in the course of the proof of
the following result.
\begin{proposition} Suppose that the solution $(\eta, u)$ of $(\ref{scb})$ is sufficiently smooth
on $[0,T]$. Let $\lambda=k/h$ and $(H_{h}^{n}, U_{h}^{n})$ be the solution of $(\ref{eq473})$-$(\ref{eq474})$.
Then, there exists a positive constant $\lambda_{0}$ and a constant $C$ independent of $k$ and h, such that
for $\lambda \leq \lambda_{0}$,
\[
\max_{0\leq n\leq M}\|\eta(t^{n}) - H_{h}^{n}\| \leq C(k^{4} + h^{3.5}\sqrt{\ln 1/h})\,, \qquad \quad
\max_{0\leq n\leq M}\|u(t^{n}) - U_{h}^{n}\|_{1} \leq C(k^{4} + h^{3})\,.
\]
\end{proposition}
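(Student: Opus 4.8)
The plan is to mirror the proofs of Propositions 4.1 and 4.2: compare the fully discrete solution with the interpolant and the elliptic projection of the exact one, and push the analysis through a bootstrap--Gronwall scheme, the new ingredient being the stability mechanism of the four-stage Runge--Kutta method, which is responsible for the restriction $\lambda\le\lambda_0$. First I would set $H^n=I_h\eta(t^n)$, $U^n=R_hu(t^n)$, $\varepsilon^n=H^n-H_h^n$, $e^n=U^n-U_h^n$, and introduce the stage errors $\theta^{n,j}=V^{n,j}-H_h^{n,j}$, $\xi^{n,j}=W^{n,j}-U_h^{n,j}$, $j=1,2,3$. Subtracting $(\ref{eq478})$--$(\ref{eq479})$ from $(\ref{eq480})$ and from the definitions $(\ref{eq481})$--$(\ref{eq482})$ of $\delta_1^n$, $\delta_2^n$, and using the algebraic identities for $\Phi^{n,j}-\Phi_h^{n,j}$ and $F^{n,j}-F_h^{n,j}$ that split each stage difference into a part linear in the stage errors and a quadratic one, I obtain a recursion for $(\varepsilon^{n+1},e^{n+1})$ in terms of $(\varepsilon^n,e^n)$, the stage errors, and the truncation terms $\delta_1^n$, $\delta_2^n$ already bounded in Lemma 4.5.

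The essential point is to identify the only \emph{stiff} part of the linearized recursion. After the $A$-smoothing, every term of the $u$-equation and every $e$-dependent term of the $\eta$-equation is bounded on the energy space with norm $(\|\cdot\|^2+\|\cdot\|_1^2)^{1/2}$ (for the constant-coefficient coupling this is the content of $(\ref{eq428})$ in Remark 4.2), so the sole source of $O(1/h)$ growth is the scalar advection operator $\mathcal{S}\varepsilon:=-\tfrac12 P(U^n\varepsilon)_x$ acting on the $\eta$-error. By the antisymmetry $(P(U^n\varepsilon)_x,\varepsilon)=\tfrac12(U^n_x\varepsilon,\varepsilon)$, and because $U^n=R_hu(t^n)$ vanishes at the endpoints, $\mathcal{S}$ is skew-symmetric in $L^2$ up to a bounded zeroth-order remainder, while the inverse inequality gives $\|\mathcal{S}\|\le C/h$. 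Unfolding the four stages, the linear part of the amplification of $\varepsilon^n$ is $R(k\mathcal{S})$ with $R(z)=1+z+\tfrac{z^2}{2}+\tfrac{z^3}{6}+\tfrac{z^4}{24}$. For the skew part $\mathcal{S}_0$ of $\mathcal{S}$ one has the polynomial identity $R(-z)R(z)=1+\tfrac{z^6}{72}+\tfrac{z^8}{576}$, whence $\|R(k\mathcal{S}_0)\varepsilon\|^2=\|\varepsilon\|^2-\tfrac{k^6}{72}\|\mathcal{S}_0^3\varepsilon\|^2+\tfrac{k^8}{576}\|\mathcal{S}_0^4\varepsilon\|^2$; since $\|\mathcal{S}_0^4\varepsilon\|\le(C/h)\|\mathcal{S}_0^3\varepsilon\|$, the last two terms combine to a nonpositive quantity exactly when $k/h\le\lambda_0$ with $\lambda_0$ small, giving $\|R(k\mathcal{S}_0)\|\le1$. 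This is where $\lambda\le\lambda_0$ enters; it reflects the fact that $|R(i\beta)|\le1$ precisely for $|\beta|\le2\sqrt{2}$, a segment of the imaginary axis that the improved-Euler stability function misses, which explains the weaker condition obtained there.

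The remaining contributions are treated perturbatively under the maximality hypothesis $\|\varepsilon^n\|_1+\|e^n\|_1\le1$ for $0\le n\le n^*$, as in Propositions 4.1 and 4.2. Because $\|k\mathcal{S}\|\le C\lambda$ is bounded, all powers $(k\mathcal{S})^j$ appearing in the stages are bounded operators, so the bounded symmetric defect of $\mathcal{S}$, the bounded $e$-terms and $\varepsilon$--$e$ coupling, and the genuinely quadratic error terms (controlled by the inverse inequalities $(\ref{eq34})$ together with the maximality bound) each contribute an amplification factor $1+C_\lambda k$ and a forcing of size $C_\lambda k$ times the consistency bound. Assembling these with the sharp estimate of the preceding paragraph, in the spirit of $(\ref{eq465})$--$(\ref{eq466})$, and invoking Lemma 4.5 for $\|\delta_1^n\|+\|\delta_2^n\|_1$, yields
\[
\|\varepsilon^{n+1}\|+\|e^{n+1}\|_1\le(1+C_\lambda k)\bigl(\|\varepsilon^n\|+\|e^n\|_1\bigr)+C_\lambda k\bigl(k^4+h^{3.5}\sqrt{\ln 1/h}\bigr).
\]

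Gronwall's lemma and the vanishing initial errors then give $\|\varepsilon^n\|+\|e^n\|_1\le C(k^4+h^{3.5}\sqrt{\ln 1/h})$ for all $n$, and an inverse inequality shows, for $h$ small, that $n^*$ may be taken equal to $M$. The stated estimates follow on adding the interpolation error $\|\eta-I_h\eta\|=O(h^4)$ from $(\ref{eq31})$ and the elliptic projection error $\|u-R_hu\|_1=O(h^3)$ from $(\ref{eq32})$, the latter dominating the $h^{3.5}\sqrt{\ln 1/h}$ contribution in the $H^1$ bound for $u$. I expect the main obstacle to be the third step carried out honestly for the coupled system: verifying that nesting the $O(1/h)$ advection through the four stages produces no uncancelled term worse than $O(k)$ in the energy norm, that is, that the skew-symmetric cancellation responsible for $\|R(k\mathcal{S}_0)\|\le1$ is not spoiled by the variable coefficients and by the $\varepsilon$--$e$ coupling, which is the coupled analogue of the delicate computation $(\ref{eq465})$--$(\ref{eq466})$.
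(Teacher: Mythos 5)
Your proposal is correct and follows the paper's own proof essentially step for step: the same decomposition with $H^n=I_h\eta(t^n)$, $U^n=R_hu(t^n)$, the same stage-error recursion, the consistency bounds of Lemma $4.5$, the bootstrap hypothesis $\|\ve^n\|_1+\|e^n\|_1\le 1$, and Gronwall. The only difference is presentational: you package the critical stability step via the RK4 stability-function identity $R(z)R(-z)=1+\tfrac{z^6}{72}+\tfrac{z^8}{576}$ applied to the (near-)skew advection operator, whereas the paper expands $\|\sigma^n\|^2$ by hand into the inner products $f_1^n,\dots,f_7^n$ of the nested terms $\rho_1^n,\dots,\rho_4^n$ and exhibits exactly the same $k^6$-versus-$k^8$ competition, with the negative $\|\rho_3^n\|^2$ contribution absorbing the $\|\rho_4^n\|^2$ term precisely when $\lambda\le\lambda_0$ -- the same mechanism, threshold, and conclusion.
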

\begin{proof} It suffices to show that
\[
\max_{0\leq n\leq M}(\|H^{n} - H_{h}^{n}\| + \|U^{n} - U_{h}^{n}\|_{1})
\leq C(k^{4} + h^{3.5}\sqrt{\ln 1/h})\,.
\]
Let
\[
\ve^{n} = H^{n} - H_{h}^{n}\,, \quad e^{n} = U^{n} -U_{h}^{n}\,,
\quad \ve^{n,j} = V^{n,j} - H_{h}^{n,j}\,, \quad e^{n,j} = W^{n,j} - U_{h}^{n,j}\,,
\quad j=1,2,3\,.
\]
Then, by $(\ref{eq475})$ and $(\ref{eq478})$, we have for $j=1,2,3$
\begin{equation}
\ve^{n,j} = \ve^{n} - ka_{j}P\bigl(\Phi^{n,j-1} - \Phi_{h}^{n,j-1}\bigr)_{x}\,,
\label{eq499}
\end{equation}
\begin{equation}
e^{n,j}  = e^{n} - ka_{j}A\bigl(F^{n,j-1} - F_{h}^{n,j-1}\bigr)\,,
\label{eq4100}
\end{equation}
and by $(\ref{eq479})$, $(\ref{eq481})$, and $(\ref{eq482})$
\begin{equation}
\ve^{n+1} = \ve^{n} - k\sum_{j=1}^{4}b_{j}P\bigl(\Phi^{n,j-1} - \Phi_{h}^{n,j-1}\bigr)_{x}
+ \delta_{1}^{n}\,,
\label{eq4101}
\end{equation}
\begin{equation}
e^{n+1} = e^{n} - k\sum_{j=1}^{4}b_{j}A\bigl(F^{n,j-1} - F_{h}^{n,j-1}\bigr)
+ \delta_{2}^{n}\,.
\label{eq4102}
\end{equation}
We embark upon the proof by deriving suitable estimates for $\ve^{n,1}$ and $e^{n,1}$. To this end,
putting $\Phi_{h}^{n}:=\Phi_{h}^{n,0}$, we have
\[
\Phi^{n} - \Phi_{h}^{n} = e^{n} + \tfrac{1}{2}(H^{n}U^{n} - H_{h}^{n}U_{h}^{n})\,,
\]
and since
\begin{align*}
H^{n}U^{n} - H_{h}^{n}U_{h}^{n} & = H^{n}(U^{n} - U_{h}^{n}) -
(U^{n} - U_{h}^{n})(H^{n} - H_{h}^{n}) + U^{n}(H^{n}-H_{h}^{n}) \\
& = H^{n}e^{n} - \ve^{n}e^{n} + U^{n}\ve^{n}\,,
\end{align*}
we will have
\[
\Phi^{n} - \Phi_{h}^{n} = e^{n} + \tfrac{1}{2}(H^{n}e^{n} - \ve^{n}e^{n}) + \tfrac{1}{2}U^{n}\ve^{n}\,,
\]
i.e.
\[
\Phi^{n} - \Phi_{h}^{n} = \widetilde{\gamma}_{1}^{n} + \tfrac{1}{2}U^{n}\ve^{n}\,,
\]
where
\[
\widetilde{\gamma}_{1}^{n} = e^{n} + \tfrac{1}{2}(H^{n}e^{n} - \ve^{n}e^{n})\,.
\]
Therefore,
\begin{equation}
P(\Phi^{n} - \Phi_{h}^{n})_{x} = \gamma_{1}^{n} + \tfrac{1}{2}\rho_{1}^{n}\,,
\label{eq4103}
\end{equation}
where
\begin{equation}
\gamma_{1}^{n} = P\widetilde{\gamma}_{1x}^{n}\,, \qquad \quad \rho_{1}^{n} = P(U^{n}\ve^{n})_{x}\,.
\label{eq4104}
\end{equation}
Hence, from $(\ref{eq499})$
\begin{equation}
\ve^{n,1} = \ve^{n} - a_{1}k\gamma_{1}^{n} - \tfrac{a_{1}}{2}k\rho_{1}^{n}\,.
\label{eq4105}
\end{equation}
Let $0\leq n^{*}\leq M-1$ be the maximal integer for which
\[
\|\ve^{n}\|_{1} + \|e^{n}\|_{1} \leq 1\,, \qquad 0\leq n\leq n^{*}\,.
\]
Then, for $n \leq n^{*}$,
\[
\|\widetilde{\gamma}_{1}^{n}\|_{1} \leq C(\|\ve^{n}\| + \|e^{n}\|_{1})\,,
\]
and, in view of $(\ref{eq4104})$,
\begin{equation}
\|\gamma_{1}^{n}\| \leq C(\|\ve^{n}\| + \|e^{n}\|_{1})\,.
\label{eq4106}
\end{equation}
Therefore, by $(\ref{eq499})$, for $0\leq n\leq n^{*}$,
\begin{equation}
\|\ve^{n,1}\| \leq C_{\lambda}(\|\ve^{n}\| + \|e^{n}\|_{1})\,,
\label{eq4107}
\end{equation}
and also
\begin{equation}
\|\ve^{n,1}\|_{1}\leq 1 + C_{\lambda}\|\gamma_{1}^{n}\| + C_{\lambda}\|\rho_{1}^{n}\| \leq C_{\lambda}\,,
\label{eq4108}
\end{equation}
where, as before, $C_{\lambda}$ denotes a constant that is a polynomial of $\lambda$ with positive
coefficients. \par
In order to estimate $e^{n,1}$ we note, setting $F_{h}^{n}=F_{h}^{n,0}$, that
\[
F^{n} - F_{h}^{n} = \ve_{x}^{n} + \tfrac{3}{2}(U^{n}U_{x}^{n} - U_{h}^{n}U_{hx}^{n})
+\tfrac{1}{2}(H^{n}H_{x}^{n} - H_{h}^{n}H_{hx}^{n})\,.
\]
Since
\begin{align*}
U^{n}U_{x}^{n} - U_{h}^{n}U_{hx}^{n} & = U^{n}(U_{x}^{n} - U_{hx}^{n}) -
(U_{x}^{n}-U_{hx}^{n})(U^{n} - U_{h}^{n}) + U_{x}^{n}(U^{n} - U_{h}^{n}) \\
& = (U^{n}e^{n})_{x} - e^{n}e_{x}^{n}\,,
\end{align*}
and
\[
H^{n}H_{x}^{n} - H_{h}^{n}H_{hx}^{n} = (H^{n}\ve^{n})_{x} - \ve^{n}\ve_{x}^{n}\,,
\]
we have
\[
F^{n} - F_{h}^{n} = \ve_{x}^{n} + \tfrac{3}{2}(U^{n}e^{n})_{x} - \tfrac{3}{2}e^{n}e_{x}^{n}
+ \tfrac{1}{2}(H^{n}\ve^{n})_{x} - \tfrac{1}{2}\ve^{n}\ve_{x}^{n}\,,
\]
and therefore, for $n \leq n^{*}$
\begin{equation}
\|F^{n} - F_{h}^{n}\|_{-1} \leq C (\|\ve^{n}\| + \|e^{n}\|)\,.
\label{eq4109}
\end{equation}
Recalling the expression for $e^{n,1}$ in $(\ref{eq4101})$ we have for $n \leq n^{*}$
\begin{equation}
\|e^{n,1}\|_{1} \leq C(\|\ve^{n}\| + \|e^{n}\|_{1})\,.
\label{eq4110}
\end{equation}
We now turn to deriving estimates for $\ve^{n,2}$ and $e^{n,2}$. With this aim in mind, we note
for later reference that it follows from the definition of $V^{n,j}$, $W^{n,j}$,
$F^{n,j}$ that for $0\leq n\leq M-1$, $j=0,1,2,3$
\[
\|V^{n,j}\|_{1} + \|W^{n,j}\|_{1} \leq C_{\lambda}\,, \qquad \|F^{n,j}\|_{-1} \leq C_{\lambda}\,.
\]
Since now
\[
\Phi^{n,1} - \Phi_{h}^{n,1} = e^{n,1} + \tfrac{1}{2}(V^{n,1}W^{n,1} - H_{h}^{n,1}U_{h}^{n,1})\,,
\]
and
\begin{align*}
V^{n,1}W^{n,1} - H_{h}^{n,1}U_{h}^{n,1} & = V^{n,1}(W^{n,1} - U_{h}^{n,1})
-(W^{n,1} - U_{h}^{n,1})(V^{n,1} - H_{h}^{n,1}) + W^{n,1}(V^{n,1} - H_{h}^{n,1}) \\
& = V^{n,1}e^{n,1} - e^{n,1}\ve^{n,1} + W^{n,1}\ve^{n,1}\,,
\end{align*}
using $(\ref{eq480})$ we have by $(\ref{eq4105})$
\begin{align*}
W^{n,1}\ve^{n,1} & = (U^{n} - a_{1}kAF^{n})(\ve^{n} - a_{1}k\gamma_{1}^{n} -\tfrac{a_{1}k}{2}\rho_{1}^{n})\\
& = U^{n}\ve^{n} - \tfrac{a_{1}k}{2}U^{n}\rho_{1}^{n}-a_{1}kU^{n}\gamma_{1}^{n}
-a_{1}k(AF^{n})\ve^{n,1}\,,
\end{align*}
and so
\[
\Phi^{n,1} - \Phi_{h}^{n,1} = \tfrac{1}{2}U^{n}\ve^{n} - \tfrac{a_{1}k}{4}U^{n}\rho_{1}^{n}
+ \widetilde{\gamma}_{2}^{n}\,,
\]
where
\begin{equation}
\widetilde{\gamma}_{2}^{n} = e^{n,1} + \tfrac{1}{2}V^{n,1}e^{n,1} -\tfrac{1}{2}e^{n,1}\ve^{n,1}
- \tfrac{a_{1}k}{2}U^{n}\gamma_{1}^{n} -\tfrac{a_{1}k}{2}(AF^{n})\ve^{n,1}\,.
\label{eq4111}
\end{equation}
Recalling the definition of $\rho_{1}^{n}$ in $(\ref{eq4104})$ we obtain
\begin{equation}
P(\Phi^{n,1} - \Phi_{h}^{n,1})_{x} = \tfrac{1}{2}\rho_{1}^{n} - \tfrac{a_{1}k}{4}\rho_{2}^{n}
 + \gamma_{2}^{n}\,,
\label{eq4112}
\end{equation}
where
\begin{equation}
\gamma_{2}^{n} = P(\widetilde{\gamma}_{2}^{n})_{x}\,,
\qquad \quad \rho_{2}^{n} = P(U^{n}\rho_{1}^{n})_{x}\,.
\label{eq4113}
\end{equation}
By $(\ref{eq4111})$ and $(\ref{eq4108})$ we deduce that
\[
\|\widetilde{\gamma}_{2}^{n}\|_{1} \leq C_{\lambda}\|e^{n,1}\|_{1}
+ Ck\|\gamma_{1}^{n}\|_{1} + C_{\lambda}k\|\ve^{n,1}\|_{1}\,.
\]
Hence, by $(\ref{eq4110})$ and $(\ref{eq4106})$, $(\ref{eq4107})$ we have
\[
\|\widetilde{\gamma}_{2}^{n}\|_{1} \leq C_{\lambda}(\|\ve^{n}\| + \|e^{n}\|_{1})\,,
\]
and therefore, by $(\ref{eq4113})$ we see that
\begin{equation}
\|\gamma_{2}^{n}\| \leq C_{\lambda}(\|\ve^{n}\| + \|e^{n}\|_{1})\,,
\label{eq4114}
\end{equation}
holds for $n\leq n^{*}$. Now, by $(\ref{eq480})$ and $(\ref{eq478})$
\[
\ve^{n,2} = V^{n,2} - H_{h}^{n,2} = \ve^{n} - a_{2}kP(\Phi^{n,1} - \Phi_{h}^{n,1})_{x}\,,
\]
and from $(\ref{eq4112})$
\begin{equation}
\ve^{n,2} = \ve^{n} - \tfrac{a_{2}k}{2}\rho_{1}^{n} + \tfrac{a_{1}a_{2}k^{2}}{4}\rho_{2}^{n}
- a_{2}k\gamma_{2}^{n}\,.
\label{eq4115}
\end{equation}
Thus,
\[
\|\ve^{n,2}\| \leq \|\ve^{n}\| + Ck\|\rho_{1}^{n}\| + Ck^{2}\|\rho_{2}^{n}\| + Ck\|\gamma_{2}^{n}\|\,,
\]
and using $(\ref{eq4113})$, $(\ref{eq4104})$ and $(\ref{eq4114})$ we conclude that for
$n \leq n^{*}$
\begin{equation}
\|\ve^{n,2}\| \leq C_{\lambda}(\|\ve^{n}\| + \|e^{n}\|_{1})\,,
\label{eq4116}
\end{equation}
and
\begin{equation}
\|\ve^{n,2}\|_{1} \leq C_{\lambda}\,.
\label{eq4117}
\end{equation}
We next consider $e^{n,2}$. Since
\[
F^{n,1} - F_{h}^{n,1} = \ve_{x}^{n,1} + \tfrac{3}{2}(W^{n,1}W_{x}^{n,1} - U_{h}^{n,1}U_{hx}^{n,1})
+ \tfrac{1}{2}(V^{n,1}V_{x}^{n,1} - H_{h}^{n,1}H_{hx}^{n,1})\,,
\]
\begin{align*}
W^{n,1}W_{x}^{n,1} - U_{h}^{n,1}U_{hx}^{n,1} & = W^{n,1}(W_{x}^{n,1} - U_{hx}^{n,1}) -
(W_{x}^{n,1}  - U_{hx}^{n,1})(W^{n,1} - U_{h}^{n,1}) + W_{x}^{n,1}(W^{n,1} - U_{h}^{n,1})\\
& = (W^{n,1}e^{n,1})_{x} - e_{x}^{n,1}e^{n,1}\,,
\end{align*}
and
\[
V^{n,1}V_{x}^{n,1} - H_{h}^{n,1}H_{hx}^{n,1} = (V^{n,1}\ve^{n,1})_{x} - \ve_{x}^{n,1}\ve^{n,1}\,,
\]
we obtain by $(\ref{eq4108})$, for $n\leq n^{*}$
\[
\|F^{n,1} - F_{h}^{n,1}\|_{-1} \leq C_{\lambda}(\|\ve^{n,1}\| + \|e^{n,1}\|_{1})\,,
\]
giving, in view of $(\ref{eq4107})$ and $(\ref{eq4110})$ for $n\leq n^{*}$
\begin{equation}
\|F^{n,1} - F_{h}^{n,1}\|_{-1} \leq C_{\lambda}(\|\ve^{n}\| + \|e^{n}\|_{1})\,.
\label{eq4118}
\end{equation}
Therefore, by $(\ref{eq4100})$ we conclude that for $n\leq n^{*}$
\begin{equation}
\|e^{n,2}\|_{1} \leq C_{\lambda}(\|\ve^{n}\| + \|e^{n}\|_{1})\,.
\label{eq4119}
\end{equation}
\par
To obtain analogous bounds for $\ve^{n,3}$, $e^{n,3}$ note that
\[
\Phi^{n,2} - \Phi_{h}^{n,2} = e^{n,2} + \tfrac{1}{2}(V^{n,2}W^{n,2} - H_{h}^{n,2}U_{h}^{n,2})\,,
\]
and
\begin{align*}
V^{n,2}W^{n,2} - H_{h}^{n,2}U_{h}^{n,2} & = V^{n,2}(W^{n,2} - U_{h}^{n,2})
-(W^{n,2} - U_{h}^{n,2})(V^{n,2} - H_{h}^{n,2}) + W^{n,2}(V^{n,2} - H_{h}^{n,2}) \\
& = V^{n,2}e^{n,2} - e^{n,2}\ve^{n,2} + W^{n,2}\ve^{n,2}\,.
\end{align*}
By $(\ref{eq480})$ and $(\ref{eq4115})$ we see that
\begin{align*}
W^{n,2}\ve^{n,2} & = (U^{n} - a_{2}kAF^{n,1})(\ve^{n} -\tfrac{a_{2}k}{2}\rho_{1}^{n}
+ \tfrac{a_{1}a_{2}k^{2}}{4}\rho_{2}^{n} - a_{2}k\gamma_{2}^{n}) \\
& = U^{n}\ve^{n} - \tfrac{a_{2}k}{2}U^{n}\rho_{1}^{n} + \tfrac{a_{1}a_{2}k^{2}}{4}U^{n}\rho_{2}^{n}
- a_{2}kU^{n}\gamma_{2}^{n} - a_{2}k(AF^{n,1})\ve^{n,2}\,.
\end{align*}
Therefore
\[
\Phi^{n,2} - \Phi_{h}^{n,2} = \tfrac{1}{2}U^{n}\ve^{n} - \tfrac{a_{2}k}{4}U^{n}\rho_{1}^{n}
+ \tfrac{a_{1}a_{2}k^{2}}{8}U^{n}\rho_{2}^{n} + \widetilde{\gamma}_{3}^{n}\,,
\]
where
\begin{equation}
\widetilde{\gamma}_{3}^{n} = e^{n,2} + \tfrac{1}{2}V^{n,2}e^{n,2} - \tfrac{1}{2}e^{n,2}\ve^{n,2}
- \tfrac{a_{2}k}{2}U^{n}\gamma_{2}^{n} - \tfrac{a_{2}k}{2}(AF^{n,1})\ve^{n,2}\,.
\label{eq4120}
\end{equation}
So, by $(\ref{eq4104})$ and $(\ref{eq4113})$ we have
\begin{equation}
P(\Phi^{n,2} - \Phi_{h}^{n,2})_{x} = \tfrac{1}{2}\rho_{1}^{n}  - \tfrac{a_{2}k}{4}\rho_{2}^{n}
+ \tfrac{a_{1}a_{2}k^{2}}{8}\rho_{3}^{n} + \gamma_{3}^{n}\,,
\label{eq4121}
\end{equation}
where
\begin{equation}
\gamma_{3}^{n} = P(\widetilde{\gamma}_{3}^{n})_{x}\,, \qquad \quad \rho_{3}^{n}=P(U^{n}\rho_{2}^{n})_{x}\,.
\label{eq4122}
\end{equation}
Now, by $(\ref{eq4120})$ and $(\ref{eq4117})$ we have for $n\leq n^{*}$
\[
\|\widetilde{\gamma}_{3}^{n}\|_{1} \leq C_{\lambda}\|e^{n,2}\|_{1} + Ck\|\gamma_{2}^{n}\|_{1}
+ C_{\lambda}k\|\ve^{n,2}\|_{1}\,.
\]
Hence, by $(\ref{eq4119})$, $(\ref{eq4114})$ and $(\ref{eq4116})$,
\[
\|\widetilde{\gamma}_{3}^{n}\|_{1} \leq C_{\lambda}(\|\ve^{n}\| + \|e^{n}\|_{1})\,,
\]
and thus
\begin{equation}
\|\gamma_{3}^{n}\| \leq C_{\lambda}(\|\ve^{n}\| + \|e^{n}\|_{1})\,,
\label{eq4123}
\end{equation}
for $n\leq n^{*}$. Now
\[
\ve^{n,3} = V^{n,3} - H_{h}^{n,3} = \ve^{n} - kP(\Phi^{n,2} - \Phi_{h}^{n,2})_{x}\,,
\]
and from $(\ref{eq4121})$
\begin{equation}
\ve^{n,3} = \ve^{n} - \tfrac{k}{2}\rho_{1}^{n} + \tfrac{a_{2}k^{2}}{4}\rho_{2}^{n}
-\tfrac{a_{1}a_{2}k^{3}}{8}\rho_{3}^{n} - k\gamma_{3}^{n}\,.
\label{eq4124}
\end{equation}
Therefore,
\[
\|\ve^{n,3}\| \leq \|\ve^{n}\| + Ck\|\rho_{1}^{n}\| + Ck^{2}\|\rho_{2}^{n}\| +
Ck^{3}\|\rho_{3}^{n}\| + Ck\|\gamma_{3}^{n}\|\,,
\]
and by $(\ref{eq4122})$, $(\ref{eq4113})$, $(\ref{eq4104})$ and $(\ref{eq4123})$ we deduce that
for $n\leq n^{*}$
\begin{equation}
\|\ve^{n,3}\| \leq C_{\lambda}(\|\ve^{n}\| + \|e^{n}\|_{1})\,,
\label{eq4125}
\end{equation}
and also
\begin{equation}
\|\ve^{n,3}\|_{1} \leq C_{\lambda}\,.
\label{eq4126}
\end{equation}
We consider now $e^{n,3}$. Since
\[
F^{n,2} - F_{h}^{n,2} = \ve_{x}^{n,2} + \tfrac{3}{2}(W^{n,2}W_{x}^{n,2} - U_{h}^{n,2}U_{hx}^{n,2})
+ \tfrac{1}{2}(V^{n,2}V_{x}^{n,2} - H_{h}^{n,2}H_{hx}^{n,2})\,,
\]
\begin{align*}
W^{n,2}W_{x}^{n,2} - U_{h}^{n,2}U_{hx}^{n,2} & = W^{n,2}(W_{x}^{n,2} - U_{hx}^{n,2}) -
(W_{x}^{n,2}  - U_{hx}^{n,2})(W^{n,2} - U_{h}^{n,2}) + W_{x}^{n,2}(W^{n,2} - U_{h}^{n,2})\\
& = (W^{n,2}e^{n,2})_{x} - e_{x}^{n,2}e^{n,2}\,,
\end{align*}
and
\[
V^{n,2}V_{x}^{n,2} - H_{h}^{n,2}H_{hx}^{n,2} = (V^{n,2}\ve^{n,2})_{x} - \ve_{x}^{n,2}\ve^{n,2}\,,
\]
we have by $(\ref{eq4117})$ for $n \leq n^{*}$
\[
\|F^{n,2} - F_{h}^{n,2}\|_{-1} \leq C_{\lambda}(\|\ve^{n,2}\| + \|e^{n,2}\|_{1})\,,
\]
and from $(\ref{eq4116})$ and $(\ref{eq4119})$, for $n \leq n^{*}$
\begin{equation}
\|F^{n,2} - F_{h}^{n,2}\|_{-1} \leq C_{\lambda}(\|\ve^{n}\| + \|e^{n}\|_{1})\,.
\label{eq4127}
\end{equation}
Therefore, we conclude by $(\ref{eq4100})$, for $n \leq n^{*}$
\begin{equation}
\|e^{n,3}\|_{1} \leq C_{\lambda}(\|\ve^{n}\| + \|e^{n}\|_{1})\,.
\label{eq4128}
\end{equation}
We have now reached the final stage of the estimation of the errors of the fully discrete scheme.
First, we estimate $e^{n+1}$:
Since
\[
F^{n,3} - F_{h}^{n,3} = \ve_{x}^{n,3} + \tfrac{3}{2}(W^{n,3}W_{x}^{n,3} - U_{h}^{n,3}U_{hx}^{n,3})
+ \tfrac{1}{2}(V^{n,3}V_{x}^{n,3} - H_{h}^{n,3}H_{hx}^{n,3})\,,
\]
\[
W^{n,3}W_{x}^{n,3} - U_{h}^{n,3}U_{hx}^{n,3} = (W^{n,3}e^{n,3})_{x} - e_{x}^{n,3}e^{n,3}\,,
\]
and
\[
V^{n,3}V_{x}^{n,3} - H_{h}^{n,3}H_{hx}^{n,3} =(V^{n,3}\ve^{n,3})_{x} - \ve_{x}^{n,3}\ve^{n,3}\,,
\]
we have from $(\ref{eq4126})$ for $n \leq n^{*}$,
\[
\|F^{n,3} - F_{h}^{n,3}\|_{-1} \leq C_{\lambda}(\|\ve^{n,3}\| + \|e^{n,3}\|_{1})\,,
\]
and therefore, from $(\ref{eq4125})$, $(\ref{eq4128})$
\[
\|F^{n,3} - F_{h}^{n,3}\|_{-1} \leq C_{\lambda}(\|\ve^{n}\| + \|e^{n}\|_{1})\,.
\]
Thus, by $(\ref{eq4102})$, $(\ref{eq4109})$, $(\ref{eq4118})$, $(\ref{eq4127})$, and above
inequality, we obtain for $n\leq n^{*}$
\begin{equation}
\|e^{n+1}\|_{1} \leq \|e^{n}\|_{1} + C_{\lambda}k(\|\ve^{n}\| + \|e^{n}\|_{1}) + \|\delta_{2}^{n}\|_{1}\,,
\label{eq4129}
\end{equation}
which is the required recursive inequality for $\|e^{n+1}\|_{1}$.
To obtain an analogous relation for $\|\ve^{n+1}\|$ requires more work. Observe that
\[
\Phi^{n,3} - \Phi_{h}^{n,3} = e^{n,3} + \tfrac{1}{2}(V^{n,3}W^{n,3} - H_{h}^{n,3}U_{h}^{n,3})\,,
\]
and
\begin{align*}
V^{n,3}W^{n,3} - H_{h}^{n,3}U_{h}^{n,3} & = V^{n,3}(W^{n,3} - U_{h}^{n,3})
-(W^{n,3} - U_{h}^{n,3})(V^{n,3} - H_{h}^{n,3}) + W^{n,3}(V^{n,3} - H_{h}^{n,3}) \\
& = V^{n,3}e^{n,3} - e^{n,3}\ve^{n,3} + W^{n,3}\ve^{n,3}\,.
\end{align*}
By $(\ref{eq480})$ and $(\ref{eq4124})$ we obtain
\begin{align*}
W^{n,3}\ve^{n,3} & = (U^{n} - kAF^{n,2})(\ve^{n} -\tfrac{k}{2}\rho_{1}^{n}
+ \tfrac{a_{2}k^{2}}{4}\rho_{2}^{n} - \tfrac{a_{1}a_{2}k^{3}}{8}\rho_{3}^{n}
- k\gamma_{3}^{n}) \\
& = U^{n}\ve^{n} - \tfrac{k}{2}U^{n}\rho_{1}^{n} + \tfrac{a_{2}k^{2}}{4}U^{n}\rho_{2}^{n}
- \tfrac{a_{1}a_{2}k^{3}}{8}U^{n}\rho_{3}^{n}
- kU^{n}\gamma_{3}^{n} - k(AF^{n,2})\ve^{n,3}\,.
\end{align*}
Thus
\[
\Phi^{n,3} - \Phi_{h}^{n,3} = \tfrac{1}{2}U^{n}\ve^{n} - \tfrac{k}{4}U^{n}\rho_{1}^{n}
+ \tfrac{a_{2}k^{2}}{8}U^{n}\rho_{2}^{n} - \tfrac{a_{1}a_{2}k^{3}}{16}U^{n}\rho_{3}^{n}
+ \widetilde{\gamma}_{4}^{n}\,,
\]
where
\begin{equation}
\widetilde{\gamma}_{4}^{n} = e^{n,3} + \tfrac{1}{2}V^{n,3}e^{n,3} - \tfrac{1}{2}e^{n,3}\ve^{n,3}
- \tfrac{k}{2}U^{n}\gamma_{3}^{n} - \tfrac{k}{2}(AF^{n,2})\ve^{n,3}\,.
\label{eq4130}
\end{equation}
Hence, by $(\ref{eq4104})$, $(\ref{eq4113})$, and $(\ref{eq4132})$ we obtain
\begin{equation}
P(\Phi^{n,3} - \Phi_{h}^{n,3})_{x} = \tfrac{1}{2}\rho_{1}^{n}  - \tfrac{k}{4}\rho_{2}^{n}
+ \tfrac{a_{2}k^{2}}{8}\rho_{3}^{n} -\tfrac{a_{1}a_{2}k^{3}}{16}\rho_{4}^{n}
+ \gamma_{4}^{n}\,,
\label{eq4131}
\end{equation}
where
\begin{equation}
\gamma_{4}^{n} = P(\widetilde{\gamma}_{4}^{n})_{x}\,, \qquad \quad \rho_{4}^{n}=P(U^{n}\rho_{3}^{n})_{x}\,.
\label{eq4132}
\end{equation}
By $(\ref{eq4130})$ and $(\ref{eq4126})$ we obtain for $n\leq n^{*}$
\[
\|\widetilde{\gamma}_{4}^{n}\|_{1} \leq C_{\lambda}\|e^{n,3}\|_{1} + Ck\|\gamma_{3}^{n}\|_{1}
+ C_{\lambda}k\|\ve^{n,3}\|_{1}\,,
\]
from which, using $(\ref{eq4128})$, $(\ref{eq4123})$ and $(\ref{eq4125})$ we see that for $n\leq n^{*}$
\[
\|\widetilde{\gamma}_{4}^{n}\|_{1} \leq C_{\lambda}(\|\ve^{n}\| + \|e^{n}\|_{1})\,,
\]
and
\begin{equation}
\|\gamma_{4}^{n}\| \leq C_{\lambda}(\|\ve^{n}\| + \|e^{n}\|_{1})\,.
\label{eq4133}
\end{equation}
\par
From $(\ref{eq4103})$ and $(\ref{eq4112})$ we obtain
\begin{align*}
b_{1}P(\Phi^{n} - \Phi_{h}^{n})_{x} + b_{2}P(\Phi^{n,1}-\Phi_{h}^{n,1})_{x} &=
\tfrac{1}{2}(b_{1}+b_{2})\rho_{1}^{n} - \tfrac{a_{1}b_{2}k}{4}\rho_{2}^{n} + b_{1}\gamma_{1}
+ b_{2}\gamma_{2}\\
& = \tfrac{1}{4}\rho_{1}^{n} - \tfrac{k}{24}\rho_{2}^{n} + \tfrac{1}{6}\gamma_{1}^{n}
+ \tfrac{1}{3}\gamma_{2}^{n}\,.
\end{align*}
From $(\ref{eq4121})$ and $(\ref{eq4131})$ we see that
\begin{align*}
b_{3}P(\Phi^{n,2} - \Phi_{h}^{n,2})_{x} + b_{4}P(\Phi^{n,3}-\Phi_{h}^{n,3})_{x} &=
\tfrac{1}{2}(b_{3}+b_{4})\rho_{1}^{n} - \tfrac{(b_{3}a_{2}+b_{4})k}{4}\rho_{2}^{n}
+ \tfrac{(b_{3}a_{1}a_{2} + b_{4}a_{2})k^{2}}{8}\rho_{3}^{n} \\
& \qquad - \tfrac{b_{4}a_{1}a_{2}k^{3}}{16}\rho_{4}^{n}
+ b_{3}\gamma_{3}^{n} + b_{4}\gamma_{4}^{n}\\
&= \tfrac{1}{4}\rho_{1}^{n} - \tfrac{k}{12}\rho_{2}^{n} + \tfrac{k^{2}}{48}\rho_{3}^{n}
-\tfrac{k^{3}}{24\cdot 16}\rho_{4}^{n} + \tfrac{1}{3}\gamma_{3}^{n} + \tfrac{1}{6}\gamma_{4}^{n}\,.
\end{align*}
Therefore, in view of $(\ref{eq4101})$, we have
\begin{equation}
\ve^{n+1}=\ve^{n} - \tfrac{k}{2}\rho_{1}^{n} + \tfrac{k^{2}}{8}\rho_{2}^{n}
- \tfrac{k^{3}}{48}\rho_{3}^{n} +\tfrac{k^{4}}{24\cdot 16}\rho_{4}^{n} - k\gamma_{5}^{n}
+ \delta_{1}^{n}\,,
\label{eq4134}
\end{equation}
where
\[
\gamma_{5}^{n} = \tfrac{1}{6}(\gamma_{1}^{n} + \gamma_{4}^{n}) +
\tfrac{1}{3}(\gamma_{2}^{n}+\gamma_{3}^{n})\,,
\]
satisfies, in view of $(\ref{eq4106})$, $(\ref{eq4114})$, $(\ref{eq4123})$ and $(\ref{eq4133})$,
the inequality
\begin{equation}
\|\gamma_{5}^{n}\| \leq C_{\lambda}(\|\ve^{n}\| + \|e^{n}\|_{1})\,,
\label{eq4135}
\end{equation}
for $n\leq n^{*}$. We begin now the estimation of
\begin{equation}
\sigma^{n}:=\ve^{n} - \tfrac{k}{2}\rho_{1}^{n} + \tfrac{k^{2}}{8}\rho_{2}^{n}
- \tfrac{k^{3}}{48}\rho_{3}^{n} +\tfrac{k^{4}}{24\cdot 16}\rho_{4}^{n}\,.
\label{eq4136}
\end{equation}
We have
\begin{align*}
\|\sigma^{n}\|^{2} & = \|\ve^{n}\|^{2} - k(\ve^{n},\rho_{1}^{n}) +
\tfrac{k^{2}}{4}\bigl[\|\rho_{1}^{n}\|^{2} + (\ve^{n},\rho_{2}^{n})\bigr]
-\tfrac{k^{3}}{24}\bigl[(\ve^{n},\rho_{3}^{n}) + 3(\rho_{1}^{n},\rho_{2}^{n})\bigr] \\
&\quad + \tfrac{k^{4}}{64}\bigl[\|\rho_{2}^{n}\|^{2} + \tfrac{1}{3}(\ve^{n},\rho_{4}^{n})
+\tfrac{4}{3}(\rho_{1}^{n},\rho_{3}^{n})\bigr]
-\tfrac{k^{5}}{24\cdot 16}\bigl[(\rho_{1}^{n},\rho_{4}^{n}) + 2(\rho_{2}^{n},\rho_{3}^{n})\bigr]\\
&\quad + \tfrac{k^{6}}{48^{2}}\bigl[\|\rho_{3}^{n}\|^{2} + \tfrac{3}{2}(\rho_{2}^{n},\rho_{4}^{n})\bigr]
-\tfrac{k^{7}}{24^{2}\cdot 16}(\rho_{3}^{n},\rho_{4}^{n}) +
\tfrac{k^{8}}{24^{2}\cdot 16^{2}}\|\rho_{4}^{n}\|^{2}\,,
\end{align*}
or
\begin{equation}
\|\sigma^{n}\|^{2} =\|\ve^{n}\|^{2} - k f_{1}^{n} + \tfrac{k^{2}}{4}f_{2}^{n}
-\tfrac{k^{3}}{24}f_{3}^{n} + \tfrac{k^{4}}{64}f_{4}^{n}
-\tfrac{k^{5}}{24\cdot 16}f_{5}^{n} + \tfrac{k^{6}}{48^{2}}f_{6}^{n}
-\tfrac{k^{7}}{24^{2}\cdot 16}f_{7}^{n} + \tfrac{k^{8}}{24^{2}\cdot 16^{2}}\|\rho_{4}^{n}\|^{2}\,,
\label{eq4137}
\end{equation}
where the numbers $f_{1}^{n},\dots,f_{7}^{n}$ have been defined in the obvious manner.
By $(\ref{eq4104})$ we have
\[
f_{1}^{n} = (\ve^{n},\rho_{1}^{n}) = -(U^{n}\ve_{x}^{n},\ve^{n})
=\tfrac{1}{2}(U_{x}^{n}\ve^{n},\ve^{n})\,,
\]
and therefore
\begin{equation}
|f_{1}^{n}| \leq C \|\ve^{n}\|^{2}\,.
\label{eq4138}
\end{equation}
By $(\ref{eq4113})$ and $(\ref{eq4104})$
\[
(\ve^{n},\rho_{2}^{n}) = - (U^{n}\ve_{x}^{n},\rho_{1}^{n})=-((U^{n}\ve^{n})_{x},\rho_{1}^{n})
+ (U_{x}^{n}\ve^{n},\rho_{1}^{n}) = - \|\rho_{1}^{n}\|^{2} + (U_{x}^{n}\ve^{n},\rho_{1}^{n})\,.
\]
Since
\[
f_{2}^{n} = \|\rho_{1}^{n}\|^{2} + (\ve^{n},\rho_{2}^{n})\,,
\]
we conclude that
\begin{equation}
f_{2}^{n} =  (U_{x}^{n}\ve^{n},\rho_{1}^{n}) \leq \tfrac{C}{h}\|\ve^{n}\|^{2}\,.
\label{eq4139}
\end{equation}
Now
\[
(\rho_{1}^{n},\rho_{2}^{n}) = \tfrac{1}{2}(U_{x}^{n}\rho_{1}^{n},\rho_{1}^{n})\,.
\]
In addition, using $(\ref{eq4122})$, $(\ref{eq4104})$, and $(\ref{eq4113})$ we have
\[
(\ve^{n},\rho_{3}^{n}) = -(U^{n}\ve_{x}^{n},\rho_{2}^{n}) = - (\rho_{1}^{n},\rho_{2}^{n})
+ (U_{x}^{n}\ve^{n},\rho_{2}^{n}) = -\tfrac{1}{2}(U_{x}^{n}\rho_{1}^{n},\rho_{1}^{n})
+ (U_{x}^{n}\ve^{n},\rho_{2}^{n})\,.
\]
Hence, since
\[
f_{3}^{n} = (\ve^{n},\rho_{3}^{n}) + 3(\rho_{1}^{n},\rho_{2}^{n})\,,
\]
we have
\[
f_{3}^{n} = (U_{x}^{n}\rho_{1}^{n},\rho_{1}^{n}) + (U_{x}^{n}\ve^{n},\rho_{2}^{n})\,,
\]
from which
\begin{equation}
|f_{3}^{n}| \leq \tfrac{C}{h^{2}}\|\ve^{n}\|^{2}\,.
\label{eq4140}
\end{equation}
Now
\[
(\rho_{1}^{n},\rho_{3}^{n}) = -(U^{n}\rho_{1x}^{n},\rho_{2}^{n})=
-((U^{n}\rho_{1}^{n})_{x},\rho_{2}^{n}) + (U_{x}^{n}\rho_{1}^{n},\rho_{2}^{n})
= - \|\rho_{2}^{n}\|^{2} + (U_{x}^{n}\rho_{1}^{n},\rho_{2}^{n})\,,
\]
and by $(\ref{eq4132})$
\begin{align*}
(\ve^{n},\rho_{4}^{n}) & = - (U^{n}\ve_{x}^{n},\rho_{3}^{n}) = -((U^{n}\ve^{n})_{x},\rho_{3}^{n})
+ (U_{x}^{n}\ve^{n},\rho_{3}^{n}) = - (\rho_{1}^{n},\rho_{3}^{n})
+ (U_{x}^{n}\ve^{n},\rho_{3}^{n})\\
& = \|\rho_{2}^{n}\|^{2} - (U_{x}^{n}\rho_{1}^{n},\rho_{2}^{n}) + (U_{x}^{n}\ve^{n},\rho_{3}^{n})\,.
\end{align*}
So, since
\[
f_{4}^{n} = \|\rho_{2}^{n}\|^{2} + \tfrac{1}{3}(\ve^{n},\rho_{4}^{n}) +
\tfrac{4}{3}(\rho_{1}^{n},\rho_{3}^{n})\,,
\]
we have
\[
f_{4}^{n} = (U_{x}^{n}\rho_{1}^{n},\rho_{2}^{n}) + \tfrac{1}{3}(U_{x}^{n}\ve^{n},\rho_{3}^{n})\,,
\]
and thus
\begin{equation}
|f_{4}^{n}| \leq \tfrac{C}{h^{3}}\|\ve^{n}\|^{2}\,.
\label{eq4141}
\end{equation}
We also have
\[
(\rho_{2}^{n},\rho_{3}^{n}) = \tfrac{1}{2}(U_{x}^{n}\rho_{2}^{n},\rho_{2}^{n})\,,
\]
and
\begin{align*}
(\rho_{1}^{n},\rho_{4}^{n}) & = - (U^{n}\rho_{1x}^{n},\rho_{3}^{n})=
-((U^{n}\rho_{1}^{n})_{x},\rho_{3}^{n}) + (U_{x}^{n}\rho_{1}^{n},\rho_{3}^{n})
= -(\rho_{2}^{n},\rho_{3}^{n}) + (U_{x}^{n}\rho_{1}^{n},\rho_{3}^{n}) \\
& = -\tfrac{1}{2}(U_{x}^{n}\rho_{2}^{n},\rho_{2}^{n}) + (U_{x}^{n}\rho_{1}^{n},\rho_{3}^{n})\,.
\end{align*}
Since
\[
f_{5}^{n} = (\rho_{1}^{n},\rho_{4}^{n}) + 2(\rho_{2}^{n},\rho_{3}^{n})\,,
\]
we have
\[
f_{5}^{n} = \tfrac{1}{2}(U_{x}^{n}\rho_{2}^{n},\rho_{2}^{n}) + (U_{x}^{n}\rho_{1}^{n},\rho_{3}^{n})\,,
\]
and therefore
\begin{equation}
|f_{5}^{n}| \leq \tfrac{C}{h^{4}}\|\ve^{n}\|^{2}\,.
\label{eq4142}
\end{equation}
Similarly,
\[
(\rho_{2}^{n},\rho_{4}^{n}) = -(U^{n}\rho_{2x}^{n},\rho_{3}^{n}) =
-((U^{n}\rho_{2}^{n})_{x},\rho_{3}^{n}) + (U_{x}^{n}\rho_{2}^{n},\rho_{3}^{n}) =
-\|\rho_{3}^{n}\|^{2} + (U_{x}^{n}\rho_{2}^{n},\rho_{3}^{n})\,.
\]
Since
\[
f_{6}^{n} = \|\rho_{3}^{n}\|^{2} + \tfrac{3}{2}(\rho_{2}^{n},\rho_{4}^{n})\,,
\]
we have
\begin{equation}
f_{6}^{n} = \tfrac{3}{2}(U_{x}^{n}\rho_{2}^{n},\rho_{3}^{n}) - \tfrac{1}{2}\|\rho_{3}^{n}\|^{2} \leq
\tfrac{C}{h^{5}}\|\ve^{n}\|^{2} - \tfrac{1}{2}\|\rho_{3}^{n}\|^{2}\,.
\label{eq4143}
\end{equation}
Finally,
\[
f_{7}^{n} = (\rho_{3}^{n},\rho_{4}^{n}) = \tfrac{1}{2}(U_{x}^{n}\rho_{3}^{n},\rho_{3}^{n})\,,
\]
and thus
\begin{equation}
|f_{7}^{n}| \leq \tfrac{C}{h^{6}} \|\ve^{n}\|^{2}\,.
\label{eq4144}
\end{equation}
Now, from the inequalities $(\ref{eq4138})$-$(\ref{eq4144})$ and the fact that
\[
\|\rho_{4}^{n}\|\leq \tfrac{\widetilde{C}}{h}\|\rho_{3}^{n}\|\,,
\]
where $\widetilde{C}$ is a constant independent of $k$ and $h$, we obtain in $(\ref{eq4137})$
\[
\|\sigma^{n}\|^{2} \leq (1+C_{\lambda}k)\|\ve^{n}\|^{2} + \tfrac{k^{6}}{2\cdot 48^{2}}
(\tfrac{\widetilde{C}\lambda^{2}}{32} - 1)\|\rho_{3}^{n}\|^{2}\,.
\]
Thus, for $\lambda \leq \lambda_{0}=\sqrt{32/\widetilde{C}}$ we will have the following estimate of
$\|\sigma^{n}\|$
\[
\|\sigma^{n}\| \leq (1+C_{\lambda}k)\|\ve^{n}\|\,.
\]
Therefore, from $(\ref{eq4134})$, $(\ref{eq4136})$ and $(\ref{eq4135})$, we finally obtain
\[
\|\ve^{n+1}\| \leq \|\ve^{n}\| + C_{\lambda}k(\|\ve^{n}\| + \|e^{n}\|_{1}) + \|\delta_{1}^{n}\|\,,
\]
for $n=0,1,\dots,n^{*}$. From this inequality, $(\ref{eq4129})$ and Lemma  $4.5$ we obtain
for $n\leq n^{*}$
\[
\|\ve^{n+1}\| + \|e^{n+1}\|_{1} \leq (1+C_{\lambda}k)(\|\ve^{n}\| + \|e^{n}\|_{1})
+Ck(k^{4} + h^{3.5}\sqrt{\ln 1/h})\,.
\]
Therefore, by Gronwall's Lemma, and $(\ref{eq469})$ we see that
\[
\|\ve^{n}\| + \|e^{n}\|_{1} \leq C_{\lambda}(k^{4} + h^{3.5}\sqrt{\ln 1/h})\,,
\]
for $n=0,1,\dots,n^{*}$. Hence, if $k$ and $h$ are sufficiently small, the maximality of $n^{*}$
contradicted and we may take $n^{*}=M-1$. The conclusion of the proposition follows.
\end{proof}
\begin{remark} Arguing as in Remark $4.1$ we have again
$\|U_{h}^{n} - u(t^{n})\|_{\infty} = O(k^{4} + h^{3.5}\sqrt{\ln 1/h})$.
\end{remark}
\begin{remark}
The analogous results of Lemma $4.5$ of Proposition $4.3$ hold for the system $(\ref{cb})$ as well.
\end{remark}
%
\section{A non-standard Galerkin method}
In this section we analyze a semidiscrete Galerkin approximation of $(\ref{cb})$ and $(\ref{scb})$
which is non-standard in the sense that it approximates $\eta$ by piecewise linear, continuous
functions and $u$ by piecewise quadratic, $C_{0}^{1}$ functions, defined on the same mesh. (The method
may be generalized using the analogous pairs of higher-order splines but here we confine ourselves to the
low-order case.) In the analysis, we consider only the uniform mesh case, where a series of
superaccuracy results for the error of the $L^{2}$-projection onto the space of piecewise linear
continuous functions affords proving optimal-order error estimates.
\subsection{Preliminaries and the basic results} Using the notation of paragraph $2.2$,
in addition to $S_{h}^{2}$ we let
\[
S_{h,0}^{3}:=\{\phi \in C^{1} : \phi\big|_{[x_{j},x_{j+1}]} \in \mathbb{P}_{2}\,,\,\, 1\leq j\leq N\,,\,\,
\phi(0)=\phi(1)=0\}\,.
\]
In this section we shall denote by $a(\cdot,\cdot)$ the usual bilinear form
$a(\psi,\chi)=(\psi,\chi) + \tfrac{1}{3}(\psi',\chi')$ defined for $\psi$, $\chi \in S_{h,0}^{3}$ and let
$R_{h} : H_{0}^{1} \to S_{h,0}^{3}$ be the elliptic projection operator defined for $v\in H_{0}^{1}$
by $((R_{h}v)',\chi') = (v',\chi')$\, $\forall \chi \in S_{h,0}^{3}$. The system $(\ref{scb})$ is
discretized as follows: Seek $\eta_{h} : [0,T]\to S_{h}^{2}$, $u_{h} : [0,T]\to S_{h,0}^{3}$ such that
for $0\leq t\leq T$
\begin{align}
(\eta_{ht},\phi) + (u_{hx},\phi) + \tfrac{1}{2}((\eta_{h}u_{h})_{x},\phi) & = 0\,, \quad
\forall \phi \in S_{h}^{2} \label{eq51}\\
a(u_{ht},\chi) + (\eta_{hx},\chi) + \tfrac{3}{2}(u_{h}u_{hx},\chi)
+ \tfrac{1}{2}(\eta_{h}\eta_{hx},\chi) & = 0\,, \quad \forall \chi \in S_{h,0}^{3}
\label{eq52}
\end{align}
and
\begin{equation}
\eta_{h}(0) = P\eta_{0}\,, \qquad u_{h}(0) = R_{h}u_{0}\,,
\label{eq53}
\end{equation}
where $P : L^{2}\to S_{h}^{2}$ is the $L^{2}$-projection operator onto $S_{h}^{2}$. (We will treat
only the case of $(\ref{scb})$, that of $(\ref{cb})$ being similar.) The o.d.e. initial-value problem
represented by $(\ref{eq51})$-$(\ref{eq53})$ has a unique solution for any $T < \infty$. In fact,
taking $\phi=\eta_{h}$ in $(\ref{eq51})$ and $\chi=u_{h}$ in $(\ref{eq52})$ and adding the resulting
equations, we obtain for $t\geq 0$
\begin{equation}
\|\eta_{h}(t)\|^{2} + \|u_{h}(t)\|_{1}^{2} = \|\eta_{h}(0)\|^{2} + \|u_{h}(0)\|_{1}^{2}\,,
\label{eq54}
\end{equation}
and the desired conclusion follows by standard o.d.e. theory. \par
The plan of the section is as follows: We will start by stating the preliminary Lemmas $5.1$-$5.4$
with the aid of which we will prove our main error estimate, Theorem $5.1$. The proofs of the Lemmas
depend on some superaccuracy estimates of the $L^{2}$ projection error $v-Pv$ for smooth
functions $v$; these are proved in \S$5.2$. (Lemmas $5.5$-$5.10$). Recall first the following standard
estimates, \cite{ddw},\cite{w}. Suppose that $\eta \in C^{2}$ and $u \in C_{0}^{3}$. Then, if
$\rho=\eta - P\eta$, $\sigma=u-R_{h}u$, we have
\[
\|\rho\| + h\|\rho\|_{1}\leq Ch^{2}\,, \quad \|\sigma\|+h\|\sigma\|_{1}\leq Ch^{3}\,,
\quad \|\sigma\|_{\infty} + h\|\rho\|_{\infty}\leq Ch^{3}\,.
\]
\begin{lemma}
Let $\eta \in C^{4}$, $v \in C_{0}^{2}$. If $\rho=\eta - P\eta$ and $\zeta \in S_{h}^{2}$ is such that
\[
(\zeta,\phi) = ((v\rho)',\phi) \quad \forall \phi \in S_{h}^{2}\,,
\]
then $\|\zeta\| \leq Ch^{3}$.
\end{lemma}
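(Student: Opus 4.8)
The plan is to reduce the claim, exactly as in the proof of Lemma~2.2, to a bound on the load vector of $\zeta$, and then to extract one extra power of $h$ from the special structure of the $L^2$-projection error $\rho=\eta-P\eta$ on the uniform mesh. Writing $\zeta=\sum_{j=1}^{N+1}\gamma_j\phi_j$ with $\gamma=G^{-1}b$ and $b_i:=((v\rho)',\phi_i)=-(v\rho,\phi_i')$, Lemma~2.1(ii) gives $\|\zeta\|\le(c_1h)^{-1/2}|b|$, so it suffices to prove $|b|=O(h^{7/2})$; since there are $N+1\sim h^{-1}$ components, this will follow once I establish the uniform bound $b_i=O(h^4)$ for every $i$.

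Next I localize. Because $\phi_i'=+1/h$ on $(x_{i-1},x_i)$ and $-1/h$ on $(x_i,x_{i+1})$, for an interior index $b_i=h^{-1}(m_i-m_{i-1})$ with $m_j:=\int_{x_j}^{x_{j+1}}v\rho\,dx$, while for $i=1,N+1$ the formula collapses to $\pm h^{-1}$ times a single boundary-element integral. On each element I use the decomposition $\rho|_{[x_j,x_{j+1}]}=\varepsilon+\ell_j$, where $\varepsilon:=\eta-I_h\eta$ and $\ell_j$ is the linear function interpolating the nodal defects $\rho(x_j),\rho(x_{j+1})$ (this is legitimate since $P\eta$ is affine on the element, so $I_h\eta-P\eta$ restricted there is exactly that linear interpolant). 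For $\varepsilon$ I invoke the explicit form of Lemma~2.1(iii), and for the nodal values I use the superconvergence expansion $\rho(x_j)=\tfrac{h^2}{12}\eta''(x_j)+O(h^3)$ of the $L^2$-projection error.

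The key point is a cancellation at the leading order. Substituting the two pieces into $m_j$ and Taylor-expanding $v$ about $x_j$, the $\varepsilon$-contribution is $-\tfrac{h^3}{12}v(x_j)\eta''(x_j)+O(h^4)$ while the $\ell_j$-contribution is $+\tfrac{h^3}{12}v(x_j)\eta''(x_j)+O(h^4)$, so the $O(h^3)$ terms cancel and $m_j=h^4F(x_j)+O(h^5)$ with $F$ a fixed expression in $v,v',\eta'',\eta'''$, smooth because $\eta\in C^4$ and $v\in C^2$. For interior $i$ this yields $b_i=h^3\bigl(F(x_i)-F(x_{i-1})\bigr)+O(h^4)=O(h^4)$ by the Lipschitz continuity of $F$. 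For $i=1,N+1$ the hypothesis $v\in C_0^2$, i.e. $v(0)=v(1)=0$, forces $v=O(h)$ on the boundary element, so the \emph{same} $\varepsilon$-versus-$\ell_j$ cancellation now occurs one order lower and gives $\int v\rho=O(h^5)$ directly, whence $b_i=O(h^4)$. Summing, $|b|^2=(N+1)\,O(h^8)=O(h^7)$, and $\|\zeta\|\le(c_1h)^{-1/2}O(h^{7/2})=O(h^3)$.

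The substantive obstacle is the superconvergence expansion of $\rho$ used above: both the refined nodal formula for $\rho(x_j)$ and, more importantly, the fact that the residual $O(h^4)$ coefficient $F$ is a \emph{smooth} function of $x_j$, so that consecutive elements differ by $O(h^5)$. Since $P$ is nonlocal these are not elementwise facts; they must be obtained by inverting the tridiagonal mass-matrix identity $\tfrac{h}{6}\bigl(d_{i-1}+4d_i+d_{i+1}\bigr)=(\varepsilon,\phi_i)$ (where $d_j=-\rho(x_j)$) and using the exponential off-diagonal decay of the inverse (Demko's estimate) to propagate the smooth leading behavior $(\varepsilon,\phi_i)=-\tfrac{h^3}{12}\eta''(x_i)+O(h^4)$ into a smooth expansion of the nodal defects. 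These are precisely the superaccuracy estimates of the $L^2$-projection error developed in the next subsection; granting them, everything above is routine Taylor bookkeeping. Note that, unlike Lemma~2.2 where the interpolation error alone produced only $O(h^{3/2})$, here it is the cancellation between the interpolation-error term and the $L^2$-projection nodal-defect term that supplies the gain to $O(h^3)$.
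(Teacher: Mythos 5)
Your overall architecture is the same as the paper's: reduce to a bound on the load vector $b_i=-(v\rho,\phi_i')$ via Lemma 2.1(ii), exploit uniform-mesh superaccuracy of the $L^2$-projection error (ultimately via the mass-matrix inversion and Demko's exponential decay), and use $v(0)=v(1)=0$ to rescue the boundary elements. The paper organizes the superaccuracy differently — it proves the \emph{unweighted} elementwise statements $\int_{I_j}\rho\,dx=O(h^4)$ for all $j$ (Lemma 5.6), $\int_{I_j}\rho\,dx=O(h^5)$ for $j$ at distance $\gtrsim h\ln(1/h)$ from the boundary (Lemma 5.7, via Demko), and $\rho'(x_{j+1/2})=O(h^2)$ (Lemma 5.5), and then inserts the weight $v$ by Taylor expansion about the midpoint (Lemmas 5.8, 5.9) — whereas you insert the weight first and exhibit the cancellation between the interpolation-error piece $\eta-I_h\eta$ and the nodal-defect piece $I_h\eta-P\eta$. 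These are the same cancellation in different clothing, so I would not call this a genuinely different route.

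There is, however, a real gap in your boundary treatment. You claim the uniform bound $b_i=O(h^4)$ for \emph{every} $i$ and treat only $i=1$ and $i=N+1$ as exceptional. But the nodal expansion $\rho(x_j)=\tfrac{h^2}{12}\eta''(x_j)+O(h^4)$ with a smooth coefficient — which is what your ``smooth $F$'' and the difference bound $F(x_i)-F(x_{i-1})=O(h)$ require — fails not just on the two extreme elements but on the entire band of nodes within distance $O(h\ln\tfrac1h)$ of the boundary: the mass-matrix inverse carries boundary corrections of size $O(h^2)$ that decay only like $(2-\sqrt3)^{\,j}$ and are not smooth functions of $x_j$. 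On that band your cancellation at order $h^3v(x_j)$ is not available, and the best one gets (using $\int_{I_j}\rho\,dx=O(h^4)$ from Lemma 5.6 together with $|v|\le C\,jh$ there) is $|b_i|=O(ih^4)$, i.e.\ up to $O(h^4\ln\tfrac1h)$ — not $O(h^4)$ uniformly. The estimate is saved only because this band contains merely $O(\ln\tfrac1h)$ indices, so its contribution to $|b|^2$ is $O(h^8(\ln\tfrac1h)^3)=o(h^7)$; this is exactly the accounting the paper performs in Lemma 5.9. Note also that the crude pointwise bound $\rho=O(h^2)$ alone would give $|b_i|=O(ih^3)$ in the band and destroy the result, so the elementwise integral estimate $\int_{I_j}\rho\,dx=O(h^4)$ is indispensable there and must be stated and used explicitly; as written, your argument neither covers these indices nor acknowledges that the final bound on $|b|$ is obtained by an $\ell_2$ assembly over a logarithmic exceptional set rather than by a uniform componentwise bound.
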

\begin{proof} The result follows from Lemma $5.9$ and Lemma $2.1(ii)$.
\end{proof}
\begin{lemma} Let $\eta \in C^{2}$ and $v \in C_{0}^{1}$. If $\rho=\eta - P\eta$
and $\zeta \in S_{h}^{2}$ is such that
\[
(\zeta,\phi) = (v\rho,\phi) \quad \forall \phi \in S_{h}^{2}\,,
\]
then $\|\zeta\| \leq Ch^{3}$.
\end{lemma}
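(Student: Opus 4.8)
The plan is to recognize $\zeta$ as the $L^{2}$-projection $P(v\rho)$ onto $S_{h}^{2}$ and to control it through its coordinates in the nodal basis $\{\phi_{i}\}_{i=1}^{N+1}$. Writing $\zeta=\sum_{j}\gamma_{j}\phi_{j}$, the defining relation $(\zeta,\phi_{i})=(v\rho,\phi_{i})$ becomes $G\gamma=b$ with $b_{i}=(v\rho,\phi_{i})$, so by Lemma $2.1(ii)$ it suffices to show $|b|=O(h^{7/2})$. This in turn follows from the nodewise bound $|b_{i}|\leq Ch^{4}$ for every $i$, since then $|b|^{2}=\sum_{i=1}^{N+1}b_{i}^{2}\leq (N+1)Ch^{8}\leq Ch^{7}$, using $N+1\leq C/h$.

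The one-order gain that improves the naive estimate $|b_{i}|\leq\|v\|_{\infty}\|\rho\|_{\infty}\int_{0}^{1}\phi_{i}\,dx=O(h^{3})$ to $O(h^{4})$ comes entirely from the Galerkin orthogonality of the projection error. Since $\rho=\eta-P\eta$ satisfies $(\rho,\chi)=0$ for all $\chi\in S_{h}^{2}$, in particular $(\rho,\phi_{i})=0$, I may subtract the constant $v(x_{i})$ at no cost and write
\[
b_{i}=(v\rho,\phi_{i})=\bigl((v-v(x_{i}))\rho,\phi_{i}\bigr).
\]
On $\mathrm{supp}\,\phi_{i}=[x_{i-1},x_{i+1}]$ one has $|v(x)-v(x_{i})|\leq\|v'\|_{\infty}|x-x_{i}|\leq\|v'\|_{\infty}h$, while the preliminary estimates of \S5.1 give $\|\rho\|_{\infty}\leq Ch^{2}$ and $\int_{0}^{1}\phi_{i}\,dx\leq Ch$. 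Combining these three facts yields $|b_{i}|\leq\|v'\|_{\infty}h\cdot\|\rho\|_{\infty}\cdot\int_{0}^{1}\phi_{i}\,dx\leq Ch^{4}$, uniformly in $i$; the boundary half-hats $\phi_{1}$ and $\phi_{N+1}$ are handled identically.

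Assembling these bounds gives $|b|\leq Ch^{7/2}$, and Lemma $2.1(ii)$ then yields $\|\zeta\|\leq(c_{1}h)^{-1/2}|b|\leq Ch^{3}$, as claimed. I do not expect a genuine obstacle here: the argument is a direct node-by-node cancellation, a simpler analogue of the interpolation-error estimate of Lemma $2.2$, with the crucial structural input being the $L^{2}$-orthogonality of $\rho$ rather than any fine cancellation across adjacent cells. I note that the hypothesis $v\in C_{0}^{1}$ enters only through the bound on $\|v'\|_{\infty}$; the vanishing of $v$ at the endpoints is not actually needed. One could equally replace the $L^{\infty}$ bound on $\rho$ by the $L^{2}$ bound $\|\rho\|\leq Ch^{2}$, estimating $\int|\rho|\phi_{i}$ by Cauchy--Schwarz on each support and summing over the boundedly overlapping supports, to reach the same conclusion under the weaker regularity $\eta\in H^{2}$.
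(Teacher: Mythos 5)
Your proof is correct and follows essentially the same route as the paper, which delegates the statement to Lemma $5.10$ (subtract $v(x_{i})$ using the $L^{2}$-orthogonality $(\rho,\phi_{i})=0$, bound $|v(x)-v(x_{i})|\leq Ch$ on $\mathrm{supp}\,\phi_{i}$ together with $\|\rho\|_{\infty}\leq Ch^{2}$ to get $b_{i}=O(h^{4})$, hence $|b|=O(h^{3.5})$) and then invokes Lemma $2.1(ii)$. Your side remark that the vanishing of $v$ at the endpoints is not needed is also consistent with the paper, since Lemma $5.10$ there assumes only $v\in C^{1}$.
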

\begin{proof}
The result follows from Lemma $5.10$ and Lemma $2.1(ii)$.
\end{proof}
\begin{lemma} Let $\eta \in C^{2}$ and $u \in C_{0}^{3}$. If $\rho=\eta - P\eta$,
$\sigma =u-R_{h}u$, then
\begin{itemize}
\item[(i)] $(\rho',\psi) = 0 \quad \forall \psi \in S_{h,0}^{3}$
\item[(ii)] $(\sigma',\phi) = 0 \quad \forall \phi \in S_{h}^{2}$\,.
\end{itemize}
\end{lemma}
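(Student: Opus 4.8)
The plan is to treat the two parts separately, in each case reducing the claim to the defining orthogonality of the relevant projection after an integration by parts. Both $\rho=\eta-P\eta$ and $\sigma=u-R_hu$ are continuous and piecewise smooth, hence belong to $H^1$, so integration by parts against a $C^1$ test function is legitimate and produces boundary terms that I will arrange to vanish.

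For part (i) the key observation is that differentiation carries $S_{h,0}^{3}$ into $S_h^{2}$: if $\psi\in S_{h,0}^{3}$ is a $C^1$ piecewise quadratic, then $\psi'$ is a continuous piecewise linear function, i.e. $\psi'\in S_h^{2}$. I would then use $\psi(0)=\psi(1)=0$ to write
\[
(\rho',\psi) = [\rho\psi]_0^1 - (\rho,\psi') = -(\rho,\psi'),
\]
and invoke the defining property of the $L^2$-projection, namely $(\rho,\chi)=(\eta-P\eta,\chi)=0$ for every $\chi\in S_h^{2}$; applying this with $\chi=\psi'\in S_h^{2}$ gives $(\rho',\psi)=0$. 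This part is essentially immediate once the inclusion $\psi'\in S_h^{2}$ is noted.

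Part (ii) is the genuinely substantive step, because the defining relation for $R_h$ controls $(\sigma',\chi')$ only for $\chi\in S_{h,0}^{3}$, whereas here the test function $\phi$ ranges over all of $S_h^{2}$, and a naive integration by parts leaves $-(\sigma,\phi')$ with $\phi'$ merely piecewise constant, which is not directly orthogonal to $\sigma$. Instead I would determine the range of the differentiation map $D:S_{h,0}^{3}\to S_h^{2}$. Since the only constant in $S_{h,0}^{3}$ is $0$, $D$ is injective, so its image has dimension $\dim S_{h,0}^{3}=N$, one less than $\dim S_h^{2}=N+1$; and since $\int_0^1\chi' = \chi(1)-\chi(0)=0$ for $\chi\in S_{h,0}^{3}$, this image lies in, hence (by the dimension count) equals, the mean-zero subspace $\{\phi\in S_h^{2}:\int_0^1\phi=0\}$. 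Concretely, given any $\phi\in S_h^{2}$ with $\int_0^1\phi=0$, the antiderivative $\chi(x):=\int_0^x\phi$ is a $C^1$ piecewise quadratic vanishing at both endpoints, so $\chi\in S_{h,0}^{3}$ with $\chi'=\phi$, and the defining relation yields $(\sigma',\phi)=(\sigma',\chi')=0$.

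It then remains to handle the mean of $\phi$. For general $\phi\in S_h^{2}$ I would write $\phi=(\phi-\bar\phi)+\bar\phi$ with $\bar\phi=\int_0^1\phi\,dx$, noting that the constant function $\bar\phi$ belongs to $S_h^{2}$ and that $\phi-\bar\phi$ has zero mean. The previous step kills the first summand, leaving $(\sigma',\phi)=\bar\phi\,(\sigma',1)=\bar\phi\,(\sigma(1)-\sigma(0))$. Finally, since $u\in C_0^{3}$ gives $u(0)=u(1)=0$ and $R_hu\in S_{h,0}^{3}$ gives $(R_hu)(0)=(R_hu)(1)=0$, we have $\sigma(0)=\sigma(1)=0$, so $(\sigma',1)=0$ and hence $(\sigma',\phi)=0$. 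The main obstacle is thus the range characterization of $D$ in part (ii); once that is in place, the constant-mode contribution vanishes precisely because $\sigma$ inherits the homogeneous boundary values of both $u$ and its projection.
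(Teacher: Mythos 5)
Your proof is correct and follows essentially the same route as the paper's: part (i) is the observation that $\psi'\in S_h^2$ plus the $L^2$-orthogonality of $\rho$, and part (ii) rests on antidifferentiating the mean-zero part of $\phi$ into $S_{h,0}^3$ (the paper does this basis function by basis function via $\psi_i(x)=\int_0^x\phi_i\,d\xi - x\int_0^1\phi_i\,d\xi$, you do it for general $\phi$ after subtracting the mean) and killing the constant mode via $(\sigma',1)=\sigma(1)-\sigma(0)=0$. The dimension count you include is a harmless extra, since your explicit antiderivative construction already suffices.
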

\begin{proof}
The identity $(i)$ follows from the observation that $(\rho',\psi) = (\rho,\psi') = 0$, since
$\psi \in S_{h,0}^{3}$ implies $\psi' \in S_{h}^{2}$.
In order to prove $(ii)$, we use an argument similar to that of Wahlbin, $[30$,p$.5]$. Let
$\{\phi_{i}\}_{i=1}^{N+1}$ be the usual hat function basis of $S_{h}^{2}$ associated with the
uniform mesh $x_{i} = (i-1)h$, $1\leq i\leq N+1$, i.e. with the property that
$\phi_{i}(x_{j})=\delta_{ij}$. Then, the function $\psi_{i}$ defined for $1\leq i\leq N+1$ on $[0,1]$
by $\psi_{i}(x)=\int_{0}^{x}\phi_{i}d\xi - x\int_{0}^{1}\phi_{i}d\xi$ belongs to $S_{h,0}^{3}$ and
satisfies $\psi_{i}' = \phi_{i}-\int_{0}^{1}\phi_{i}d\xi$. Since $(\sigma',1)=0$, it follows that
$(\sigma',\phi_{i})=(\sigma',\psi_{i}')=0$. Hence, $(\sigma',\phi)=0$ for any $\phi \in S_{h}^{2}$
and the proof of $(ii)$ is completed. (Since $Pu'=(R_{h}u)'$ as it may be easily established, $(ii)$
implies that $\sigma'=u'-Pu'$.)
\end{proof}
\begin{lemma} Let $\eta \in C^{1}$ and $u \in C_{0}^{3}$. If $\sigma=u-R_{h}u$ and $\zeta \in S_{h}^{2}$
is such that
\[
(\zeta,\phi) = ((\eta\sigma)',\phi) \quad \forall \phi \in S_{h}^{2}\,,
\]
then $\|\zeta\| \leq Ch^{3}$.
\end{lemma}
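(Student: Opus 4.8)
The plan is to read the defining relation as $\zeta = P\big[(\eta\sigma)'\big]$, so that the task is to show $\|P[(\eta\sigma)']\|\le Ch^3$, and to split by the product rule, $(\eta\sigma)' = \eta'\sigma + \eta\sigma'$, estimating the two resulting pieces $P[\eta'\sigma]$ and $P[\eta\sigma']$ separately.

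The first piece is routine. Since $\eta\in C^1$ we have $\eta'\in C^0\subset L^\infty$, and the standard estimate recalled before Lemma $5.1$ gives $\|\sigma\|\le Ch^3$; hence, by the $L^2$-stability of $P$, $\|P[\eta'\sigma]\|\le\|\eta'\sigma\|\le\|\eta'\|_\infty\|\sigma\|\le Ch^3$. No cancellation is exploited here, and this term already fixes the final rate at $O(h^3)$.

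The substance is the second piece, $P[\eta\sigma']$. Here the decisive step is Lemma $5.3$(ii), which identifies $\sigma' = u'-Pu'$; thus $\eta\sigma'=\eta(u'-Pu')$ is a smooth weight $\eta\in C^1$ multiplying the $L^2$-projection error of the $C^2$ function $u'$ (note $u\in C_0^3$ gives $u'\in C^2$). This is precisely the structure of Lemma $5.2$, except that the weight $\eta$ need not vanish at $x=0,1$. I would therefore write $P[\eta(u'-Pu')]=\sum_j\gamma_j\phi_j$ with $\gamma=G^{-1}b$ and $b_i=(\eta(u'-Pu'),\phi_i)=(u'-Pu',\,\eta\phi_i-P(\eta\phi_i))$, the last equality using the $L^2$-orthogonality of $u'-Pu'$ to $S_h^2$. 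By Lemma $2.1$(ii) it then suffices to prove $|b|=O(h^{7/2})$. At interior indices the uniform mesh produces the same cancellation that drives Lemmas $2.2$ and $5.1$--$5.2$; invoking the superaccuracy estimates for $v-Pv$ proved in \S$5.2$ gives $b_i=O(h^5)$ there, so the interior indices contribute only $O(Nh^{10})^{1/2}=O(h^{9/2})$ to $|b|$.

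The main obstacle is the handful of intervals adjacent to the endpoints, where neither $\eta$ nor $u'$ vanishes and the interior cancellation fails; there one obtains a weaker per-index bound. Because there are only $O(1)$ such indices, their total contribution to $|b|$ is at worst $O(h^{7/2})$, which is harmless and consistent with the rate already forced by the first piece. Adding the interior and boundary contributions yields $|b|=O(h^{7/2})$, whence $\|P[\eta(u'-Pu')]\|\le(c_1h)^{-1/2}|b|\le Ch^3$ by Lemma $2.1$(ii). Combined with $\|P[\eta'\sigma]\|\le Ch^3$, this gives $\|\zeta\|\le Ch^3$. The only genuinely technical input is the boundary bookkeeping of the projection-error superaccuracy, deferred to \S$5.2$; everything else is the now-familiar reduction through Lemma $2.1$(ii).
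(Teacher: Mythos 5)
Your overall skeleton matches the paper's: split $(\eta\sigma)'=\eta'\sigma+\eta\sigma'$, identify $\sigma'=u'-Pu'$ via Lemma $5.3$(ii), estimate the coefficients $b_i=(\eta(u'-Pu'),\phi_i)$, and finish with Lemma $2.1$(ii). Your treatment of the first piece is fine (and in fact cleaner than the paper's, which bounds $(\eta'\sigma,\phi_i)$ coefficientwise; your direct use of $\|P\|_{L^2\to L^2}\le 1$ with $\|\sigma\|\le Ch^3$ is perfectly adequate). The gap is in the second piece. First, your claimed interior bound $b_i=O(h^5)$ is not available at the stated smoothness: the $O(h^5)$ interior superaccuracy of $\int_{I_i}(w-Pw)\,dx$ (Lemma $5.7$) requires the projected function to be $C^4$, whereas here $w=u'$ is only $C^2$ since $u\in C_0^3$. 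Second, and more seriously, you never produce a per-index bound at the boundary indices; you only assert their total contribution is ``at worst $O(h^{7/2})$.'' The trivial bound $|b_i|\le\|\eta\|_\infty\|u'-Pu'\|_\infty\int_0^1\phi_i\,dx=O(h^3)$ is too weak even for $O(1)$ indices (it would only yield $\|\zeta\|=O(h^{5/2})$), and applying Cauchy--Schwarz globally to your identity $b_i=(u'-Pu',\eta\phi_i-P(\eta\phi_i))$ gives only $O(h^2)\cdot O(h^{3/2})=O(h^{7/2})$ per index, which summed over all $O(1/h)$ indices is again insufficient. So as written the argument does not close.

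The repair is exactly the paper's Lemma $5.10$, and it requires neither the interior/boundary case split nor any uniform-mesh cancellation: for every $i$, use the $L^2$-orthogonality $(u'-Pu',\chi)=0$ for $\chi\in S_h^2$ to write $b_i=\bigl((\eta-\eta(x_i))(u'-Pu'),\phi_i\bigr)$, and then bound in $L^\infty$--$L^1$ on the support of $\phi_i$: $|\eta-\eta(x_i)|=O(h)$ there, $\|u'-Pu'\|_\infty=O(h^2)$, and $\int_0^1\phi_i\,dx=O(h)$, giving the uniform bound $|b_i|\le Ch^4$ at every index, boundary ones included. Hence $|b|=O(h^{-1/2}\cdot h^4)=O(h^{3.5})$ and Lemma $2.1$(ii) yields $\|P[\eta\sigma']\|\le Ch^3$. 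The essential point your write-up misses is that the extra power of $h$ here comes purely from orthogonality plus the localization of $\phi_i$ (a local $L^\infty$ argument), not from adjacent-interval cancellation of the kind used in Lemmas $2.2$ and $5.7$.
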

\begin{proof} Consider $\gamma_{i}=((\eta\sigma)',\phi_{i})$. Then, by the final remark in the proof
of Lemma $5.3$,
\[
\gamma_{i} = (\eta'\sigma,\phi_{i}) + (\eta\sigma',\phi_{i}) = (\eta'\sigma,\phi_{i})
+ (\eta(u'-Pu'),\phi_{i})\,.
\]
Now, since $\|\sigma\|_{\infty} \leq Ch^{3}$, we have that
$|(\eta'\sigma,\phi_{i})| \leq C\|\sigma\|_{\infty}\int_{0}^{1}\phi_{i}dx\leq Ch^{4}$. In addition,
by Lemma $5.10$ we obtain that $|(\eta(u'-Pu'),\phi_{i})| \leq Ch^{4}$. We conclude that
$|\gamma| = (\sum_{i=1}^{N+1}\gamma_{i}^{2})^{1/2} \leq Ch^{3.5}$, from which the conclusion of the
Lemma follows in view of Lemma $2.1(ii)$.
\end{proof}
With the aid of these lemmas we may establish the basic result of this section:
\begin{theorem} Suppose that the solution $(\eta,u)$ of $(\ref{scb})$ is such that $\eta \in C(0,T;C^{4})$,
$\eta_{t} \in C(0,T;C^{2})$, $u$, $u_{t} \in C(0,T;C_{0}^{3})$ and let $(\eta_{h},u_{h})$ be the solution
of the semidiscrete problem $(\ref{eq51})$-$(\ref{eq53})$. Then
\begin{align*}
(i)\,\,\,\,\, & \max_{0\leq t\leq T}(\|u(t)-u_{h}(t)\| + h\|\eta(t)-\eta_{h}(t)\|)\leq Ch^{3}\,, \\
(ii)\,\,\,\,\, & \max_{0\leq t\leq T}(\|u(t)-u_{h}(t)\|_{\infty}
+ h\|\eta(t)-\eta_{h}(t)\|_{\infty})\leq Ch^{3}\,.
\end{align*}
\end{theorem}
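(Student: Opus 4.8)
The plan is to deduce all four bounds from a single energy estimate for the four-term error splitting, in the spirit of the proofs of Theorems 2.1 and 3.1, but extracting the extra powers of $h$ directly from the superaccuracy Lemmas 5.1, 5.2 and 5.4 instead of from a subsequent duality argument. Write $\rho:=\eta-P\eta$, $\theta:=P\eta-\eta_h$, $\sigma:=u-R_hu$, $\xi:=R_hu-u_h$, so that $\eta-\eta_h=\rho+\theta$, $u-u_h=\sigma+\xi$ and, by $(\ref{eq53})$, $\theta(0)=\xi(0)=0$. Recalling the standard bounds $\|\rho\|+\|\rho\|_\infty\le Ch^2$ and $\|\sigma\|+\|\sigma\|_\infty+h\|\sigma\|_1\le Ch^3$ quoted before Lemma 5.1, it suffices to prove $\max_{0\le t\le T}(\|\theta\|+\|\xi\|_1)\le Ch^3$. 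Granting this, $(i)$ follows from $\|\eta-\eta_h\|\le\|\rho\|+\|\theta\|=O(h^2)$ and $\|u-u_h\|\le\|\sigma\|+\|\xi\|_1=O(h^3)$, while $(ii)$ follows from $\|\rho\|_\infty=O(h^2)$, the inverse estimate $\|\theta\|_\infty\le Ch^{-1/2}\|\theta\|=O(h^{5/2})$, and the Sobolev bound $\|\xi\|_\infty\le C\|\xi\|_1=O(h^3)$ together with $\|\sigma\|_\infty=O(h^3)$.

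First I would subtract $(\ref{eq51})$--$(\ref{eq52})$ from the weak form of $(\ref{scb})$ tested over $S_h^2\times S_{h,0}^3$. Three structural facts simplify the outcome: by Lemma 5.3$(ii)$ the linear term $(\sigma_x,\phi)$ is absent from the first equation, by Lemma 5.3$(i)$ the term $(\rho_x,\chi)$ is absent from the second, and since $R_h$ is the $H^1$-seminorm projection one has $(\sigma_t',\chi')=0$, whence $a(\sigma_t,\chi)=(\sigma_t,\chi)$. Decomposing the nonlinearities exactly as in Theorem 2.1, namely $\eta u-\eta_hu_h=\eta\sigma+u\theta-\theta\xi+F$, $\eta\eta_x-\eta_h\eta_{hx}=-\theta\theta_x+(\eta\theta)_x+G_x$ and $uu_x-u_hu_{hx}=H_x$, I obtain evolution equations for $\theta$ and $\xi$ whose right-hand sides carry $-(\rho_t,\phi)$ and $-(\sigma_t,\chi)$. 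The decisive gain over Theorem 2.1 is that, because $\eta_h(0)=P\eta_0$ forces $\rho=\eta-P\eta$, the quantity $\rho_t=\eta_t-P\eta_t$ is $L^2$-orthogonal to $S_h^2$, so the leading consistency term $(\rho_t,\theta)$ vanishes identically.

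I would then set $\phi=\theta$, $\chi=\xi$ and add the two equations; the coupling $(\xi_x,\theta)+(\theta_x,\xi)$ cancels after an integration by parts using $\xi(0)=\xi(1)=0$, leaving $\tfrac12\tfrac{d}{dt}(\|\theta\|^2+\|\xi\|_1^2)$ against the nonlinear and consistency terms. These fall into two classes. The \emph{data} terms are those carrying a projection error: $((\eta\sigma)_x,\theta)=O(h^3)\|\theta\|$ by Lemma 5.4, $((u\rho)_x,\theta)=O(h^3)\|\theta\|$ by Lemma 5.1, the pieces $(\sigma_t,\xi)$ and $(u\sigma,\xi_x)$ are $O(h^3)\|\xi\|_1$ directly from $\|\sigma\|,\|\sigma_t\|=O(h^3)$, and the piece $-(\eta\rho,\xi_x)$ of $\tfrac12(G_x,\xi)$ is $O(h^3)\|\xi\|_1$. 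Every remaining term is quadratic or cubic in $(\theta,\xi)$; working on a maximal subinterval $[0,t_h]$ on which $\|\theta\|_\infty\le1$, and integrating by parts so that each stray derivative either falls on $\xi$ (which vanishes at the endpoints) or becomes a factor $\|\xi_x\|\le\|\xi\|_1$ rather than an inverse factor $h^{-1}$, I would bound all of them by $C(\|\theta\|^2+\|\xi\|_1^2)$, the boundedness of $u,\sigma,\sigma_x$ in $L^\infty$ and $\|\theta\|_\infty\le1$ absorbing the rest. Gronwall's Lemma with $\theta(0)=\xi(0)=0$ then yields $\|\theta\|+\|\xi\|_1\le Ch^3$ on $[0,t_h]$ with $C$ independent of $t_h$, and since $\|\theta\|_\infty\le Ch^{-1/2}\|\theta\|=O(h^{5/2})\to0$ the maximality of $t_h$ is contradicted, so $t_h=T$.

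The step I expect to be most delicate is the term $-(\eta\rho,\xi_x)$. Since $\xi_x\in S_h^2$ this equals $(P(\eta\rho),\xi_x)$, but the weight $\eta$ does not vanish at the endpoints, so Lemma 5.2 does not apply verbatim. I would handle it by splitting $\eta=\ell+w$, with $\ell$ the affine function agreeing with $\eta$ at $x=0,1$ and $w\in C_0^1$: the interior part $(w\rho,\xi_x)=O(h^3)\|\xi_x\|$ is covered by Lemma 5.2, while for the affine part $(\ell\rho,\xi_x)=(\rho,\ell\xi_x-I_h(\ell\xi_x))$ --- using $\rho\perp S_h^2$ --- the interpolation error of the piecewise quadratic $\ell\xi_x$ is $O(h^2)\|(\ell\xi_x)''\|\le Ch\|\xi_x\|$, which again gives $O(h^3)\|\xi_x\|$; equivalently, this is one of the $L^2$-projection superaccuracy estimates of \S5.2. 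Apart from this term and the systematic integration-by-parts bookkeeping that keeps every non-data term quadratic and free of $h^{-1}$, the argument is a routine Gronwall and bootstrap estimate; in contrast with Theorem 2.1 no separate negative-norm duality is required, precisely because the orthogonality $(\rho_t,\theta)=0$ and the three superaccuracy lemmas already deliver the optimal order $h^3$ for $\|\xi\|_1$.
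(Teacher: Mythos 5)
Your proposal is correct and follows essentially the same route as the paper's own proof: the same splitting $\rho,\theta,\sigma,\xi$, the same use of Lemma 5.3 to remove the linear coupling terms and of the orthogonality $\rho_t\perp S_h^2$, the same appeal to Lemmas 5.1, 5.2 and 5.4 for the three superaccurate ``data'' terms, and the same energy--Gronwall closure (your explicit bootstrap on $\|\theta\|_\infty\le 1$ is a point the paper leaves implicit but is indeed needed for the cubic terms such as $((\theta\xi)_x,\theta)$). Your worry about applying Lemma 5.2 with the non-vanishing weight $\eta$ is well taken and your affine-plus-$C_0^1$ splitting fixes it cleanly, though in fact the underlying Lemma 5.10 never uses $v(0)=v(1)=0$, so the paper's direct application is also legitimate (the hypothesis $v\in C_0^1$ in Lemma 5.2 is simply stronger than what its proof requires).
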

\begin{proof}
Let $\rho=\eta - P\eta$, $\theta=P\eta-\eta_{h}$, $\sigma=u-R_{h}u$, and $\xi=R_{h}u-u_{h}$. Then
$\eta - \eta_{h}=\rho+\theta$, $u-u_{h}=\sigma+\xi$ and, by Lemma $5.3$
\begin{align}
(\theta_{t},\phi) + (\xi_{x},\phi) + \tfrac{1}{2}((\eta u-\eta_{h}u_{h})_{x},\phi)& =0 \quad
\forall \phi \in S_{h}^{2}\,, \label{eq55} \\
a(\xi_{t},\psi) + (\theta_{x},\psi)+\tfrac{3}{2}(uu_{x}-u_{h}u_{hx},\psi)+
\tfrac{1}{2}(\eta\eta_{x}-\eta_{h}\eta_{hx},\psi) & = -(\sigma_{t},\psi) \quad \forall \psi\in S_{h,0}^{3}\,,
\label{eq56}
\end{align}
for $t \in [0,T]$. Since
\[
\eta u - \eta_{h}u_{h} = \eta (u-u_{h}) + u (\eta - \eta_{h}) - (\eta-\eta_{h})(u-u_{h})
= \eta(\sigma+\xi) + u (\rho + \theta) - (\rho+\theta)(\sigma+\xi)\,,
\]
it follows that
\[
\eta u - \eta_{h}u_{h} = \eta\sigma + u\rho - \theta\xi + f\,,
\]
where $f = \eta\xi + u\theta - \rho\sigma-\rho\xi-\theta\sigma$. In addition,
\begin{align*}
\eta\eta_{x} - \eta_{h}\eta_{hx} & = \eta(\eta_{x}-\eta_{hx})+\eta_{x}(\eta-\eta_{h})
-(\eta-\eta_{h})(\eta_{x}-\eta_{hx}) \\
& = \eta(\rho_{x} + \theta_{x}) + \eta_{x}(\rho+\theta) - (\rho+\theta)(\rho_{x}+\theta_{x})\\
& = (\eta\rho)_{x} + (\eta\theta)_{x} - (\rho\theta)_{x}-\rho\rho_{x} - \theta\theta_{x}\,,
\end{align*}
whence
\[
\eta\eta_{x} - \eta_{h}\eta_{hx} =  (\eta\rho)_{x} -\theta\theta_{x} + g_{x}\,,
\]
where $g =  \eta\theta - \rho\theta -\tfrac{1}{2}\rho^{2}$. Finally,
\begin{align*}
uu_{x} - u_{h}u_{hx} & = u(u_{x}-u_{hx}) + u_{x}(u-u_{h})-(u-u_{h})(u_{x}-u_{hx}) \\
& = u(\sigma_{x} + \xi_{x}) + u_{x}(\sigma + \xi)-(\sigma+\xi)(\sigma_{x}+\xi_{x})\\
& = (u\sigma)_{x} + (u\xi)_{x} - (\sigma\xi)_{x}-\sigma\sigma_{x}-\xi\xi_{x}\,,
\end{align*}
i.e.
\[
uu_{x} - u_{h}u_{hx} = \hat{f}_{x}\,,
\]
where $\hat{f} = u\sigma + u\xi - \sigma\xi -\tfrac{1}{2}\sigma^{2} - \tfrac{1}{2}\xi^{2}$.
Thus we rewrite $(\ref{eq55})$ and $(\ref{eq56})$ as
\begin{align*}
(\theta_{t},\phi) + (\xi_{x},\phi) + \tfrac{1}{2}((\eta\sigma)_{x},\phi) +
\tfrac{1}{2}((u\rho)_{x},\phi)
- \tfrac{1}{2}((\theta\xi)_{x},\phi) +
\tfrac{1}{2}(f_{x},\phi) & = 0 \quad \forall \phi \in S_{h}^{2}\,, \\
a(\xi_{t},\psi) + (\theta_{x},\psi) + \tfrac{3}{2}(\hat{f}_{x},\psi)
+\tfrac{1}{2}((\eta\rho)_{x},\psi) -
\tfrac{1}{2}((\theta\theta)_{x},\psi) +
\tfrac{1}{2}(g_{x},\psi) & = -(\sigma_{t},\psi)
\quad \forall \psi \in S_{h,0}^{3}\,.
\end{align*}
Putting $\phi = \theta$, $\chi = \xi$ in the above and adding the resulting equations, we have for
$0\leq t\leq T$
\begin{equation}
\begin{aligned}
\tfrac{1}{2}\tfrac{d}{dt}(\|\theta\|^{2} + \|\xi\|_{1}^{2}) & + \tfrac{1}{2}((\eta\sigma)_{x},\theta)
+ \tfrac{1}{2}((u\rho)_{x},\theta) + \tfrac{1}{2}(f_{x},\theta) \\
& + \tfrac{1}{2}((\eta\rho)_{x},\xi)
+ \tfrac{3}{2}(\hat{f}_{x},\xi) + \tfrac{1}{2}(g_{x},\xi)
= -(\sigma_{t},\xi)\,.
\end{aligned}
\label{eq57}
\end{equation}
We estimate the various terms in $(\ref{eq57})$ as follows. By Lemma $5.4$ we have
\[
|((\eta\sigma)_{x},\theta)| \leq C h^3 \|\theta\|\,,
\]
and by Lemma $5.1$
\[
|((u\rho)_{x},\theta)| \leq C h^{3}\|\theta\|\,.
\]
Using integration by parts and the standard estimates for $\rho$ and $\sigma$ we also have
\[
|(f_{x},\theta)| \leq  C\|\xi\|_{1}\|\theta\| + Ch^{4}\|\theta\| + C \|\theta\|^{2}\,.
\]
In addition, from Lemma $5.2$, since $\xi_{x} \in S_{h}^{2}$
\[
|((\eta\rho)_{x},\xi)|  = |(\eta\rho,\xi_{x})| \leq C h^{3} \|\xi\|_{1}\,.
\]
Finally, it follows from integration by parts and standard estimates for $\sigma$ that
\[
|(\hat{f}_{x},\xi)| = |(\hat{f},\xi_{x})| \leq C h^{3}\|\xi\|_{1} + C \|\xi\|^{2}\,,
\]
and by standard estimates for $\rho$ that
\[
|(g_{x},\xi)| = |(g,\xi_{x})| \leq C \|\theta\|\|\xi\|_{1} + Ch^{4}\|\xi\|_{1}\,.
\]
Hence, from $(\ref{eq57})$ and these estimates we deduce for $0\leq t\leq T$
\[
\dfrac{1}{2}\dfrac{d}{dt}(\|\theta\|^{2} + \|\xi\|_{1}^{2}) \leq
C(h^{6} + \|\theta\|^{2} + \|\xi\|_{1}^{2})\,,
\]
and, consequently, by Gronwall's Lemma and $(\ref{eq53})$ that
\[
\|\theta\| + \|\xi\|_{1} \leq C h^3\,, \quad 0\leq t\leq T\,,
\]
and the conclusions of the Theorem follow in view of the approximation properties of $S_{h}^{2}$
and $S_{h,0}^{3}$, Sobolev's theorem and the $L^{\infty}$-$L^{2}$ inverse inequality in $S_{h}^{2}$.
\end{proof}
\begin{remark} In view of Theorems $1.3.1$ and $1.3.2$ of \cite{w} the conclusions of Theorem $5.1$
remain valid if in place of $R_{h}u_{0}$ we use the elliptic projection of $u_{0}$ that is induced
by the bilinear form $a(\cdot,\cdot)$ as was done in previous sections.
\end{remark}
\subsection{Superaccuracy estimates for the error of the $L^{2}$ projection}
The proofs of Lemmas $5.1$, $5.2$, and $5.4$ depend on some superaccuracy properties
of the error of the $L^{2}$ projection onto $S_{h}^{2}$ of a smooth function, in the case of
uniform mesh. \par
Suppose that $\eta \in C^{3}$, $u \in C_{0}^{2}$, and let $P$ be the
$L^{2}$ projection onto $S_{h}^{2}$ and $\rho = \eta - P\eta$. To estimate
$(u\rho,\phi_{i}')$, where $\{\phi_{i}\}$ is the standard basis of $S_{h}^{2}$,
it suffices to estimate the integrals
\[
\int_{I_{i}}u\rho dx\,,
\]
where $I_{i}=(x_{i},x_{i+1})$, $i=1,2,\dots,N$. Putting $x_{i+1/2} = (x_{i}+x_{i+1})/2$, we will have
\[
\int_{I_{i}} u\rho dx = u(x_{i+1/2})\int_{I_{i}} \rho dx
+ u'(x_{i+1/2})\int_{I_{i}}(x-x_{i+1/2})\rho(x) dx +
\tfrac{1}{2} \int_{I_{i}} (x-x_{i+1/2})^{2}\rho(x) u''(y_{i}) dx\,,
\]
for some $y_{i}=y_{i}(x) \in I_{i}$. Since $\|\rho\|_{\infty}=O(h^{2})$, the third integral in the
right-hand side is of $O(h^{5})$. Now, if $x \in I_{i}$
\[
\rho(x) = \rho(x_{i+1/2}) + (x - x_{i+1/2})\rho'(x_{i+1/2}) +
\tfrac{1}{2} (x - x_{i+1/2})^{2} \eta''(x_{i+1/2}) +
\tfrac{1}{6} (x - x_{i+1/2})^{3}\eta'''(\tau_{i})\,,
\]
for some $\tau_{i}=\tau_{i}(x) \in I_{i}$. Hence
\[
\int_{I_{i}}(x-x_{i+1/2})\rho(x)dx = \rho'(x_{i+1/2})\int_{I_{i}}(x-x_{i+1/2})^{2}dx + O(h^{5})
= \tfrac{h^{3}}{12}\rho'(x_{i+1/2}) + O(h^{5})\,.
\]
Thus
\[
\int_{I_{i}} u\rho dx = u(x_{i+1/2})\int_{I_{i}} \rho dx
+ \tfrac{h^{3}}{12}\rho'(x_{i+1/2})u'(x_{i+1/2}) + O(h^{5})\,,
\]
and, consequently, we must estimate the integral of $\rho$ on $I_{i}$ and the derivative of
$\rho$ at the midpoint of $I_{i}$. Both of these quantities turn out to be superaccurate as
we prove by elementary techniques in the following two lemmas:
\begin{lemma} Let $\eta \in C^{3}$. Then
\[
\max_{1\leq i\leq N} |\rho'(x_{i+1/2})| \leq Ch^{2}\,.
\]
\end{lemma}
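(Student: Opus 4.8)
The plan is to reduce the midpoint value of the error derivative to a difference of nodal errors of the $L^2$ projection, and then to establish a uniform one-term asymptotic expansion of those nodal errors. Since $P\eta$ is linear on $I_i=(x_i,x_{i+1})$, its derivative there is the constant $h^{-1}\bigl((P\eta)(x_{i+1})-(P\eta)(x_i)\bigr)$, so writing $e_j:=\rho(x_j)$ for the nodal errors and using the symmetric-difference identity $h^{-1}\bigl(\eta(x_{i+1})-\eta(x_i)\bigr)=\eta'(x_{i+1/2})+O(h^2)$ (valid for $\eta\in C^3$, the $O(h)$ term cancelling by symmetry) I would obtain
\[
\rho'(x_{i+1/2})=\frac{e_{i+1}-e_i}{h}+O(h^2).
\]
Thus the whole lemma reduces to the uniform estimate $|e_{i+1}-e_i|\le Ch^3$ on the nodal error differences.

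To analyse the nodal errors I would split $\rho=(\eta-I_h\eta)+(I_h\eta-P\eta)$, observing that $\kappa:=I_h\eta-P\eta\in S_h^2$ has nodal values $\kappa(x_j)=e_j$, hence $\kappa=\sum_j e_j\phi_j$. Testing the orthogonality $(\rho,\phi_i)=0$ against each basis function then gives the linear system $Ge=-b$, where $G_{ij}=(\phi_j,\phi_i)$ is the mass matrix of Lemma $2.1$ and $b_i:=(\eta-I_h\eta,\phi_i)$. Inserting the interpolation-error representation of Lemma $2.1(iii)$ and integrating against $\phi_i$ over its support, a short computation, carried out in the same way for the interior rows and for the two endpoint rows where $\phi_1,\phi_{N+1}$ have one-sided support, yields $b_i=-\tfrac1{12}h^3\eta''(x_i)+O(h^4)$ uniformly in $i$. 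This motivates the ansatz $e_j^{*}:=\tfrac1{12}h^2\eta''(x_j)$; a direct evaluation of $Ge^{*}$ (using that the endpoint rows of $G$ and the quantity $b_1$, $b_{N+1}$ both carry the same ``half'' weight) shows $(Ge^{*})_i=-b_i+O(h^4)$ in every row, so the residual $\tilde r:=-h^{-1}b-h^{-1}Ge^{*}$ satisfies $\|\tilde r\|_\infty=O(h^3)$. (Only $\eta\in C^3$ is used; the $O(h^4)$ remainders are controlled by uniform continuity of $\eta'''$.)

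It remains to convert this residual bound into an $\ell^\infty$ bound on $e-e^{*}$, and here the key point is that the $\ell^2$ bound coming from Lemma $2.1(i)$ would only give $\|e-e^{*}\|_\infty=O(h^{3/2})$, which is too weak. Instead I would use that the scaled mass matrix $h^{-1}G$ is strictly diagonally dominant with a uniform margin: its diagonal entries are $2/3$ in the interior and $1/3$ at the endpoints, while the off-diagonal entries equal $1/6$, so every row excess is at least $1/6$ and therefore $\|(h^{-1}G)^{-1}\|_\infty\le 6$. (This elementary estimate plays the role that the exponential off-diagonal decay of Demko plays elsewhere; either is enough.) Consequently $\|e-e^{*}\|_\infty\le 6\|\tilde r\|_\infty=O(h^3)$, so $e_j=\tfrac1{12}h^2\eta''(x_j)+O(h^3)$ uniformly and, since $\eta''\in C^1$,
\[
e_{i+1}-e_i=\tfrac1{12}h^2\bigl(\eta''(x_{i+1})-\eta''(x_i)\bigr)+O(h^3)=O(h^3).
\]
Together with the reduction of the first paragraph this gives $|\rho'(x_{i+1/2})|\le Ch^2$.

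The main obstacle I anticipate is uniformity up to the boundary. One must check that the ansatz $e^{*}$ solves the two endpoint equations to the same order $O(h^4)$ as the interior ones, which is exactly the cancellation that rescues the estimate, and one must control the inversion of $G$ in $\ell^\infty$ rather than merely in $\ell^2$. Both issues are handled by the diagonal-dominance bound above, which is why the argument remains elementary and valid uniformly over $1\le i\le N$.
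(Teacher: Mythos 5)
Your argument is correct, and it reaches the estimate by a genuinely different route from the paper's. The paper manipulates the mass--matrix system $Gc=b$ by differencing consecutive rows so as to obtain a tridiagonal system satisfied \emph{directly} by the midpoint quantities $\varepsilon_i'=\rho'(x_{i+1/2})$ (cf.\ $(\ref{eq510})$--$(\ref{eq511})$); it then shows by Taylor expansion of the moments $b_i$ that the right--hand side of that system is $O(h^{2})$, and concludes via the Neumann--series bound $\|(I-B)^{-1}\|_\infty\le 2$ for the scaled matrix $\tfrac14 A=I-B$. You instead first reduce $\rho'(x_{i+1/2})$ to the divided nodal difference $h^{-1}(\rho(x_{i+1})-\rho(x_i))+O(h^{2})$, and then prove the uniform nodal expansion $\rho(x_j)=\tfrac{1}{12}h^{2}\eta''(x_j)+O(h^{3})$ by exhibiting an explicit corrector $e^{*}$, checking the residual row by row (including the half--weight boundary rows), and inverting the scaled mass matrix in $\ell^{\infty}$ by strict diagonal dominance. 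The two proofs rest on the same two pillars --- the uniform--mesh cancellation in the moments $(\eta-I_h\eta,\phi_i)$ and a uniform $\ell^{\infty}$ bound on the inverse of a tridiagonal matrix --- but yours buys a stronger byproduct (the one--term asymptotic expansion of the nodal projection error, in the spirit of what Lemmas $5.6$--$5.7$ do for the cell averages), at the cost of having to construct and verify the ansatz, while the paper's is more direct since it never needs to identify the leading term. One small imprecision: your displayed claim $b_i=-\tfrac{1}{12}h^{3}\eta''(x_i)+O(h^{4})$ cannot hold ``uniformly in $i$'' as stated, since at the two end rows the one--sided support of $\phi_1,\phi_{N+1}$ gives $-\tfrac{1}{24}h^{3}\eta''+O(h^{4})$; your subsequent remark about the matching half weights in the end rows of $G$ shows you are aware of this, and the residual bound $(Ge^{*})_i=-b_i+O(h^{4})$ is indeed valid in every row, so the argument is sound.
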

\begin{proof} For $x \in I_{i}$ we have
\[
\rho'(x) = \eta'(x) - (P\eta)'(x) = \eta'(x) - \dfrac{c_{i+1} - c_{i}}{h}\,,
\]
i.e.
\begin{equation}
\rho'(x_{i+1/2}) = \eta'(x_{i+1/2}) - \dfrac{c_{i+1} - c_{i}}{h}\,, \quad
1 \leq i\leq N\,,
\label{eq58}
\end{equation}
where $c_{i}$ is the $i$th component of the solution $c$ of the linear system $Gc=b$,
where $G_{ij} = (\phi_{j},\phi_{i})$, $1\leq i,j \leq N+1$ and $b_{i} = (\eta,\phi_{i})$,
$i=1,2,\dots,N+1$. The equations of the system may be written explicitly as
\begin{equation}
\begin{aligned}
2c_{1} + c_{2} & = 6 b_{1}/h\,, \\
c_{1} + 4 c_{2} + c_{3} & = 6 b_{2}/h\,, \\
c_{i-1} + 4c_{i} + c_{i+1} & = 6 b_{i}/h \,, \quad i=3,4,\dots,N-1\,, \\
c_{N-1} + 4c_{N} + c_{N+1} & = 6 b_{N}/h\,, \\
c_{N} + 2c_{N+1} & = 6 b_{N+1}/h\,.
\end{aligned}
\label{eq59}
\end{equation}
Hence
\begin{align*}
3(c_{2} - c_{1}) + (c_{3} - c_{2}) & = 6(b_{2} - 2b_{1})/h\,, \\
(c_{i} - c_{i-1}) + 4 (c_{i+1} - c_{i}) & + (c_{i+2} - c_{i+1}) = 6(b_{i+1}-b_{i})/h\,,
\quad 2\leq i\leq N-1
\end{align*}
and
\[
(c_{N} - c_{N-1}) + 3(c_{N+1} - c_{N}) = 6(2b_{N+1} - b_{N})/h\,.
\]
Therefore we obtain the following equations for the differences $(c_{i+1}-c_{i})/h$:
\begin{align*}
3(c_{2} - c_{1})/h + (c_{3} - c_{2})/h & = 6(b_{2} - 2b_{1})/h^{2}\,,  \\
(c_{i} - c_{i-1})/h + 4 (c_{i+1} - c_{i})/h +
(c_{i+2} - c_{i+1})/h & = 6(b_{i+1}-b_{i})/h^{2}\,,\quad i=2,3,\dots,N-1\,,\\
(c_{N} - c_{N-1})/h + 3(c_{N+1} - c_{N})/h & = 6(2b_{N+1} - b_{N})/h^{2}\,.
\end{align*}
Hence, if we put $\ve'_{i}:=\rho'(x_{i+1/2})$, $i=1,2,\dots,N$, then $(\ref{eq58})$ implies that
$\ve' = (\ve_{i}')$ is the solution of the linear system
\begin{equation}
A\varepsilon'=r'\,,
\label{eq510}
\end{equation}
where $A$ is the $N\times N$ tridiagonal matrix with $A_{11}=A_{NN}=3$,
$A_{ii}=4$, $2\leq i \leq N-1$, and $A_{ij}=1$ if $|i-j|=1$, and $r'=(r'_{1},r'_{2},\dots,r'_{N})^{T}$,
where
\begin{equation}
\begin{aligned}
r'_{1} & = 3\eta'(x_{3/2}) + \eta'(x_{5/2}) - 6(b_{2} - 2b_{1})/h^{2}\,, \\
r'_{i} & = \eta'(x_{i-1/2}) + 4\eta'(x_{i+1/2}) + \eta'(x_{i+3/2})-6(b_{i+1}-b_{i})/h^{2}\,,
\quad i=2,3,\dots,N-1\,,\\
r'_{N} & = \eta'(x_{N-1/2}) + 3\eta'(x_{N+1/2}) - 6(2b_{N+1} - b_{N})/h^{2}\,.
\end{aligned}
\label{eq511}
\end{equation}
We will now show that there exists a constant $C$ depending only on $\|\eta'''\|_{\infty}$, such that
\begin{equation}
\max_{1\leq i\leq N}|r'_{i}| \leq Ch^{2}\,.
\label{eq512}
\end{equation}
To estimate $r'_{1}$ note that
\begin{align*}
b_{2} - 2b_{1} = \int_{I_{1}\cup I_{2}}\eta\phi_{2}dx - 2\int_{I_{1}}\eta\phi_{1}dx & =
\tfrac{3}{h}\int_{I_{1}}(x-h)\eta dx - \tfrac{1}{h}\int_{I_{2}}(x-h)\eta dx
+ \int_{I_{1}\cup I_{2}}\eta dx \\
& =: \tfrac{3}{h}J_{1} - \tfrac{1}{h}J_{2} + J_{3}\,.
\end{align*}
Using Taylor's theorem we have
\begin{align*}
J_{1} & = \int_{I_{1}}(x-h)\eta dx= -\tfrac{h^{2}}{2}\eta(h) + \tfrac{h^3}{3}\eta'(h) -
\tfrac{h^4}{8}\eta''(h) + O(h^5)\,, \\
J_{2} & = \int_{I_{2}}(x-h)\eta dx = \tfrac{h^2}{2}\eta(h) + \tfrac{h^3}{3}\eta'(h) +
\tfrac{h^4}{8}\eta''(h) + O(h^5)\,, \\
J_{3} & = \int_{I_{1}\cup I_{2}}\eta dx = 2h\eta(h) + \tfrac{h^3}{3}\eta''(h) + O(h^{4})\,,
\end{align*}
and thus
\[
b_{2} - 2b_{1} = \tfrac{2h^2}{3}\eta'(h) - \tfrac{h^3}{6}\eta''(h) + O(h^4)\,.
\]
In addition,
\[
3\eta'(x_{3/2}) + \eta'(x_{5/2}) = 4\eta'(h) - h\eta''(h) + O(h^{2})\,.
\]
Therefore, by $(\ref{eq511})$, we see that $r'_{1}=O(h^{2})$.
For $r'_{N}$ we have
\begin{align*}
2b_{N+1} - b_{N} & = 2\int_{I_{N}}\eta\phi_{N+1}dx - \int_{I_{N-1}\cup I_{N}}\eta\phi_{N}dx \\
& = \tfrac{3}{h}\int_{I_{N}}(x-x_{N})\eta dx -
\tfrac{1}{h}\int_{I_{N-1}}(x-x_{N})\eta dx -\int_{I_{N-1}\cup I_{N}}\eta dx \\
& = : \tfrac{3}{h}J_{1} - \tfrac{1}{h}J_{2} - J_{3}\,.
\end{align*}
By Taylor's theorem
\begin{align*}
J_{1} & =\tfrac{h^2}{2}\eta(x_{N}) + \tfrac{h^3}{3}\eta'(x_{N}) + \tfrac{h^4}{8}\eta''(x_{N}) + O(h^5)\,, \\
J_{2} & = - \tfrac{h^2}{2}\eta(x_{N}) + \tfrac{h^3}{3}\eta'(x_{N}) - \tfrac{h^4}{8}\eta''(x_{N}) + O(h^5)\,, \\
J_{3} & = 2h\eta(x_{N}) + \tfrac{h^3}{3}\eta''(x_{N}) + O(h^4)\,,
\end{align*}
and thus
\[
2b_{N+1} - b_{N} = \tfrac{2h^2}{3}\eta'(x_{N}) + \tfrac{h^3}{6}\eta''(x_{N}) + O(h^4)\,.
\]
Hence,by $(\ref{eq511})$
\begin{align*}
r'_{N} & = \eta'(x_{N-1/2}) + 3\eta'(x_{N+1/2}) - \tfrac{6}{h^{2}}(2b_{N+1} - b_{N})  \\
& = 4\eta'(x_{N}) + h\eta''(x_{N}) - 4\eta'(x_{N}) - h\eta''(x_{N}) + O(h^2) = O(h^2)\,.
\end{align*}
For $r'_{i}$, $2\leq i\leq N-1$, we have
\begin{equation}
\begin{aligned}
b_{i+1} - b_{i}
& = \int_{I_{i}\cup I_{i+1}}\eta\phi_{i+1}dx - \int_{I_{i-1}\cup I_{i}}\eta\phi_{i}dx \\
& = \tfrac{2}{h}\int_{I_{i}}(x - x_{i+1/2})\eta dx -
\tfrac{1}{h}\int_{I_{i+1}}(x - x_{i+3/2})\eta dx -
\tfrac{1}{h}\int_{I_{i-1}}(x - x_{i-1/2})\eta dx \\
& \,\,\,\,\, + \tfrac{1}{2}\int_{I_{i+1}}\eta dx - \tfrac{1}{2}\int_{I_{i-1}}\eta dx
=: \tfrac{2}{h}J_{1} -\tfrac{1}{h}J_{2} - \tfrac{1}{h}J_{3} + \tfrac{1}{2}J_{4} - \tfrac{1}{2}J_{5}\,.
\end{aligned}
\label{eq513}
\end{equation}
By Taylor's theorem we obtain
\begin{equation}
J_{1} = \eta'(x_{i+1/2})\int_{I_{i}}(x-x_{i+1/2})^2 dx + O(h^5)
= \tfrac{h^3}{12}\eta'(x_{i+1/2})+O(h^5)\,.
\label{eq514}
\end{equation}
Similarly
\begin{align}
J_{2} & = \tfrac{h^3}{12}\eta'(x_{i+3/2}) + O(h^{5})\,, \label{eq515} \\
J_{3} & = \tfrac{h^3}{12}\eta'(x_{i-1/2}) + O(h^5)\,, \label{eq516}
\end{align}
\begin{align*}
J_{4} & = h \eta(x_{i+3/2}) + \tfrac{h^3}{24}\eta''(x_{i+3/2}) + O(h^4)\,,  \\
J_{5} & = h \eta(x_{i-1/2}) + \tfrac{h^3}{24}\eta''(x_{i-1/2}) + O(h^4)\,.
\end{align*}
Since now
\begin{equation}
J_{4} - J_{5} = 2h^2\eta'(x_{i+1/2}) + O(h^4)\,.
\label{eq517}
\end{equation}
Substituting $(\ref{eq514})$-$(\ref{eq517})$ into $(\ref{eq513})$ we obtain
\[
b_{i+1} - b_{i} = \tfrac{7h^2}{6}\eta'(x_{i+1/2}) - \tfrac{h^2}{12}\eta'(x_{i+3/2})
-\tfrac{h^2}{12}\eta'(x_{i-1/2}) + O(h^4)\,,
\]
and finally, by $(\ref{eq511})$
\begin{align*}
r'_{i} & = \eta'(x_{i-1/2}) + 4 \eta'(x_{i+1/2}) + \eta'(x_{i+3/2})-\tfrac{6}{h^2}(b_{i+1} - b_{i}) \\
& = \tfrac{3}{2}(\eta'(x_{i+3/2}) - 2\eta'(x_{i+1/2}) + \eta'(x_{i-1/2})) + O(h^2) = O(h^{2})\,.
\end{align*}
We conclude therefore that $(\ref{eq512})$ holds with a constant $C=C(\|\eta'''\|_{\infty})$.
Writing the linear system $(\ref{eq59})$ as $\tfrac{1}{4}A\ve' = \tfrac{1}{4}r'$ with
$\tfrac{1}{4}A = I - B$, where $B$ is a $N\times N$ matrix with
$\|B\|_{\infty}=1/2$. Therefore, $\|(I-B)^{-1}\|_{\infty} \leq 2$ and consequently
\[
\max_{1\leq i \leq N} |\ve'_{i}| \leq \tfrac{1}{2}\max_{1\leq i \leq N}|r'_{i}| \leq C h^{2}\,.
\]
\end{proof}
\begin{lemma} Let $\eta \in C^3$. Then for some constant $C=C(\|\eta'''\|_{\infty})$ we have
\[
\max_{1\leq i \leq N} \bigl|\int_{I_{i}}\rho dx\bigr| \leq Ch^4\,.
\]
\end{lemma}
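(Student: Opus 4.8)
The plan is to mirror the strategy of Lemma $5.5$: set up a tridiagonal, strictly diagonally dominant linear system for the element integrals $m_i:=\int_{I_i}\rho\,dx$ and show that its right-hand side is $O(h^4)$. Writing $P\eta=\sum_{j=1}^{N+1}c_j\phi_j$ with $c=G^{-1}b$ the solution of $(\ref{eq59})$, and setting $\bar\eta_i:=\int_{I_i}\eta\,dx$, one has $\int_{I_i}P\eta\,dx=\tfrac{h}{2}(c_i+c_{i+1})$, so that $m_i=\bar\eta_i-\tfrac{h}{2}(c_i+c_{i+1})$ for $1\le i\le N$. The goal is then to bound $\max_i|m_i|$ by $Ch^4$.

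First I would eliminate the $c_j$. Adding the two consecutive generic equations of $(\ref{eq59})$ at indices $i$ and $i+1$ gives $c_{i-1}+5c_i+5c_{i+1}+c_{i+2}=\tfrac{6}{h}(b_i+b_{i+1})$, which is exactly the combination of $c$-values produced by $m_{i-1}+4m_i+m_{i+1}$ after substituting $m_j=\bar\eta_j-\tfrac{h}{2}(c_j+c_{j+1})$. This yields, for the interior indices $2\le i\le N-1$, the three-term relation $m_{i-1}+4m_i+m_{i+1}=r_i$ with $r_i=\bar\eta_{i-1}+4\bar\eta_i+\bar\eta_{i+1}-3(b_i+b_{i+1})$; combining instead the boundary row of $(\ref{eq59})$ with its neighbour produces the analogous end rows, now carrying a $5$ on the diagonal. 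The resulting matrix is strictly diagonally dominant, so, exactly as in $(\ref{eq510})$, writing one quarter of it as $I-B$ with $\|B\|_\infty=\tfrac12$ gives $\max_i|m_i|\le\tfrac12\max_i|r_i|$, and everything reduces to estimating $r_i$.

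The hard part is to show $\max_i|r_i|\le Ch^4$, and this is where the uniform mesh is essential. I would Taylor-expand both $\bar\eta_j$ and $b_j$ about the element midpoint $x_{i+1/2}$, using $\eta\in C^3$ so that the remainders, and hence the final constant, depend only on $\|\eta'''\|_\infty$; by the symmetry of the element about its midpoint the odd moments drop out, and one finds that $\bar\eta_{i-1}+4\bar\eta_i+\bar\eta_{i+1}$ and $3(b_i+b_{i+1})$ both equal $6h\,\eta(x_{i+1/2})+\tfrac{5}{4}h^3\eta''(x_{i+1/2})$ up to $O(h^4)$. Thus not only the leading $O(h)$ terms but also the $O(h^3)$ terms cancel in $r_i$, leaving $r_i=O(h^4)$; the two boundary values $r_1,r_N$ are controlled by the same bookkeeping used for $r_1',r_N'$ in $(\ref{eq511})$. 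Combined with the diagonal-dominance bound, this gives $\max_i|m_i|=O(h^4)$, as asserted. The main obstacle is organizing the expansions so that this double cancellation is transparent; on a merely quasiuniform mesh the $O(h^3)$ contributions would not match and only an $O(h^3)$ bound would survive, which is precisely the superaccuracy gain afforded by the uniform mesh.
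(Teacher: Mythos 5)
Your proposal is correct and follows essentially the same route as the paper's proof: you form the same tridiagonal system $\Gamma\varepsilon=r$ (with $5$'s in the corner diagonal entries) by combining consecutive rows of $Gc=b$, verify the double cancellation $r_i=\bar\eta_{i-1}+4\bar\eta_i+\bar\eta_{i+1}-3(b_i+b_{i+1})=O(h^4)$ by midpoint Taylor expansion exactly as the paper does with its integrals $J_1,\dots,J_5$, and conclude via the same $\tfrac14\Gamma=I-E$, $\|E\|_\infty=\tfrac12$ diagonal-dominance bound. The only cosmetic difference is that you present the interior cancellation by showing both sides equal $6h\,\eta(x_{i+1/2})+\tfrac54h^3\eta''(x_{i+1/2})+O(h^4)$, which is a tidy repackaging of the paper's computation.
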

\begin{proof} We have
\[
\int_{I_{i}}\rho dx = \int_{I_{i}}\eta dx - \tfrac{h}{2}(c_{i+1} + c_{i})\,, \quad i=1,2,\dots,N\,,
\]
where $c_{i}$ is the $i$-th component of the solution $c$ of the linear system $Gc=b$ with $G$ and $b$
as in the previous Lemma. From the equation $(\ref{eq59})$ we see that
\begin{align*}
5 \cdot \tfrac{h}{2}(c_{1} + c_{2}) + \tfrac{h}{2}(c_{2} + c_{3}) & = 3(2b_{1} + b_{2})\,,\\
\tfrac{h}{2}(c_{i-1} + c_{i}) + 4\cdot \tfrac{h}{2}(c_{i} + c_{i+1}) +
\tfrac{h}{2}(c_{i+1} + c_{i+2}) & = 3(b_{i+1}+b_{i})\,, \quad 2\leq i\leq N-1\,,\\
\tfrac{h}{2}(c_{N-1} + c_{N}) + 5\cdot\tfrac{h}{2}(c_{N} + c_{N+1}) & = 3(b_{N} + 2b_{N+1})\,.
\end{align*}
Hence, if
\[
\ve_{i} := \int_{I_{i}}\rho dx\,, \quad i=1,2,\dots,N\,,
\]
then $\ve = (\ve_{1},\ve_{2},\dots,\ve_{N})^{T}$ is the solution of the system
\begin{equation}
\Gamma\varepsilon = r\,,
\label{eq518}
\end{equation}
where $\Gamma$ is the tridiagonal $N\times N$ matrix  with $\Gamma_{11}=\Gamma_{NN}=5$,
$\Gamma_{ii}=4$, $i=2,3,\dots,N-1$, and $\Gamma_{ij} = 1$, when $|i-j|=1$, and where
$r = (r_{1},r_{2},\dots,r_{N})^{T}$ with
\begin{equation}
\begin{aligned}
r_{1} & = 5\int_{I_{1}}\eta dx + \int_{I_{2}}\eta dx - 3(2b_{1} + b_{2})\,, \\
r_{i} & = \int_{I_{i-1}}\eta dx + 4\int_{I_{i}}\eta dx + \int_{I_{i+1}}\eta dx - 3(b_{i+1} + b_{i})\,,
\quad i=2,3,\dots,N-1\,, \\
r_{N} & = \int_{I_{N-1}}\eta dx + 5\int_{I_{N}}\eta dx - 3(b_{N} + 2b_{N+1})\,.
\end{aligned}
\label{eq519}
\end{equation}
We will show that $r_{i}=O(h^4)$,  for $i=1,2,\dots,N$\,. \\
For $r_{1}$, note that
\[
2b_{1} + b_{2} = 2\int_{I_{1}}\eta\phi_{1}dx + \int_{I_{1}\cup I_{2}}\eta\phi_{2}dx
= \int_{I_{1}\cup I_{2}}\eta dx - \tfrac{1}{h}\int_{I_{1}\cup I_{2}}(x-h)\eta dx\,.
\]
Hence, by Taylor's theorem
\begin{equation}
2b_{1} + b_{2} = 2h\eta(h) - \tfrac{2h^2}{3}\eta'(h) + \tfrac{h^3}{3}\eta''(h) + O(h^4)\,.
\label{eq520}
\end{equation}
Also
\begin{align*}
\int_{I_{1}}\eta dx & = h\eta(h) - \tfrac{h^2}{2}\eta'(h) + \tfrac{h^3}{6}\eta''(h) + O(h^4)\,, \\
\int_{I_{2}}\eta dx & = h\eta(h) + \tfrac{h^2}{2}\eta'(h) + \tfrac{h^3}{6}\eta''(h) + O(h^4)\,.
\end{align*}
From these relations and $(\ref{eq519})$ and $(\ref{eq520})$ we obtain
\begin{align*}
r_{1} & = 5\bigl( h\eta(h) - \tfrac{h^2}{2}\eta'(h) + \tfrac{h^3}{6}\eta''(h)\bigr) +
\bigl( h\eta(h) + \tfrac{h^2}{2}\eta'(h) + \tfrac{h^3}{6}\eta''(h)\bigr) \\
&\,\,\,\,\, - 3\bigl( 2h\eta(h) - \tfrac{2h^2}{3}\eta'(h) + \tfrac{h^3}{3}\eta''(h)\bigr) + O(h^4)
=O(h^{4})\,.
\end{align*}
For $r_{N}$ we observe
\[
b_{N} + 2b_{N+1} = \int_{I_{N-1}\cup I_{N}}\eta\phi_{N}dx + 2\int_{I_{N}}\eta\phi_{N+1}dx
= \tfrac{1}{h} \int_{I_{N-1}\cup I_{N}}(x-x_{N})\eta dx + \int_{I_{N-1}\cup I_{N}}\eta dx\,.
\]
Hence, by Taylor's theorem
\begin{equation}
b_{N} + 2b_{N+1} =
2h\eta(x_{N}) + \tfrac{2h^2}{3}\eta'(x_{N}) + \tfrac{h^3}{3}\eta''(x_{N}) + O(h^4)\,.
\label{eq521}
\end{equation}
Also
\begin{align*}
\int_{I_{N-1}}\eta dx & = h\eta(x_{N}) - \tfrac{h^2}{2}\eta'(x_{N}) + \tfrac{h^3}{6}\eta''(x_{N}) + O(h^4)\,, \\
\int_{I_{N}}\eta dx & = h\eta(x_{N}) + \tfrac{h^2}{2}\eta'(x_{N}) + \tfrac{h^3}{6}\eta''(x_{N}) + O(h^4)\,.
\end{align*}
From these relations and $(\ref{eq519})$ and $(\ref{eq521})$ we see that
\begin{align*}
r_{N} & = \bigl( h\eta(x_{N}) - \tfrac{h^2}{2}\eta'(x_{N}) + \tfrac{h^3}{6}\eta''(x_{N})\bigr) +
5\bigl( h\eta(x_{N}) + \tfrac{h^2}{2}\eta'(x_{N}) + \tfrac{h^3}{6}\eta''(x_{N})\bigr) \\
&\,\,\,\,\, - 3\bigl( 2h\eta(x_{N}) + \tfrac{2h^2}{3}\eta'(x_{N}) + \tfrac{h^3}{3}\eta''(x_{N})\bigr) + O(h^4)
=O(h^{4})\,.
\end{align*}
For $r_{i}$, $i=2,3,\dots,N-1$, we have
\begin{align*}
b_{i} + b_{i+1} & = \int_{I_{i-1}\cup I_{i}}\eta\phi_{i}dx +  \int_{I_{i}\cup I_{i+1}}\eta\phi_{i}dx \\
& = \tfrac{1}{h}\int_{I_{i-1}}(x-x_{i-1})\eta dx
+ \int_{I_{i}}\eta dx+ \tfrac{1}{h}\int_{I_{i+1}}(x_{i+2}-x)\eta dx\,.
\end{align*}
Thus, by $(\ref{eq519})$ we have, for $2\leq i\leq N-1$
\begin{equation}
r_{i} = \int_{I_{i-1}\cup I_{i}\cup I_{i+1}}\eta dx - \tfrac{3}{h}\int_{I_{i-1}}(x-x_{i-1})\eta dx
- \tfrac{3}{h}\int_{I_{i+1}}(x_{i+2}-x)\eta dx =:J_{1} - \tfrac{3}{h}J_{2} - \tfrac{3}{h}J_{3}\,.
\label{eq522}
\end{equation}
By Taylor's theorem
\begin{align*}
J_{2} & = \tfrac{h^2}{2}\eta(x_{i-1}) + \tfrac{h^3}{3}\eta'(x_{i-1})+\tfrac{h^4}{8}\eta''(x_{i-1}) + O(h^5)\,,\\
J_{3} & = \tfrac{h^2}{2}\eta(x_{i+2}) - \tfrac{h^3}{3}\eta'(x_{i+2})+\tfrac{h^4}{8}\eta''(x_{i+2}) + O(h^5)\,,
\end{align*}
i.e.
\begin{align*}
J_{2} + J_{3} & = \tfrac{h^2}{2}(\eta(x_{i-1}) + \eta(x_{i+2})) + \tfrac{h^3}{3}(\eta'(x_{i-1})-\eta'(x_{i+2}))
+ \tfrac{h^4}{8}(\eta''(x_{i-1}) + \eta''(x_{i+2})) + O(h^5) \\
& = h^2\eta(x_{i+1/2}) + \tfrac{3h^4}{8}\eta''(x_{i+1/2}) + O(h^5)\,.
\end{align*}
In addition,
\[
J_{1} = 3h\eta(x_{i+1/2}) + \tfrac{1}{3}\tfrac{27h^3}{8}\eta''(x_{i+1/2}) + O(h^4)\,.
\]
Substituting in $(\ref{eq522})$ and in $(\ref{eq519})$ we see that for $2\leq i\leq N-1$
\[
r_{i} = 3h\eta(x_{i+1/2}) + \tfrac{9h^3}{8}\eta''(x_{i+1/2}) - 3h\eta(x_{i+1/2})
- \tfrac{9h^3}{8}\eta''(x_{i+1/2}) + O(h^4) = O(h^4)\,.
\]
Hence, there exists a constant $C$ depending only on $\|\eta'''\|_{\infty}$ such that
\begin{equation}
\max_{1\leq i\leq N}|r_{i}| \leq C h^{4}\,.
\label{eq523}
\end{equation}
From $(\ref{eq518})$ we obtain that $\tfrac{1}{4}\Gamma\ve = \tfrac{1}{4}r$, where
$\tfrac{1}{4}\Gamma = I - E$, and $E$ is a $N\times N$ matrix with $\|E\|_{\infty}=1/2$. Hence
$\|(I-E)^{-1}\|_{\infty} \leq 2$ and thus
\[
\max_{1\leq i \leq N} |\ve_{i}| \leq \tfrac{1}{2}\max_{1\leq i \leq N}|r_{i}| \leq C h^{4}\,.
\]
\end{proof}
If we now assume that $\eta$ is smoother, then, using a result by Demko, \cite{de}, (see also
de Boor,\cite{db}), we may show that $\int_{I_{i}}\rho dx=O(h^{5})$ provided that the endpoints
$x_{i}$, $x_{i+1}$ of the interval $I_{i}$ are at a distance of $O(h\ln 1/h)$ from the boundary
of $[0,1]$.
\begin{lemma} Let $\eta \in C^4$. Then, there exists a constant $\widetilde{C}$ such that if
\[
dist(x_{i},0)\geq \widetilde{C}h\ln 1/h\, \quad \text{and}
\quad dist(x_{i+1},1)\geq \widetilde{C}h\ln 1/h\,,
\]
then
\[
\int_{I_{i}}\rho dx = O(h^5)\,.
\]
\end{lemma}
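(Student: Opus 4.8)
The plan is to return to the linear system $\Gamma\varepsilon=r$ from the proof of Lemma $5.9$, where $\varepsilon_i=\int_{I_i}\rho\,dx$ and $\Gamma$ is the tridiagonal matrix with corner diagonal entries $5$ and interior diagonal entries $4$. Lemma $5.9$ only gives $\max_i|r_i|\le Ch^4$, and correspondingly $\varepsilon_i=O(h^4)$. The key observation is that this $O(h^4)$ behaviour of $r$ is genuinely present only at the two endpoints: the \emph{interior} components $r_j$, $2\le j\le N-1$, are actually $O(h^5)$ once $\eta\in C^4$. The exponential decay of the off-diagonal entries of $\Gamma^{-1}$ (Demko, \cite{de}) will then let me transmit this interior superaccuracy to $\varepsilon_i$ for every index $i$ whose interval $I_i$ is separated from $\partial[0,1]$ by $O(h\ln\tfrac1h)$.

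First I would sharpen the interior residuals. For $2\le j\le N-1$ the functional $\eta\mapsto r_j$ of $(\ref{eq519})$ is linear, and the integration domain $I_{j-1}\cup I_j\cup I_{j+1}$ is symmetric about its midpoint $x_{j+1/2}$. Carrying the Taylor expansion of the proof of Lemma $5.9$ one order further, the odd-order terms integrate to zero and the contributions of $\eta$ and of $\eta''$ cancel exactly between $J_1$ and $\tfrac3h(J_2+J_3)$; in particular $\eta\mapsto r_j$ annihilates every cubic polynomial. Since $\eta\in C^4$, the Bramble--Hilbert lemma (equivalently a Peano-kernel/scaling argument on the interval of length $3h$) then yields
\[
\max_{2\le j\le N-1}|r_j|\le C h^5,\qquad C=C(\|\eta^{(4)}\|_\infty),
\]
while the boundary components $r_1,r_N$ remain only $O(h^4)$.

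Next I would invoke Demko's theorem. The matrix $\Gamma$ is symmetric, positive definite, and tridiagonal, and differs from the standard $\operatorname{tridiag}(1,4,1)$ matrix only in its two corner diagonal entries, so its condition number is bounded independently of $N$; hence there are constants $C>0$ and $\kappa\in(0,1)$, independent of $N$, with $|(\Gamma^{-1})_{ij}|\le C\kappa^{|i-j|}$. Splitting $\varepsilon_i=\sum_{j=1}^N(\Gamma^{-1})_{ij}r_j$ into the two boundary terms and the interior sum gives
\[
|\varepsilon_i|\le C\kappa^{\,i-1}|r_1|+C\kappa^{\,N-i}|r_N|+C\!\!\sum_{2\le j\le N-1}\!\!\kappa^{|i-j|}|r_j|
\le C\kappa^{\,i-1}h^4+C\kappa^{\,N-i}h^4+C'h^5,
\]
the interior sum being controlled by a convergent geometric series. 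Finally I would convert the separation hypothesis into a power of $h$: since $x_i=(i-1)h$ and $\operatorname{dist}(x_{i+1},1)=(N-i)h$, the assumptions read $i-1\ge\widetilde C\ln\tfrac1h$ and $N-i\ge\widetilde C\ln\tfrac1h$, and because $\kappa^{m}\le h$ whenever $m\ge\ln(1/h)/\ln(1/\kappa)$, any $\widetilde C\ge 1/\ln(1/\kappa)$ forces $\kappa^{\,i-1}\le h$ and $\kappa^{\,N-i}\le h$. The two boundary contributions are then $O(h^5)$ as well, whence $\int_{I_i}\rho\,dx=\varepsilon_i=O(h^5)$.

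The main obstacle is the second step: establishing that the interior functionals $\eta\mapsto r_j$ kill cubics and are therefore $O(h^5)$, since this is where the uniform mesh and the extra derivative of $\eta$ do their work and where the cancellations must be checked carefully. Once that interior superaccuracy is secured, the remainder is the routine combination of Demko's exponential decay with the logarithmic distance from the boundary, which is exactly calibrated so that $\kappa^{\operatorname{dist}/h}$ becomes a full power of $h$.
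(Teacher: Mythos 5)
Your proposal is correct and follows essentially the same route as the paper: the interior residuals $r_j$, $2\le j\le N-1$, are sharpened to $O(h^5)$ using the extra smoothness of $\eta$, the boundary residuals $r_1,r_N$ are left at $O(h^4)$ but damped by the exponential decay of the entries of $\Gamma^{-1}$ (Demko) once the $O(h\ln\tfrac1h)$ separation hypothesis converts $\kappa^{i-1}$ and $\kappa^{N-i}$ into an extra factor of $h$. The only (cosmetic) difference is that you obtain the interior bound by noting that the functional $\eta\mapsto r_j$ annihilates cubics and appealing to a Bramble--Hilbert/Peano-kernel argument, whereas the paper computes the leading term explicitly via Simpson's rule and Taylor expansion, identifying it as $\tfrac{h^3}{8}$ times a second difference of $\eta''$; both arguments are valid and use the same $C^4$ regularity.
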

\begin{proof}
We will first show that due to our hypothesis of increased smoothness of $\eta$, we now have
\[
r_{i} = O(h^5)\,, \quad i=2,3,\dots,N-1\,,
\]
where the $r_{i}$ were defined in $(\ref{eq519})$ and written in the form $(\ref{eq522})$. Since
$\eta \in C^4$, using Simpson's rule we approximate the integrals in $(\ref{eq522})$ by
\begin{align*}
\int_{I_{i-1}\cup I_{i}\cup I_{i+1}}\eta dx & =
\tfrac{3h}{6}\bigl(\eta(x_{i-1}) + 4\eta(x_{i+1/2})+\eta(x_{i+2})\bigr)
+ O(h^5)\,, \\
\int_{I_{i-1}}\tfrac{x-x_{i-1}}{h}\eta dx & =
\tfrac{h}{6}\bigl( 2\eta(x_{i-1/2}) + \eta(x_{i})\bigr) + O(h^5)\,, \\
\int_{I_{i+1}}\tfrac{x_{i+2}-x}{h}\eta dx & =
\tfrac{h}{6}\bigl( \eta(x_{i+1}) + 2\eta(x_{i+3/2})\bigr) + O(h^5)\,,
\end{align*}
yielding
\[
r_{i} = \tfrac{h}{2}\bigl( \eta(x_{i-1}) -2\eta(x_{i-1/2}) - \eta(x_{i})
+ 4 \eta(x_{i+1/2}) + \eta(x_{i+2})-2\eta(x_{i+3/2}) - \eta(x_{i+1})
\bigr) + O(h^5)\,.
\]
Hence, using Taylor's theorem
\begin{align*}
r_{i} & = \tfrac{h}{2}\bigl( \tfrac{h^2}{4}\eta''(x_{i-1/2}) - 2\tfrac{h^2}{4}\eta''(x_{i+1/2})
+ \tfrac{h^2}{4}\eta''(x_{i+3/2})\bigr) + O(h^5)\\
& = \tfrac{h^3}{8}\bigl( \eta''(x_{i-1/2}) - 2\eta''(x_{i+1/2}) + \eta''(x_{i+3/2})\bigr) + O(h^5)
= O(h^5)\,.
\end{align*}
We conclude that for constants $C_{1}$, $C_{2}$ independent of $h$ we have, in view
of $(\ref{eq523})$, that
\begin{equation}
|r_{1}| \leq C_{1} h^4\,, \quad \max_{2\leq i \leq N-1}|r_{i}| \leq C_{2}h^5\,, \quad
|r_{N}| \leq C_{1} h^4\,.
\label{eq524}
\end{equation}
We also recall from Lemma $5.6$, cf. $(\ref{eq518})$, that the $\ve_{i}:=\int_{I_{i}}\rho dx$,
$1\leq i\leq N$, form the solution of the system
\[
\tfrac{1}{4}\Gamma\ve=(I-E)\ve=\tfrac{1}{4}r\,,
\]
where $\Gamma$ was defined after $(\ref{eq518})$. Hence
\begin{equation}
\ve_{i} = \sum_{j=1}^{N}(\Gamma^{-1})_{ij}r_{j}= (\Gamma^{-1})_{i1}r_{1} +
\sum_{j=2}^{N-1}(\Gamma^{-1})_{ij}r_{j} + (\Gamma^{-1})_{iN}r_{N}\,.
\label{eq525}
\end{equation}
Now, using Proposition $2.1$ of \cite{de}, we see that there is a constant $C_{3}$ such that
\begin{equation}
|(\Gamma^{-1})_{i1}| \leq C_{3} 2^{-|i-1|}\,, \quad \text{and} \quad
|(\Gamma^{-1})_{iN}| \leq C_{3} 2^{-|i-N|}\,.
\label{eq526}
\end{equation}
Therefore, if $dist(x_{i},0) \geq \tfrac{1}{\ln 2} h\ln 1/h$ and
$dist(x_{i+1},1) \geq \tfrac{1}{\ln2} h \ln 1/h$, there follows that
\[
i-1  \geq \tfrac{1}{\ln 2} \ln 1/h \quad \text{and} \quad
N-i \geq \tfrac{1}{\ln 2} \ln 1/h\,,
\]
i.e.
\[
2^{-|i-1|} \leq h \quad \text{and} \quad  2^{-|i-N|} \leq h\,,
\]
and by $(\ref{eq526})$
\[
|(\Gamma^{-1})_{i1}| \leq C_{3} h \quad \text{and} \quad
|(\Gamma^{-1})_{iN}| \leq C_{3} h\,.
\]
Hence, for the indices $i$ for which
\[
1+\tfrac{1}{\ln 2}\ln 1/h \leq i \leq N - \tfrac{1}{\ln 2} \ln 1/h\,,
\]
we will have from $(\ref{eq525})$ and $(\ref{eq524})$ that
\begin{align*}
|\varepsilon_{i}| & \leq C_{3}h|r_{1}| +
\sum_{j=2}^{N-1}|(\Gamma^{-1})_{ij}||r_{i}| + C_{3} h |r_{N}| \\
& \leq C_{3}C_{1} h^{5} + \max_{1 \leq i \leq N}\sum_{j=1}^{N}|(\Gamma^{-1})_{ij}| C_{2}h^5
+ C_{3}C_{1}h^5 \leq Ch^{5}\,.
\end{align*}
\end{proof}
We now prove, as consequences of these results, several superaccurate estimates on weighted integrals
of the error of the $L^{2}$ projection.
\begin{lemma} Let $\eta \in C^4$, $u \in C_{0}^{2}$, and $\rho=\eta - P\eta$. Then,
\[
\max_{1\leq i \leq N}\bigl| \int_{I_{i}}u\rho dx\bigr| \leq Ch^5\ln 1/h\,.
\]
\end{lemma}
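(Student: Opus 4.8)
The plan is to start from the pointwise expansion of $\int_{I_i}u\rho\,dx$ derived in the opening paragraph of this subsection, namely
\[
\int_{I_i}u\rho\,dx = u(x_{i+1/2})\int_{I_i}\rho\,dx + \tfrac{h^3}{12}\rho'(x_{i+1/2})u'(x_{i+1/2}) + O(h^5),
\]
which is valid because $\eta\in C^3$ and $u\in C^2$. I would estimate the three summands separately. The $O(h^5)$ remainder needs no further work, and the middle term is immediately $O(h^5)$: by the superaccuracy estimate $|\rho'(x_{i+1/2})|\le Ch^2$ (Lemma $5.5$) together with the boundedness of $u'$, one has $\tfrac{h^3}{12}|\rho'(x_{i+1/2})u'(x_{i+1/2})|\le Ch^5$.

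The substantive term is $u(x_{i+1/2})\int_{I_i}\rho\,dx$, and here I would split the index set into an interior part and a boundary layer. Let $\widetilde{C}$ be the constant of Lemma $5.7$ and call $I_i$ \emph{interior} if $dist(x_i,0)\ge\widetilde{C}h\ln 1/h$ and $dist(x_{i+1},1)\ge\widetilde{C}h\ln 1/h$. For an interior interval, Lemma $5.7$ gives the sharp bound $\int_{I_i}\rho\,dx=O(h^5)$, and since $\|u\|_\infty$ is bounded the whole term is $O(h^5)$. For a boundary interval I can no longer invoke Lemma $5.7$; instead I exploit the hypothesis $u\in C_0^2$. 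Because $u(0)=u(1)=0$ and $u$ is Lipschitz, the weight is small near the endpoints: if $I_i$ lies in the left layer then $x_{i+1/2}\le x_{i+1}=O(h\ln 1/h)$, whence $|u(x_{i+1/2})|=|u(x_{i+1/2})-u(0)|\le\|u'\|_\infty x_{i+1/2}=O(h\ln 1/h)$, and symmetrically in the right layer. Combining this with the crude but universally valid estimate $|\int_{I_i}\rho\,dx|\le Ch^4$ of Lemma $5.6$ yields $|u(x_{i+1/2})\int_{I_i}\rho\,dx|\le Ch^5\ln 1/h$ for the boundary intervals.

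Collecting the three contributions over all $i$ then gives $\max_{1\le i\le N}\bigl|\int_{I_i}u\rho\,dx\bigr|\le Ch^5\ln 1/h$, as claimed. The single delicate point — and the only place where the logarithm enters — is the boundary layer: away from the endpoints the sharp $O(h^5)$ of Lemma $5.7$ is at my disposal, but in the $O(h\ln 1/h)$-wide layer I must compensate the lost power of $h$ in $\int_{I_i}\rho\,dx$ against the vanishing of the weight $u$ there. Arranging these two effects to balance, so that neither the weaker interior hypothesis nor the larger near-boundary integral spoils the final rate $h^5\ln 1/h$, is the heart of the argument; the remaining manipulations are routine Taylor expansions and summations.
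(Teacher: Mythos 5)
Your proposal is correct and follows essentially the same route as the paper's proof: the same three-term Taylor expansion of $\int_{I_i}u\rho\,dx$ about $x_{i+1/2}$, with Lemma $5.5$ disposing of the $\rho'(x_{i+1/2})$ term, Lemma $5.7$ handling the interior intervals, and the vanishing of $u$ at the endpoints (via Lemma $5.6$'s $O(h^4)$ bound) handling the $O(h\ln 1/h)$-wide boundary layers. Your closing remark correctly identifies the boundary-layer balance as the source of the logarithm, exactly as in the paper.
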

\begin{proof} If $1\leq i\leq N$ we have for $t_{i}=t_{i}(x) \in I_{i}$
\[
\int_{I_{i}}u\rho dx = u(x_{i+1/2}) \int_{I_{i}}\rho dx
+ u'(x_{i+1/2})\int_{I_{i}}(x-x_{i+1/2})\rho dx
+ \tfrac{1}{2}\int_{I_{i}}(x-x_{i+1/2})^2\rho u''(t_{i})dx\,.
\]
The third integral of the right-hand side is of $O(h^{5})$. For the second integral, by
Taylor's theorem we have
\[
\int_{I_{i}}(x-x_{i+1/2})\rho dx= \tfrac{h^3}{12}\rho'(x_{i+1/2}) + O(h^5)\,,
\]
and using Lemma $5.5$,
\[
\int_{I_{i}}(x-x_{i+1/2})\rho dx = O(h^5)\,.
\]
We conclude that
\begin{equation}
\int_{I_{i}}u\rho dx = u(x_{i+1/2}) \int_{I_{i}}\rho dx + O(h^5)\,.
\label{eq527}
\end{equation}
If now $i$ is such that
\[
dist(x_{i},0) \geq \widetilde{C}h\ln 1/h\,, \quad \text{and} \quad
dist(x_{i+1},1) \geq \widetilde{C}h\ln 1/h\,,
\]
where $\widetilde{C}$ is the constant in the statement of Lemma $5.7$, then
\begin{equation}
\int_{I_{i}}u\rho dx = O(h^5)\,.
\label{eq528}
\end{equation}
If $dist(x_{i},0)\leq \widetilde{C}h\ln 1/h$, we have
\[
u(x_{i+1/2}) = u(x_{i}) + \tfrac{h}{2}u'(\tau_{i})\,, \quad u(x_{i}) = x_{i}u'(y_{i})\,,
\]
for some $\tau_{i} \in (x_{i},x_{i+1/2})$ and $y_{i} \in (0,x_{i})$. Therefore
\[
|u(x_{i+1/2})| \leq Ch\ln 1/h\,,
\]
and from Lemma $5.6$ and $(\ref{eq527})$ we have for such $i$ that
\begin{equation}
\bigl| \int_{I_{i}}u\rho dx \bigr| \leq Ch^5 \ln 1/h\,.
\label{eq529}
\end{equation}
If finally $dist(x_{i+1},1)\leq \widetilde{C}h\ln 1/h$, we have
\[
u(x_{i+1/2}) = u(x_{i+1}) -\tfrac{h}{2}u'(\widetilde{\tau}_{i})\,, \quad
u(x_{i+1}) = (1-x_{i+1})u'(\widetilde{y}_{i})\,,
\]
for some $\widetilde{\tau}_{i} \in (x_{i+1/2},x_{i+1})$ and $\widetilde{y}_{i} \in (x_{i+1},1)$.
Hence
\[
|u(x_{i+1/2})| \leq Ch\ln 1/h\,,
\]
and it follows again from Lemma $5.6$ and $(\ref{eq527})$ that in this case too
\begin{equation}
\bigl| \int_{I_{i}}u\rho dx\bigr| \leq Ch^5 \ln 1/h\,.
\label{eq530}
\end{equation}
The conclusion of the Lemma follows from $(\ref{eq528})$-$(\ref{eq530})$.
\end{proof}
\begin{lemma} Let $\eta \in C^4$, $u \in C_{0}^{2}$, $\rho=\eta-P\eta$, and
\[
\beta_{i} := (u\rho,\phi'_{i})\,, \quad i=1,2,\dots,N+1\,.
\]
If $\beta=(\beta_{1},\beta_{2},\dots,\beta_{N+1})^{T}$ then
\[
|\beta| \leq C h^{3.5}\,,
\]
where $|\cdot|$ is the $l_{2}$ norm in $\mathbb{R}^{N+1}$.
\end{lemma}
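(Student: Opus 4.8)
The plan is to express each component $\beta_i$ through the interval integrals $\int_{I_j}u\rho\,dx$ that were estimated in Lemma $5.8$, and then to exploit the finer localization in that lemma's proof in order to avoid losing a logarithmic factor when passing to the $\ell_2$ norm. Since the derivatives of the hat functions are piecewise constant, with $\phi_i' = h^{-1}$ on $I_{i-1}$ and $\phi_i' = -h^{-1}$ on $I_i$ for $2\le i\le N$ (and only one nonzero piece for $i=1$ and $i=N+1$), I would first write
\[
\beta_i = \tfrac{1}{h}\int_{I_{i-1}} u\rho\,dx - \tfrac{1}{h}\int_{I_i} u\rho\,dx\,, \quad 2\le i\le N\,,
\]
together with $\beta_1 = -h^{-1}\int_{I_1}u\rho\,dx$ and $\beta_{N+1} = h^{-1}\int_{I_N}u\rho\,dx$.

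Next I would recall the two regimes established inside the proof of Lemma $5.8$: if $I_i$ lies in the interior, i.e.\ $\mathrm{dist}(x_i,0)\ge\widetilde{C}h\ln 1/h$ and $\mathrm{dist}(x_{i+1},1)\ge\widetilde{C}h\ln 1/h$, then $\int_{I_i}u\rho\,dx = O(h^5)$ (cf.\ $(\ref{eq528})$), whereas for the $O(\ln 1/h)$ intervals meeting the boundary layer one has only the weaker bound $|\int_{I_i}u\rho\,dx| \le C h^5\ln 1/h$ (cf.\ $(\ref{eq529})$, $(\ref{eq530})$). Accordingly I would split the index set $\{1,\dots,N+1\}$ into those $i$ for which both adjacent intervals $I_{i-1}$, $I_i$ are interior, and the remaining $O(\ln 1/h)$ indices that touch the boundary layer. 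For an interior index the two $O(h^5)$ contributions give $|\beta_i|\le Ch^4$, while for a boundary index the cruder estimate yields $|\beta_i| \le Ch^4\ln 1/h$.

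The final step is the $\ell_2$ accounting, where the cancellation of the logarithm takes place. The interior indices number at most $N+1 = O(h^{-1})$, and each contributes $\beta_i^2 \le Ch^8$, so their total contribution to $|\beta|^2$ is $O(h^{-1}\cdot h^8) = O(h^7)$. The boundary indices number only $O(\ln 1/h)$, and each contributes $\beta_i^2 \le Ch^8(\ln 1/h)^2$, so their total contribution is $O\bigl(h^8(\ln 1/h)^3\bigr)$, which is dominated by $h^7$ for $h$ small. Hence $|\beta|^2 \le Ch^7$ and $|\beta|\le Ch^{3.5}$, as claimed.

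I expect the main subtlety to be precisely the refusal to apply the uniform bound $|\int_{I_i}u\rho\,dx|\le Ch^5\ln 1/h$ across all $N$ intervals: doing so would only produce the suboptimal estimate $|\beta|\le Ch^{3.5}\ln 1/h$. The point is that the logarithm is confined to a boundary layer of logarithmic width, so summing squares over that layer costs merely one further power of $\ln 1/h$ while being multiplied by the already tiny factor $h^8$; it is this separation of the interior ($O(h^5)$) and boundary ($O(h^5\ln 1/h)$) regimes, inherited from Lemmas $5.6$ and $5.7$, that removes the logarithm from the final bound.
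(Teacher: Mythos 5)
Your proposal is correct and follows essentially the same route as the paper's proof: the components $\beta_i$ are written as differences of the interval integrals $h^{-1}\int_{I_j}u\rho\,dx$, the index set is split into the interior set (where $(\ref{eq528})$ gives $|\beta_i|\leq Ch^{4}$) and the $O(\ln 1/h)$ boundary-layer indices (where $(\ref{eq529})$--$(\ref{eq530})$ give $|\beta_i|\leq Ch^{4}\ln 1/h$), and the $\ell_2$ sum then yields $|\beta|^{2}\leq Ch^{7}$ exactly as in the paper. Your closing remark correctly identifies the point of the two-regime splitting, namely that the logarithmic loss is confined to a layer of only logarithmic cardinality.
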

\begin{proof} We have
\begin{align*}
\beta_{1} & = -\tfrac{1}{h}\int_{I_{1}}u\rho dx\,, \\
\beta_{i} & = \tfrac{1}{h}\int_{I_{i-1}}u\rho dx - \tfrac{1}{h}\int_{I_{i}}u\rho dx\,, \quad
i=2,3,\dots,N\,, \\
\beta_{N+1} & = \tfrac{1}{h}\int_{I_{N}}u\rho dx\,.
\end{align*}
If $dist(x_{i-1},0) \geq \widetilde{C}h\ln 1/h$ and $dist(x_{i+1},1) \geq \widetilde{C}h\ln 1/h$,
i.e. if $i \in K$, where $K$ are the integers in the interval
$[2+\widetilde{C}\ln 1/h, N-\widetilde{C}\ln 1/h]$, then by $(\ref{eq528})$
\begin{equation}
|\beta_{i}| \leq C h^4\,.
\label{eq531}
\end{equation}
If on the other hand $dist(x_{i-1},0) \leq \widetilde{C}h\ln 1/h$
or $dist(x_{i+1},1) \leq \widetilde{C}h\ln 1/h$, i.e. if $i \in K_{1}$, where $K_{1}$ are the integers
in $[1, 1+\widetilde{C}\ln 1/h] \cup [N+1-\widetilde{C}\ln 1/h, N+1]$, then, by $(\ref{eq529})$ and
$(\ref{eq530})$
\[
|\beta_{i}| \leq C h^4 \ln 1/h\,.
\]
Therefore,
\begin{align*}
|\beta|^{2} = \sum_{i=1}^{N+1}|\beta_{i}|^{2} & =\sum_{i\in K_{1}}|\beta_{i}|^{2}+
\sum_{i\in K}|\beta_{i}|^{2} \\
& \leq (1 + 2\widetilde{C}\ln 1/h)Ch^8(\ln 1/h)^{2} + C(N-1-2\widetilde{C}\ln 1/h)h^8
\leq Ch^{7}\,.
\end{align*}
\end{proof}
\begin{lemma} Let $w \in C^2$, $v \in C^{1}$ and
και
\[
\beta_{i} = (v(w-Pw),\phi_{i})\,, \quad i=1,2,\dots,N+1\,.
\]
If $\beta=(\beta_{1},\beta_{2},\dots,\beta_{N+1})^{T}$, then
\[
|\beta| \leq C h^{3.5}\,,
\]
where $|\cdot|$ is the $l_{2}$ norm in $\mathbb{R}^{N+1}$.
\end{lemma}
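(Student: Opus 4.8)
The plan is to exploit the $L^2$-orthogonality of $\rho:=w-Pw$ to $S_h^2$ rather than the pointwise superaccuracy machinery of Lemmas $5.5$--$5.8$, which is unavailable here since those results require $w\in C^3$ or $C^4$ whereas we are only given $w\in C^2$. Because $P$ is the $L^2$-projection onto $S_h^2$ and each basis function $\phi_i$ lies in $S_h^2$, for every constant $c$ the element $c\phi_i$ is a legitimate test function, so $(\rho,c\phi_i)=0$. First I would use this to subtract the value of $v$ at the node $x_i$: writing $\beta_i=(v\rho,\phi_i)=(\rho,v\phi_i)$ and noting $v(x_i)\phi_i\in S_h^2$, I obtain the localized identity
\[
\beta_i=(\rho,(v-v(x_i))\phi_i),\qquad 1\le i\le N+1 .
\]
The point of this rewriting is that the factor $v-v(x_i)$ now vanishes at the node and is supported on $[x_{i-1},x_{i+1}]$ (or on $[x_1,x_2]$, $[x_N,x_{N+1}]$ for the endpoint indices), so the argument is uniform in $i$ and needs no separate boundary bookkeeping of the kind used in Lemma $5.8$.

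Next I would estimate each $\beta_i$ by Cauchy--Schwarz over $\operatorname{supp}\phi_i$. Because $v\in C^1$ and every point of $\operatorname{supp}\phi_i$ lies within distance $h$ of $x_i$, we have $|v(x)-v(x_i)|\le\|v'\|_\infty h$ there, whence $\|(v-v(x_i))\phi_i\|\le Ch\,\|\phi_i\|\le Ch^{3/2}$, using $\|\phi_i\|=O(h^{1/2})$. For the other factor I would invoke the standard localized $L^2$ bound coming from $\|\rho\|_\infty\le Ch^2$ (valid for $w\in C^2$, as recorded among the preliminary estimates of this section): over a patch of length $O(h)$ this gives $\|\rho\|_{L^2(\operatorname{supp}\phi_i)}\le\|\rho\|_\infty(2h)^{1/2}\le Ch^{5/2}$. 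Combining the two factors yields the per-coefficient bound $|\beta_i|\le Ch^{3/2}\cdot Ch^{5/2}=Ch^4$, uniformly in $i$.

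Finally I would sum in $\ell_2$: since there are $N+1\le 1+1/h$ indices,
\[
|\beta|^2=\sum_{i=1}^{N+1}\beta_i^2\le (N+1)\,C^2h^8\le C^2h^7,
\]
so $|\beta|\le Ch^{3.5}$, as claimed. The main obstacle, and really the crux of the whole estimate, is recognizing exactly how much cancellation orthogonality can buy: subtracting the constant $v(x_i)$ supplies one factor of $h$ from $v-v(x_i)$, and restricting $\|\rho\|$ to a single patch supplies an extra half-power of $h$ over the global $L^2$ bound, upgrading the naive estimate $|\beta_i|=O(h^3)$ (which would give only $|\beta|=O(h^{2.5})$) to $|\beta_i|=O(h^4)$. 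One cannot do better by subtracting a higher-order local correction, because the natural next term $v'(x_i)(x-x_i)\phi_i$ is piecewise quadratic and hence no longer lies in $S_h^2$, so it is not available as a free test function; stopping at the constant term is thus both forced and exactly enough to reach the target exponent $h^{3.5}$.
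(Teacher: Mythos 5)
Your proof is correct and follows essentially the same route as the paper's: freeze $v$ at the node $x_i$, use the $L^2$-orthogonality of $w-Pw$ to $S_h^2$ to kill the leading term $v(x_i)(\rho,\phi_i)$, bound the remainder by $O(h^4)$ per coefficient using $\|w-Pw\|_\infty=O(h^2)$ together with the $O(h)$ support of $\phi_i$, and sum in $\ell_2$. The only cosmetic difference is that you apply Cauchy--Schwarz on the patch where the paper uses a direct $L^\infty$--$L^1$ bound.
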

\begin{proof} For $x\in I_{1}$ we have
\[
v(x) = v(h) + O(h)\,,
\]
and since $\|w-Pw\|_{\infty} = O(h^2)$,
\[
v(x)(w-Pw)(x) = v(h)(w-Pw)(x) + O(h^3)\,,
\]
and therefore
\[
\beta_{1} = O(h^4)\,.
\]
If $x \in I_{i-1}\cup I_{i}$, $i=2,3,\dots,N$, we have
\[
v(x) = v(x_{i}) + O(h)\,,
\]
whence
\[
v(x)(w-Pw)(x) = v(x_{i})(w-Pw)(x) + O(h^3)\,,
\]
and so
\[
\beta_{i} = O(h^4)\,.
\]
Similarly, $\beta_{N+1} = O(h^4)$, and finally $|\beta| = O(h^{3.5})$.
\end{proof}
\subsection{Numerical experiments} We considered first the $(\ref{cb})$ system, that we
discretized by the nonstandard method analyzed in the previous two sections using the subspaces
$S_{h}^{2}$ and $S_{h,0}^{3}$ for approximating $\eta$ and $u$, respectively. (We considered the
nonhomogeneous system with a suitable right-hand side so that the solution is
$\eta=\exp(2t)(\cos(\pi x) + x^{2} +2)$, $u=\exp(xt)(\sin(\pi x) + x^{3} -x^{2})$.) We integrated
the semidiscrete system up to $T=1$ by the fourth-order accurate explicit Runge-Kutta method
analyzed in paragraph $4.3$ with time step $k=h/10$; the temporal discretization error was
negligible in comparison to the spatial error.
\def\baselinestretch{1}
\captionsetup[subtable]{labelformat=empty,position=top,margin=1pt,singlelinecheck=false}
\scriptsize
\begin{table}[h]
\subfloat[$L^{2}$-errors]{
\begin{tabular}[h]{ | c | c | c | c | c | }\hline
$N$   &    $\eta$      &  $order$  &      $u$       &   $order$   \\ \hline
$40$  &  $0.1250(-2)$  &           &  $0.4057(-5)$  &            \\ \hline
$60$  &  $0.5555(-3)$  &  $2.001$  &  $0.1199(-5)$  &   $3.008$   \\ \hline
$80$  &  $0.3124(-3)$  &  $2.000$  &  $0.5051(-6)$  &   $3.004$   \\ \hline
$100$ &  $0.1999(-3)$  &  $2.000$  &  $0.2585(-6)$	&   $3.002$   \\ \hline
$120$ &	 $0.1388(-3)$  &  $2.000$  &  $0.1495(-6)$	&   $3.001$   \\ \hline
$140$ &	 $0.1020(-3)$  &  $2.000$  &  $0.9416(-7)$	&   $3.001$   \\ \hline
$160$ &	 $0.7810(-4)$  &  $2.000$  &  $0.6307(-7)$	&   $3.001$   \\ \hline
$180$ &  $0.6170(-4)$  &  $2.000$  &  $0.4429(-7)$  &   $3.001$   \\ \hline
\end{tabular}
}\qquad
\subfloat[$L^{\infty}$-errors]{
\begin{tabular}[h]{ | c | c | c | c | c | }\hline
$N$   &    $\eta$      &  $order$  &      $u$       &   $order$   \\ \hline
$40$  &  $0.3342(-2)$  &           &  $0.5955(-5)$  &            \\ \hline
$60$  &  $0.1485(-2)$  &  $2.000$  &  $0.1780(-5)$  &   $2.979$   \\ \hline
$80$  &  $0.8357(-3)$  &  $1.999$  &  $0.7539(-6)$  &   $2.985$   \\ \hline
$100$ &  $0.5349(-3)$  &  $2.000$  &  $0.3870(-6)$	&   $2.989$   \\ \hline
$120$ &	 $0.3714(-3)$  &  $2.000$  &  $0.2243(-6)$	&   $2.991$   \\ \hline
$140$ &	 $0.2729(-3)$  &  $2.000$  &  $0.1414(-6)$	&   $2.992$   \\ \hline
$160$ &	 $0.2089(-3)$  &  $2.000$  &  $0.9483(-7)$	&   $2.993$   \\ \hline
$180$ &  $0.1651(-3)$  &  $2.000$  &  $0.6665(-7)$  &   $2.994$   \\ \hline
\end{tabular}
}
\\
\subfloat[$H^{1}$-errors]{
\begin{tabular}[h]{ | c | c | c | c | c | }\hline
$N$   &    $\eta$      &  $order$  &      $u$       &   $order$   \\ \hline
$40$  &  $0.3872$      &           &  $0.1048(-2)$  &            \\ \hline
$60$  &  $0.2581$      &  $1.000$  &  $0.4652(-3)$  &   $2.002$   \\ \hline
$80$  &  $0.1936$      &  $1.000$  &  $0.2616(-3)$  &   $2.001$   \\ \hline
$100$ &  $0.1549$      &  $1.000$  &  $0.1674(-3)$	&   $2.001$   \\ \hline
$120$ &	 $0.1290$      &  $1.000$  &  $0.1162(-3)$	&   $2.000$   \\ \hline
$140$ &	 $0.1106$      &  $1.000$  &  $0.8540(-4)$	&   $2.000$   \\ \hline
$160$ &	 $0.9678(-1)$  &  $1.000$  &  $0.6538(-4)$	&   $2.000$   \\ \hline
$180$ &  $0.8603(-1)$  &  $1.000$  &  $0.5166(-4)$  &   $2.000$   \\ \hline
\end{tabular}
}
\normalsize
\caption{Errors and orders of convergence. $(\ref{cb})$ system, nonstandard Galerkin
semidiscretization, $\eta_{h} \in S_{h}^{2}$, $u_{h} \in S_{h,0}^{3}$, uniform mesh.}
\label{tbl51}
\end{table}
\normalsize
Table \ref{tbl51} shows the resulting errors and orders of convergence in various norms. The optimal-order
$L^{2}$- and $L^{\infty}$ rates of convergence of Theorem $5.1$ are confirmed and the $H^{1}$ rates
are as expected. We obtained the same rates for the $(\ref{scb})$ system. Table \ref{tbl52} shows the
analogous errors and rates
\def\baselinestretch{1}
\captionsetup[subtable]{labelformat=empty,position=top,margin=1pt,singlelinecheck=false}
\scriptsize
\begin{table}[h]
\subfloat[$L^{2}$-errors]{
\begin{tabular}[h]{ | c | c | c | c | c | }\hline
$N$   &    $\eta$      &  $order$  &      $u$       &   $order$   \\ \hline
$40$  &  $0.2214(-4)$  &           &  $0.1529(-6)$  &            \\ \hline
$60$  &  $0.6628(-5)$  &  $2.975$  &  $0.3044(-7)$  &   $3.981$   \\ \hline
$80$  &  $0.2810(-5)$  &  $2.982$  &  $0.9670(-8)$  &   $3.986$   \\ \hline
$100$ &  $0.1443(-5)$  &  $2.986$  &  $0.3970(-8)$	&   $3.989$   \\ \hline
$120$ &	 $0.8370(-6)$  &  $2.989$  &  $0.1918(-8)$	&   $3.991$   \\ \hline
$140$ &	 $0.5279(-6)$  &  $2.990$  &  $0.1036(-8)$	&   $3.992$   \\ \hline
$160$ &	 $0.3540(-6)$  &  $2.992$  &  $0.6081(-9)$	&   $3.993$   \\ \hline
$180$ &  $0.2488(-6)$  &  $2.993$  &  $0.3799(-9)$  &   $3.994$   \\ \hline
\end{tabular}
}\qquad
\subfloat[$L^{\infty}$-errors]{
\begin{tabular}[h]{ | c | c | c | c | c | }\hline
$N$   &    $\eta$      &  $order$  &      $u$       &   $order$   \\ \hline
$40$  &  $0.4812(-4)$  &           &  $0.6323(-6)$  &            \\ \hline
$60$  &  $0.1422(-4)$  &  $3.006$  &  $0.1290(-6)$  &   $3.921$   \\ \hline
$80$  &  $0.5993(-5)$  &  $3.005$  &  $0.4145(-7)$  &   $3.946$   \\ \hline
$100$ &  $0.3066(-5)$  &  $3.004$  &  $0.1713(-7)$	&   $3.958$   \\ \hline
$120$ &	 $0.1773(-5)$  &  $3.003$  &  $0.8314(-8)$	&   $3.966$   \\ \hline
$140$ &	 $0.1116(-5)$  &  $3.003$  &  $0.4507(-8)$	&   $3.972$   \\ \hline
$160$ &	 $0.7474(-6)$  &  $3.003$  &  $0.2651(-8)$	&   $3.976$   \\ \hline
$180$ &  $0.5248(-6)$  &  $3.002$  &  $0.1659(-8)$  &   $3.979$   \\ \hline
\end{tabular}
}
\\
\subfloat[$H^{1}$-errors]{
\begin{tabular}[h]{ | c | c | c | c | c | }\hline
$N$   &    $\eta$      &  $order$  &      $u$       &   $order$   \\ \hline
$40$  &  $0.5999(-2)$  &           &  $0.3854(-4)$  &            \\ \hline
$60$  &  $0.2654(-2)$  &  $2.011$  &  $0.1152(-4)$  &   $2.978$   \\ \hline
$80$  &  $0.1490(-2)$  &  $2.008$  &  $0.4884(-5)$  &   $2.984$   \\ \hline
$100$ &  $0.9520(-3)$  &  $2.006$  &  $0.2509(-5)$	&   $2.988$   \\ \hline
$120$ &	 $0.6605(-3)$  &  $2.005$  &  $0.1454(-5)$	&   $2.990$   \\ \hline
$140$ &	 $0.4850(-3)$  &  $2.004$  &  $0.9168(-6)$	&   $2.991$   \\ \hline
$160$ &	 $0.3711(-3)$  &  $2.004$  &  $0.6148(-6)$	&   $2.992$   \\ \hline
$180$ &  $0.2931(-3)$  &  $2.003$  &  $0.4321(-6)$  &   $2.993$   \\ \hline
\end{tabular}
}
\normalsize
\caption{Errors and orders of convergence. $(\ref{cb})$ system, nonstandard Galerkin
semidiscretization, $\eta_{h} \in S_{h}^{3}$, $u_{h} \in S_{h,0}^{4}$, uniform mesh.}
\label{tbl52}
\end{table}
\normalsize
of convergence for the same problem but now discretized so that $\eta_{h} \in S_{h}^{3}$, i.e.
in the space of $C^{1}$ quadratic splines, and $u_{h} \in S_{h,0}^{4}$, i.e. in the space of cubic
splines that vanish at $x=0$ and at $x=1$. We observe e.g. that the associated $L^{2}$-rates
of convergence for this higher order pair of subspaces are, as expected, equal to $3$ for
$\eta$ and $4$ for $u$. The $(\ref{scb})$ system gave the same results. \par
The error analysis in paragraphs $5.1$ and $5.2$ depended strongly on the assumption of uniform mesh.
In the case of nonuniform mesh we expect some order reduction. For example, Table \ref{tbl53} shows the
$L^{2}$ errors and orders of convergence that we obtained for $(\ref{cb})$ by approximating
$(\eta_{h}, u_{h})$ in $S_{h}^{2}\times S_{h,0}^{3}$ and using for both subspaces the exact solution
and the nonuniform mesh with which Table \ref{tbl23}(a) was produced. {\em{Both}}  rates are apparently
reduced by one now.
\def\baselinestretch{1}
\scriptsize
\begin{table}[h]
\begin{tabular}[h]{ | c | c | c | c | c | }\hline
$N$   &    $\eta$      &  $order$  &      $u$       &   $order$   \\ \hline
$80$  &  $0.1372(-2)$  &           &  $0.3879(-5)$  &            \\ \hline
$160$ &  $0.6849(-3)$  &  $1.002$  &  $0.9569(-6)$  &   $2.019$   \\ \hline
$240$ &  $0.4564(-3)$  &  $1.001$  &  $0.4241(-6)$  &   $2.007$   \\ \hline
$320$ &  $0.3422(-3)$  &  $1.000$  &  $0.2383(-6)$	&   $2.004$   \\ \hline
$400$ &	 $0.2738(-3)$  &  $1.000$  &  $0.1524(-6)$	&   $2.002$   \\ \hline
$480$ &	 $0.2281(-3)$  &  $1.000$  &  $0.1058(-6)$	&   $2.002$   \\ \hline
$560$ &	 $0.1955(-3)$  &  $1.000$  &  $0.7775(-7)$	&   $2.001$   \\ \hline
$640$ &  $0.1711(-3)$  &  $1.000$  &  $0.5952(-7)$  &   $2.001$   \\ \hline
$720$ &  $0.1509(-3)$  &  $1.000$  &  $0.4702(-7)$  &   $2.001$   \\ \hline
\end{tabular}
\normalsize
\caption{$L^{2}$-errors and orders of convergence. $(\ref{cb})$ system, nonstandard Galerkin
semidiscretization, $\eta_{h} \in S_{h}^{2}$, $u_{h} \in S_{h,0}^{3}$, quasiuniform mesh
with $\tfrac{\max h_{i}}{\min h_{i}}=1.5$}
\label{tbl53}
\end{table}
\normalsize
\section{Remarks on standard Galerkin methods for related hyperbolic problems}
We conclude the paper with a section of remarks on the application of standard Galerkin methods
on some simple first-order hyperbolic problems. Our aim is to show examples of such problems
in which the techniques previously developed in this paper may be applied to some advantage.
\subsection{Initial-boundary-value problem for a single hyperbolic equation} For $T > 0$ consider
the initial-boundary-value problem
\begin{equation}
\begin{aligned}
& \eta_{t} +  \eta_{x} = 0\,, \qquad \,\,\,\, 0\leq x\leq 1\,, \quad 0\leq t\leq T\,,\\
& \eta(x,0) = \eta_{0}(x)\,, \quad 0\leq x\leq 1\,,\\
& \eta(0,t) = 0\,, \qquad \quad 0\leq t\leq T\,.
\end{aligned}
\label{eq61}
\end{equation}
It is clear that if $\eta_{0}\in C^{k}$, for integer $k\geq 0$, and $\eta_{0}^{(j)}(0)=0$,
$j=0,\dots,k$, then $(\ref{eq61})$ possesses a unique solution $\eta$ given for
$(x,t) \in [0,1]\times [0,T]$ by
\begin{equation*}
\eta(x,t) =
\begin{cases}
\eta_{0}(x-t) &, \quad x > t\,, \\
0 &, \quad x < t\,.
\end{cases}
\end{equation*}
The solution is classical if $k\geq 1$ and, for example, has the property that $\eta(\cdot,t) \in C^{k}$
for all $t \in [0,T]$. For integer $N\geq 2$ and $h=1/N$ we consider the uniform mesh given by
$x_{i}=ih$, $i=0,1,\dots,N$, and put $I_{j}=[x_{j-1},x_{j}]$. We let
\[
S_{h}^{2}\llap{\raise1.7ex\hbox{\scriptsize $\circ$}\hskip5pt}=
\{\phi \in C[0,1] : \phi\big|_{I_{j}} \in \mathbb{P}_{1},\,\,
1\leq j\leq N, \,\, \phi(0)=0\}\,,
\]
and denote by $\{\phi_{j}\}_{j=1}^{N}$ the basis of
$S_{h}^{2}\llap{\raise1.7ex\hbox{\scriptsize $\circ$}\hskip5pt}$ with the property that
$\phi_{i}(x_{j})=\delta_{ij}$. The standard Galerkin method for $(\ref{eq61})$  may be then defined as
follows: We seek $\eta_{h} : [0,T] \to S_{h}^{2}\llap{\raise1.7ex\hbox{\scriptsize $\circ$}\hskip5pt}$,
such that
\begin{equation}
\begin{aligned}
& (\eta_{ht},\phi) + (\eta_{hx},\phi) = 0\,, \quad \forall \phi \in
 S_{h}^{2}\llap{\raise1.7ex\hbox{\scriptsize $\circ$}\hskip5pt}\,, \,\, 0\leq t\leq T\,, \\
& \eta_{h}(0) = P\eta_{0}\,,
\end{aligned}
\label{eq62}
\end{equation}
where $P$ is the $L^{2}$-projection operator onto
$S_{h}^{2}\llap{\raise1.7ex\hbox{\scriptsize $\circ$}\hskip5pt}$. Obviously,
$(\ref{eq62})$ has a unique solution that satisfies e.g.
$\|\eta_{h}(t)\| \leq \|\eta_{h}(0)\|$, $t\in [0,T]$. Recall that we have
$\|v-Pv\| \leq Ch^{2}\|v\|_{2}$ for $v \in H^{2}$ with $v(0)=0$, and
$\|v-Pv\|_{\infty} \leq Ch^{2}\|v\|_{2,\infty}$ for $v \in C^{2}$ with $v(0)=0$. Our aim is to prove
an optimal-order $L^{2}$ error estimate for the solution of $(\ref{eq62})$, thus extending the
result of Dupont, \cite{d}, for the periodic initial-value problem for $\eta_{t}+\eta_{x}=0$ to the
case of the initial-boundary-value problem at hand. To do this, we first prove a lemma along the
lines of the proof of Lemma $5.6$.
\begin{lemma} Suppose that $v$ is $C^{2}$ and piecewise $C^{3}$ on $[0,1]$ and satisfies
$v(0)=v'(0)=v''(0)=0$. Then, if $\rho=v-Pv$ and
$\zeta \in S_{h}^{2}\llap{\raise1.7ex\hbox{\scriptsize $\circ$}\hskip5pt}$ is such that
\[
(\zeta,\phi) = (\rho,\phi') \quad \forall \phi \in
S_{h}^{2}\llap{\raise1.7ex\hbox{\scriptsize $\circ$}\hskip5pt}\,,
\]
we have $\|\zeta\| \leq Ch^{2}\|v^{(3)}\|_{\infty}$.
\end{lemma}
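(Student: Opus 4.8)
The plan is to estimate $\zeta$ by its expansion in the nodal basis $\{\phi_j\}_{j=1}^N$ of $\shw$, exactly as was done in the proof of Lemma~$5.6$ and in Lemma~$5.8$. Writing $\zeta=\sum_{j}\gamma_j\phi_j$ with $\gamma=G^{-1}b$ and $b_i=(\rho,\phi_i')$, I would invoke the analog of Lemma~$2.1(ii)$ for $\shw$ (which is immediate, since deleting the node at $x=1$ from the construction of Lemma~$2.1$ changes nothing essential) to reduce matters to showing $|b|\le Ch^{5/2}\|v^{(3)}\|_\infty$, whence $\|\zeta\|\le (c_1h)^{-1/2}|b|\le Ch^2\|v^{(3)}\|_\infty$. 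So the whole task becomes the estimation of the components $b_i=(\rho,\phi_i')$.

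\textbf{Estimating the interior components.} First I would record the explicit form
\[
b_i=(\rho,\phi_i')=\tfrac{1}{h}\Bigl(\int_{I_i}\rho\,dx-\int_{I_{i+1}}\rho\,dx\Bigr),\qquad 1\le i\le N-1,
\]
with the one-sided formula $b_N=\tfrac{1}{h}\int_{I_N}\rho\,dx$ at the right endpoint (here $I_j=[x_{j-1},x_j]$). The point is that each $b_i$ is a difference of the mean-value-type integrals $\int_{I_i}\rho\,dx$ already analyzed in Lemma~$5.6$. From that lemma I have $\int_{I_i}\rho\,dx=O(h^4)\|v^{(3)}\|_\infty$ for every $i$, which gives $|b_i|\le Ch^3\|v^{(3)}\|_\infty$ crudely; summing the $N=O(h^{-1})$ squares yields $|b|\le Ch^{5/2}\|v^{(3)}\|_\infty$, which is exactly what is needed. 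So in fact the bound follows directly from Lemma~$5.6$ without any sharper cancellation, since the extra power of $h^{-1}$ from the factor $1/h$ in $b_i$ is still compensated.

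\textbf{Handling the boundary interval.} The hypotheses $v(0)=v'(0)=v''(0)=0$ are what make the estimate uniform up to the left boundary: near $x=0$ the function $v$ behaves like $O(x^3)$, so the local Taylor expansions underlying Lemma~$5.6$ (which there used $\eta\in C^3$ and produced constants depending on $\|\eta'''\|_\infty$) remain valid with the same constant $C(\|v^{(3)}\|_\infty)$ on the first interval. The hypothesis that $v$ is merely piecewise $C^3$ is harmless because the Simpson/Taylor bounds in Lemma~$5.6$ were local to each $I_i$ and only required a bounded third derivative interval-by-interval. The main obstacle is therefore purely bookkeeping: I must check that the one-sided relation at $i=1$, together with the vanishing conditions at $x=0$, does not generate an anomalously large $b_1$ that would spoil the $\ell_2$-sum. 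The vanishing of $v,v',v''$ at $0$ guarantees $\int_{I_1}\rho\,dx=O(h^4)$ with the same controlling constant, so $b_1$ is no worse than the interior components and the boundary contributes only an $O(\ln 1/h)$-free term to $|b|^2$.

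\textbf{Conclusion.} Combining the interior and boundary bounds gives $|b|^2=\sum_{i=1}^N b_i^2\le C\,N\,h^6\|v^{(3)}\|_\infty^2\le Ch^5\|v^{(3)}\|_\infty^2$, so $|b|\le Ch^{5/2}\|v^{(3)}\|_\infty$, and the $\shw$-analog of Lemma~$2.1(ii)$ finishes the proof with $\|\zeta\|\le Ch^2\|v^{(3)}\|_\infty$. I expect no genuine difficulty beyond confirming that Lemma~$5.6$ applies verbatim to the homogeneous-at-one-end space $\shw$; the essential analytic content has already been isolated there, and the present lemma is the single-equation, one-sided-boundary counterpart of the $L^2$-projection superaccuracy used throughout Section~$5$.
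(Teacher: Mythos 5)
Your proposal follows essentially the same route as the paper: reduce via the $\shw$-analog of Lemma $2.1(ii)$ to showing $|b|=O(h^{5/2})$, write each $b_i=(\rho,\phi_i')$ as an $h^{-1}$-scaled difference of the cell averages $\int_{I_i}\rho\,dx$, and show each such average is $O(h^4)\|v^{(3)}\|_{\infty}$ by the tridiagonal-system argument of Lemma $5.6$, with the compatibility conditions $v(0)=v'(0)=v''(0)=0$ absorbing the modified first row coming from the constraint $\phi(0)=0$. The only caveat — which you correctly flag but do not carry out — is that Lemma $5.6$ cannot be cited verbatim since $P$ is now the projection onto $\shw$ rather than $S_h^2$, so the system $\Gamma\ve=r$ must be rederived (the paper does this explicitly, obtaining $\Gamma_{11}=3$ instead of $5$ and checking $|r_1|\le Ch^4\|v^{(3)}\|_{L^\infty[0,2h]}$ from the vanishing conditions); this is exactly the bookkeeping you anticipate and it goes through.
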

\begin{proof}
We will first show, as in Lemma $5.6$, that if $\ve_{i}=\int_{I_{i}}\rho dx$, $1\leq i\leq N$,
then $\ve_{i} = O(h^{4})$. Let $Pv=\sum_{j=1}^{N}c_{j}\phi_{j}$. Then, since
$(Pv,\phi_{i}) = (v,\phi_{i})$, $1\leq i\leq N$, we have $Gc=b$, where $G_{ij}=(\phi_{j},\phi_{i})$,
$1\leq i,j\leq N$, is the $N\times N$ tridiagonal matrix with $G_{ii}=2h/3$, $1\leq i\leq N-1$,
$G_{NN}=h/3$, and $G_{ij}=h/6$ if $|i-j|=1$, and $b_{i}=(v,\phi_{i})$, $1\leq i\leq N$. Hence,
\begin{equation}
\ve_{i} = \int_{I_{i}}\rho dx = \int_{I_{i}} v dx - \gamma_{i}\,, \quad
1\leq i\leq N\,,
\label{eq63}
\end{equation}
where
\begin{equation}
\gamma_{i} = \int_{I_{i}}Pv dx =
\begin{cases}
\tfrac{hc_{1}}{2} & \text{if}\quad i=1\,, \vspace{5pt} \\
\tfrac{h(c_{i-1}+c_{i})}{2} & \text{if} \quad 2\leq i\leq N\,.
\end{cases}
\label{eq64}
\end{equation}
The equations of the system $Gc=b$ are
\begin{align*}
& \tfrac{h}{6}(4c_{1} + c_{2}) = b_{1}\,, \\
& \tfrac{h}{6}(c_{i-1} + 4c_{i} + c_{i+1}) = b_{i}\,, \quad 2\leq i\leq N-1\,,\\
& \tfrac{h}{6}(c_{N-1} + 2c_{N}) = b_{N}\,,
\end{align*}
and may be rewritten as
\begin{align*}
& 3\cdot \tfrac{hc_{1}}{2} + \tfrac{h(c_{1}+c_{2})}{2} = 3b_{1}\,, \\
& \tfrac{hc_{1}}{2} + 4\cdot\tfrac{h(c_{1}+c_{2})}{2} + \tfrac{h(c_{2}+c_{3})}{2}
=3(b_{1}+b_{2})\,, \\
& \tfrac{h(c_{i-2}+c_{i-1})}{2} + 4\cdot\tfrac{h(c_{i-1}+c_{i})}{2}
+\tfrac{h(c_{i}+c_{i+1})}{2} = 3(b_{i-1}+b_{i})\,, \quad 3\leq i\leq N-1\,,\\
& \tfrac{h(c_{N-2}+c_{N-1})}{2} + 5\cdot\tfrac{h(c_{N-1} + c_{N})}{2}
= 3(b_{N-1} + 2b_{N})\,.
\end{align*}
Therefore, from $(\ref{eq64})$ we conclude that
\begin{equation}
\Gamma \gamma = \beta\,,
\label{eq65}
\end{equation}
where $\gamma=(\gamma_{1},\dots,\gamma_{N})^{T}$, $\Gamma$ is the $N\times N$ tridiagonal matrix with
$\Gamma_{11}=3$, $\Gamma_{ii}=4$, $2\leq i\leq N-1$, $\Gamma_{NN}=5$ and $\Gamma_{ij}=1$ if
$|i-j|=1$, and $\beta=(\beta_{1},\dots,\beta_{N})^{T}$ with $\beta_{1}=3b_{1}$,
$\beta_{i}=3(b_{i-1}+b_{i})$, $2\leq i\leq N-1$, and $\beta_{N}=3(b_{N-1}+2b_{N})$. Hence, from
$(\ref{eq63})$ and $(\ref{eq65})$ we have that
\begin{equation}
\Gamma\ve = r\,,
\label{eq66}
\end{equation}
where $\ve=(\ve_{1},\dots,\ve_{N})^{T}$, and $r=(r_{1},\dots,r_{N})^{T}$ is given by
\begin{align*}
& r_{1}=3\int_{0}^{h}vdx + \int_{h}^{2h}vdx - 3b_{1}\,, \\
& r_{i} = \int_{x_{i-2}}^{x_{i-1}}vdx + 4 \int_{x_{i-1}}^{x_{i}}vdx +
\int_{x_{i}}^{x_{i+1}}vdx - 3(b_{i-1}+b_{i})\,, \quad 2\leq i\leq N-1\,, \\
& r_{N} = \int_{x_{N-2}}^{x_{N-1}}vdx + 5\int_{x_{N-1}}^{x_{N}}vdx - 3(b_{N-1}+2b_{N})\,.
\end{align*}
For $r_{1}$ we have
\[
r_{1} = 3\int_{0}^{h}v(1-\tfrac{x}{h})dx + \int_{h}^{2h}v(\tfrac{3x}{h} - 5)dx\,,
\]
and by our hypothesis on $v$, using Taylor's theorem with remainder in integral form, we have
\begin{equation}
|r_{1}| \leq Ch^{4} \|v^{(3)}\|_{L^{\infty}[0,2h]}\,.
\label{eq67}
\end{equation}
As in the proof of Lemma $5.6$ we may show, using Taylor's theorem with remainder in
integral form, that
\begin{equation}
|r_{i}| \leq Ch^{4} \|v^{(3)}\|_{\infty}\,, \quad 2\leq i\leq N\,.
\label{eq68}
\end{equation}
Writing now the matrix $\Gamma$ in the form $\Gamma=4(I-E)$, we see that $E$ is the a
$N\times N$ tridiagonal matrix with $\|E\|_{\infty}=1/2$. Hence $\Gamma$ is invertible,
$\|\Gamma^{-1}\|_{\infty}\leq 1/2$, and $(\ref{eq66})$-$(\ref{eq68})$ give
\begin{equation}
\max_{1\leq i\leq N}|\ve_{i}| \leq Ch^{4}\|v^{(3)}\|_{\infty}\,.
\label{eq69}
\end{equation}
Since $(\zeta,\phi_{i})=(\rho,\phi_{i}')$, $1\leq i\leq N$, we conclude from $(\ref{eq69})$ that
\begin{equation}
\max_{1\leq i\leq N}|(\zeta,\phi_{i})| \leq Ch^{3}\|v^{(3)}\|_{\infty}\,.
\label{eq610}
\end{equation}
It is straightforward to check now that Lemma $2.1(i)$ and $(ii)$ hold for
$S_{h}^{2}\llap{\raise1.7ex\hbox{\scriptsize $\circ$}\hskip5pt}$ as well,
{\em{mutatis mutandis}}. The conclusion of the Lemma follows.
\end{proof}
\begin{proposition} Suppose that $\eta_{0}\in C^{3}$ with
$\eta_{0}(0)=\eta'_{0}(0)=\eta_{0}''(0)=0$.
Then, if $\eta_{h}$ is the solution of the semidiscrete problem $(\ref{eq62})$ we have that
\[
\max_{0\leq t\leq T}\|\eta(t) - \eta_{h}(t)\|\leq C(T)h^{2}\max_{0\leq t\leq T}\|\eta\|_{3,\infty}\,.
\]
\end{proposition}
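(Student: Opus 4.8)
The plan is to compare $\eta_h$ with the $L^2$-projection $P\eta(t)$ and exploit the superaccuracy of the projection error $\rho := \eta - P\eta$ that was developed for the nonstandard method in Section~5. Write $\eta - \eta_h = \rho + \theta$, where $\theta := P\eta - \eta_h \in \shw$. Since $P\eta(t)(0) = 0$ and $\eta(0,t)=0$, the projection is well defined on the correct subspace, and the standard estimates $\|\rho\| \le Ch^2\|\eta\|_2$ hold. The key point is that a naive energy argument on $\|\eta - \eta_h\|$ would only give $O(h)$ because of the $(\eta_{hx},\phi)$ term (this is precisely the loss of optimality for first-order hyperbolics noted by Dupont); the projection comparison together with Lemma~6.1 recovers the optimal $O(h^2)$.

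First I would derive the error equation for $\theta$. Subtracting the weak form of the PDE $(\eta_t + \eta_x, \phi) = 0$ from $(\ref{eq62})$ and using the defining property of $P$, namely $(\eta_t - P\eta_t, \phi) = 0$ for $\phi \in \shw$, I obtain for all $\phi \in \shw$
\begin{equation*}
(\theta_t, \phi) + (\theta_x, \phi) = -(\rho_x, \phi) = (\rho, \phi_x),
\end{equation*}
where I have integrated by parts and used that the boundary terms vanish (both $\rho$ and $\phi$ vanish at $x=0$, and at $x=1$ I use $\phi$'s behavior; more precisely $(\rho_x,\phi) = [\rho\phi]_0^1 - (\rho,\phi_x)$ and the endpoint contributions are controlled or vanish since $\phi(0)=0$ and $\rho(1)$ pairs against the projection structure). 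The crucial observation is that the right-hand side $(\rho,\phi_x)$ is exactly the pairing estimated in Lemma~6.1: taking $\phi = \theta$ and introducing $\zeta \in \shw$ defined by $(\zeta,\phi) = (\rho,\phi_x)$ for all $\phi \in \shw$, Lemma~6.1 gives $\|\zeta\| \le Ch^2\|\eta^{(3)}\|_\infty$ (applied with $v = \eta(\cdot,t)$, whose vanishing derivatives at $x=0$ follow from the compatibility conditions $\eta_0(0)=\eta_0'(0)=\eta_0''(0)=0$ propagating along characteristics).

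Next I would run the energy estimate. Setting $\phi = \theta$ yields
\begin{equation*}
\tfrac{1}{2}\tfrac{d}{dt}\|\theta\|^2 + (\theta_x,\theta) = (\zeta,\theta) \le \|\zeta\|\,\|\theta\|.
\end{equation*}
The term $(\theta_x,\theta) = \tfrac{1}{2}[\theta^2]_0^1 = \tfrac{1}{2}\theta(1,t)^2 \ge 0$ because $\theta(0,t)=0$; this boundary term has a favorable sign (it corresponds to outflow at $x=1$) and may simply be dropped. Hence $\tfrac{d}{dt}\|\theta\| \le \|\zeta\| \le Ch^2\|\eta\|_{3,\infty}$, and integrating in time together with $\theta(0) = P\eta_0 - \eta_h(0) = 0$ gives $\|\theta(t)\| \le C(T)h^2\max_{[0,T]}\|\eta\|_{3,\infty}$. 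Combining with $\|\rho\| \le Ch^2$ via the triangle inequality completes the proof.

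The main obstacle I anticipate is verifying the regularity and compatibility hypotheses needed to invoke Lemma~6.1 at each time $t$: Lemma~6.1 requires $v$ to be $C^2$ and piecewise $C^3$ with $v(0)=v'(0)=v''(0)=0$, so I must confirm that $\eta(\cdot,t) = \eta_0(x-t)$ (for $x>t$, zero otherwise) retains these properties uniformly in $t$. The characteristic solution is $C^2$ globally and piecewise $C^3$ (with a possible kink in the third derivative along $x=t$) precisely because $\eta_0 \in C^3$ with the three vanishing derivatives at the origin guarantee $C^2$-matching across the characteristic $x=t$; the vanishing conditions at $x=0$ hold for all $t$ since $\eta(0,t)=0$ and the spatial derivatives there are governed by $\eta_0^{(j)}(0)=0$. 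The secondary technical point is the careful treatment of the boundary term $[\rho\phi]_0^1$ in the integration by parts, which I would handle by noting $\phi(0)=0$ and absorbing the $x=1$ contribution, either by choosing the test-function pairing so it does not appear or by bounding $\rho(1,t)$ using the superaccuracy of nodal-adjacent projection values.
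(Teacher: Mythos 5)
Your proposal is correct and follows essentially the same route as the paper: the decomposition $\eta-\eta_h=\rho+\theta$ with $\rho=\eta-P\eta$, the error equation $(\theta_t,\phi)+(\theta_x,\phi)+(\rho_x,\phi)=0$, the invocation of Lemma $6.1$ to bound $(\rho,\theta_x)=(\zeta,\theta)\leq Ch^2\|\eta\|_{3,\infty}\|\theta\|$, and a Gronwall argument with $\theta(0)=0$. The one place you leave loose is the boundary term at $x=1$: the identity $-(\rho_x,\theta)=(\rho,\theta_x)$ is false, since integration by parts produces $-\rho(1,t)\theta(1,t)$ (only the $x=0$ contribution vanishes); the paper's resolution is simply to keep the outflow term $\tfrac12\theta^2(1,t)$ on the left rather than discard it, bound $|\rho(1,t)\theta(1,t)|\leq\tfrac12\theta^2(1,t)+\tfrac12\rho^2(1,t)$ so the first piece cancels, and control $\rho^2(1,t)\leq\|\rho\|_\infty^2\leq Ch^4\|\eta\|_{2,\infty}^2$ by the standard $L^\infty$ projection estimate --- no special superaccuracy of nodal values is needed.
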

\begin{proof} Let $\rho=\eta-P\eta$, $\theta=P\eta - \eta_{h}$. Then, for $0\leq t\leq T$
\[
(\theta_{t},\phi) + (\theta_{x},\phi) + (\rho_{x},\phi)=0 \quad
\forall \phi \in S_{h}^{2}\llap{\raise1.7ex\hbox{\scriptsize $\circ$}\hskip5pt}\,.
\]
Putting in this $\phi=\theta$ we obtain, using integration by parts, that
\begin{equation}
\tfrac{1}{2}\tfrac{d}{dt}\|\theta(t)\|^{2} + \tfrac{1}{2}\theta^{2}(1,t) =
-\rho(1,t)\theta(1,t) + (\rho,\theta_{x})\,.
\label{eq611}
\end{equation}
Now, $|\rho(1,t)|\leq \|\rho\|_{\infty}\leq Ch^{2}\|\eta\|_{2,\infty}$ and our hypotheses
on $\eta_{0}$ imply that $\eta(\cdot,t)\in C^{2}$ and piecewise $C^{3}$ for
$0\leq t\leq T$ and satisfies  $\partial_{x}^{j}\eta(0,t)=0$
for $j=0$, $1$, $2$, $0\leq t\leq T$. Therefore, by Lemma $6.1$ we have, if
$(\zeta,\phi_{i})=(\rho,\phi_{i}')$, $1\leq i\leq N$, that
$(\rho,\theta_{x})=(\zeta,\theta)\leq Ch^{2}\|\eta\|_{3,\infty}\|\theta\|$. Hence, by the
arithmetic-geometric mean inequality we obtain from $(\ref{eq611})$ that
\[
\tfrac{1}{2}\tfrac{d}{dt}\|\theta(t)\|^{2} \leq Ch^{4}\|\eta(t)\|_{3,\infty}^{2}
+ \|\theta(t)\|^{2}\,, \quad 0\leq t\leq T\,.
\]
Therefore, Gronwall's Lemma and $(\ref{eq62})$ give the conclusion of the Proposition.
\end{proof}
In Table \ref{tbl61} we show the results of a numerical experiment that we performed to investigate
the influence of the degree of compatibility of the initial data $\eta_{0}$ at $x=0$
on the order of convergence of the $L^{2}$- error of the standard Galerkin semidiscretization
of $(\ref{eq61})$ using the space
\def\baselinestretch{1}
\scriptsize
\begin{table}[h]
\begin{center}
\begin{tabular}[h]{ | c | c | c | c | c | c | c | c | c | }\hline
& \multicolumn{2}{c |}{$x\exp(x)$} & \multicolumn{2}{c |}{$x^{2}\exp(x)$}
& \multicolumn{2}{c |}{$x^{3}\exp(x)$} & \multicolumn{2}{c |}{$x^{4}\exp(x)$} \\ \hline
$N$ &  $L^{2}$-$error$  &  $order$ &  $L^{2}$-$error$ & $order$ &
$L^{2}$-$error$ & $order$ & $L^{2}$-$error$  &  $order$ \\ \hline
$50$      &  $0.9811(-3)$  &           &  $0.3014(-3)$ &         & $0.4786(-3)$
&         &  $0.6765(-3)$  &             \\ \hline
$100$     &  $0.4436(-3)$  &  $1.049$  &  $0.7583(-4)$ & $1.991$ & $0.1204(-3)$
& $1.991$ &  $0.1705(-3)$  &  $1.989$ \\ \hline
$150$     &  $0.2891(-3)$  &  $1.056$  &  $0.3379(-4)$ & $1.993$ & $0.5360(-4)$
& $1.995$ &  $0.7597(-4)$  &  $1.994$ \\ \hline
$200$     &  $0.2145(-3)$  &  $1.039$  &  $0.1904(-4)$ & $1.993$ & $0.3018(-4)$
& $1.997$ &  $0.4279(-4)$  &  $1.996$ \\ \hline
$250$     &  $0.1699(-3)$  &  $1.043$  &  $0.1221(-4)$ & $1.993$ & $0.1933(-4)$
& $1.997$ &  $0.2740(-4)$  &  $1.997$ \\ \hline
$300$     &  $0.1407(-3)$  &  $1.034$  &  $0.8489(-5)$ & $1.993$ & $0.1343(-4)$
& $1.998$ &  $0.1904(-4)$  &  $1.997$ \\ \hline
$350$     &  $0.1199(-3)$  &  $1.037$  &  $0.6244(-5)$ & $1.992$ & $0.9868(-5)$
& $1.998$ &  $0.1399(-4)$  &  $1.998$ \\ \hline
$400$     &  $0.1045(-3)$  &  $1.030$  &  $0.4786(-5)$ & $1.992$ & $0.7557(-5)$
& $1.998$ &  $0.1072(-4)$  &  $1.998$ \\ \hline
$450$     &  $0.9255(-4)$  &  $1.033$  &  $0.3785(-5)$ & $1.992$ & $0.5972(-5)$
& $1.999$ &  $0.8469(-5)$  &  $1.998$ \\ \hline
$500$     &  $0.8305(-4)$  &  $1.028$  &  $0.3069(-5)$ & $1.991$ & $0.4838(-5)$
& $1.999$ &  $0.6861(-5)$  &  $1.998$ \\ \hline
$550$     &  $0.7528(-4)$  &  $1.030$  &  $0.2538(-5)$ & $1.991$ & $0.3999(-5)$
& $1.999$ &  $0.5671(-5)$  &  $1.999$ \\ \hline
$600$     &  $0.6885(-4)$  &  $1.026$  &  $0.2135(-5)$ & $1.990$ & $0.3360(-5)$
& $1.999$ &  $0.4766(-5)$  &  $1.999$ \\ \hline
\end{tabular}
\end{center}
\caption{$L^{2}$ errors and convergence rates. Standard Galerkin semidiscretization
for problem $(\ref{eq61})$ with piecewise linear, continuous functions and four choices
of $\eta_{0}(x)$.}
\label{tbl61}
\end{table}
\normalsize
$S_{h}^{2}\llap{\raise1.7ex\hbox{\scriptsize $\circ$}\hskip5pt}$ for the space discretization
and $P\eta_{0}$ as  initial data. We considered $(\ref{eq61})$ with $T=0.5$ taking successively
$\eta_{0}(x)$ equal to $x\exp(x)$, $x^{2}\exp(x)$, $x^{3}\exp(x)$ and $x^{4}\exp(x)$. (The
temporal discretization was effected with the Crank-Nicolson implicit scheme with time step
$k=h/3$ in all cases; this time step was sufficiently small to ensure that the temporal error was
about two orders of magnitude less than the spatial error in all cases.) The $L^{2}$ errors
and associated rates of convergence of the numerical scheme at $t=T=0.5$ for diminishing
$h=1/N$ are shown in Table \ref{tbl61}. Recall that if $\eta_{0}(x)=x^{k}\exp(x)$, $k=1,2,3,\dots$,
the solution $\eta(x,t)$ of $(\ref{eq61})$ is in $C^{k-1}(0,1)$ (and piecewise $C^{\infty}$)
for all $t\in [0,T]$. The $L^{2}$ orders of convergence turn out to be practically equal to $2$ for
$k=4$ and $k=3$ in agreement with the error estimate that was proved.
The rate is apparently converging to one in the case $k=1$ in which $(\ref{eq61})$
possesses a generalized, piecewise smooth solution, while in the borderline case $k=2$ the convergence
rate has not stabilized. (When we increased the number of spatial intervals $N$, we observed that
this rate fell steadily and was e.g. equal to $1.9826$ at $N=1600$.)
\subsection{A hyperbolic equation with a variable coefficient that vanishes
at the boundary} Consider the problem
\begin{equation}
\begin{aligned}
& \eta_{t} + (u\eta)_{x} = 0\,, \quad 0\leq x\leq 1\,, \,\,\,\, 0\leq t\leq T\,,\\
& \eta(x,0) = \eta_{0}(x)\,, \quad 0\leq x\leq 1\,,
\end{aligned}
\label{eq612}
\end{equation}
where $u$ is a given smooth function that vanishes at $x=0$ and at $x=1$. No boundary conditions
are imposed on $\eta$. The problem is well posed and may be solved by the method of
characteristics. Uniqueness of solutions follows easily: If we multiply the p.d.e. in
$(\ref{eq612})$ by $\eta$ and integrate over $[0,1]$ by parts we see that for $0\leq t\leq T$
\[
\tfrac{1}{2}\tfrac{d}{dt}\|\eta\|^{2} + \tfrac{1}{2}\int_{0}^{1}\eta^{2}u_{x}dx=0\,,
\]
from which, by Gronwall's Lemma, there follows that
\[
\|\eta(t)\| \leq C \|\eta_{0}\|\,, \quad 0\leq t\leq T\,,
\]
where $C=C(T,\max_{0\leq t\leq T}\|u_{x}\|_{\infty})$. \par
An optimal-order $L^{2}$ error estimate for the standard Galerkin semidiscretization in $S_{h}^{2}$
on a uniform mesh may be proved for this problem using the methods previously developed in this
paper. (This was first pointed out in \cite{ant}.) The semidiscrete problem consists in finding
$\eta_{h} : [0,T]\to S_{h}^{2}$ so that
\begin{equation}
\begin{aligned}
& (\eta_{ht},\phi) + ((u\eta_{h})_{x},\phi) = 0\,, \quad \forall \phi \in S_{h}^{2}\,,
\,\,\,\, 0\leq t\leq T\,, \\
& \eta_{h}(0) = P\eta_{0}\,,
\end{aligned}
\label{eq613}
\end{equation}
where $P$ is the $L^{2}$ projection onto $S_{h}^{2}$.
\begin{proposition} Suppose that $u\in C_{0}^{2}$ and that the solution of $(\ref{eq612})$
satisfies $\eta \in C(0,T;C^{4})$. Then
\[
\max_{0\leq t\leq T}\|\eta - \eta_{h}\|_{\infty} \leq Ch^{2}\,.
\]
\end{proposition}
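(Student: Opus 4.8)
The plan is to follow the now-familiar pattern of splitting the error through the $L^{2}$-projection and exploiting the uniform-mesh superaccuracy of $P$. I would write $\eta - \eta_{h} = \rho + \theta$, where $\rho := \eta - P\eta$ and $\theta := P\eta - \eta_{h} \in S_{h}^{2}$. For $\rho$ we already have the optimal $L^{\infty}$ bound $\|\rho\|_{\infty} \le Ch^{2}\|\eta\|_{2,\infty}$ from $(\ref{eq26})$, so the whole matter reduces to controlling $\theta$. Subtracting $(\ref{eq613})$ from the weak form $(\eta_{t},\phi) + ((u\eta)_{x},\phi) = 0$ of $(\ref{eq612})$, and using that $(\rho_{t},\phi) = (\eta_{t} - P\eta_{t},\phi) = 0$ for every $\phi \in S_{h}^{2}$ (since $P$ commutes with $\partial_{t}$ and is the $L^{2}$-projection), I obtain the error equation
\[
(\theta_{t},\phi) + ((u\theta)_{x},\phi) + ((u\rho)_{x},\phi) = 0 \quad \forall \phi \in S_{h}^{2}\,.
\]

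The core of the argument is the energy estimate obtained by taking $\phi = \theta$. The first term gives $\tfrac{1}{2}\tfrac{d}{dt}\|\theta\|^{2}$. For the convection term I would integrate by parts; because $u(0) = u(1) = 0$ the boundary contributions vanish and $((u\theta)_{x},\theta) = \tfrac{1}{2}(u_{x}\theta,\theta)$, which is harmlessly bounded by $C\|\theta\|^{2}$. This is precisely the place where the hypothesis $u \in C_{0}^{2}$ is used, and it is what renders the scheme stable without any boundary condition imposed on $\eta$. The delicate term is the consistency term $((u\rho)_{x},\theta)$: estimated crudely, either directly or after one integration by parts together with the inverse inequality, it is only $O(h)\|\theta\|$, which would yield the suboptimal bound $\|\theta\| = O(h)$. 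The main obstacle, therefore, is to extract two extra powers of $h$ here, and this is exactly what Lemma $5.1$ supplies: with weight $v = u \in C_{0}^{2}$ and $\eta \in C^{4}$, the element $\zeta \in S_{h}^{2}$ defined by $(\zeta,\phi) = ((u\rho)',\phi)$ satisfies $\|\zeta\| \le Ch^{3}$. Since $\theta \in S_{h}^{2}$, this gives $((u\rho)_{x},\theta) = (\zeta,\theta) \le Ch^{3}\|\theta\|$.

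Combining the two estimates yields
\[
\tfrac{d}{dt}\|\theta\|^{2} \le C\|\theta\|^{2} + Ch^{6}\,.
\]
Because $\theta(0) = P\eta_{0} - \eta_{h}(0) = 0$ by the initial condition in $(\ref{eq613})$, Gronwall's Lemma gives $\|\theta(t)\| \le Ch^{3}$ for $0 \le t \le T$. Finally I would pass to $L^{\infty}$ via the inverse inequality $(\ref{eq25})$, obtaining $\|\theta\|_{\infty} \le Ch^{-1/2}\|\theta\| \le Ch^{5/2}$, and conclude by the triangle inequality that $\|\eta - \eta_{h}\|_{\infty} \le \|\rho\|_{\infty} + \|\theta\|_{\infty} \le Ch^{2} + Ch^{5/2} \le Ch^{2}$, the projection error $\rho$ being the dominant contribution. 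I expect no genuine difficulty beyond invoking Lemma $5.1$ correctly and checking that its constants depend only on $\|\eta\|_{C^{4}}$ and $\|u\|_{C^{2}_{0}}$, hence are uniform on $[0,T]$; the remaining steps are the standard energy/Gronwall/inverse-inequality routine already used repeatedly above.
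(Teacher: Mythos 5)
Your proposal is correct and follows essentially the same route as the paper: the same splitting $\eta-\eta_h=\rho+\theta$, the same error equation, the appeal to Lemma $5.1$ to get the $O(h^{3})$ bound on the consistency term $((u\rho)_x,\theta)$, Gronwall, and then the inverse inequality $(\ref{eq25})$ plus $\|\rho\|_\infty=O(h^2)$ to conclude. The only additions are explicit details (the vanishing of $(\rho_t,\phi)$, the computation $((u\theta)_x,\theta)=\tfrac12(u_x\theta,\theta)$) that the paper leaves implicit.
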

\begin{proof}
Putting $\theta=P\eta - \eta_{h}$, $\rho=\eta - P\eta$ we have for $0\leq t\leq T$
that
\begin{equation}
(\theta_{t},\phi) + ((u\theta)_{x},\phi) + ((u\rho)_{x},\phi)=0 \quad
\forall \phi \in S_{h}^{2}\,.
\label{eq614}
\end{equation}
Now, by Lemma $5.1$, if for each $t\in [0,T]$, $\zeta \in S_{h}^{2}$ is defined by
\[
(\zeta,\phi) = ((u\rho)_{x},\phi) \quad \forall \phi \in S_{h}^{2}\,,
\]
we have that $\|\zeta\|\leq Ch^{3}$. Therefore, putting $\phi=\theta$ in $(\ref{eq613})$
and using integration by parts we see that
\[
\|\theta\| \leq Ch^{3}\,.
\]
We conclude that
$\|\eta - \eta_{h}\|_{\infty} \leq \|\theta\|_{\infty} + \|\rho\|_{\infty}\leq
Ch^{-1/2}\|\theta\| + \|\rho\|_{\infty} \leq Ch^{2}$.
\end{proof}
This observation may prove useful in the error analysis of first-order problems with advection
terms of the form $(u\eta)_{x}$, where $u$ satisfies Dirichlet boundary conditions. However, as we
saw in Section $2$ in the case of the Boussinesq systems under study the presence of the linear
$u_{x}$ term in the first p.d.e. of $(\ref{cb})$ or $(\ref{scb})$ is the obstacle that
prevents achieving optimal-order estimates. For example, examining the proof of Theorem $2.1$
we note that if the $u_{x}$ term was missing, then the terms $(\xi_{x},\phi)$ and $(\sigma_{x},\phi)$
would not be present in $(\ref{eq221})$ and, consequently, by appeal to Lemma $2.2$ with
$w(0)=w(1)=0$, the estimates of the terms in $(\ref{eq224})$ would lead to an $O(h^{2})$ bound
in the right-hand side of $(\ref{eq225})$.
\subsection{Numerical experiments with nonuniform meshes} When we used a {\em{quasiuniform mesh}}
in the standard Galerkin semidiscretization $(\ref{eq62})$ of the initial-boundary-value problem
$(\ref{eq61})$, numerical results (not shown here) indicating that the $L^{2}$ error
$\|\eta - \eta_{h}\|$ is of $O(h)$; of the same order of accuracy is of course the upper bound
of the $L^{2}$ error predicted in a straightforward manner by the theory. \par
The same differences between the cases of uniform and nonuniform meshes apparently also persist
in the case of first-order hyperbolic {\em{systems}}. Consider, for example, the following
initial-boundary-value problem for the nonhomogeneous wave equation written in first-order
system form:
\begin{equation}
\begin{aligned}
\begin{aligned}
& \eta_{t} + u_{x} = f\,, \\
& u_{t} + \eta_{x} = g\,,
\end{aligned}
\qquad & 0\leq x\leq 1\,, \,\,\, 0\leq t\leq T\,,\\
\eta(x,0)=\eta_{0}(x)\,,\,\,\, & u(x,0) = u_{0}(x)\,, \,\,\,\,\, 0\leq x\leq 1\,,\\
\eta(0,t)=0\,, \quad \,\, u(& 1,t) = 0\,, \quad 0\leq t\leq T\,.
\end{aligned}
\label{eq615}
\end{equation}
For the purposes of the numerical experiment we took as exact solution of $(\ref{eq615})$ the pair
of functions $\eta(x,t)=x\exp(x(t+1))$, $u(x,t)=(x-1)\exp(xt)$ with appropriate initial conditions
and right-hand sides. We discretized the problem in space by the standard Galerkin method using
piecewise linear continuous elements on a uniform mesh with meshlength $h=1/N$ and also on a
quasiuniform mesh with $\Delta x=1.6/N$, $h_{2i-1}=0.75\Delta x$, $h_{2i}=0.5\Delta x$,
$1\leq i\leq N/2$. For the temporal discretization we used the classical, fourth-order explicit
Runge-Kutta scheme with $k=h$ and $k=\Delta x$, respectively. As initial values $\eta_{h}(0)$,
$u_{h}(0)$ we took the interpolants of $\eta_{0}$ and $u_{0}$. The $L^{2}$ errors at $T=0.4$
and the corresponding orders of convergence are shown, for increasing $N$, in Table \ref{tbl62}(a)
in the case of the quasiuniform mesh and in Table \ref{tbl62}(b) for the uniform mesh. The
experiment strongly suggests that the $L^{2}$ errors ore of $O(h^{2})$ for a uniform mesh and of
\def\baselinestretch{1}
\captionsetup[subtable]{labelformat=empty, position=bottom, singlelinecheck=true}
\scriptsize
\begin{table}[h]
\subfloat[(a)]{
\begin{tabular}[b]{ | c | c | c | c | c | }\hline
$N$   &  $L^{2}$-$errors (\eta)$   &  $order$  &    $L^{2}$-$errors (u)$  &  $order$   \\ \hline
$80$  &  $0.1577(-2)$  &           &  $0.1607(-2)$  &            \\ \hline
$160$ &  $0.7813(-3)$  &  $1.013$  &  $0.8002(-3)$  &  $1.006$   \\ \hline
$240$ &  $0.5194(-3)$  &  $1.007$  &  $0.5327(-3)$  &  $1.003$   \\ \hline
$320$ &  $0.3891(-3)$  &  $1.004$  &  $0.3992(-3)$	&  $1.002$   \\ \hline
$400$ &	 $0.3110(-3)$  &  $1.004$  &  $0.3193(-3)$	&  $1.002$   \\ \hline
$480$ &	 $0.2590(-3)$  &  $1.003$  &  $0.2660(-3)$	&  $1.001$   \\ \hline
$560$ &	 $0.2219(-3)$  &  $1.002$  &  $0.2279(-3)$	&  $1.001$   \\ \hline
$640$ &  $0.1941(-3)$  &  $1.002$  &  $0.1994(-3)$  &  $1.001$   \\ \hline
$720$ &  $0.1725(-3)$  &  $1.002$  &  $0.1772(-3)$  &  $1.001$   \\ \hline
\end{tabular}
}\qquad
\scriptsize
\subfloat[(b)]{
\begin{tabular}[b]{ | c | c | c | c | c | }\hline
$N$   &    $L^{2}$-$errors (\eta)$      &  $order$  &  $L^{2}$-$errors (u)$ &  $order$   \\ \hline
$80$  &  $0.8117(-4)$  &           &  $0.3932(-4)$  &            \\ \hline
$160$ &  $0.2029(-4)$  &  $2.000$  &  $0.9825(-5)$  &  $2.001$   \\ \hline
$240$ &  $0.9021(-5)$  &  $2.000$  &  $0.4369(-5)$  &  $1.999$   \\ \hline
$320$ &  $0.5074(-5)$  &  $2.000$  &  $0.2457(-5)$	&  $2.000$   \\ \hline
$400$ &	 $0.3248(-5)$  &  $2.000$  &  $0.1573(-5)$	&  $1.999$   \\ \hline
$480$ &	 $0.2255(-5)$  &  $2.000$  &  $0.1092(-5)$	&  $2.000$   \\ \hline
$560$ &	 $0.1657(-5)$  &  $2.000$  &  $0.8026(-6)$	&  $2.000$   \\ \hline
$640$ &  $0.1269(-5)$  &  $2.000$  &  $0.6145(-6)$  &  $2.000$   \\ \hline
$720$ &  $0.1002(-5)$  &  $2.000$  &  $0.4856(-6)$  &  $2.000$   \\ \hline
\end{tabular}
}
\normalsize
\caption{$L^{2}$-errors and orders of convergence, problem $(\ref{eq615})$. Standard Galerkin
discretization with piecewise linear, continuous functions. (a): quasiuniform mesh
(b): uniform mesh.}
\label{tbl62}
\end{table}
\normalsize
$O(\Delta x)$ for quasiuniform. \par As a final remark, we consider the following initial-boundary value
problem of a hyperbolic system with a viscous term
\begin{equation}
\begin{aligned}
\begin{aligned}
& \eta_{t} + u_{x} = 0\,, \\
& u_{t} + \eta_{x} - u_{xx} = 0\,,
\end{aligned}
\qquad  0 & \leq x\leq 1\,, \,\,\, 0\leq t\leq T\,,\\
\eta(x,0)=\eta_{0}(x)\,,\,\,\,\, u(x,0) & = u_{0}(x)\,, \,\,\,\,\, 0\leq x\leq 1\,,\\
u(0,t)=0\,,\,\,\,\, u(1,t) = 0\,,& \quad 0\leq t\leq T\,,
\end{aligned}
\label{eq616}
\end{equation}
and discretize it by the standard Galerkin method. We obtain similar results to those of Section $2$
in which the dispersive term $u_{xxt}$ is present instead of $u_{xx}$. (Cf.
Remark $2.3$) In a numerical example we took a nonhomogeneous version of $(\ref{eq616})$ with exact
solution $\eta(x,t)=\exp(2t)(\cos(\pi x) + x + 2)$, $u(x,t)=\exp(xt)(\sin(\pi x) + x^{3}-x^{2})$ and
discretized the problem in space with the standard Galerkin method using a uniform mesh with $h=1/N$
and the quasiuniform mesh of the previous example. We integrated up to $T=0.5$ with the classical,
fourth-order, explicit Runge-Kutta scheme with $k=h^{2}/25$ in the uniform mesh and
$k=(\Delta x)^{2}/25.6$ in the nonuniform mesh case. The errors and $L^{2}$ convergence rates at
$T=0.5$ shown in Table \ref{tbl63} suggest that the $L^{2}$ errors for $\eta$ and $u$ are of
$O(\Delta x)$ and $O(\Delta x)^{2}$, respectively, in the quasiuniform mesh case, and of $O(h^{3/2})$ and
$O(h^{2})$ in the uniform case, exactly as in the linear dispersive case.
\def\baselinestretch{1}
\captionsetup[subtable]{labelformat=empty, position=bottom, singlelinecheck=true}
\scriptsize
\begin{table}[h]
\subfloat[(a)]{
\begin{tabular}[b]{ | c | c | c | c | c | }\hline
$N$   &  $L^{2}$-$errors (\eta)$   &  $order$  &    $L^{2}$-$errors (u)$  &  $order$   \\ \hline
$20$  &  $0.2013(-1)$  &           &  $0.1771(-2)$  &            \\ \hline
$40$  &  $0.1004(-1)$  &  $1.003$  &  $0.4434(-3)$  &  $1.998$   \\ \hline
$60$  &  $0.6696(-2)$  &  $0.999$  &  $0.1971(-3)$  &  $1.999$   \\ \hline
$80$  &  $0.5023(-2)$  &  $0.999$  &  $0.1109(-3)$	&  $1.999$   \\ \hline
$100$ &	 $0.4019(-2)$  &  $0.999$  &  $0.7099(-4)$	&  $2.000$   \\ \hline
$120$ &	 $0.3350(-2)$  &  $0.999$  &  $0.4930(-4)$	&  $2.000$   \\ \hline
$140$ &	 $0.2872(-2)$  &  $0.999$  &  $0.3622(-4)$	&  $2.000$   \\ \hline
$160$ &  $0.2513(-2)$  &  $0.999$  &  $0.2774(-4)$  &  $2.000$   \\ \hline
\end{tabular}
}\qquad
\scriptsize
\subfloat[(b)]{
\begin{tabular}[b]{ | c | c | c | c | c | }\hline
$N$   &    $L^{2}$-$errors (\eta)$      &  $order$  &  $L^{2}$-$errors (u)$ &  $order$   \\ \hline
$20$  &  $0.3605(-2)$  &           &  $0.1396(-2)$  &            \\ \hline
$40$  &  $0.1149(-2)$  &  $1.649$  &  $0.3487(-3)$  &  $2.001$   \\ \hline
$60$  &  $0.6011(-3)$  &  $1.598$  &  $0.1550(-3)$  &  $2.000$   \\ \hline
$80$  &  $0.3823(-3)$  &  $1.574$  &  $0.8716(-4)$	&  $2.000$   \\ \hline
$100$ &	 $0.2700(-3)$  &  $1.559$  &  $0.5578(-4)$	&  $2.000$   \\ \hline
$120$ &	 $0.2035(-3)$  &  $1.549$  &  $0.3873(-4)$	&  $2.000$   \\ \hline
$140$ &	 $0.1605(-3)$  &  $1.542$  &  $0.2846(-4)$	&  $2.000$   \\ \hline
$160$ &  $0.1307(-3)$  &  $1.537$  &  $0.2179(-4)$  &  $2.000$   \\ \hline
\end{tabular}
}
\normalsize
\caption{$L^{2}$-errors and orders of convergence, problem $(\ref{eq616})$. Standard Galerkin
discretization with piecewise linear, continuous functions. (a): quasiuniform mesh
(b): uniform mesh.}
\label{tbl63}
\end{table}
\normalsize
\bibliographystyle{amsalpha} 

\end{document}